\documentclass[twoside,11pt]{article}
\usepackage{blindtext}
\usepackage[preprint]{jmlr2e}
\usepackage{bbding}
\usepackage{microtype}
\usepackage{graphicx}
\usepackage{subfigure}
\usepackage{booktabs} % for professional tables
\usepackage{microtype}
\usepackage{graphicx}
\usepackage{subfigure}
\usepackage{booktabs} 
\usepackage{amsmath}
\usepackage{amssymb}
\usepackage{mathtools}
\usepackage{paralist}
\usepackage{multirow}
\RequirePackage{algorithm}
\RequirePackage{algorithmic}

\newtheorem{ass}{Assumption}
\newcommand{\Norm}[1]{\left\|#1\right\|}

\def \cite {\citep}
\def \E {\mathbb{E}}
\def \R {\mathbb{R}}

\def \h {\mathbf{h}}

\def \r {\mathbf{r}}

\def \u {\mathbf{u}}
\def \v {\mathbf{v}}
\def \s {\mathbf{s}}
\def \w {\mathbf{w}}

\def \F {\mathcal{F}}
\def \G {\mathcal{G}}
\def \z {\mathbf{z}}
\def \x {\mathbf{x}}
\def \y {\mathbf{y}}

\def \LL {L} %{\mathcal{L}}

\def \X {\mathcal{X}}
\usepackage{lastpage}
\jmlrheading{1}{2025}{1-\pageref{LastPage}}{XX/XX}{XX/XX}{21-0000}{Jiang, Yang, Yang, Wang, Wan, Li, and Zhang}

\ShortHeadings{Projection-free Multi-level Algorithms for Stochastic Constrained Optimization}{Jiang, Yang, Yang, Wang, Wan, Li, and Zhang}
\firstpageno{1}

\begin{document}

\title{Projection-Free Multi-level Algorithms for Stochastic Constrained Compositional Optimization}

\author{\name Wei Jiang\textsuperscript{\rm 1,2} \email {jiang\_wei@njust.edu.cn} 
       \AND
       \name Sifan Yang\textsuperscript{\rm 2,3} \email yangsf@lamda.nju.edu.cn 
       \AND
       \name Wenhao Yang\textsuperscript{\rm 2,3} \email yangwh@lamda.nju.edu.cn \AND
       \name Yibo Wang\textsuperscript{\rm 2,3} \email wangyb@lamda.nju.edu.cn
        \AND
       \name Yuanyu Wan\textsuperscript{\rm 4} \email wanyy@zju.edu.cn 
       \AND
       %\\ State Key Laboratory of Blockchain and Data Security, Zhejiang University, Hangzhou, China
       \name Zechao Li\textsuperscript{\rm 1} \email zechao.li@njust.edu.cn
    \AND
    \name Lijun Zhang\textsuperscript{\rm 2,3} \email zhanglj@lamda.nju.edu.cn \AND
       \addr
       \textsuperscript{\rm 1}School of Computer Science and Engineering, Nanjing University of Science and Technology, China\\
        \textsuperscript{\rm 2}State Key Laboratory of Novel Software Technology, Nanjing University, China\\
        \textsuperscript{\rm 3}School of Artificial Intelligence, Nanjing University, China\\
        \textsuperscript{\rm 4}School of Software Technology, Zhejiang University, China
       }

\editor{My editor}

\maketitle

\begin{abstract}%   <- trailing '%' for backward compatibility of .sty file
This paper studies projection-free algorithms for stochastic constrained multi-level compositional optimization. 
In this context, the objective function is a nested composition of several smooth functions, and the decision set is closed and convex. 
Since projection onto the constraint set can be computationally expensive, we develop projection-free methods that rely on linear minimization oracles.
For non-convex objectives, we propose variance-reduced projection-free algorithms and establish complexity guarantees under both the Frank-Wolfe gap and the gradient mapping criteria.
We also develop momentum-based methods that achieve convergence guarantees under weaker smoothness assumptions.
Additionally, by using a stage-wise design, we derive a parameter-free variant that preserves the same complexities for the Frank-Wolfe gap.
Such a design can be further used to develop algorithms for convex and strongly convex functions whose rates match those of single-level projection-free counterparts. 
Finally, we consider finite-sum problems and derive complexities for non-convex, convex, and strongly convex objectives.
Numerical experiments across multiple tasks demonstrate the effectiveness of the proposed methods.
\end{abstract}

\begin{keywords}
  projection-free methods, constrained optimization, multi-level optimization, Frank-Wolfe gap, gradient mapping
\end{keywords}

\section{Introduction}
In this paper, we investigate projection-free algorithms for stochastic constrained multi-level compositional optimization of the form
\begin{equation} \label{prob:1}
\min_{\x\in \X} F(\x) = f_K \circ f_{K-1} \circ \cdots \circ f_1(\x),
\end{equation}
where $\X$ is a closed convex set. We assume that each function $f_i$ and its gradient are accessible only through unbiased stochastic estimations, denoted by $f_i(\cdot;\xi)$ and $\nabla f_i(\cdot;\xi)$, such that $\E_\xi \left[ f_i(\cdot;\xi) \right] = f_i(\cdot)$ and $\E_\xi \left[\nabla f_i(\cdot;\xi) \right] = \nabla f_i(\cdot)$,
where $\xi$ denotes a sample drawn from the oracle. Problem~(\ref{prob:1}) arises in a range of machine learning applications, including reinforcement learning~\citep{Dann2014PolicyEW}, government planning~\citep{Bruno2016RiskNA}, risk management~\citep{ Dentcheva2015StatisticalEO}, model-agnostic meta-learning~\citep{Ji2020MultiStepMM}, robust learning~\citep{li2021tilted}, risk-averse portfolio optimization~\citep{Shapiro2009LecturesOS}, and graph neural network training~\citep{balasubramanian2020stochastic}.

Although stochastic multi-level optimization has been investigated extensively in recent years~\citep{Yang2019MultilevelSG,balasubramanian2020stochastic,Zhang2021MultiLevelCS,chen2021solving,jiang2022optimal,pmlr-v235-zhang24aw,zhang2025on,TPAMI:2025:Jiang}, most existing work focuses on unconstrained settings (i.e., $\X=\R^d$). 
In many practical problems, such as risk-averse portfolio optimization, the decision set is constrained~(e.g., $\x$ lies in a simplex), and standard approaches enforce feasibility via projection operations.
However, projections are usually computationally intensive, motivating projection-free methods that replace the projection operation~(a convex optimization problem) with multiple steps of efficient linear minimization~\citep{NEURIPS2022_7e16384b}.

Projection-free methods typically rely on two oracles: (i) a Stochastic First-order Oracle~(SFO), which takes $\x$ and returns $(f(\x;\xi), \nabla f(\x;\xi))$ for a sample $\xi$;
and (ii) a Linear Minimization Oracle~(LMO), which takes a direction $\mathbf{d}$ and outputs $\arg \min_{\x \in \X} \langle \x, \mathbf{d} \rangle$.
The performance of projection-free algorithms is typically measured by the number of SFO and LMO calls required to achieve an acceptable solution.
For non-convex objectives, such a solution $\x$ is usually defined by the Frank-Wolfe gap~\citep{lacostejulien2016convergence}:
\begin{align}\label{FW}
    \F(\x) \coloneqq \max_{\hat{\x} \in \X}\langle\hat{\x}-\x,-\nabla F(\x)\rangle \leq \epsilon,
\end{align}
where $\epsilon$ is a small value. More recently, the gradient mapping criterion~\citep{pmlr-v80-qu18a} has also been used:
\begin{align}\label{GM}
    \G(\x) \coloneqq \Norm{\beta\left(\x-\Pi_{\mathcal{X}}\left(\x-\frac{1}{\beta}\nabla F(\x)\right)\right)}^2\leq \epsilon,
\end{align}
where $\Pi_{\X}$ denotes projection onto $\X$ and $\beta$ is a positive constant.
Notably, when $\X=\R^d$, this reduces to the standard stationary point $\Norm{\nabla F(\x)}^2\leq \epsilon$ for unconstrained stochastic optimization.
For convex or strongly convex objectives, the optimal gap criterion is used instead, expressed as 
\begin{align}\label{OP}
    F(\x)-\min_{\hat{\x} \in \X} F(\hat{\x})\leq \epsilon,
\end{align}
which measures the difference between the objective and the optimal value.

The current projection-free approach for stochastic multi-level compositional optimization, LiNASA+ICG~\citep{NEURIPS2022_7e16384b},
combines the linearized NASA~\citep{Ghadimi2020AST} estimator with inexact conditional gradient~\citep{Balasubramanian2018ZerothOrderNS} and provides guarantees under the gradient mapping criterion.
It finds an acceptable point with $\mathcal{O}(\epsilon^{-2})$ SFO calls and $\mathcal{O}(\epsilon^{-3})$ LMO calls.
However, it has several limitations: (i) its SFO complexity does not match the state-of-the-art $\mathcal{O}(\epsilon^{-1.5})$ rate for stochastic unconstrained problems; (ii) its analysis focuses solely on gradient mapping and does not provide results under the more widely used Frank-Wolfe gap; and (iii) the method is restricted to non-convex objectives, and it is unclear how to obtain improved rates for convex/strongly convex objectives or for finite-sum structures.

To address these issues, we propose new algorithms that leverage the variance-reduction estimator~\citep{cutkosky2019momentum} to track both inner function values and the overall gradient more accurately.
Coupled with a tailored Frank-Wolfe procedure~\citep{pmlr-v28-jaggi13}, this approach improves the rate for gradient mapping and also enables the analysis for Frank-Wolfe gap.
Moreover, by using larger batch sizes, we can reduce the number of iterations and thus improve LMO complexity while maintaining the same SFO rate.
We further develop a stage-wise algorithm with warm starts and establish guarantees for convex and strongly convex objectives. Compared with previous methods, this paper makes the following contributions:
\begin{compactenum}
\item We provide the first theoretical guarantees for the Frank-Wolfe gap in the multi-level setting (Theorem~\ref{thm:main}). The resulting rates match those of single-level projection-free methods~\citep{Zhang2019OneSS,pmlr-v97-yurtsever19b}.
\item For gradient mapping (Theorem~\ref{thm:2}), we obtain an improved SFO complexity of $\mathcal{O}(\epsilon^{-1.5})$ and an LMO complexity of $\mathcal{O}(\epsilon^{-2.5})$. The SFO complexity matches the lower bound~\citep{Arjevani2019LowerBF}, and the LMO complexity can be further reduced to $\mathcal{O}(\epsilon^{-2})$ using larger batch sizes. 
\item We establish complexity results for convex (Theorem~\ref{thm:3}) and strongly convex objectives (Theorem~\ref{thm:4}), deriving optimal SFO rates for these settings, which have not been studied in prior projection-free multi-level optimization.
\end{compactenum}
A preliminary version of this work~\citep{ICML:2024:Jiang} was presented at the 41st International Conference on Machine Learning (ICML 2024). This journal version substantially extends the conference paper in the following aspects:
\begin{compactenum}
\item We develop a fully parameter-free algorithm with a stage-wise design and a refined analysis. This method does not require problem-dependent parameters to set hyperparameters and preserves the same complexities under the Frank-Wolfe gap measure~(\textbf{Theorem~\ref{thm:main+}}).
\item We propose a simpler momentum-based projection-free algorithm that attains the rates under a weaker smoothness assumption (instead of average smoothness), improving the LMO complexity compared to prior results~(\textbf{Theorem~\ref{thm:main-}, \ref{thm:2-}}).
\item We further investigate the finite-sum setting, where we can periodically compute exact function values and gradients and reuse them to refine our estimators. This yields improved complexities for non-convex (\textbf{Theorem~\ref{thm:fsnc},~\ref{thm:fsnc2}}), convex (\textbf{Theorem~\ref{thm:3+}}), and strongly convex functions (\textbf{Theorem~\ref{thm:4+}}). 
\item We provide additional experimental results, including the newly proposed methods and more datasets, to demonstrate empirical effectiveness more comprehensively.
\end{compactenum}
We compare our theoretical results with existing methods under the Frank-Wolfe gap in Table~\ref{t1}, under the gradient mapping criterion in Table~\ref{t1-}, and under the optimal gap in Table~\ref{t2}, which also present differences from the conference version.
\begin{table*}[t]
\caption{Summary of results for projection-free methods under the Frank-Wolfe gap. We compare our methods with 1-SFW~\citep{Zhang2019OneSS} and SPIDER-FW~\citep{pmlr-v97-yurtsever19b}. *PF denotes parameter-free methods.}
\vspace{-0.2in}
\label{t1}
\begin{center}
\resizebox{0.9\columnwidth}{!}{%
\begin{tabular}{lcccccc}
\toprule
Method &  Assumptions & Level & Batch size & SFO & LMO & PF* \\
\midrule
1-SFW     &\multirow{2}{*}{Average Smooth} & \multirow{2}{*}{$1$} &  $1$ &$\mathcal{O}\left(\epsilon^{-3}\right)$ & $\mathcal{O}\left(\epsilon^{-3}\right)$ & \XSolidBrush \\
SPIDER-FW   &  &   &  $\Omega\left(\epsilon^{-1}\right)$ &$\mathcal{O}\left(\epsilon^{-3}\right)$ & $\mathcal{O}\left(\epsilon^{-2}\right)$ & \XSolidBrush \\
\midrule
\multirow{2}{*}{\textbf{Theorem~\ref{thm:main}}}  &\multirow{2}{*}{Average Smooth} &   \multirow{2}{*}{$K$} & $1$ &$\mathcal{O}\left(\epsilon^{-3}\right)$ & $\mathcal{O}\left(\epsilon^{-3}\right)$ & \XSolidBrush\\
& & & $\Omega\left(\epsilon^{-1}\right)$ &$\mathcal{O}\left(\epsilon^{-3}\right)$ & $\mathcal{O}\left(\epsilon^{-2}\right)$ & \XSolidBrush\\
\midrule
\multirow{2}{*}{\textbf{Theorem~\ref{thm:main+}~(new)}}  &\multirow{2}{*}{Average Smooth} &   \multirow{2}{*}{$K$} & $1$ &$\mathcal{O}\left(\epsilon^{-3}\right)$ & $\mathcal{O}\left(\epsilon^{-3}\right)$ & \Checkmark\\
& & & $\Omega\left(\epsilon^{-1}\right)$ &$\mathcal{O}\left(\epsilon^{-3}\right)$ & $\mathcal{O}\left(\epsilon^{-2}\right)$ & \Checkmark\\
\midrule
\textbf{Theorem~\ref{thm:main-}~\textbf{(new)}}  & \multirow{1}{*}{Smooth} &   \multirow{1}{*}{$K$} & $\Omega\left(\epsilon^{-2}\right)$ &$\mathcal{O}\left(\epsilon^{-4}\right)$ & $\mathcal{O}\left(\epsilon^{-2}\right)$ &  \Checkmark \\
\midrule
\multirow{2}{*}{\textbf{Theorem~\ref{thm:fsnc}~(new)}}  &\multirow{1}{*}{Finite-sum} &   \multirow{2}{*}{$K$} & $1$ &$\mathcal{O}\left(\sqrt{m}\epsilon^{-2}\right)$ & $\mathcal{O}\left(\sqrt{m}\epsilon^{-2}\right)$ & \Checkmark\\
&\multirow{1}{*}{+ Smooth} & & $\Omega\left(\sqrt{m}\right)$ &$\mathcal{O}\left(\sqrt{m}\epsilon^{-2}\right)$ & $\mathcal{O}\left(\epsilon^{-2}\right)$ & \Checkmark\\
\bottomrule
\end{tabular}}
\end{center}
\end{table*}
\begin{table*}[t]
\caption{Summary of results for projection-free algorithms under the gradient mapping. We compare our methods with NCGS~\citep{pmlr-v80-qu18a}, SGD+ICG~\citep{Balasubramanian2018ZerothOrderNS}, and LiNASA+ICG~\citep{NEURIPS2022_7e16384b}}
\vspace{-0.2in}
\label{t1-}
\begin{center}
\resizebox{0.9\columnwidth}{!}{%
\begin{tabular}{lccccr}
\toprule
Method &  Assumptions & Level & Batch size & SFO & LMO \\
\midrule
\multirow{2}{*}{\textbf{Theorem~\ref{thm:2}}}  & \multirow{2}{*}{Average Smooth} & \multirow{2}{*}{$K$} & $1$ &$\mathcal{O}\left(\epsilon^{-1.5}\right)$ & $\mathcal{O}\left(\epsilon^{-2.5}\right)$\\
   & &  & $\Omega\left(\epsilon^{-0.5}\right)$ &$\mathcal{O}\left(\epsilon^{-1.5}\right)$ & $\mathcal{O}\left(\epsilon^{-2}\right)$\\
   \midrule
NCGS  &\multirow{2}{*}{Smooth} &  \multirow{2}{*}{$1$} &$\Omega\left(\epsilon^{-1}\right)$ &$\mathcal{O}\left(\epsilon^{-2}\right)$ & $\mathcal{O}\left(\epsilon^{-2}\right)$\\
SGD+ICG   & &  & $\Omega\left(\epsilon^{-1}\right)$ &$\mathcal{O}\left(\epsilon^{-2}\right)$ & $\mathcal{O}\left(\epsilon^{-2}\right)$\\
   \midrule
LiNASA+ICG   & \multirow{2}{*}{Smooth} &  \multirow{2}{*}{$K$} & 1 &$\mathcal{O}\left(\epsilon^{-2}\right)$ & $\mathcal{O}\left(\epsilon^{-3}\right)$ \\
\textbf{Theorem~\ref{thm:2-}~(new)}  & & & $\Omega\left(\epsilon^{-1}\right)$ &$\mathcal{O}\left(\epsilon^{-2}\right)$ & $\mathcal{O}\left(\epsilon^{-2}\right)$\\\midrule
\multirow{2}{*}{\textbf{Theorem~\ref{thm:fsnc2}~(new)}}  & \multirow{1}{*}{Finite-sum} & \multirow{2}{*}{$K$} & $1$ &$\mathcal{O}\left(\sqrt{m}\epsilon^{-1}\right)$ & $\mathcal{O}\left(\sqrt{m}\epsilon^{-2}\right)$\\
   & \multirow{1}{*}{+Smooth}&  & $\Omega\left(\sqrt{m}\right)$ &$\mathcal{O}\left(\sqrt{m}\epsilon^{-1}\right)$ & $\mathcal{O}\left(\epsilon^{-2}\right)$\\
\bottomrule
\end{tabular}}
\end{center}
\end{table*}
%##################%##################%##################%##################
\begin{table*}[t]
\caption{Summary of results for projection-free algorithms under optimal gap. Here, CVX represents convex functions and SC stands for $\lambda$-strongly convex functions. We compare our methods with 1-SFW~\citep{Zhang2019OneSS}, SPIDER-FW~\citep{pmlr-v97-yurtsever19b}, and SCGS~\citep{doi:10.1137/140992382}.}
\vspace{-0.2in}
\label{t2}
\begin{center}
\resizebox{0.9\columnwidth}{!}{%
\begin{tabular}{lccccr}
\toprule
Method &  Assumptions & Level & Batch size & SFO & LMO \\
\midrule
1-SFW &\multirow{2}{*}{CVX} & \multirow{2}{*}{$1$} & $1$ &$\mathcal{O}\left(\epsilon^{-2}\right)$ & $\mathcal{O}\left(\epsilon^{-2}\right)$ \\
SPIDER-FW & &  & $\Omega\left(\epsilon^{-1}\right)$ &$\mathcal{O}\left(\epsilon^{-2}\right)$ & $\mathcal{O}\left(\epsilon^{-1}\right)$ \\
\midrule
\multirow{2}{*}{\textbf{Theorem~\ref{thm:3}}} &\multirow{2}{*}{CVX} &   \multirow{2}{*}{$K$} & $1$ &$\mathcal{O}\left(\epsilon^{-2}\right)$ & $\mathcal{O}\left(\epsilon^{-2}\right)$ \\
 & &    & $\Omega\left(\epsilon^{-1}\right)$ &$\mathcal{O}\left(\epsilon^{-2}\right)$ & $\mathcal{O}\left(\epsilon^{-1}\right)$ \\
 \midrule
\multirow{2}{*}{\textbf{Theorem~\ref{thm:3+}~(new)}} &\multirow{1}{*}{Finite-sum} &   \multirow{2}{*}{$K$} & $1$ &$\mathcal{O}\left(\sqrt{m}\epsilon^{-1}\right)$ & $\mathcal{O}\left(\sqrt{m}\epsilon^{-1}\right)$ \\
 & \multirow{1}{*}{+CVX} &    & $\Omega\left(\sqrt{m}\right)$ &$\mathcal{O}\left(\sqrt{m}\epsilon^{-1}\right)$ & $\mathcal{O}\left(\epsilon^{-1}\right)$ \\
\midrule
\midrule
SCGS
& \multirow{1}{*}{SC} &   $1$ & $\Omega\left(\epsilon^{-1}\right)$ &$\mathcal{O}\left(\lambda^{-1}\epsilon^{-1}\right)$ & $\mathcal{O}\left(\epsilon^{-1}\right)$ \\
\midrule
\multirow{2}{*}{\textbf{Theorem~\ref{thm:4}}} & \multirow{2}{*}{SC} &   \multirow{2}{*}{$K$} & $1$ &$\mathcal{O}\left(\lambda^{-1}\epsilon^{-1}\right)$ & $\mathcal{O}\left(\epsilon^{-2}\right)$ \\
& &  & $\Omega\left(\epsilon^{-1}\right)$ &$\mathcal{O}\left(\lambda^{-1}\epsilon^{-1}\right)$ & $\mathcal{O}\left(\epsilon^{-1}\right)$ \\
\midrule
\multirow{2}{*}{\textbf{Theorem~\ref{thm:4+}~(new)}} & \multirow{1}{*}{Finite-sum} &   \multirow{2}{*}{$K$} & $1$ &$\widetilde{\mathcal{O}}\left(\sqrt{m}\lambda^{-1}\right)$ & $\widetilde{\mathcal{O}}\left(\sqrt{m}\epsilon^{-1}\right)$ \\
&\multirow{1}{*}{+SC} &  & $\Omega\left(\sqrt{m}/\lambda\right)$ &$\widetilde{\mathcal{O}}\left(\sqrt{m}\lambda^{-1}\right)$ & $\widetilde{\mathcal{O}}\left(\lambda\epsilon^{-1}\right)$ \\
\bottomrule
\end{tabular}}
\end{center}
\end{table*}
\section{Related Work}
This section reviews prior work on stochastic multi-level compositional optimization and stochastic projection-free methods.
\subsection{Stochastic Multi-Level Compositional Optimization}
Stochastic compositional optimization has been studied extensively, with most work focusing on two-level settings~\citep{wang2017stochastic,DBLP:journals/jmlr/WangLF17,Zhang2019ASC,Ghadimi2020AST,chen2021solving,qi2021online,jiang2022multiblocksingleprobe,ICML:2023:Jiang,yu2024efficient,11202712}. 
Multi-level compositional optimization was first investigated by \citet{Yang2019MultilevelSG}. Building on multi-timescale stochastic approximation~\cite{wang2017stochastic}, they proposed a multi-level stochastic gradient method with sample complexity $\mathcal{O}\left(1 / \epsilon^{(7+K)/2}\right)$ for $K$-level problems. This rate improves to $\mathcal{O}\left(1 / \epsilon^{(3+K)/4}\right)$, when the function is strongly convex.
Motivated by the NASA estimator~\citep{Ghadimi2020AST}, \citet{balasubramanian2020stochastic} used a linearized averaging estimator for tracking inner function values, obtaining $\mathcal{O}\left(1 / \epsilon^{4}\right)$ sample complexity for non-convex objectives.
This rate was also obtained in a concurrent work~\citep{chen2021solving} by employing variance reduction to evaluate the function value.

Later, \citet{Zhang2021MultiLevelCS} applied nested variance reduction to approximate gradients and improved the sample complexity to the optimal $\mathcal{O}(\epsilon^{-3})$. 
However, their method requires a large and increasing batch size of $\mathcal{O}(\epsilon^{-1})$.
To address this issue, \citet{jiang2022optimal} proposed SMVR, which attains the same optimal rate with a constant batch size, and further achieves $\mathcal{O}(\epsilon^{-2})$ for convex objectives and $\mathcal{O}\left((\lambda\epsilon)^{-1}\right)$ for $\lambda$-strongly convex objectives.
More recently, \citet{gao2023stochastic} studied decentralized stochastic multi-level optimization and established level-independent convergence in decentralized settings.
Despite these advances, the above methods apply only to unconstrained problems.

\subsection{Stochastic Projection-Free Algorithms}
The most classical projection-free method,  Frank-Wolfe algorithm~\cite{Frank1956AnAF}, was initially developed for smooth convex optimization  with polyhedral domains and later extended to general compact convex sets by~\citet{pmlr-v28-jaggi13}. 
In the stochastic setting, \citet{hazan2012} proposed a projection-free method for online smooth convex optimization, and \citet{pmlr-v48-hazana16} incorporated variance reduction into stochastic Frank-Wolfe.
Inspired by accelerated gradient methods~\citep{Nesterov1983AMF}, \citet{doi:10.1137/140992382} introduced stochastic conditional gradient sliding (SCGS), achieving SFO complexity of $\mathcal{O}(\lambda^{-1}\epsilon^{-1})$ and LMO complexity of $\mathcal{O}(\epsilon^{-1})$ for smooth $\lambda$-strongly convex optimization.
Besides that, projection-free methods have also been widely studied in online convex optimization~\cite{Hazan20,ICML:2020:Wan,JMLR:2022:Wan,Zak_SC22,Garber23}.

For non-convex objectives, \citet{Reddi2016StochasticFM} proposed SVFW, which achieves SFO complexity $\mathcal{O}(\epsilon^{-10/3})$ and LMO complexity $\mathcal{O}(\epsilon^{-2})$ under the Frank-Wolfe gap.
Using the SPIDER variance-reduction technique~\citep{Fang2018SPIDERNN}, \citet{pmlr-v97-yurtsever19b} developed SPIDER-FW and improved the SFO complexity to $\mathcal{O}(\epsilon^{-3})$, at the cost of a large batch size $\mathcal{O}(\epsilon^{-1})$.
To avoid large batches, \citet{Zhang2019OneSS} proposed the one-sample stochastic Frank-Wolfe algorithm (1-SFW), which attains the same SFO complexity with LMO complexity of $\mathcal{O}(\epsilon^{-3})$.
Beyond the Frank-Wolfe gap, \citet{pmlr-v80-qu18a} and \citet{Balasubramanian2018ZerothOrderNS} analyzed projection-free methods under the gradient mapping criterion and obtained $\mathcal{O}(\epsilon^{-2})$ complexity for both SFO and LMO, using a batch size $\mathcal{O}(\epsilon^{-1})$ per iteration.

In the multi-level setting, \citet{NEURIPS2022_7e16384b} proposed a projection-free conditional gradient-type algorithm that combines linearized NASA with inexact conditional gradient~\citep{Balasubramanian2018ZerothOrderNS}.
It achieves SFO complexity $\mathcal{O}(\epsilon^{-2})$ and LMO complexity $\mathcal{O}(\epsilon^{-3})$ under the gradient mapping criterion.
However, its SFO rate does not match the $\mathcal{O}(\epsilon^{-1.5})$ complexity achieved by variance-reduced methods for unconstrained multi-level optimization. Moreover, the analysis is limited to non-convex objectives for the gradient mapping criterion, which motivates the methods developed in this paper.

\section{Projection-free Variance Reduction for Non-convex Functions}
This section presents projection-free variance-reduced algorithms for stochastic multi-level compositional optimization in the non-convex setting. We first state the assumptions and then provide guarantees under two criteria: the Frank-Wolfe gap and the gradient mapping.
\subsection{Assumptions}
We begin with standard assumptions commonly used in  compositional optimization, variance reduction and projection-free analysis~\citep{Zhang2019OneSS,8472787,NEURIPS2020_7a53928f,pmlr-v119-huang20j,Zhang2021MultiLevelCS,jiang2022optimal}.
\begin{ass} \label{asm:0} 
We suppose $F\left(\x_{1}\right)-\min_{\x \in \X} F(\x) \leq \Delta_{F}$ for the initial point $\x_{1}$. The feasible set $\X$ is closed and convex with bounded diameter, i.e., $\max _{\x, \y \in \mathcal{X}}\Norm{\x-\y} \leq D$.
\end{ass}

\begin{ass} \label{asm:1} 
All functions $f_1, \dotsc, f_K$ are $L_f$-Lipschitz continuous, and their Jacobians $\nabla f_1, \dotsc, \nabla f_K$ are $L_J$-Lipschitz continuous on the bounded domain $\mathcal{X}$.
\end{ass}
\textbf{Remark:} Under Assumption~\ref{asm:1}, the objective $F$ is $L_F$-smooth, where
$L_F \coloneqq L_f^{2K-1}L_J\sum_{i=1}^K\frac{1}{L_f^i}$ \citep{jiang2022optimal}.

\begin{ass}\label{asm:stochastic2} For $1\leq i\leq K$, we assume that:
\begin{gather*}
       \E_{\xi_t^i}\left[f_i(\x;\xi_t^i)\right] = f_i(\x), \quad
	  \E_{\xi_t^i}\left[\nabla f_i(\x;\xi_t^i)\right] = \nabla f_i(\x),\\
 \E_{\xi_t^i}\left[\Norm{f_i(\x;\xi_t^i) - f_i(\x)}^2 \right]\leq \sigma^2, \quad
	\E_{\xi_t^i}\left[\Norm{\nabla f_i(\x;\xi_t^i) - \nabla f_i(\x) }^2 \right] \leq  \sigma_J^2,
\end{gather*}
where $\{\xi_t^i\}_{i=1}^K$ are mutually independent.
\end{ass}

\begin{ass}\label{asm:stoc_smooth3} (Average smoothness and Lipschitz)
For $1\leq i\leq K$, we assume that:
	\begin{align*}
	\E_{\xi_t^i}\left[\Norm{f_i(\x;\xi_t^i)-  f_i(\y;\xi_t^i)}^2\right] &\leq \LL_f^2 \Norm{\x - \y}^2,\\
		\E_{\xi_t^i}\left[\Norm{\nabla f_i(\x;\xi_t^i)- \nabla f_i(\y;\xi_t^i)}^2\right] &\leq  \LL_J^2 \Norm{\x - \y}^2.
	\end{align*}
\end{ass}

\subsection{Results for Frank-Wolfe Gap}
We first analyze sample complexity under the Frank-Wolfe gap criterion.
The primary procedure of our algorithm involves estimating the gradient of the objective function and then employing the Frank-Wolfe method to replace the projection operation. 
Note that the gradient of the multi-level function exhibits a nested structure; thus, the estimation error accumulates as the level becomes deeper. 
To control this accumulation, we adopt the STORM variance-reduction technique~\citep{cutkosky2019momentum} to track both inner function values and the overall gradient.
At iteration $t$, for each level $i$, we draw a mini-batch $\{\xi_t^{i,1},\ldots,\xi_t^{i,B_1}\}$ of size $B_1$ and maintain a variance-reduced estimator $\u_t^i$ to track the inner function value $f_i(\cdot)$ via
\begin{equation}\label{VR:u}
    \begin{aligned}
    \u_t^i = (1-\alpha)\u_{t-1}^i +   \frac{1}{B_1} \sum_{j=1}^{B_1} f_i(\u_t^{i-1};\xi_t^{i,j})
    -  (1-\alpha)\frac{1}{B_1} \sum_{j=1}^{B_1}f_i(\u_{t-1}^{i-1};\xi_t^{i,j}) .
\end{aligned}
\end{equation}
This evaluation ensures that the estimation error is reduced over time. We then build a similar variance-reduced estimator $\v_t$ for evaluating $\nabla F(\x_t)$ as:
\begin{equation}\label{VR:v}
\begin{aligned}
\v_t = (1-\alpha)\v_{t-1} +\frac{1}{B_1} \sum_{j=1}^{B_1} \left[ \prod_{i=1}^K \nabla f_i(\u_t^{i-1};\xi_t^{i,j}) \right] 
- (1-\alpha)\frac{1}{B_1} \sum_{j=1}^{B_1} \left[\prod_{i=1}^K \nabla f_i(\u_{t-1}^{i-1};\xi_t^{i,j})\right] .
\end{aligned}
\end{equation}
After obtaining the gradient estimation, we follow the framework of the Frank-Wolfe algorithm \citep{pmlr-v28-jaggi13}, but use the estimator $\v_t$ to replace the gradient required in the original algorithm as follows:
\begin{gather*}
    \z_{t} = \arg \min_{\x \in \X} \langle \x,\v_{t} \rangle, \quad \x_{t+1} = \x_t + \eta (\z_t - \x_t).
\end{gather*}
This yields our Projection-free Multi-level Variance Reduction (PMVR) method, summarized in Algorithm~\ref{alg:1}~(PMVR).
For initialization (when iteration $t=1$), we use mini-batch averages: $\u_{1}^i = \frac{1}{B_0} \sum_{j=1}^{B_0}  f_i(\u_{1}^{i-1};\xi^{i,j}_1)$ and $\v_{1} = \frac{1}{B_0} \sum_{j=1}^{B_0} \left[ \prod_{i=1}^K  \nabla f_i(\u_{1}^{i-1};\xi_1^{i,j}) \right]$, where $B_0$ is the initial batch size.
\begin{algorithm}[!tb]
	\caption{PMVR/PMM Algorithm}
	\label{alg:1}
	\begin{algorithmic}[1]
	\STATE {\bfseries Input:} parameters $T$, $\eta$, $\alpha$, initial points $\left(\x_1,\u_1,\v_1\right)$ 
		\FOR{time step $t = 1$ {\bfseries to} $T$}
		\STATE Set $\u_t^0 = \x_t$
		\FOR{level $i = 1$ {\bfseries to} $K$}
		\STATE (PMVR) Compute the function estimator $\u_t^i$ according to equation~(\ref{VR:u})
        \STATE (PMM) Compute the function estimator $\u_t^i$ according to equation~(\ref{ss1})
        \ENDFOR
        \STATE (PMVR) Compute the gradient estimator $\v_t$ according to equation~(\ref{VR:v}) 
        \STATE (PMM)  Compute the gradient estimator $\v_t$ according to equation~(\ref{ss2}) 
        \STATE (v1) Compute $\z_{t} = \arg \min_{\x \in \X} \langle \x,\v_{t} \rangle$
        \STATE (v2) Compute $\z_t = \operatorname{IFW}(\x_t,\beta)$
		\STATE Update the weight: $\x_{t+1} = \x_t + \eta (\z_t - \x_t) $
		\ENDFOR
	\STATE Choose $\tau$ uniformly at random from $\{1, \ldots, T\}$
	\STATE {\bfseries Return} $(\x_{\tau},\u_{\tau},\v_{\tau})$
	\end{algorithmic}
\end{algorithm}
We now present the complexity of Algorithm~\ref{alg:1}~(PMVR) in terms of the Frank-Wolfe gap $\F(\cdot)$ in equation~(\ref{FW}). 
Noting that using larger batches reduces the number of iterations and thus can improve the LMO complexity. Accordingly, we provide both constant-batch and large-batch guarantees.
\begin{theorem}\label{thm:main}
Under Assumptions~\ref{asm:0},\ref{asm:1},\ref{asm:stochastic2},\ref{asm:stoc_smooth3}, PMVR-v1 enjoys the following guarantees:
\begin{compactenum}[(i)]
\item By setting $B_1 = \Omega(1)$, $\eta = \Theta(\epsilon^2)$, and $\alpha = \Theta(\epsilon^2)$, we can ensure that  $\E\left[\F(\x_\tau)\right] \leq \epsilon$ within $\mathcal{O}(\epsilon^{-3})$ iterations.
\item By setting $B_1 = \Omega(\epsilon^{-1})$, $\eta = \Theta(\epsilon)$, and $\alpha = \Theta(\epsilon)$, we can ensure that $\E\left[\F(\x_\tau)\right] \leq \epsilon$ within $\mathcal{O}(\epsilon^{-2})$ iterations.
\end{compactenum}
\end{theorem}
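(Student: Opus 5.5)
We follow the standard Frank-Wolfe descent analysis with an inexact gradient, combined with a STORM-type recursion for the estimation errors of $\u_t^i$ and $\v_t$.

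\textbf{Descent step.} Since $F$ is $L_F$-smooth (Remark after Assumption~\ref{asm:1}) and $\x_{t+1}-\x_t=\eta(\z_t-\x_t)$ with $\Norm{\z_t-\x_t}\le D$,
\begin{align*}
F(\x_{t+1})\le F(\x_t)+\eta\langle \nabla F(\x_t),\z_t-\x_t\rangle+\tfrac{L_F\eta^2D^2}{2}.
\end{align*}
Let $\hat\z_t=\arg\min_{\x\in\X}\langle \x,\nabla F(\x_t)\rangle$. Write $\nabla F=\v_t+(\nabla F-\v_t)$ and use optimality of $\z_t$ for $\v_t$, so that $\langle \v_t,\z_t\rangle\le\langle\v_t,\hat\z_t\rangle$. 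This gives
\begin{align*}
\langle \nabla F(\x_t),\z_t-\x_t\rangle\le -\F(\x_t)+2D\Norm{\nabla F(\x_t)-\v_t}.
\end{align*}
Summing over $t$ and dividing by $\eta T$ yields
\begin{align*}
\frac1T\sum_t\E[\F(\x_t)]\le \frac{\Delta_F}{\eta T}+\frac{L_FD^2\eta}{2}+\frac{2D}{T}\sum_t\E\Norm{\nabla F(\x_t)-\v_t}.
\end{align*}
It therefore remains to bound the average gradient error, for which Jensen's inequality lets us work with $\sqrt{\E\Norm{\cdot}^2}$.

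\textbf{Error recursion.} Define $e_t^i=\E\Norm{\u_t^i-f_i(\u_t^{i-1})}^2$ and $e_t^v=\E\Norm{\v_t-\prod_i\nabla f_i(\u_t^{i-1})}^2$. Under Assumptions~\ref{asm:stochastic2} and \ref{asm:stoc_smooth3}, the STORM recursion gives
\begin{align*}
e_t^i\le(1-\alpha)e_{t-1}^i+\frac{2\alpha^2\sigma^2}{B_1}+\frac{2L_f^2}{B_1}\E\Norm{\u_t^{i-1}-\u_{t-1}^{i-1}}^2,
\end{align*}
and the analogous bound holds for $e_t^v$, with the product of average-smooth Jacobians contributing a constant depending on $L_f,L_J,K$.

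The hard part is the movement term $\Norm{\u_t^{i-1}-\u_{t-1}^{i-1}}^2$, which must be controlled by $\eta^2D^2$ plus lower-level errors. Write
\begin{align*}
\u_t^{i-1}-\u_{t-1}^{i-1}=\bigl[\u_t^{i-1}-f_{i-1}(\u_t^{i-2})\bigr]-\bigl[\u_{t-1}^{i-1}-f_{i-1}(\u_{t-1}^{i-2})\bigr]+\bigl[f_{i-1}(\u_t^{i-2})-f_{i-1}(\u_{t-1}^{i-2})\bigr].
\end{align*}
This produces terms $e_t^{i-1},e_{t-1}^{i-1}$ plus $L_f^2\Norm{\u_t^{i-2}-\u_{t-1}^{i-2}}^2$, and unrolling reaches $\Norm{\x_t-\x_{t-1}}^2\le\eta^2D^2$. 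The resulting error terms carry a coefficient $\mathcal{O}(1/B_1)$ rather than $\alpha$, so they are not automatically dominated. To handle this, I would first try directly unrolling the u-estimator instead: in the STORM update the fresh-sample difference has conditional mean $f_i(\u_t^{i-1})-f_i(\u_{t-1}^{i-1})$, so $\E\Norm{\u_t^i-\u_{t-1}^i}^2$ can be bounded by $\mathcal{O}(\alpha^2)\cdot(\text{error}+\sigma^2)+\mathcal{O}(\eta^2)$ through a potential argument. Concretely, define the Lyapunov function $\Phi_t=\sum_i c_i e_t^i+e_t^v$ with geometrically chosen weights $c_i$ so that cross terms telescope. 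Summing then gives
\begin{align*}
\alpha\sum_t\Phi_t\le \Phi_1+T\Bigl(\frac{\alpha^2\sigma^2}{B_1}+\frac{C\eta^2D^2}{B_1}\Bigr),
\end{align*}
where $C$ depends on $K,L_f,L_J$. Finally, the gradient error $\Norm{\nabla F(\x_t)-\v_t}$ is bounded by $\sqrt{e_t^v}$ plus a Lipschitz-chain term of order $\sum_i\sqrt{e_t^i}$, since $\nabla F(\x_t)$ evaluates the Jacobians at the true inner values $f_{i-1}\circ\cdots(\x_t)$ rather than at $\u_t^{i-1}$.

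\textbf{Parameter choice.} The averaged error is then $\mathcal{O}\bigl(\sqrt{\Phi_1/(\alpha T)}+\sqrt{\alpha/B_1}+\eta/\sqrt{\alpha B_1}\bigr)$, with $\Phi_1=\mathcal{O}(1/B_0)$.

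For case (i), take $B_1=\mathcal{O}(1)$ and $\eta=\alpha=\epsilon^2$. Then $\sqrt{\alpha}=\epsilon$ and $\eta/\sqrt{\alpha}=\epsilon$. Setting $T=\epsilon^{-3}$ gives $\Delta_F/(\eta T)=\epsilon$ and $L_FD^2\eta\le\epsilon$. The initial term requires $B_0\gtrsim \epsilon^{-1}$ (or $\Phi_1=\mathcal{O}(1)$ with $\alpha T=\epsilon^{-1}$, which is enough when combined with a squared-sum argument).

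For case (ii), take $B_1=\epsilon^{-1}$ and $\eta=\alpha=\epsilon$. Then $\sqrt{\alpha/B_1}=\epsilon$ and $\eta/\sqrt{\alpha B_1}=\epsilon$, and $T=\epsilon^{-2}$ gives $\Delta_F/(\eta T)=\epsilon$.

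In both cases, uniform sampling of $\tau$ converts the averaged bound into $\E[\F(\x_\tau)]\le\mathcal{O}(\epsilon)$.
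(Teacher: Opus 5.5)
Your proposal is correct and follows essentially the same route as the paper. The shared steps are: the inexact Frank--Wolfe descent inequality; STORM recursions for $\u_t^i$ and $\v_t$; a movement bound obtained by unrolling the update itself, so that lower-level errors enter with an $\alpha^2$ coefficient and the base case is $\eta^2 D^2$; absorption of that feedback under a condition $\alpha\lesssim B_1$ (the paper's $\alpha\le B_1L_F^2/(2L_1)$, which you leave implicit); and the same parameter choices with $B_0\gtrsim\epsilon^{-1}$. One caveat: your parenthetical alternative with $\Phi_1=\mathcal{O}(1)$ does not follow from the averaged squared bound, which only gives $\sqrt{\Phi_1/(\alpha T)}=\sqrt{\epsilon}$; it would require tracking the per-iteration geometric decay of the initial error before taking square roots. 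Your main line with $B_0\gtrsim\epsilon^{-1}$ is exactly what the paper uses.
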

\textbf{Remark:} 
(i) With constant batch size, the theorem implies $\mathcal{O}(\epsilon^{-3})$ complexity for both SFO and LMO calls, matching the rates for projection-free single-level problems~\citep{Zhang2019OneSS}.
(ii) With a large batch size $B_1=\Omega(\epsilon^{-1})$, we obtain SFO complexity of $\mathcal{O}(\epsilon^{-3})$ and LMO complexity of $\mathcal{O}(\epsilon^{-2})$, consistent with the existing single-level projection-free results~\citep{pmlr-v97-yurtsever19b}.

\subsection{Parameter-free Variants}\label{pf}
Although we developed projection-free algorithms with provable guarantees, they require problem-dependent parameters to set up hyperparameters. 
In this subsection, we present a fully parameter-free algorithm that achieves similar rates without knowing problem constants.
Specifically, we design a multi-stage scheme with stages $s = \{1, 2, \ldots, S\}$. At the beginning of each stage, we reset the hyperparameters~$\{T_s, \eta_s, \alpha_s\}$, and initialize each stage with the output of the previous one. In each stage $s$, the algorithm is executed for $T_s = 2^{s-1}$ iterations, effectively doubling the iteration numbers after each stage. 
 Equivalently, at iteration $t$, the current stage index can be identified as $s(t)=1+\lfloor \log_2 t \rfloor$, with $T_{s(t)}=2^{\lfloor \log_2 t \rfloor}$. The complete algorithm is summarized in Algorithm~\ref{alg:3}. Next, we present the theoretical guarantees for this stage-wise algorithm.
\begin{algorithm}[!tb]
	\caption{Stage-wise PMVR}
	\label{alg:3}
	\begin{algorithmic}[1]
	\STATE {\bfseries Input:} initial points $\left(\x_0,\u_0,\v_0\right)$
		\FOR{stage $s = 1$ {\bfseries to} $S$}
		\STATE $(\x_{s},\u_{s},\v_{s})$ = PMVR~(with parameters $T_{s}$, $\eta_s$, $\alpha_s$ and initialization $\left(\x_{s-1},\u_{s-1},\v_{s-1}\right)$)
		\ENDFOR
	\STATE Return $\x_{S}$
	\end{algorithmic}
\end{algorithm}
\begin{theorem}\label{thm:main+}
Under Assumptions~\ref{asm:0},\ref{asm:1},\ref{asm:stochastic2},\ref{asm:stoc_smooth3}, Stage-wise PMVR-v1 guarantees the following:
\begin{compactenum}[(i)]
    \item  By setting $T_s=2^{s-1}$, $B_1^s = 1$, and $\eta_s = \alpha_s = T_s^{-2/3}$, we can ensure that  $\E\left[\F(\x_\tau)\right] \leq \epsilon$ within $\mathcal{O}(\epsilon^{-3})$ iterations.
    \item By setting $T_s=2^{s-1}$, $B_1^s = \sqrt{T_s}$, and $\eta_s = \alpha_s = T_s^{-1/2}$, we can ensure $\E\left[\F(\x_\tau)\right] \leq \epsilon$ within $\mathcal{O}(\epsilon^{-2})$ iterations.
\end{compactenum}
\end{theorem}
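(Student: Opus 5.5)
The plan is to reuse the per-iteration analysis behind Theorem~\ref{thm:main}, but with stage-dependent parameters, and to sum the resulting guarantees over stages with weights proportional to the stage lengths. The sampled index $\tau$ is uniform over all $T=\sum_{s=1}^S T_s = 2^S-1$ iterations.

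\textbf{Step 1: per-iteration inequalities with time-varying parameters.} Since $F$ is $L_F$-smooth and $\x_{t+1}-\x_t=\eta_t(\z_t-\x_t)$, the optimality of $\z_t$ for $\v_t$ gives the descent inequality
\begin{align*}
F(\x_{t+1}) \le F(\x_t) - \eta_t \F(\x_t) + \eta_t D\Norm{\v_t-\nabla F(\x_t)} + \tfrac{L_F}{2}\eta_t^2D^2 .
\end{align*}
To handle the estimators, I will rederive the STORM recursions for $\u_t^i$ and $\v_t$ with $\alpha_t,\eta_t,B_1^{s(t)}$ constant inside each stage. Under Assumptions~\ref{asm:1} and \ref{asm:stoc_smooth3}, the error $\delta_t^i=\E\Norm{\u_t^i-f_i(\u_t^{i-1})}^2$ satisfies
\begin{align*}
\delta_t^i \le (1-\alpha_t)\delta_{t-1}^i + \frac{2\alpha_t^2\sigma^2}{B_1} + \frac{C}{B_1}\E\Norm{\u_t^{i-1}-\u_{t-1}^{i-1}}^2 .
\end{align*}
The last term is controlled recursively down to $\E\Norm{\x_t-\x_{t-1}}^2\le \eta_t^2D^2$ plus lower-level errors. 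An analogous recursion holds for $\v_t$, whose bias is controlled by $\sum_i\delta_t^i$ through the Lipschitz Jacobians.

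\textbf{Step 2: a Lyapunov function across stage boundaries.} I will take
\begin{align*}
\Phi_t = F(\x_t) + c_t\Big(\E\Norm{\v_t-\nabla F(\x_t)}^2 + \textstyle\sum_i \delta_t^i\Big)
\end{align*}
and use $\E\Norm{\v_t-\nabla F(\x_t)}\le\sqrt{\E\Norm{\cdot}^2}$. Within a stage I will use Young's inequality in the form $\eta D e \le \eta^2/\alpha + \alpha D^2 e^2$ to absorb the error. Because the stage is warm-started, the estimators at its start are exactly the previous stage's final estimators; no fresh large batch is drawn.

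The \textbf{main obstacle} is that the steady-state error level changes between stages. Stage $s$ ends with estimator error about $\alpha_s/B_1^s + \eta_s^2/(\alpha_s B_1^s)$, and this becomes the initial error of stage $s+1$. The parameters shrink only by the constant factors $2^{-2/3}$ or $2^{-1/2}$ per stage. Two consequences follow:
\begin{compactenum}[(a)]
\item the inherited error exceeds the new stationary level by only a constant factor, and it decays geometrically at rate $1-\alpha_{s+1}$ over $T_{s+1}$ steps, where $\alpha_{s+1}T_{s+1}\ge1$;
\item the weights $c_t$ can be chosen nonincreasing, so $\Phi$ does not jump up at stage boundaries.
\end{compactenum}
I will check both facts explicitly. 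Summing the error recursion then gives $\sum_{t\in\text{stage } s}\E\Norm{\v_t-\nabla F(\x_t)}\lesssim T_s\big(\sqrt{\alpha_s/B_1^s}+\eta_s/\sqrt{\alpha_s B_1^s}\big)$ plus a geometric transient that is bounded by a constant.

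\textbf{Step 3: summation and rates.} Telescoping the descent inequality over all stages, and using $F\ge\min F$ together with the bounded diameter, gives
\begin{align*}
\sum_{s}\eta_s\sum_{t\in s}\E\F(\x_t) \le \Delta_F + \sum_s T_s\Big(\eta_s D\sqrt{\alpha_s/B_1^s} + L_F\eta_s^2D^2\Big) + O(1).
\end{align*}
The left side is weighted by $\eta_s$, so I will bound $\E\F(\x_\tau)$ using $\eta_s\ge\eta_S$.
\begin{compactenum}[(i)]
\item With $\eta_s=\alpha_s=T_s^{-2/3}$ and $B_1=1$, the right side is $O\big(\sum_s T_s^{1/3}\big)=O(T^{1/3})$. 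Dividing by $\eta_S T$ gives $\E\F(\x_\tau)=O(T^{-1/3})$, i.e.\ $T=O(\epsilon^{-3})$.
\item With $\eta_s=\alpha_s=T_s^{-1/2}$ and $B_1^s=\sqrt{T_s}$, we get $\eta_s\sqrt{\alpha_s/B_1^s}=T_s^{-1}$ and $\eta_s^2=T_s^{-1}$. The right side is then $O(S)=O(\log T)$, which gives $\E\F(\x_\tau)=\widetilde O(T^{-1/2})$.
\end{compactenum}
In case (ii) I will remove the $\log T$ factor by sampling $\tau$ only from the last stage, or by a sharper accounting in which the $\Delta_F$ term dominates. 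Either route yields $O(\epsilon^{-2})$ iterations.
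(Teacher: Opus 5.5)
\textbf{The gap is in Step 3.} The global $\eta$-weighted telescoping cannot deliver the claimed rates, and the computation for (i) is inconsistent.

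In (i), with Jensen ($\E\Norm{\v_t-\nabla F(\x_t)}\le\sqrt{\E\Norm{\cdot}^2}$) you get $T_s\eta_s D\sqrt{\alpha_s/B_1^s}=D$, $T_s\eta_s D\,\eta_s/\sqrt{\alpha_s B_1^s}=D$ and $T_s L_F\eta_s^2D^2=L_FD^2T_s^{-1/3}$. So every stage adds $\Theta(1)$, and your right side is $\Delta_F+O(S)=O(\log T)$, not $O(T^{1/3})$. The $O(T^{1/3})$ is what the Young split $\eta De\le\eta^2/\alpha+\alpha D^2e^2$ produces, since $T_s\eta_s^2/\alpha_s=T_s^{1/3}$. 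Dividing that by $\eta_S T=\Theta(T^{1/3})$ gives only $O(1)$, not $O(T^{-1/3})$.

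Either way, (i) yields at best $O(\log T/T^{1/3})$, which is the same logarithmic loss you found in (ii). The loss is structural: each stage contributes a constant to the telescoped sum, while the step $\eta_s\ge\eta_S$ discards the larger weights of the early stages. So the $\Delta_F$ term can never dominate. Restricting $\tau$ to the last stage does not fix this by itself either. Bounding $\E[F(\x^S_{\mathrm{start}})]-\min_{\X}F$ by telescoping through the earlier stages again costs $O(S)$, because the descent inequality lets $F$ increase by the error terms.

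\textbf{The missing idea is to account per stage, using the bounded range of $F$ on $\X$.} Each $f_i$ is $L_f$-Lipschitz, so $F$ is $L_f^K$-Lipschitz, and $F(\x)-F(\y)\le L_f^KD$ for all $\x,\y\in\X$. Running the argument of Theorem~\ref{thm:main} inside stage $s$ alone bounds the stage-average Frank--Wolfe gap by $\frac{L_f^KD}{\eta_sT_s}+D\sqrt{\mathrm{err}_s}+\frac{L_F}{2}\eta_sD^2$. This is $O(T_s^{-1/3})$ in (i) and $O(T_s^{-1/2})$ in (ii).

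The paper applies this bound to the last complete stage, whose length is $2^{S-1}\ge T/4$. Alternatively, averaging the per-stage bounds with weights $T_s/T$ gives $O(T^{-1/3})$, resp.\ $O(T^{-1/2})$, without a log for $\tau$ uniform over all iterations, because $\sum_sT_s^{2/3}=O(T^{2/3})$ and $\sum_sT_s^{1/2}=O(T^{1/2})$.

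\textbf{Your transient argument (a) is worth keeping.} It is a legitimate way to bound $\mathrm{err}_s$ for warm-started estimators. The paper instead charges a per-stage initial batch $B_0^s$ ($T_s^{1/3}$, resp.\ $\sqrt{T_s}$), which is where its $L_2/(\alpha_sB_0^sT_s)$ term comes from. Combine (a) with the per-stage range bound rather than with cross-stage telescoping.

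\textbf{A smaller point about the warm start.} PMVR returns a uniformly random iterate $(\x_\tau,\u_\tau,\v_\tau)$, so stage $s+1$ does not start from the last iterate of stage $s$. $F$ can therefore jump up at stage boundaries, which is another reason the global telescoping is the wrong tool. The per-stage argument is unaffected, since the inherited estimator error is then, in expectation, the stage-average error.
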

\textbf{Remark:} The stage-wise schedule matches the SFO and LMO complexities in Theorem~\ref{thm:main} while avoiding problem-dependent constants to set hyper-parameters.

\subsection{Results for Gradient Mapping}
We now analyze complexity under the gradient mapping criterion $\G(\cdot)$ defined in equation~(\ref{GM}).
To deal with the gradient mapping, our PMVR-v1 algorithm only requires minimal modifications to fit this criterion. Using Proposition~2 of~\citet{NEURIPS2022_7e16384b}, gradient mapping can be decomposed into two components:
\begin{align*}
  \mathcal{G}(\x_t) \leq - 4 \beta \min _{\w \in \X} g(\w, \x_t, \v_t)+2\left\| \nabla F(\x_t)- \v_t \right\|^{2},  
\end{align*}  
where $ g(\w, \x_t, \v_t)= \langle \v_t, \w-\x_t \rangle+\frac{\beta}{2}\|\w-\x_t\|^{2}$. 
Since our PMVR method has already employed variance-reduced techniques to ensure that the gradient estimation error $\E\left[\Norm{\nabla F(\x_t)- \v_t }^{2}\right]$ decreases over time, we can reuse this part and focus on controlling the $-\min _{\w \in \X} g(\w, \x_t, \v_t)$ term. 
To address this constrained quadratic minimization problem, we design a sub-algorithm, modified from the Frank-Wolfe method~\citep{pmlr-v28-jaggi13}, which runs for $N$ loops. At inner iteration $n$, we compute:
\begin{gather*}
    \s
      = \arg \min _{\hat{\s} \in \X}\left\langle \v_t+\beta\left(\w_n-\x_t\right), \hat{\s}\right\rangle, \quad 
      \w_{n+1}=\left(1-\gamma_{t}\right) \w_{n}+\gamma_{t} \s.
\end{gather*}
Note that $\v_t+\beta\left(\w_n-\x_t\right)$ is the gradient of $g(\w_n,\x_t,\v_t)$ with respect to $\w$ at $\w=\w_n$.
Thus, we obtain the PMVR-v2 method by replacing Step~10 of Algorithm~\ref{alg:1} with Algorithm~\ref{alg:2} and can provide the guarantees for gradient mapping in the following theorem.
\begin{algorithm}[!tb]
	\caption{Inner Frank-Wolfe Method~(IFW)}
	\label{alg:2} 
	\begin{algorithmic}[1]
 \STATE {\bfseries Input:} initial point $\x_t$, parameter $\beta$
         \STATE Initialize $\w_1 = \x_t$
        \FOR{inner step $n = 1$ {\bfseries to} $N$}
	\STATE Compute $ \s
      = \arg \min _{\hat{\s} \in \X}\left\langle \v_t+\beta\left(\w_n-\x_t\right), \hat{\s}\right\rangle$
	\STATE Set $\w_{n+1}=\left(1-\gamma_{t}\right) \w_{n}+\gamma_{t} \s$
	\ENDFOR
     \STATE {\bfseries Return} $\z_t = \w_{N+1}$
\end{algorithmic}
\end{algorithm}
\begin{theorem}\label{thm:2} Under Assumptions~\ref{asm:0},\ref{asm:1},\ref{asm:stochastic2},\ref{asm:stoc_smooth3}, our PMVR-v2 enjoys the following guarantees:
\begin{compactenum}[(i)]
    \item By setting $B_1 = \Omega(1)$, $N = \Omega(\epsilon^{-1})$, $\eta = \Theta(\sqrt{\epsilon})$, and $\alpha = \Theta(\epsilon)$, we can ensure $ \E\left[\G (\x_\tau) \right]\leq \epsilon$ in $\mathcal{O}(\epsilon^{-1.5})$ iterations.
    \item  By setting $B_1 = \Omega(\epsilon^{-0.5})$, $N = \Omega(\epsilon^{-1})$, $\eta = \Theta(1)$, and $\alpha = \Theta(\sqrt{\epsilon})$, we can ensure $\E\left[\G (\x_\tau) \right] \leq \epsilon$ in $\mathcal{O}(\epsilon^{-1})$ iterations.
\end{compactenum}
\end{theorem}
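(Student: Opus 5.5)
We combine three ingredients: a one-step descent inequality for the Frank-Wolfe update driven by the inner-loop output, the STORM-type recursion for the estimation errors (the same one used for Theorem~\ref{thm:main}), and the decomposition of $\mathcal{G}(\x_t)$ from Proposition~2 of \citet{NEURIPS2022_7e16384b}.

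\textbf{Error recursions.} Write $\delta_t^i=\Norm{\u_t^i-f_i(\u_t^{i-1})}^2$ and $\delta_t^v=\Norm{\v_t-\nabla F(\x_t)}^2$. Using Assumptions~\ref{asm:stochastic2},\ref{asm:stoc_smooth3} and the estimators (\ref{VR:u}),(\ref{VR:v}), I would reuse the recursion behind Theorem~\ref{thm:main}. It has the form
\begin{align*}
\E[\delta_{t+1}^v+\textstyle\sum_i\delta_{t+1}^i]\le (1-\alpha)\E[\delta_t^v+\textstyle\sum_i\delta_t^i] + C\frac{\alpha^2}{B_1} + C\frac{1}{B_1}\E\Norm{\x_{t+1}-\x_t}^2 .
\end{align*}
Obtaining this uses a level-by-level induction: the movement $\Norm{\u_{t+1}^{i-1}-\u_t^{i-1}}^2$ is bounded by $\Norm{\x_{t+1}-\x_t}^2$ plus lower-level errors, with weights chosen so that the nesting closes. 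The movement term is at most $\eta^2 D^2$. Telescoping gives
\[
\frac1T\sum_t\E[\delta_t^v]\lesssim \frac{\delta_1}{\alpha T}+\frac{\alpha}{B_1}+\frac{\eta^2D^2}{\alpha B_1}.
\]

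\textbf{Inner loop and descent.} Let $g_t^*=\min_{\w\in\X}g(\w,\x_t,\v_t)\le 0$. The function $g(\cdot,\x_t,\v_t)$ is $\beta$-smooth and convex. The standard Frank-Wolfe analysis \citep{pmlr-v28-jaggi13} with $\gamma_n=2/(n+1)$ then gives $g(\z_t,\x_t,\v_t)-g_t^*\le 2\beta D^2/(N+1)$. This step size is the one the analysis needs; the algorithm's $\gamma_t$ notation would be read this way.

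For the descent step, $L_F$-smoothness gives
\[
F(\x_{t+1})\le F(\x_t)+\eta\langle\nabla F(\x_t),\z_t-\x_t\rangle+\tfrac{L_F\eta^2}{2}\Norm{\z_t-\x_t}^2 .
\]
I split $\langle\nabla F,\z_t-\x_t\rangle=\langle\v_t,\z_t-\x_t\rangle+\langle\nabla F-\v_t,\z_t-\x_t\rangle$. For the first piece, $\langle\v_t,\z_t-\x_t\rangle=g(\z_t)-\frac\beta2\Norm{\z_t-\x_t}^2$, which is at most $g_t^*+\frac{2\beta D^2}{N}-\frac\beta2\Norm{\z_t-\x_t}^2$. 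For the second piece, Young's inequality gives the bound $\frac{1}{\beta}\delta_t^v+\frac\beta4\Norm{\z_t-\x_t}^2$. Choosing $L_F\eta\le\beta/2$, the quadratic terms cancel and
\[
-\eta g_t^*\le F(\x_t)-F(\x_{t+1})+\tfrac{\eta}{\beta}\delta_t^v+\tfrac{2\eta\beta D^2}{N}.
\]

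\textbf{Combining.} By the decomposition $\mathcal{G}(\x_t)\le-4\beta g_t^*+2\delta_t^v$, averaging over $t$ yields
\[
\E[\mathcal{G}(\x_\tau)]\lesssim\frac{\beta\Delta_F}{\eta T}+\frac1T\sum_t\E\delta_t^v+\frac{\beta^2D^2}{N}.
\]
The parameters are then checked case by case.
\begin{compactenum}[(i)]
\item With $\eta=\sqrt\epsilon$, $\alpha=\epsilon$, $B_1=\Omega(1)$ and $T=\epsilon^{-1.5}$: the variance term $\eta^2/\alpha$ is $O(1)$, which is too large. So in this regime the movement should not be bounded by $\eta^2D^2$. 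Instead I keep $\eta^2\Norm{\z_t-\x_t}^2$ and absorb it with the $-\frac\beta2\Norm{\z_t-\x_t}^2$ term from the descent step. This is exactly what the quadratic in $g$ buys over Theorem~\ref{thm:main}: it needs a potential $\Phi_t=F(\x_t)+\frac{c\eta}{\alpha}(\delta_t^v+\sum_i\delta_t^i)$ with $c\eta/(\alpha B_1)\cdot\eta^2\lesssim\eta\beta$, i.e.\ $\eta^2\lesssim\alpha$, which holds. The remaining terms are then $\frac{1}{\eta T}=\epsilon$, $\alpha=\epsilon$, $\frac{\delta_1}{\alpha T}\le\epsilon$ (using a batch $B_0$ or $\epsilon^{-0.5}$), and $1/N=\epsilon$.
\item With $\eta=\Theta(1)$, $\alpha=\sqrt\epsilon$, $B_1=\epsilon^{-0.5}$ and $T=\epsilon^{-1}$: the condition becomes $\eta^2/B_1\lesssim\alpha$, which holds. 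The terms $\alpha/B_1=\epsilon$ and $1/(\eta T)=\epsilon$ are both fine.
\end{compactenum}

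\textbf{Main obstacle.} The delicate step is building this potential-function argument so that the movement terms inside the nested level-wise error recursion are dominated by the $\frac\beta2\Norm{\z_t-\x_t}^2$ slack, with level-dependent constants. Any leftover $\eta^2$ piece would ruin the $\epsilon^{-1.5}$ rate.
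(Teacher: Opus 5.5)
Your proposal is essentially the paper's proof. Like the paper, you keep $\eta^2\Norm{\z_t-\x_t}^2$ in the STORM error bound instead of bounding it by $\eta^2D^2$, and cancel it against the negative $\Norm{\z_t-\x_t}^2$ term from the descent step under a condition of the form $\eta\le\sqrt{\beta^2\alpha B_1/(20L_3)}$; your potential function is just a per-step version of the paper's telescoped average.

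One bookkeeping fix is needed. With your constants the quadratic terms in the descent step cancel exactly, so nothing is left to absorb the movement term. Take, as the paper does, $\eta\le\beta/(2L_F)$ and Young's inequality with weight $2/\beta$, which leaves $-\frac{\eta\beta}{8}\Norm{\z_t-\x_t}^2$. Also, this absorption is needed in regime (ii) as well, since $\eta^2/(\alpha B_1)=\Theta(1)$ there too.
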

\textbf{Remark:} 
(i) With constant batch size, PMVR-v2 yields $\mathcal{O}(\epsilon^{-1.5})$ SFO complexity and $\mathcal{O}(\epsilon^{-2.5})$ LMO complexity, improving upon LiNASA+ICG~\citep{NEURIPS2022_7e16384b} (which attains $\mathcal{O}(\epsilon^{-2})$ SFO and $\mathcal{O}(\epsilon^{-3})$ LMO complexities).
(ii) With $B_1=\Omega(\epsilon^{-0.5})$, PMVR-v2 achieves $\mathcal{O}(\epsilon^{-1.5})$ SFO and $\mathcal{O}(\epsilon^{-2})$ LMO complexities.
These rates improve upon single-level methods such as NCGS~\citep{pmlr-v80-qu18a} and SGD+ICG~\citep{Balasubramanian2018ZerothOrderNS}, which use larger batch size $\mathcal{O}(\epsilon^{-1})$ and have worse SFO complexity $\mathcal{O}(\epsilon^{-2})$.
Notably, our SFO complexity of $\mathcal{O}(\epsilon^{-1.5})$ also matches the lower bound for stochastic unconstrained optimization~\citep{Arjevani2019LowerBF}. 

\section{Projection-free Momentum-based Methods for Non-convex Functions}
In this section, we develop momentum-based projection-free methods for non-convex multi-level compositional optimization under weaker smoothness assumptions. In particular, unlike the variance-reduced methods in the previous section, the results here do not rely on the average smoothness/Lipschitz assumptions. 
We first provide guarantees under the Frank-Wolfe gap and then extend the approach to the gradient mapping criterion.
\subsection{Results for Frank-Wolfe Gap}
Although the variance-reduced methods achieve improved complexity, their analysis relies on average smoothness and average Lipschitz conditions (Assumption~\ref{asm:stoc_smooth3}), which are stronger than the standard smoothness/Lipschitz assumptions typically used in stochastic optimization. 
Momentum-based methods are known to yield convergence guarantees under standard smoothness assumptions. We therefore develop a simpler momentum-based projection-free algorithm and provide its theoretical guarantees.

The momentum-based method differs from PMVR primarily in the construction of the function and gradient estimators. 
Instead of the STORM-type variance-reduced updates, we use exponential moving averages. For each level $i$, we maintain an estimator $\u_t^i$ to track the inner function value $f_i(\cdot)$ via
\begin{equation}
\begin{aligned}\label{ss1}
    \u_t^i = (1-\alpha)\u_{t-1}^i + \alpha  \frac{1}{B_1} \sum_{j=1}^{B_1} f_i(\u_t^{i-1};\xi_t^{i,j}).
\end{aligned}
\end{equation}
This update is simpler and controls the estimation error over time without Assumption~\ref{asm:stoc_smooth3}. Similarly, we employ a momentum-based estimator $\v_t$ to evaluate the overall gradient by
\begin{equation}
\begin{aligned}\label{ss2}
\v_t = (1-\alpha)\v_{t-1} +\alpha \frac{1}{B_1} \sum_{j=1}^{B_1} \left[ \prod_{i=1}^K \nabla f_i(\u_t^{i-1};\xi_t^{i,j}) \right].
\end{aligned}
\end{equation}
The remaining steps follow PMVR, replacing projections with a Frank-Wolfe update. The full method is given in Algorithm~\ref{alg:1}~(PMM). Next, we present the complexity guarantees with respect to the Frank-Wolfe gap.
\begin{theorem}\label{thm:main-}
Under Assumptions~\ref{asm:0},\ref{asm:1},\ref{asm:stochastic2}, by setting $B_1 = T$, $\eta = T^{-1/2}$, and $\alpha = 1/2$, our PMM-v1 algorithm ensures $\E\left[\F(\x_\tau)\right] \leq \mathcal{O}(1/\sqrt{T})$.
\end{theorem}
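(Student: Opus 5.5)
The plan is a one-step descent inequality for the Frank-Wolfe update, coupled with a bound on the gradient-estimation error of the momentum estimators $\v_t$, which the large batch $B_1=T$ will keep at order $T^{-1/2}$ at every step.

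\textbf{Step 1 (descent inequality).} Since $F$ is $L_F$-smooth (Remark after Assumption~\ref{asm:1}) and $\x_{t+1}-\x_t=\eta(\z_t-\x_t)$ with $\Norm{\z_t-\x_t}\le D$,
\begin{align*}
F(\x_{t+1}) \le F(\x_t) + \eta\langle \nabla F(\x_t), \z_t-\x_t\rangle + \tfrac{L_F\eta^2D^2}{2}.
\end{align*}
Let $\hat\x_t=\arg\max_{\hat\x\in\X}\langle \hat\x-\x_t,-\nabla F(\x_t)\rangle$. Because $\z_t$ minimizes $\langle\cdot,\v_t\rangle$,
\begin{align*}
\langle \nabla F(\x_t),\z_t-\x_t\rangle \le \langle \v_t,\hat\x_t-\x_t\rangle + \langle \nabla F(\x_t)-\v_t,\z_t-\x_t\rangle \le -\F(\x_t) + 2D\Norm{\nabla F(\x_t)-\v_t}.
\end{align*}
Summing over $t$, taking expectations, and dividing by $\eta T$ gives
\begin{align*}
\E[\F(\x_\tau)] \le \frac{\Delta_F}{\eta T} + \frac{L_F\eta D^2}{2} + \frac{2D}{T}\sum_{t=1}^T\E\Norm{\nabla F(\x_t)-\v_t}.
\end{align*}
With $\eta=T^{-1/2}$ the first two terms are $\mathcal{O}(T^{-1/2})$. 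It therefore suffices to show $\E\Norm{\nabla F(\x_t)-\v_t}=\mathcal{O}(T^{-1/2})$.

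\textbf{Step 2 (inner function estimators).} Define $e_t^i=\u_t^i-f_i(\u_t^{i-1})$. From (\ref{ss1}),
\begin{align*}
e_t^i=(1-\alpha)\bigl(e_{t-1}^i+f_i(\u_{t-1}^{i-1})-f_i(\u_t^{i-1})\bigr)+\alpha\,\zeta_t^i,
\end{align*}
where $\zeta_t^i$ is the batch noise. It has zero conditional mean and $\E\Norm{\zeta_t^i}^2\le\sigma^2/B_1=\sigma^2/T$. Using $L_f$-Lipschitzness (Assumption~\ref{asm:1}) and a Young inequality, I get the recursion
\begin{align*}
\E\Norm{e_t^i}^2\le(1-\alpha)\E\Norm{e_{t-1}^i}^2+\tfrac{C}{\alpha}L_f^2\E\Norm{\u_t^{i-1}-\u_{t-1}^{i-1}}^2+\alpha^2\sigma^2/T.
\end{align*}
The drift term $\Norm{\u_t^{i-1}-\u_{t-1}^{i-1}}$ is handled by induction on the level. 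At level $0$ it is $\Norm{\x_t-\x_{t-1}}\le\eta D$. At higher levels, $\u_t^{i}-\u_{t-1}^{i}=\alpha(\text{sample}-\u_{t-1}^i)+(\text{difference of }f_i\text{ terms})$, so it is bounded by $L_f$ times the lower-level drift plus $\alpha$ times $(\Norm{e_{t-1}^i}+\text{noise})$.

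\textbf{Main obstacle.} Because $\alpha=1/2$ is a constant, this last contribution is \emph{not} small in $\eta$. The errors are therefore not driven by shrinking steps; instead, each step's fresh noise of size $\sigma/\sqrt{T}$ is contracted geometrically by the factor $1-\alpha=1/2$. I would accordingly bound everything jointly: with constant $\alpha$, unroll the recursion directly to show by induction on $t$ and $i$ that
\begin{align*}
\E\Norm{e_t^i}^2 \le C_i\,(\eta^2 D^2 + \sigma^2/T)
\end{align*}
with level-dependent constants $C_i$ (exponential in $K$ but independent of $T$), and likewise $\E\Norm{\u_t^{i}-f_i\circ\cdots\circ f_1(\x_t)}^2=\mathcal{O}(1/T)$. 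The $B_0$ initialization supplies the base case; taking $B_0=T$ makes the initial error $\mathcal{O}(1/T)$.

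\textbf{Step 3 (gradient estimator).} Treat $\v_t$ the same way. Since $\nabla F(\x_t)=\prod_i\nabla f_i(f_{i-1}\circ\cdots(\x_t))$ and the sampled Jacobians are independent across levels (Assumption~\ref{asm:stochastic2}), the batch-averaged product has mean $\prod_i\nabla f_i(\u_t^{i-1})$ and variance $\mathcal{O}(1/B_1)=\mathcal{O}(1/T)$. By the telescoping product bound with $\Norm{\nabla f_i}\le L_f$ and $L_J$-Lipschitz Jacobians,
\begin{align*}
\Norm{\textstyle\prod_i\nabla f_i(\u_t^{i-1})-\nabla F(\x_t)}\le L_f^{K-1}L_J\sum_i\Norm{\u_t^{i-1}-f_{i-1}\circ\cdots\circ f_1(\x_t)},
\end{align*}
which is $\mathcal{O}(T^{-1/2})$ by Step 2. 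The momentum recursion $\v_t-\nabla F(\x_t)=(1-\alpha)(\v_{t-1}-\nabla F(\x_{t-1}))+(1-\alpha)(\nabla F(\x_{t-1})-\nabla F(\x_t))+\alpha(\cdot)$ has a drift term $L_F\eta D=\mathcal{O}(T^{-1/2})$ and a contraction factor $1/2$. It thus gives $\E\Norm{\v_t-\nabla F(\x_t)}^2=\mathcal{O}(1/T)$, and Jensen's inequality gives the needed $\mathcal{O}(T^{-1/2})$ bound on $\E\Norm{\nabla F(\x_t)-\v_t}$, completing the proof.
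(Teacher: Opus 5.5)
Your proposal is correct and follows essentially the same route as the paper: the Frank--Wolfe descent inequality, Young-type recursions for the momentum errors $\u_t^i-f_i(\u_t^{i-1})$ handled by induction over levels with $T$-independent constants, a drift-plus-contraction recursion for $\v_t-\nabla F(\x_t)$, and $B_1=T$ making every noise term $\mathcal{O}(1/T)$ so that constant $\alpha$ suffices. The differences are only technical. The paper bounds time-averaged sums, so it can keep $B_0=1$ (the initialization enters only through a $\frac{1}{\alpha B_0 T}$ term), and it controls the bias via the one-step errors $\u_t^i-f_i(\u_t^{i-1})$ rather than your cumulative errors $\u_t^{i-1}-f_{i-1}\circ\cdots\circ f_1(\x_t)$; your uniform-in-$t$ bounds with $B_0=T$ are equally valid and do not change the complexity.
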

\textbf{Remark:} (i) To ensure $\E\left[\F(\x_\tau)\right] \leq \epsilon$, we require $B_1=T=\Omega(\epsilon^{-2})$.  
Therefore, the resulting SFO and LMO complexities are $\mathcal{O}(\epsilon^{-4})$ and $\mathcal{O}(\epsilon^{-2})$, respectively.
(ii) As in Section~\ref{pf}, the above result can be made fully parameter-free via a stage-wise schedule with $B_1^s=T_s$, $\eta_s=T_s^{-1/2}$, and $\alpha_s=1/2$ for stage-wise PMM-v1.

\subsection{Results for Gradient Mapping}
We consider the gradient mapping criterion $\G(\cdot)$ in this subsection. As in PMVR-v2, we replace Step 10 in Algorithm~\ref{alg:1}~(PMM) with the inner Frank-Wolfe procedure (Algorithm~\ref{alg:2}), yielding PMM-v2. Then, we can obtain the guarantees for gradient mapping below.
\begin{theorem}\label{thm:2-}
Under Assumptions~\ref{asm:0},\ref{asm:1},\ref{asm:stochastic2}, by setting $B_1 = N = T$, $\eta = \Theta(1)$, and $\alpha = 1/2$, our PMM-v2 algortithm can ensure $\E\left[\G (\x_\tau) \right] \leq \mathcal{O}(1/T)$.
\end{theorem}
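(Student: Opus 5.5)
We prove Theorem~\ref{thm:2-} in three parts: a one-step descent inequality for $F$ driven by the inner Frank-Wolfe output $\z_t$, a bound on the momentum estimation errors $\E\|\u_t^i-f_i(\u_t^{i-1})\|^2$ and $\E\|\v_t-\nabla F(\x_t)\|^2$, and a combination of the two through the decomposition $\G(\x_t)\le -4\beta\min_{\w\in\X} g(\w,\x_t,\v_t)+2\|\nabla F(\x_t)-\v_t\|^2$.

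\textbf{Descent step.} By $L_F$-smoothness, with $\x_{t+1}-\x_t=\eta(\z_t-\x_t)$,
\begin{align*}
F(\x_{t+1})\le F(\x_t)+\eta\langle \v_t,\z_t-\x_t\rangle+\eta\langle\nabla F(\x_t)-\v_t,\z_t-\x_t\rangle+\tfrac{L_F\eta^2}{2}\|\z_t-\x_t\|^2 .
\end{align*}
The function $g(\cdot,\x_t,\v_t)$ is $\beta$-smooth in $\w$, and IFW is Frank-Wolfe on it with step $\gamma_n=2/(n+1)$. The standard Jaggi bound therefore gives $g(\z_t,\x_t,\v_t)-\min_\w g\le 2\beta D^2/(N+1)$.

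Write $\langle\v_t,\z_t-\x_t\rangle = g(\z_t)-\frac{\beta}{2}\|\z_t-\x_t\|^2$. Apply Young's inequality to the cross term:
\begin{align*}
\eta\langle\nabla F(\x_t)-\v_t,\z_t-\x_t\rangle\le \tfrac{\eta}{\beta}\|\nabla F(\x_t)-\v_t\|^2+\tfrac{\eta\beta}{4}\|\z_t-\x_t\|^2 .
\end{align*}
Choose $\beta\ge 2L_F$ and $\eta\le 1$, so that all $\|\z_t-\x_t\|^2$ terms have nonpositive total coefficient. This yields
\begin{align*}
\eta\bigl(-\min_\w g\bigr)\le F(\x_t)-F(\x_{t+1})+\tfrac{2\eta\beta D^2}{N+1}+\tfrac{\eta}{\beta}\|\nabla F(\x_t)-\v_t\|^2 .
\end{align*}

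\textbf{Estimator errors.} This is the main obstacle, because only standard smoothness is assumed. We set $\alpha=1/2$ and $B_1=T$. Then $\u_t^i$ is half the previous estimate plus half of a fresh minibatch of size $T$ evaluated at $\u_t^{i-1}$.

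Let $e_t^i=\u_t^i-f_i(\u_t^{i-1})$. Then
\begin{align*}
e_t^i=\tfrac12 e_{t-1}^i+\tfrac12\bigl(f_i(\u_{t-1}^{i-1})-f_i(\u_t^{i-1})\bigr)+\tfrac12\,\text{noise},
\end{align*}
with noise variance at most $\sigma^2/T$. The drift term is bounded by $L_f\|\u_t^{i-1}-\u_{t-1}^{i-1}\|$, and this is where $\eta$ enters. Unrolling over levels, $\|\u_t^{i}-\u_{t-1}^{i}\|$ is controlled by $\eta D$ plus lower-level errors. Unrolling the geometric recursion with factor $1/2$ then gives
\begin{align*}
\E\|e_t^i\|^2\le C\bigl(\sigma^2/T+\eta^2D^2\bigr)+2^{-t}\,\E\|e_1^i\|^2 .
\end{align*}
The same argument applies to $\v_t$, using the Lipschitz Jacobians, the boundedness of the $\nabla f_i$ (norm at most $L_f$), and the telescoping product-difference bound. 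This gives
\begin{align*}
\E\|\v_t-\nabla F(\x_t)\|^2\le C'\bigl(\sigma_J^2/T+\sigma^2/T+\eta^2D^2+\textstyle\sum_i\E\|e_t^i\|^2\bigr).
\end{align*}

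Here we hit a difficulty: the term $\eta^2D^2$ is of order $1$ when $\eta=\Theta(1)$. To handle it, the drift must instead be bounded by the actual step, $\eta\|\z_t-\x_t\|$. We then keep $\sum_t\eta^2\E\|\z_t-\x_t\|^2$ on the left-hand side of the descent inequality as a negative term, with coefficient of order $\eta\beta$, and absorb the drift into it. The absorption uses a Lyapunov function $F(\x_t)+c\sum_i\|e_t^i\|^2+c\|\v_t-\nabla F(\x_t)\|^2$. Because $\alpha=1/2$ is constant, the contraction is strong enough to cancel the drift terms once $\eta$ is a sufficiently small constant relative to $\beta$, $L_f$, $L_J$ and $K$. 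This is the step I expect to be most delicate.

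\textbf{Combining.} Sum the Lyapunov inequality over $t=1,\dots,T$ and divide by $\eta T$:
\begin{align*}
\tfrac1T\textstyle\sum_t\E\bigl[-\min_\w g\bigr]\lesssim \frac{\Delta_F+\text{init}}{\eta T}+\frac{\beta D^2}{N}+\frac{\sigma^2+\sigma_J^2}{T}.
\end{align*}
Summing the error bounds likewise gives $\frac1T\sum_t\E\|\nabla F(\x_t)-\v_t\|^2=\mathcal{O}(1/T)$. The initialization uses a batch of size $B_0=T$, so the initial errors are also $\mathcal{O}(1/T)$.

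Plugging both bounds into the gradient-mapping decomposition, with $N=T$ and $\eta=\Theta(1)$, gives
\begin{align*}
\E[\G(\x_\tau)]=\tfrac1T\textstyle\sum_t\E\,\G(\x_t)=\mathcal{O}(1/T),
\end{align*}
which proves Theorem~\ref{thm:2-}.
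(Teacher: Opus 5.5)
Your proposal is correct and follows essentially the paper's argument. The paper likewise drops the $\eta^2D^2$ bound, keeps $\frac1T\sum_t\|\z_t-\x_t\|^2$ in the momentum error bounds (Lemma~\ref{L4}), and absorbs the resulting $\mathcal{O}(\eta^2/\alpha^2)\cdot\frac1T\sum_t\|\z_t-\x_t\|^2$ term into the $-\frac{\beta^2}{2}\frac1T\sum_t\|\z_t-\x_t\|^2$ term of the IFW descent inequality by taking $\eta$ a small constant. Your Lyapunov function is just a per-step repackaging of those summed recursions.

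Two minor fixes are needed. First, put the restriction on $\eta$ (e.g.\ $\eta\le\beta/(4L_F)$, as the paper does) instead of requiring $\beta\ge 2L_F$: $\beta$ is fixed by the criterion, and this choice also gives the strictly negative margin you rely on later. Second, give each $\|e_t^i\|^2$ and $\|\v_t-\nabla F(\x_t)\|^2$ its own weight in the potential, because a single $c$ cannot absorb the cross-level coupling of order $L_f^2$ when $L_f$ is large.
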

\textbf{Remark:} To ensure $\E\left[\G (\x_\tau) \right] \leq \epsilon$, we require $B_1=N=T=\Omega(\epsilon)$. 
Therefore, the SFO complexity $B_1T$ and the LMO complexity $NT$ are both $\mathcal{O}(\epsilon^{-2})$.
Notably, the $\mathcal{O}(\epsilon^{-2})$ SFO complexity matches the lower bound for stochastic unconstrained optimization under standard smoothness assumptions~\citep{Arjevani2019LowerBF} and is thus optimal.
\section{The Proposed Methods for Convex and Strongly Convex Functions}
In addition to the non-convex analyses in previous sections, we study the cases where the objective function $F$ is convex or strongly convex, defined as follows.
\begin{ass}
A function $F:\X \mapsto \mathbb{R} $ is convex if
\begin{align*}
  F(\mathbf{y}) \geq F(\mathbf{x})+\langle\nabla F(\mathbf{x}), \mathbf{y}-\mathbf{x}\rangle, \quad \forall \mathbf{x}, \mathbf{y} \in \X.
\end{align*}
\end{ass}
\begin{ass}\label{def:sc}
A function $F:\X \mapsto \mathbb{R} $ is $\lambda$-strongly convex if
\begin{align*}
  F(\mathbf{y}) \geq F(\mathbf{x})+\langle\nabla F(\mathbf{x}), \mathbf{y}-\mathbf{x}\rangle+\frac{\lambda}{2}\|\mathbf{y}-\mathbf{x}\|^{2}, \quad \forall \mathbf{x}, \mathbf{y} \in \X.
\end{align*}
\end{ass}
When $F$ is convex (or strongly convex), any local optimal point is globally optimal. Accordingly, we measure progress using the optimal gap in equation~(\ref{OP}). Next, we consider the case for convex and strongly convex functions, respectively.
\subsection{Convex Objectives}
First, we investigate the case for convex functions. 
To obtain optimal-gap guarantees, we adopt the stage-wise warm-start framework of Algorithm~\ref{alg:3}. 
Specifically, we run Algorithm~\ref{alg:1}~(PMVR) for $S$ stages. At stage $s$ we use a new set of parameters $\{T_s,\eta_s,\alpha_s\}$ and initialize the algorithm with the output from the previous stage $(\x_{s-1}, \u_{s-1}, \v_{s-1})$.
By decreasing $\eta_s$ and $\alpha_s$ while increasing $T_s$ across stages, we can ensure geometric decay of the expected optimal gap, namely $\E\left[F(\x_s) - F_{\star}\right] \leq \frac{1}{2} \E\left[F(\x_{s-1}) - F_{\star}\right]$. 
Let $S = \mathcal{O}(\log (\frac{\epsilon_1}{\epsilon}))$ and define $\epsilon_s = \epsilon_1 /2^{s}$, where $\epsilon_1$ is a positive constant. We can obtain the guarantees for the optimal gap in the following theorem.
\begin{theorem}\label{thm:3}
For convex objectives, under Assumptions~\ref{asm:0},\ref{asm:1},\ref{asm:stochastic2},\ref{asm:stoc_smooth3}, Stage-wise PMVR-v1 enjoys the following guarantees:
\begin{compactenum}[(i)]
    \item By setting $B_1 = \Omega(1)$,  $\eta_s = \Theta(\epsilon_s^2)$, $\alpha_s = \Theta(\epsilon_s^2)$, and $T_s = \Omega(\epsilon_s^{-2})$, we can ensure that $\E\left[F(\x_{S})\right]-\min_{\hat{\x}} F(\hat{\x}) \leq \epsilon$ in $\mathcal{O}(\epsilon^{-2})$ iterations.
    \item By setting $B_1=\Omega(\epsilon_s^{-1})$, $\eta_s = \Theta(\epsilon_s)$, $\alpha_s = \Theta(\epsilon_s)$, and $T_s = \Omega(\epsilon_s^{-1})$, we can ensure $\E\left[F(\x_{S})\right]-\min_{\hat{\x}} F(\hat{\x}) \leq \epsilon$ in $\mathcal{O}(\epsilon^{-1})$ iterations.
\end{compactenum}

\end{theorem}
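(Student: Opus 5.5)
The proof runs stage by stage. Within one stage of PMVR-v1 we combine the Frank-Wolfe descent inequality with convexity to contract the optimality gap, and we use the variance-reduction recursion from the analysis behind Theorem~\ref{thm:main} to control the gradient error.

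\textbf{Per-iteration inequality.} Fix a stage $s$ with parameters $(T_s,\eta_s,\alpha_s)$ and write $F_\star=\min_{\hat{\x}\in\X}F(\hat{\x})$ with minimizer $\x_\star$. $L_F$-smoothness and the update $\x_{t+1}=\x_t+\eta_s(\z_t-\x_t)$ give
\[
F(\x_{t+1})\le F(\x_t)+\eta_s\langle \v_t,\z_t-\x_t\rangle+\eta_s\langle\nabla F(\x_t)-\v_t,\z_t-\x_t\rangle+\tfrac{L_F\eta_s^2D^2}{2}.
\]
Since $\z_t$ minimizes $\langle\cdot,\v_t\rangle$ over $\X$, we have $\langle\v_t,\z_t-\x_t\rangle\le\langle\v_t,\x_\star-\x_t\rangle$. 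Moving $\v_t$ back to $\nabla F(\x_t)$ costs another error term, and convexity bounds $\langle\nabla F(\x_t),\x_\star-\x_t\rangle\le -(F(\x_t)-F_\star)$. Setting $h_t=F(\x_t)-F_\star$, this yields
\[
h_{t+1}\le(1-\eta_s)h_t+2\eta_s D\Norm{\nabla F(\x_t)-\v_t}+\tfrac{L_F\eta_s^2D^2}{2}.
\]

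\textbf{Error bound and stage contraction.} For the error term we reuse the lemma from the proof of Theorem~\ref{thm:main}. Summed over levels, the STORM errors $\E\Norm{\u_t^i-f_i(\u_t^{i-1})}^2$ and $\E\Norm{\v_t-\nabla F(\x_t)}^2$ satisfy a recursion of the form
\[
\E e_{t+1}\le(1-\alpha_s)e_t+\mathcal{O}\bigl(\alpha_s^2\sigma^2/B_1+\eta_s^2D^2/B_1\bigr),
\]
because $\Norm{\x_{t+1}-\x_t}\le\eta_sD$. The stage starts from the previous stage's estimators. Its initial error $e_1^{(s)}$ is therefore bounded by the stationary level of the previous stage, which is at most a constant times the current stationary level; alternatively, a fresh batch $B_0$ can be drawn. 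Taking the transient $(1-\alpha_s)^te_1$ into account, this gives
\[
\E\Norm{\nabla F(\x_t)-\v_t}\lesssim\sqrt{\alpha_s/B_1+\eta_s^2/(\alpha_sB_1)}.
\]
Unrolling the contraction over $T_s$ steps gives
\[
\E h_{T_s+1}\le e^{-\eta_sT_s}\,\E h_1+2D\max_t\E\Norm{\nabla F-\v}+\tfrac{L_F\eta_sD^2}{2}.
\]
We take the stage output to be the last iterate, so that convexity is never needed for averaging. For case (i), set $\eta_s=\alpha_s=c\,\epsilon_s^2$ with $B_1=\Omega(1)$; the error term is then $\mathcal{O}(\epsilon_s)$ and the bias $L_F\eta_sD^2$ is $\mathcal{O}(\epsilon_s^2)\le\epsilon_s$. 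Choosing $T_s=\Theta(\epsilon_s^{-2}\log 4)$ makes the transient at most $\E h_1/4$. With constants tuned, we get $\E h^{(s)}\le\tfrac14\epsilon_{s-1}+\tfrac14\epsilon_{s-1}=\epsilon_s$, and by induction $\E[F(\x_S)]-F_\star\le\epsilon_S\le\epsilon$ (with the base case $\epsilon_0\ge\Delta_F$). For case (ii), set $\eta_s=\alpha_s=\Theta(\epsilon_s)$ and $B_1=\Omega(\epsilon_s^{-1})$; the error is then $\mathcal{O}(\sqrt{\epsilon_s\cdot\epsilon_s})=\mathcal{O}(\epsilon_s)$, and $T_s=\Theta(\epsilon_s^{-1})$.

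\textbf{Counting iterations.} This is a geometric sum: $\sum_{s\le S}\epsilon_s^{-2}=\mathcal{O}(\epsilon^{-2})$ in case (i) and $\sum_{s\le S}\epsilon_s^{-1}=\mathcal{O}(\epsilon^{-1})$ in case (ii). In case (i) the transient factor is $e^{-\eta_sT_s}$ with $\eta_sT_s=\Theta(1)$, so $T_s=\Theta(\epsilon_s^{-2})$ suffices, consistent with the theorem.

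\textbf{Main obstacle.} I expect the hardest step to be the stage-coupling of the estimator errors. The recursion for $\E\Norm{\u_t^i-f_i(\u_t^{i-1})}^2$ is nested across levels and involves the drift of $\u_t^{i-1}$, not only that of $\x_t$. Moreover, the warm-start error inherited from stage $s-1$ was tuned to $\alpha_{s-1}$, which is larger than $\alpha_s$, so it decays more slowly under the smaller $\alpha_s$. I would handle this by showing that the inherited error is $\mathcal{O}(\epsilon_{s-1})=\mathcal{O}(2\epsilon_s)$ and absorbing the factor $2$ into the constants. If that fails, I would fall back to re-initializing each stage with a batch of size $\Theta(\epsilon_s^{-2})$ in case (i) or $\Theta(\epsilon_s^{-1})$ in case (ii). The total cost of these batches is also geometric, so the rates are unchanged.
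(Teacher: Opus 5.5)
Your proposal is correct and follows essentially the same route as the paper. Both arguments combine three ingredients: the convex Frank--Wolfe recursion $h_{t+1}\le(1-\eta_s)h_t+\mathcal{O}(\eta_sD\Norm{\nabla F(\x_t)-\v_t})+\mathcal{O}(\eta_s^2)$, the STORM error lemma with the warm-start error carried into the next stage, and an induction over stages that halves the gap.

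The differences are cosmetic. You unroll the contraction for the last iterate and bound the estimator error pointwise in $t$. The paper instead sums to an averaged $\E[h_1]/(\eta_sT_s)$ bound, which matches PMVR's random-iterate output, and it resolves your ``main obstacle'' by a joint induction $\E[F(\x^s)-F_\star]\le\epsilon_s$, $\E[\Gamma_s]\le\epsilon_s^2$ on the averaged estimator error $\Gamma_s$. That induction, like your tuning, only needs $\alpha_sT_s$ to be a sufficiently large constant.
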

\textbf{Remark:} (i) With constant batch size, the above result implies $\mathcal{O}(\epsilon^{-2})$ complexity for both SFO and LMO, matching the best-known rates for single-level projection-free methods~\citep{Zhang2019OneSS}. 
The $\mathcal{O}(\epsilon^{-2})$ SFO rate is already optimal for stochastic convex optimization~\citep{Agarwal2012InformationTheoreticLB}.
(ii) With a large batch size $B_1=\Omega(\epsilon^{-1})$, the method achieves the same SFO complexity and improves the LMO complexity to $\mathcal{O}(\epsilon^{-1})$, consistent with single-level projection-free methods such as SPIDER-FW~\citep{pmlr-v97-yurtsever19b}.

\subsection{Strongly Convex Objectives} 
For strongly convex objectives, we incorporate a quadratic regularization term in the conditional-gradient step.
 Specifically, instead of using $\arg \min_{\x \in \X}\langle \x,\v_t\rangle$ in Algorithm~\ref{alg:1}, we aim to approximately solve the strongly convex quadratic surrogate
\begin{align*}
    \min _{\w \in \X} g(\w,\x_t,\v_t) \coloneqq \left\{\langle \w,\v_t \rangle+\frac{\lambda}{4}\|\w-\x_t\|^{2}\right\}.
\end{align*} 
We solve this sub-problem using only LMO calls by running a Frank-Wolfe procedure~\citep{pmlr-v28-jaggi13} for $N$ inner iterations. At inner iteration $n$, we compute
\begin{equation}
   \begin{split}
       \s  = &\arg \min _{\hat{\s} \in \X}\left\langle \v_t+\frac{\lambda}{2}\left(\w_n-\x_t\right), \hat{\s}\right\rangle, \quad
\w_{n+1}=\left(1-\gamma_{t}\right) \w_{n}+\gamma_{t} \s,
\end{split}
\end{equation}
where $\v_t+\frac{\lambda}{2}(\w_n-\x_t)$ is the gradient of $g(\w,\x_t,\v_t)$ with respect to $\w$ at $\w=\w_n$.
Overall, we retain the stage-wise warm-start design of Algorithm~\ref{alg:3} and use the PMVR-v2 variant with $\beta=\lambda/2$.
With $\epsilon_s=\epsilon_1/2^s$ and $\epsilon_1 \ge 0$, we obtain the following guarantees.
\begin{theorem}\label{thm:4}
For strongly convex objective functions, under Assumptions~\ref{asm:0},\ref{asm:1},\ref{asm:stochastic2},\ref{asm:stoc_smooth3}, our proposed Stage-wise PMVR-v2 algorithm enjoys the following guarantees:
\begin{compactenum}[(i)]
    \item  Setting $B_1 = \Omega(1)$, $N=\Omega(\lambda \epsilon^{-1})$,  $\eta_s = \Theta(\lambda \epsilon_s)$, $\alpha_s = \Theta(\lambda \epsilon_s)$, and $T_s = \Omega(\lambda^{-1}\epsilon_s^{-1})$, we ensure $\E\left[F(\x_{S})\right]-\min_{\hat{\x}} F(\hat{\x}) \leq \epsilon$ in $S = \mathcal{O}(\log (\frac{\epsilon_1}{\epsilon}))$ stages.
    \item 
By setting $B_1 = \Omega(\epsilon_s^{-1})$, $N= \Omega(\lambda \epsilon^{-1})$,  $\eta_s = \Theta(\lambda)$, $\alpha_s = \Theta(\lambda)$, and $T_s = \Omega(\lambda^{-1})$, we can ensure $\E\left[F(\x_{S})\right]-\min_{\hat{\x}} F(\hat{\x}) \leq \epsilon$ within $S = \mathcal{O}(\log (\frac{\epsilon_1}{\epsilon}))$ stages.
\end{compactenum}
\end{theorem}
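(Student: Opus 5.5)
The plan is to show that each stage halves the expected optimality gap, $\E[F(\x_s)-F_\star]\le \frac12\E[F(\x_{s-1})-F_\star]$, given the stage parameters, and then unroll over $S=\mathcal{O}(\log(\epsilon_1/\epsilon))$ stages. Within a stage, I would work with the one-step descent inequality from $L_F$-smoothness, $F(\x_{t+1})\le F(\x_t)+\eta\langle\nabla F(\x_t),\z_t-\x_t\rangle+\frac{L_F\eta^2}{2}\Norm{\z_t-\x_t}^2$. I then split $\nabla F(\x_t)=\v_t+(\nabla F(\x_t)-\v_t)$. The key comparison point is the exact minimizer $\w^\star_t$ of $g(\cdot,\x_t,\v_t)$ with $\beta=\lambda/2$, together with the standard inexact-Frank-Wolfe bound $g(\z_t)-g(\w_t^\star)\le \mathcal{O}(\beta D^2/N)$. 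This bound holds because $g$ is $\beta$-smooth and uses a step $\gamma_n=2/(n+1)$.

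Next I use strong convexity to bound $\min_\w g$. Evaluating $g$ at the point $\x_t+\theta(\x_\star-\x_t)$ with $\theta\in[0,1]$, and using $\lambda$-strong convexity of $F$, gives
\[
\langle \v_t,\z_t-\x_t\rangle \le -\theta\bigl(F(\x_t)-F_\star\bigr)-\tfrac{\theta\lambda}{2}\Norm{\x_\star-\x_t}^2+\tfrac{\theta^2\lambda}{4}\Norm{\x_\star-\x_t}^2+\theta D\Norm{\nabla F(\x_t)-\v_t}+\mathcal{O}(\lambda D^2/N).
\]
Taking $\theta=1$ kills the distance terms. I would also keep the term $-\frac{\lambda}{4}\Norm{\z_t-\x_t}^2$, which comes from strong convexity of $g$ and dominates $\frac{L_F\eta^2}{2}\Norm{\z_t-\x_t}^2$ whenever $\eta\le \lambda/(2L_F)$. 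This is exactly why $\eta_s=\Theta(\lambda)$ or smaller is needed. Combining with Young's inequality on $\eta\langle\nabla F-\v_t,\z_t-\x_t\rangle$ yields a contraction
\[
\E[F(\x_{t+1})-F_\star]\le(1-\eta)\E[F(\x_t)-F_\star]+\mathcal{O}\bigl(\eta\lambda^{-1}\E\Norm{\nabla F(\x_t)-\v_t}^2\bigr)+\mathcal{O}(\eta\lambda D^2/N).
\]

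For the estimator error I reuse the PMVR recursions established for Theorem~\ref{thm:main}. These give
\[
\E\Norm{\v_{t+1}-\nabla F(\x_{t+1})}^2+\sum_i\E\Norm{\u^i_{t+1}-f_i(\u^{i-1}_{t+1})}^2\le(1-\alpha)(\cdot)+\mathcal{O}\bigl(\alpha^2\sigma^2/B_1+\eta^2D^2/B_1\bigr),
\]
using $\Norm{\x_{t+1}-\x_t}\le\eta D$ and Assumption~\ref{asm:stoc_smooth3}. I then form a potential $\Phi_t=F(\x_t)-F_\star+c\,(\text{errors})$ with $c=\Theta(\eta/(\lambda\alpha))$. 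With $\alpha\asymp\eta$, this gives $\Phi_{t+1}\le(1-\eta/2)\Phi_t+\mathcal{O}\bigl(\eta^2/(\lambda B_1)+\eta\lambda/N\bigr)$. Unrolling over $T_s$ iterations yields $\Phi_{T_s}\le e^{-\eta T_s/2}\Phi_0+\mathcal{O}\bigl(\eta/(\lambda B_1)+\lambda/N\bigr)$.

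Finally I plug in the two parameter regimes. In case (i), $\eta_s=\alpha_s=\Theta(\lambda\epsilon_s)$ with $B_1=\Omega(1)$ makes the noise term $\mathcal{O}(\epsilon_s)$. In case (ii), $\eta_s=\Theta(\lambda)$ with $B_1=\Omega(\epsilon_s^{-1})$ does the same. In both cases $N=\Omega(\lambda\epsilon^{-1})$ makes the inner error at most $\epsilon\le\epsilon_s$, and $T_s=\Omega(1/\eta_s)$ makes $e^{-\eta T_s/2}\le 1/4$. The main obstacle is that the potential must include the error terms inherited at the warm start. I would therefore prove the halving statement for $\Phi$ rather than for $F$ alone. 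This requires that the initial error at stage $s$ is $\mathcal{O}(\epsilon_{s-1})$, which holds inductively since the error terms decay to $\mathcal{O}(\alpha/B_1)$-level within each stage. Once that is established, the result follows: $\Phi_s\le\epsilon_s$ implies $\E[F(\x_S)]-F_\star\le\epsilon$.
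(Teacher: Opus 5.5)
Your proposal is correct in outline and reaches the theorem by a different route than the paper. Two derivation steps need repair before it goes through; both are described in the second paragraph.

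\textbf{Comparison with the paper.} The per-iteration step is the same in both arguments. Both use $L_F$-smoothness with $\eta\le\lambda/(4L_F)$, compare the inexact Frank--Wolfe output with $\x_\star$ through $g$, and apply strong convexity to get $F(\x_{t+1})-F_\star\le(1-\eta)(F(\x_t)-F_\star)+\frac{4\eta}{\lambda}\Norm{\nabla F(\x_t)-\v_t}^2+\frac{\eta\lambda D^2}{N}$. The paper then sums this over a stage, bounding the \emph{averaged} gap, since each stage outputs a uniformly random iterate. It reuses the averaged estimator bound from Theorem~\ref{thm:main} in the form $\Gamma_s\le\frac{\Gamma_{s-1}}{\alpha_sT_s}+\frac{\alpha_sL_3}{B_1^s}+\frac{L_3\eta_s^2D^2}{\alpha_sB_1^s}$. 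It finishes with a joint induction on $\E[F(\x^s)-F_\star]\le\epsilon_s$ and $\E[\Gamma_s]\le\lambda\epsilon_s$. Your potential $\Phi=F-F_\star+\Theta(1/\lambda)\cdot(\text{errors})$ packages exactly these two hypotheses into one quantity. That buys you a per-step contraction, $e^{-\eta_sT_s/2}$ decay within a stage, and a one-line stage induction. The paper's route needs only the already-proved averaged lemmas and no per-step coupling of gap and estimator error.

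Two bookkeeping points on your version. The weight $c=\Theta(\eta_s/(\lambda\alpha_s))$ must be the same in every stage for $\Phi$ to carry across the warm start; taking $\eta_s=\alpha_s$ ensures this. Algorithm~\ref{alg:1} returns a random iterate, not the last one, so sum your contraction instead of unrolling it. This gives $\frac{1}{T_s}\sum_t\E\Phi_t\le\frac{2\Phi_1}{\eta_sT_s}$ plus the same noise floor $\mathcal{O}(\eta_s/(\lambda B_1)+\lambda D^2/N)$, which works equally well.

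\textbf{Repairs needed.} First, your displayed bound on $\langle\v_t,\z_t-\x_t\rangle$ controls the $\x_\star$ cross term by $\theta D\Norm{\nabla F(\x_t)-\v_t}$. Taking $\theta=1$ does not kill the distance terms; it leaves $-\frac{\lambda}{4}\Norm{\x_\star-\x_t}^2$, and you need that term. If a linear term $\eta D\Norm{\nabla F(\x_t)-\v_t}$ survives, you would need $\E\Norm{\nabla F(\x_t)-\v_t}^2\lesssim\epsilon_s^2$, and the $\mathcal{O}(\lambda^{-1}\epsilon^{-1})$ SFO rate is lost. Do as the paper does:
\begin{itemize}
\item merge both cross terms into $\eta\langle\nabla F(\x_t)-\v_t,\z_t-\x_\star\rangle$;
\item apply Young's inequality with weight $\Theta(\lambda)$;
\item split $\Norm{\z_t-\x_\star}^2\le2\Norm{\z_t-\x_t}^2+2\Norm{\x_t-\x_\star}^2$ and absorb the pieces into $-\frac{\eta\lambda}{4}\Norm{\x_\star-\x_t}^2$ and into what remains of $-\frac{\eta\lambda}{4}\Norm{\z_t-\x_t}^2$ after paying for $\frac{L_F\eta^2}{2}\Norm{\z_t-\x_t}^2$.
\end{itemize}
Only then does your contraction with a purely quadratic error term follow.

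Second, the STORM recursions behind Theorem~\ref{thm:main} contract $\Norm{\v_t-\prod_{i=1}^K\nabla f_i(\u_t^{i-1})}^2$, not $\Norm{\v_t-\nabla F(\x_t)}^2$. The bias $\prod_{i=1}^K\nabla f_i(\u_t^{i-1})-\nabla F(\x_t)$ enters each step with coefficient one, so it does not contract. Put the tracking error in $\Phi$ instead. Convert it with the per-step form of Lemma~\ref{lem:2}: $\Norm{\v_t-\nabla F(\x_t)}^2\le2\Norm{\v_t-\prod_{i=1}^K\nabla f_i(\u_t^{i-1})}^2+2KL_F^2\sum_{i=1}^{K-1}\Norm{\u_t^i-f_i(\u_t^{i-1})}^2$. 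Finally, absorb the $\mathcal{O}(\alpha^2/B_1)\sum_i\Norm{\u_t^i-f_i(\u_t^{i-1})}^2$ terms from the increment bound into the $(1-\alpha)$ factor.
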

\textbf{Remark:} 
(i) With constant batch size, we obtain SFO complexity of $\mathcal{O}(\lambda^{-1}\epsilon^{-1})$ and LMO complexity of $\mathcal{O}(\epsilon^{-2})$. 
Notably, the $\mathcal{O} (\lambda^{-1}\epsilon^{-1})$ SFO complexity we obtained is already optimal for stochastic unconstrained strongly convex problems~\citep{Agarwal2012InformationTheoreticLB}.
(ii) With a large batch size, the SFO complexity remains of the same order, while the LMO complexity improves to $\mathcal{O}(\epsilon^{-1})$, matching the known single-level projection-free results for strongly convex objectives~\citep{doi:10.1137/140992382}.

\section{The Proposed Methods for Finite-sum Structure}
In this section, we consider the case where each level has a finite-sum structure. We first present guarantees for non-convex objectives, and then provide the results for convex and strongly convex settings. 
\subsection{Results for Non-convex Functions}
For the multi-level finite-sum problem, we assume that the function in each level $i$ can be written as $f_i(\x) = \frac{1}{m}\sum_{j=1}^m f_{i}(\x;\xi^{i,j})$.
This structure allows us to periodically compute exact function values and Jacobians, which can be reused to build more accurate estimators.
Specifically, at iteration $t=1$ and every $I$ iterations (i.e., when $t \bmod I = 0$), we set a snapshot index $\tau=t$ and compute the exact quantities $f_i(\u_\tau^{i-1})$ and $\nabla f_i(\u_\tau^{i-1})$ for each level $i$.
At other iterations, we construct estimators using the snapshot information. For the inner functions, we use
\begin{equation}\label{fs:u}
    \begin{aligned}
    \u_t^i = (1-\alpha)\u_{t-1}^i + \alpha \h_t^i
    +  (1-\alpha)\frac{1}{B_1} \sum_{j=1}^{B_1}\left(f_i(\u_{t}^{i-1};\xi_t^{i,j}) - f_i(\u_{t-1}^{i-1};\xi_t^{i,j})\right),
\end{aligned}
\end{equation}
where $\h_t^i$ is an unbiased estimator of $f_i(\u_t^{i-1})$ defined by
\begin{equation}
    \begin{aligned}
    \h_t^i =  \frac{1}{B_1} \left(\sum_{j=1}^{B_1} f_i(\u_t^{i-1};\xi_t^{i,j}) - f_i(\u_\tau^{i-1};\xi_t^{i,j})\right) +f_i(\u_\tau^{i-1}).
\end{aligned}
\end{equation}
Using this new estimator, we can ensure faster decay of the estimation error with appropriate parameter choices. 
We also employ a similar design of the gradient estimator, such that
\begin{equation}\label{fs:v}
    \begin{aligned}
    \v_t = (1-\alpha)\v_{t-1}+\alpha \z_t+(1-\alpha)\left(\frac{1}{B_1} \sum_{j=1}^{B_1} \left[\prod_{i=1}^K \nabla f_i(\u_t^{i-1};\xi_t^{i,j})-\prod_{i=1}^K \nabla f_i(\u_{t-1}^{i-1};\xi_t^{i,j})\right]\right) 
\end{aligned}
\end{equation}
where $\z_t$ is an unbiased estimator of $\prod_{i=1}^K\nabla f_i(\u_t^{i-1})$ given by
\begin{equation}
    \begin{aligned}
    \z_t = \frac{1}{B_1} \sum_{j=1}^{B_1} \left[\prod_{i=1}^K \nabla f_i(\u_t^{i-1};\xi_t^{i,j})-\prod_{i=1}^K \nabla f_i(\u_{\tau}^{i-1};\xi_t^{i,j})\right] +\prod_{i=1}^K\nabla f_i(\u_\tau^{i-1}).
\end{aligned}
\end{equation}
The resulting method is summarized in Algorithm~\ref{alg:fs} and is referred to as PMFS (Projection-free Multi-level method for Finite-sum Structure). Next, we state the theoretical guarantee for the proposed methods under the Frank-Wolfe gap.
\begin{algorithm}[!tb]
	\caption{PMFS Algorithm}
	\label{alg:fs}
	\begin{algorithmic}[1]
	\STATE {\bfseries Input:} parameters $T$, $\eta$, $\alpha$, initial points $\left(\x_1,\u_1,\v_1\right)$ 
		\FOR{time step $t = 1$ {\bfseries to} $T$}
		\STATE Set $\u_t^0 = \x_t$
        \IF{$t ==1$ or $t \mod I ==0$}
        \STATE Set $\tau =t$ and compute $ f_i(\u_t^i)$ and $\nabla f_i(\u_t^i)$ for every layer $i$
        \ENDIF
		\FOR{level $i = 1$ {\bfseries to} $K$}
		\STATE Compute the function estimator $\u_t^i$ according to equation~(\ref{fs:u})
        \ENDFOR
        \STATE Compute the gradient estimator $\v_t$ according to equation~(\ref{fs:v})
        \STATE (v1) Compute $\z_{t} = \arg \min_{\x \in \X} \langle \x,\v_{t} \rangle$
        \STATE (v2) Compute $\z_t = \operatorname{IFW}(\x_t,\beta)$
		\STATE Update the weight: $\x_{t+1} = \x_t + \eta (\z_t - \x_t) $
		\ENDFOR
	\STATE Choose $\tau$ uniformly at random from $\{1, \ldots, T\}$
	\STATE {\bfseries Return} $(\x_{\tau},\u_{\tau},\v_{\tau})$
	\end{algorithmic}
\end{algorithm}

\begin{theorem}\label{thm:fsnc}
Under Assumptions~\ref{asm:0},\ref{asm:1},\ref{asm:stochastic2},\ref{asm:stoc_smooth3}, our PMFS-v1 enjoys the following guarantees:
\begin{compactenum}[(i)]
    \item By setting $B_1 = 1$, $I=m/B_1$, $\alpha = {B_1}/{m}$ and $\eta = {m^{-1/4}T^{-1/2}}$, we can ensure that  $\E\left[\F(\x_\tau)\right] \leq \epsilon$ within $\mathcal{O}(\sqrt{m}\epsilon^{-2})$ iterations.
    \item By setting $B_1 = \sqrt{m}$, $I=m/B_1$, $\alpha = {B_1}/{m}$ and $\eta = T^{-1/2}$, we can ensure that $\E\left[\F(\x_\tau)\right] \leq \epsilon$ within $\mathcal{O}(\epsilon^{-2})$ iterations.
\end{compactenum}
\end{theorem}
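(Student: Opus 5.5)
The argument combines a one-step Frank-Wolfe descent inequality with a Lyapunov-type control of the estimation errors of the snapshot-based estimators (\ref{fs:u}) and (\ref{fs:v}). At the end, the errors are summed over $t$ and the parameters are plugged in.

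\textbf{Step 1: descent inequality.} By $L_F$-smoothness of $F$ and $\x_{t+1}-\x_t=\eta(\z_t-\x_t)$, we have
$F(\x_{t+1})\le F(\x_t)+\eta\langle\nabla F(\x_t),\z_t-\x_t\rangle+\frac{L_F\eta^2D^2}{2}$.
Let $\hat\x_t=\arg\max_{\hat\x\in\X}\langle\hat\x-\x_t,-\nabla F(\x_t)\rangle$. Since $\z_t$ minimizes $\langle\cdot,\v_t\rangle$, we get $\langle\nabla F(\x_t),\z_t-\x_t\rangle\le -\F(\x_t)+\langle\nabla F(\x_t)-\v_t,\z_t-\hat\x_t\rangle\le -\F(\x_t)+D\Norm{\nabla F(\x_t)-\v_t}$. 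Telescoping gives
\[
\frac1T\sum_{t=1}^T\E[\F(\x_t)]\le \frac{\Delta_F}{\eta T}+\frac{L_FD^2\eta}{2}+\frac{D}{T}\sum_{t=1}^T\sqrt{\E\Norm{\nabla F(\x_t)-\v_t}^2}.
\]
It remains to show that the average gradient-estimation error is $\mathcal O(\eta)$ in root-mean-square terms.

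\textbf{Step 2: error recursions.} Write $e_t^i=\u_t^i-f_i(\u_t^{i-1})$ and $e_t=\v_t-\prod_i\nabla f_i(\u_t^{i-1})$. The estimator satisfies
$e_t^i=(1-\alpha)e_{t-1}^i+\alpha(\h_t^i-f_i(\u_t^{i-1}))+(1-\alpha)(\Delta_t^i-\E\Delta_t^i)$, where $\Delta_t^i$ is the minibatch difference. Both noise terms are zero-mean given the past. By Assumption~\ref{asm:stoc_smooth3}, the SVRG-type term has variance $\le \frac{L_f^2}{B_1}\Norm{\u_t^{i-1}-\u_\tau^{i-1}}^2$. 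The STORM term has variance $\le\frac{L_f^2}{B_1}\Norm{\u_t^{i-1}-\u_{t-1}^{i-1}}^2$. Hence
\[
\E\Norm{e_t^i}^2\le(1-\alpha)\E\Norm{e_{t-1}^i}^2+\frac{2\alpha^2L_f^2}{B_1}\E\Norm{\u_t^{i-1}-\u_\tau^{i-1}}^2+\frac{2L_f^2}{B_1}\E\Norm{\u_t^{i-1}-\u_{t-1}^{i-1}}^2 .
\]
The analogous bound holds for $e_t$, with the Lipschitz constant of the product of Jacobians ($\mathcal O(L_f^{K-1}L_J+\dots)$) in place of $L_f$.

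Next I need to control the movements. Since $\u_t^{i-1}-\u_{t-1}^{i-1}=f_{i-1}(\u_t^{i-2})-f_{i-1}(\u_{t-1}^{i-2})+e_t^{i-1}-e_{t-1}^{i-1}$, an induction over levels bounds the one-step movement by $\mathcal O(\eta^2D^2+\sum_{j<i}\E\Norm{e_t^j-e_{t-1}^j}^2)$. The snapshot movement $\Norm{\u_t^{i-1}-\u_\tau^{i-1}}^2$ is bounded by $I\sum_{k=\tau+1}^{t}\Norm{\u_k^{i-1}-\u_{k-1}^{i-1}}^2$.

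\textbf{Step 3: main obstacle.} The difficult step is closing this nested recursion. The $e_t^j-e_{t-1}^j$ terms feed back into lower-level movements, and the snapshot terms involve sums of length $I$. I would handle this with a weighted potential $\Phi_t=\sum_i c_i\E\Norm{e_t^i}^2+c\,\E\Norm{e_t}^2$, choosing geometric weights across levels. Using $\Norm{e_t-e_{t-1}}^2\le 2\Norm{e_t}^2+2\Norm{e_{t-1}}^2$ together with $1/B_1\le$ small constants, the feedback terms can be absorbed into the $-\alpha\Phi$ contraction.

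With $\alpha=B_1/m$ and $I=m/B_1$, the snapshot term contributes $\alpha^2 I^2\eta^2D^2/B_1=\eta^2D^2/B_1$ per step, and the STORM term contributes $\eta^2D^2/B_1$. Summing the resulting recursion $\Phi_t\le(1-\alpha/2)\Phi_{t-1}+\mathcal O(\eta^2/B_1)$ gives $\frac1T\sum_t\Phi_t\le \frac{\Phi_1}{\alpha T}+\mathcal O(\frac{\eta^2}{\alpha B_1})=\mathcal O(\frac{m\eta^2}{B_1^2})$. Here $\Phi_1=0$ because exact quantities are computed at $t=1$, and the same holds at each snapshot. Finally, $\Norm{\nabla F(\x_t)-\v_t}^2\lesssim \Norm{e_t}^2+\sum_i\Norm{e_t^i}^2$ by Lipschitzness of the Jacobian chain, which is the standard telescoping from \citet{jiang2022optimal}.

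\textbf{Step 4: plug in.} By Jensen's inequality the average error term is $\mathcal O(\sqrt m\,\eta/B_1)$. So the Frank-Wolfe gap is bounded by $\mathcal O(\frac1{\eta T}+\eta+\frac{\sqrt m\eta}{B_1})$.

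For (i), $B_1=1$ and $\eta=m^{-1/4}T^{-1/2}$ give $\mathcal O(m^{1/4}T^{-1/2})$, which is at most $\epsilon$ once $T=\mathcal O(\sqrt m\epsilon^{-2})$. For (ii), $B_1=\sqrt m$ and $\eta=T^{-1/2}$ give $\mathcal O(T^{-1/2})$, so $T=\mathcal O(\epsilon^{-2})$ suffices.
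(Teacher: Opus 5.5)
Your overall skeleton matches the paper's: the descent inequality, SVRG-plus-STORM error recursions contracting at rate $1-\alpha$, control of the iterate movements, and then $\frac1T\sum_t\E\|\nabla F(\x_t)-\v_t\|^2=\mathcal O(m\eta^2/B_1^2)$ plugged into the gap bound. However, the step you flag as the main obstacle does not close as you propose, and case (i) fails.

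You control $\u_t^{i-1}-\u_{t-1}^{i-1}$ through $e_t^{i-1}-e_{t-1}^{i-1}$ and then use $\|e_t^{j}-e_{t-1}^{j}\|^2\le 2\|e_t^{j}\|^2+2\|e_{t-1}^{j}\|^2$. This discards the fact that consecutive errors differ only by $-\alpha e_{t-1}^j$ plus zero-mean noise. As a result, a lower-level error enters the next level's STORM variance $\frac{2L_f^2}{B_1}\E\|\u_t^{i-1}-\u_{t-1}^{i-1}\|^2$ with coefficient $\Theta(L_f^2/B_1)$, while the contraction is only $\alpha=B_1/m$. Absorbing it with comparable weights needs $L_f^2/B_1\lesssim B_1/m$, i.e.\ $B_1\gtrsim L_f\sqrt m$. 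Using geometric weights instead (equivalently, propagating level by level, since the dependence is triangular), each level multiplies the error bound by $\Theta(mL_f^2/B_1^2)$.

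In case (ii) that factor is a constant, so (ii) survives. In case (i), where $B_1=1$, it is $\Theta(m)$ per level. Already for $K=2$ you get $\frac1T\sum_t\E\|e_t\|^2=\mathcal O(m^2\eta^2)$ rather than $\mathcal O(m\eta^2)$. With $\eta=m^{-1/4}T^{-1/2}$ the gap bound becomes $\mathcal O(m^{3/4}T^{-1/2})$, i.e.\ $T=\mathcal O(m^{3/2}\epsilon^{-2})$ instead of $\mathcal O(\sqrt m\,\epsilon^{-2})$. Your remark that $1/B_1$ is a small constant is false in case (i), and in any event the relevant comparison is with $\alpha$, not with a constant.

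The missing idea is to bound the movement directly from the update~(\ref{fs:u}):
$\u_{t+1}^{i}-\u_t^{i}=\alpha(\h_{t+1}^{i}-\u_t^{i})+(1-\alpha)\frac1{B_1}\sum_j\big(f_i(\u_{t+1}^{i-1};\xi_{t+1}^{i,j})-f_i(\u_t^{i-1};\xi_{t+1}^{i,j})\big)$.
By Assumption~\ref{asm:stoc_smooth3}, the level-$i$ error then enters the movement of $\u^i$ only through $\alpha^2\E\|e_t^i\|^2$. The remaining terms are $L_f^2$ times the lower-level movement and an $\alpha^2$-weighted snapshot term. This is the paper's recursion $Q^i\le 4L_f^2Q^{i-1}+2\alpha^2\Upsilon^{i-1}$.

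The feedback into the error sums is then of order $\frac{L_f^2}{\alpha B_1}\cdot\alpha^2=\frac{L_f^2}{m}$. This is absorbed once $\alpha\le B_1L_F^2/(2L_1)$, giving $\frac1T\sum_t\E\|\nabla F(\x_t)-\v_t\|^2\le 2L_1\eta^2D^2m/B_1^2$ for both batch choices.

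Separately, $\Phi$ does not vanish at later snapshots. At $t=\tau>1$ only $\h_\tau^i$ is exact, while $\u_\tau^i$ still carries $(1-\alpha)e_{\tau-1}^i$. You do not need that claim, so drop it.
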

\textbf{Remark:} 
(i) With constant batch size, both the SFO and LMO complexities are $\mathcal{O}(\sqrt{m}\epsilon^{-2})$.
(ii) With batch size $B_1=\sqrt{m}$, the SFO complexity is $\mathcal{O}(\sqrt{m}\epsilon^{-2})$ and the LMO complexity improves to $\mathcal{O}(\epsilon^{-2})$, consistent with projection-free results for single-level finite-sum objectives~\citep{beznosikov2024sarah}.
(iii) As in Section~\ref{pf}, the result can be made parameter-free via a stage-wise schedule, e.g., $\eta_s = m^{-1/4}T_s^{-1/2}$ (constant batch) and $\eta_s = T_s^{-1/2}$ (large batch) for Stage-wise PMFS-v1.

We next consider the gradient mapping criterion $\G(\cdot)$. As in PMVR-v2, we only need to replace Step~11 of Algorithm~\ref{alg:fs} with the inner Frank-Wolfe subroutine (Algorithm~\ref{alg:2}), yielding PMFS-v2. We can obtain the guarantees for gradient mapping below.
\begin{theorem}\label{thm:fsnc2} Under Assumptions~\ref{asm:0},\ref{asm:1},\ref{asm:stochastic2},\ref{asm:stoc_smooth3}, our PMFS-v2 enjoys the following guarantees:
\begin{compactenum}[(i)]
    \item By setting $B_1 = \Omega(1)$, $I=m/B_1$, $N = \Omega(\epsilon^{-1})$, $\eta = \Theta(1/\sqrt{m})$, and $\alpha = \Theta(1/m)$, we can ensure $ \E\left[\G (\x_\tau) \right]\leq \epsilon$ in $\mathcal{O}(\sqrt{m}\epsilon^{-1})$ iterations.
    \item By setting $B_1 = \Omega(\sqrt{m})$, $I=m/B_1$, $N = \Omega(\epsilon^{-1})$, $\eta = \Theta(1)$, and $\alpha = \Theta(1/\sqrt{m})$, we can ensure $\E\left[\G (\x_\tau) \right] \leq \epsilon$ in $\mathcal{O}(\epsilon^{-1})$ iterations.
\end{compactenum}
\end{theorem}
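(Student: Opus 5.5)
We prove Theorem~\ref{thm:fsnc2} by combining three ingredients: the gradient-mapping decomposition (Proposition~2 of~\citet{NEURIPS2022_7e16384b}), the standard Frank-Wolfe analysis of the inner IFW subroutine, and an error recursion for the PMFS estimators. The decomposition gives
\begin{align*}
\G(\x_t) \leq -4\beta \min_{\w\in\X} g(\w,\x_t,\v_t) + 2\Norm{\nabla F(\x_t)-\v_t}^2 .
\end{align*}
So it suffices to bound, on average over $t$, the two quantities $-\min_\w g(\w,\x_t,\v_t)$ and $\E\Norm{\nabla F(\x_t)-\v_t}^2$.

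\textbf{Step 1: the descent inequality.} Since $g(\cdot,\x_t,\v_t)$ is $\beta$-smooth in $\w$, IFW with $\gamma_n=2/(n+1)$ and $N=\Omega(\epsilon^{-1})$ returns $\z_t$ satisfying $g(\z_t,\x_t,\v_t)\le \min_\w g + \mathcal{O}(\beta D^2/N)$. With $\x_{t+1}=\x_t+\eta(\z_t-\x_t)$, $L_F$-smoothness gives
\[
F(\x_{t+1}) \le F(\x_t) + \eta\langle \v_t,\z_t-\x_t\rangle + \eta\langle \nabla F(\x_t)-\v_t,\z_t-\x_t\rangle + \tfrac{L_F\eta^2}{2}\Norm{\z_t-\x_t}^2 .
\]
We write $\langle \v_t,\z_t-\x_t\rangle = g(\z_t,\x_t,\v_t) - \frac{\beta}{2}\Norm{\z_t-\x_t}^2$. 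The cross term is handled by Young's inequality, $\eta\langle \nabla F-\v_t,\z_t-\x_t\rangle\le \frac{\eta}{\beta}\Norm{\nabla F-\v_t}^2+\frac{\eta\beta}{4}\Norm{\z_t-\x_t}^2$. Choosing $\beta\ge 2L_F$ (and $\eta\le1$) makes the quadratic terms nonpositive. Telescoping then yields
\[
\tfrac{1}{T}\sum_t \E[-g^*_t] \lesssim \frac{\Delta_F}{\eta T} + \frac{\beta D^2}{N} + \frac1{\beta T}\sum_t \E\Norm{\nabla F(\x_t)-\v_t}^2 ,
\]
where $g^*_t=\min_\w g(\w,\x_t,\v_t)$. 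We also record $\E\Norm{\x_{t+1}-\x_t}^2=\eta^2\E\Norm{\z_t-\x_t}^2$. Because $\z_t$ approximately minimizes $g$, $\Norm{\z_t-\x_t}^2 \lesssim -g^*_t/\beta + 1/N$. This is the feature that lets the gradient-mapping rate be $\mathcal{O}(1/T)$ instead of paying $D^2$ per step.

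\textbf{Step 2: estimator errors (the main obstacle).} We bound the errors $\Norm{\u_t^i-f_i(\u_t^{i-1})}^2$ and $\Norm{\v_t-\prod_i\nabla f_i(\u_t^{i-1})}^2$ inside each epoch of length $I=m/B_1$. At a snapshot the error is zero, because exact values are computed. Between snapshots, the estimators (\ref{fs:u}) and (\ref{fs:v}) are a hybrid of STORM and SARAH. Using Assumption~\ref{asm:stoc_smooth3} and the finite-sum variance bound for sampling, the variance of $\h_t^i$ is $\frac{L_f^2}{B_1}\Norm{\u_t^{i-1}-\u_\tau^{i-1}}^2$ and the correction term contributes $\frac{L_f^2}{B_1}\Norm{\u_t^{i-1}-\u_{t-1}^{i-1}}^2$. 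This gives a recursion of the form
\[
e_t^i \le (1-\alpha)e_{t-1}^i + \tfrac{2\alpha^2}{B_1}L^2\Norm{\u_t^{i-1}-\u_\tau^{i-1}}^2 + \tfrac{2}{B_1}L^2\Norm{\u_t^{i-1}-\u_{t-1}^{i-1}}^2 .
\]
Proceeding level by level, we bound $\Norm{\u_t^{i-1}-\u_{t-1}^{i-1}}^2$ by $\mathcal{O}(e_{t-1}^{i-1}+\Norm{\x_t-\x_{t-1}}^2)$. The drift $\Norm{\u_t^{i-1}-\u_\tau^{i-1}}^2$ is at most $I\sum_{r=\tau}^{t}\Norm{\u_r-\u_{r-1}}^2$. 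Summing over $t$ with $\alpha=\Theta(B_1/m)$ and $I=m/B_1$ leads to
\[
\sum_t\E\Norm{\nabla F(\x_t)-\v_t}^2 \lesssim \frac{C_K}{\alpha B_1}\sum_t \E\Norm{\x_{t+1}-\x_t}^2 \approx \frac{C_K m\eta^2}{B_1^2}\sum_t\E\Norm{\z_t-\x_t}^2 .
\]
The main technical difficulty is controlling this nested accumulation across the $K$ levels. We will use a Lyapunov potential that sums the errors over levels with geometrically chosen weights, so that each level's bias feeds the next. The drift terms are then absorbed using $I\alpha=\Theta(1)$.

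\textbf{Step 3: closing the argument.} We substitute Step 1's bound $\Norm{\z_t-\x_t}^2\lesssim -g^*_t/\beta+1/N$ into Step 2. In case (i), $B_1=1$ and $\eta=\Theta(1/\sqrt m)$, so $m\eta^2/B_1^2=\Theta(1)$. In case (ii), $B_1=\sqrt m$ and $\eta=\Theta(1)$, so again $m\eta^2/B_1^2=\Theta(1)$. In both cases, shrinking the constant in $\eta$ lets the error term be absorbed into the left side of Step 1. This leaves
\[
\tfrac1T\sum_t \E\G(\x_t) \lesssim \frac{\beta\Delta_F}{\eta T}+\frac{\beta^2D^2}{N} .
\]
Taking $N=\Omega(\epsilon^{-1})$, the bound is at most $\epsilon$ after $T=\mathcal{O}(\sqrt m\epsilon^{-1})$ iterations in case (i) and $T=\mathcal{O}(\epsilon^{-1})$ in case (ii). Finally, since $\tau$ is drawn uniformly from $\{1,\ldots,T\}$, $\E[\G(\x_\tau)]$ equals this average, which completes the proof.
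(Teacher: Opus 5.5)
Your overall architecture matches the paper's. You use the decomposition of $\G$, the $\mathcal{O}(\beta D^2/N)$ inner Frank--Wolfe error, an estimator-error bound of the form $\frac{C}{\alpha B_1}\sum_t\E\Norm{\x_{t+1}-\x_t}^2$ with no variance term, and absorption by taking $\eta\lesssim \beta B_1/\sqrt{m}$. Your absorption device is different but valid. The paper keeps the leftover $-\frac{\eta\beta}{8}\Norm{\z_t-\x_t}^2$ from the descent step and cancels the error term against it directly. You instead discard that term and use $\beta$-strong convexity of $g(\cdot,\x_t,\v_t)$ together with $g(\x_t,\x_t,\v_t)=0$, which gives $\Norm{\z_t-\x_t}^2\le \frac{4}{\beta}(-g_t^*)+\mathcal{O}(D^2/N)$. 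This costs only an extra $\mathcal{O}(D^2/N)$. One small correction: $\beta$ is fixed by the criterion, so impose $\eta\le\beta/(2L_F)$ as the paper does rather than assuming $\beta\ge 2L_F$.

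The genuine gap is in Step 2. You bound the drift $\Norm{\u_t^{i-1}-\u_{t-1}^{i-1}}^2$ by $\mathcal{O}(e_{t-1}^{i-1}+\Norm{\x_t-\x_{t-1}}^2)$, and this is too weak to yield your displayed $\frac{C_K}{\alpha B_1}$ bound when $K\ge 2$.

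Each level's recursion is summed with a factor $1/\alpha$, so the drift enters the next level's error with coefficient $\frac{1}{\alpha B_1}=\frac{m}{B_1^2}$. If the drift contains $e^{i-1}$ with an $\mathcal{O}(1)$ coefficient, these factors multiply across levels, and the gradient error ends up of order $(m/B_1^2)^{K}\eta^2\sum_t\Norm{\z_t-\x_t}^2$. In case (ii) this is harmless because $m/B_1^2=1$. In case (i), with $B_1=1$, it forces $\eta\lesssim m^{-K/2}$, so the $\mathcal{O}(\sqrt{m}\epsilon^{-1})$ iteration count is lost. A Lyapunov potential with geometric weights does not rescue this, because the weights would have to shrink by a factor $\Theta(\alpha B_1)$ per level.

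The missing observation, which the paper's drift lemma uses, is that the estimation error enters the drift only through the $\alpha$-weighted part of the update:
$\u_{t+1}^{i}-\u_t^{i}=\alpha(\h_{t+1}^{i}-\u_t^{i})+(1-\alpha)\frac{1}{B_1}\sum_j\bigl(f_i(\u_{t+1}^{i-1};\xi_{t+1}^{i,j})-f_i(\u_t^{i-1};\xi_{t+1}^{i,j})\bigr)$.
Hence the drift is at most $\mathcal{O}(\alpha^2)\,e_t^{i}$, plus $\alpha^2$ times the snapshot drift, plus $\mathcal{O}(L_f^2)$ times the previous level's drift. After dividing by $\alpha B_1$, the feedback coefficient is $\mathcal{O}(\alpha/B_1)=\mathcal{O}(1/m)$. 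It can therefore be absorbed; this is the paper's condition $\alpha\le B_1L_F^2/(2L_1)$, and it gives an $m$-independent constant.

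A minor point: the error is not zero at a snapshot in PMFS. The estimator $\u_\tau^i$ still carries $(1-\alpha)\u_{\tau-1}^i$, and only $\h_\tau^i$ and $\z_\tau$ are exact. Your global $(1-\alpha)$ recursion does not need a reset, so you can simply drop that claim.
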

\textbf{Remark:} 
(i) With constant batch size, the resulting SFO complexity is $\mathcal{O}(\sqrt{m}\epsilon^{-1})$ and the LMO complexity is $\mathcal{O}(\sqrt{m}\epsilon^{-2})$.
(ii) With batch size $\mathcal{O}(\sqrt{m})$, the SFO complexity remains $\mathcal{O}(\sqrt{m}\epsilon^{-1})$, while the LMO complexity improves to $\mathcal{O}(\epsilon^{-2})$.

\subsection{Results for Convex and Strongly Convex Functions}
We now provide optimal-gap guarantees for convex and strongly convex objectives, using the same stage-wise framework, i.e., Algorithm~\ref{alg:3}. 
At stage $s$, we run Algorithm~\ref{alg:fs} with parameters $(T_s,\eta_s,\alpha_s)$ and initialize it with the previous stage output $(\x_{s-1},\u_{s-1},\v_{s-1})$.
Let $\epsilon_s = \epsilon_1 / 2^{s}$, where $\epsilon_1\ge 0$ is a constant. We obtain the following optimal-gap guarantees.
\begin{theorem}\label{thm:3+}
For convex functions, under Assumptions~\ref{asm:0},\ref{asm:1},\ref{asm:stochastic2},\ref{asm:stoc_smooth3}, our Stage-wise PMFS-v1 enjoys the following guarantees:
\begin{compactenum}[(i)]
    \item Setting $B_1 = \Omega(1)$, $I=m/B_1$,  $\eta_s = \Theta(\epsilon_s/\sqrt{m})$, $\alpha_s = \Theta(1/m)$, and $T_s = \Omega(\sqrt{m}\epsilon_s^{-1})$, we can ensure $\E\left[F(\x_{S})\right]-\min_{\hat{\x}} F(\hat{\x}) \leq \epsilon$ in $\mathcal{O}(\sqrt{m}\epsilon^{-1})$ iterations.
    \item By setting $B_1=\Omega(\sqrt{m})$, $I=m/B_1$, $\eta_s = \Theta(\epsilon_s)$, $\alpha_s = \Theta(1/\sqrt{m})$, and $T_s = \Omega(\epsilon_s^{-1})$, we can ensure $\E\left[F(\x_{S})\right]-\min_{\hat{\x}} F(\hat{\x}) \leq \epsilon$ in $\mathcal{O}(\sqrt{m}\epsilon^{-1})$ iterations.
\end{compactenum}
\end{theorem}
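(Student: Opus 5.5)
We prove Theorem~\ref{thm:3+} by showing that each stage of Stage-wise PMFS-v1 halves the expected optimality gap; the total iteration count then comes from summing $T_s$ over the $S=\mathcal{O}(\log(\epsilon_1/\epsilon))$ stages. Fix a stage $s$ with parameters $(T_s,\eta_s,\alpha_s)$, initialized at $(\x_{s-1},\u_{s-1},\v_{s-1})$, and write $\Delta_t=F(\x_t)-F_\star$ inside the stage.

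\textbf{Step 1: one-step descent.} From the $L_F$-smoothness of $F$ (Remark after Assumption~\ref{asm:1}), the Frank-Wolfe update, the optimality of $\z_t$ for $\v_t$, and convexity applied with $\x^\star$, we get
\begin{align*}
\Delta_{t+1}\le (1-\eta_s)\Delta_t + \eta_s D\Norm{\nabla F(\x_t)-\v_t} + \tfrac{L_F}{2}\eta_s^2 D^2 .
\end{align*}
To obtain this, insert $\x^\star$ into $\langle \v_t,\z_t-\x_t\rangle\le\langle \v_t,\x^\star-\x_t\rangle$ and use $\langle\nabla F(\x_t),\x^\star-\x_t\rangle\le-\Delta_t$.

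\textbf{Step 2: estimator error for the finite-sum scheme.} We need a bound on $\E\Norm{\nabla F(\x_t)-\v_t}^2$, obtained from the recursions used for Theorem~\ref{thm:fsnc}. Define the errors $\E\Norm{\u_t^i-f_i(\u_t^{i-1})}^2$ and $\E\Norm{\v_t-\prod_i\nabla f_i(\u_t^{i-1})}^2$. Because $\h_t^i$ and $\z_t$ are SVRG-type estimators anchored at the snapshot, and we choose $I=m/B_1$, their variance is at most $\mathcal{O}(I\eta_s^2D^2/B_1)$ times an average-smoothness constant (Assumption~\ref{asm:stoc_smooth3}). Each error then satisfies a recursion of the form
\begin{align*}
e_{t+1}\le (1-\alpha_s)e_t + \mathcal{O}\bigl(\alpha_s^2 I\eta_s^2/B_1 + \eta_s^2/B_1\bigr)+\text{(lower-level errors)} ,
\end{align*}
and unrolling it across levels gives a stationary bound of order $\eta_s^2(\alpha_s I+1/\alpha_s)/B_1$. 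With $\alpha_s=B_1/m$ and $I=m/B_1$ we have $\alpha_s I=1$ and $1/\alpha_s=m/B_1$, so the stationary error is $\mathcal{O}(\eta_s^2 m/B_1^2)$. The Lipschitz gap between $\prod_i\nabla f_i(\u_t^{i-1})$ and $\nabla F(\x_t)$ is controlled by the $\u$-errors. The initial error of the stage, inherited from the previous stage, decays like $(1-\alpha_s)^t$; a full snapshot is computed at the stage start, which makes the initial error small, and we plan to rely on that.

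\textbf{Step 3: per-stage halving and complexity.} Combining Steps 1 and 2 via Jensen's inequality, the recursion unrolls to
\begin{align*}
\E\Delta_{T_s}\le e^{-\eta_s T_s}\E\Delta_0 + \mathcal{O}\bigl(D\eta_s\sqrt{m}/B_1 + L_F\eta_s D^2\bigr).
\end{align*}
\emph{Case (i), $B_1=1$.} Choose $\eta_s=c\,\epsilon_s/\sqrt{m}$ with $c$ small, so the additive term is at most $\epsilon_s/4$. Take $T_s=\Theta(\sqrt m\,\epsilon_s^{-1})$, so that $\eta_sT_s$ is a constant large enough to make $e^{-\eta_sT_s}\le 1/4$. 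Inductively assuming $\E\Delta_0\le\epsilon_{s-1}$, this gives $\E\Delta_{T_s}\le\epsilon_s$.

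\emph{Case (ii), $B_1=\sqrt m$.} The term $\sqrt m/B_1$ becomes constant, so $\eta_s=\Theta(\epsilon_s)$ and $T_s=\Theta(\epsilon_s^{-1})$ suffice.

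In both cases $\sum_s T_s$ is a geometric sum dominated by the last stage. This gives $\mathcal{O}(\sqrt m\epsilon^{-1})$ iterations in case (i). In case (ii) it gives $\mathcal{O}(\epsilon^{-1})$ iterations with $\sqrt m$ samples each, i.e.\ $\mathcal{O}(\sqrt m\epsilon^{-1})$ SFO calls. This is the count the theorem states, read as sample-weighted iterations, consistent with Table~\ref{t2}.

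\textbf{Main obstacle.} The hard step is Step 2, the nested multi-level error propagation under the snapshot estimator. The difficulty is showing that the level-$i$ error feeds into level $i+1$ only through terms of order $\eta_s^2$, which requires the average-smoothness constants $L_f,L_J$, and doing so without $\alpha_s$ needing to shrink with $\epsilon_s$. I would first reuse the per-level lemmas from the proof of Theorem~\ref{thm:fsnc} verbatim, and substitute stage-dependent $\eta_s$ for $\eta$.
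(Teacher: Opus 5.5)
Your proposal is correct and takes essentially the paper's route. It uses the same three ingredients: the convex Frank--Wolfe descent inequality; the snapshot-based error bound $\frac{1}{T}\sum_{t}\E\Norm{\nabla F(\x_t)-\v_t}^2\le \frac{2L_1\eta^2D^2}{\alpha B_1}=\mathcal{O}(\eta_s^2 m/B_1^2)$, which has no $\sigma^2$ floor; and a stage-halving induction with $\eta_sT_s=\Theta(1)$.

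The differences are cosmetic:
\begin{itemize}
\item You contract the last iterate via $e^{-\eta_sT_s}$, whereas the paper telescopes to $\E[F(\x_{s-1})-F_\star]/(\eta_sT_s)$ for the uniformly sampled output $\x_\tau$ of Algorithm~\ref{alg:fs}. That random iterate is what gets warm-started, so either average your per-$t$ bounds or change the output rule to the last iterate.
\item The snapshot drift satisfies $\Norm{\u_t^{i-1}-\u_\tau^{i-1}}^2\le I^2(\cdot)$ rather than $I(\cdot)$. This is harmless because $\alpha_s^2I^2=1$.
\end{itemize}
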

\textbf{Remark:} 
(i) With constant batch size, both SFO and LMO complexities are $\mathcal{O}(\sqrt{m}\epsilon^{-1})$.
(ii) With batch size $\mathcal{O}(\sqrt{m})$, the SFO complexity remains $\mathcal{O}(\sqrt{m}\epsilon^{-1})$ and the LMO complexity improves to $\mathcal{O}(\epsilon^{-1})$.

For strongly convex objectives, we retain the stage-wise design of Algorithm~\ref{alg:3} with the PMFS-v2 variant and set $\beta=\lambda/2$. The resulting optimal-gap guarantees are as follows.
\begin{theorem}\label{thm:4+}
For strongly convex function, under Assumptions~\ref{asm:0},\ref{asm:1},\ref{asm:stochastic2},\ref{asm:stoc_smooth3}, our proposed Stage-wise PMFS-v2 algorithm enjoys the following guarantees:
\begin{compactenum}[(i)]
    \item Setting $B_1 = \Omega(1)$,  $I=m/B_1$, $N=\Omega(\lambda \epsilon^{-1})$,  $\eta_s = \Theta(\lambda /\sqrt{m})$, $\alpha_s = \Theta(1/m)$, and $T_s = \Omega(\sqrt{m}/\lambda)$, we ensure $\E\left[F(\x_{S})\right]-\min_{\hat{\x}} F(\hat{\x}) \leq \epsilon$ in $\mathcal{O}(\log (\frac{\epsilon_1}{\epsilon}))$ stages.
    \item By setting $B_1 = \Omega(\sqrt{m}/\lambda)$, $I=m/B_1$, $N= \Omega(\lambda \epsilon^{-1})$,  $\eta_s = \Theta(1)$, $\alpha_s = \Theta(1/(\lambda\sqrt{m}))$, and $T_s = \Omega(1)$, we can ensure $\E\left[F(\x_{S})\right]-\min_{\hat{\x}} F(\hat{\x}) \leq \epsilon$ within $\mathcal{O}(\log (\frac{\epsilon_1}{\epsilon}))$ stages.
\end{compactenum}
\end{theorem}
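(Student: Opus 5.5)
The plan is a stage-wise contraction argument. We show that each stage of Stage-wise PMFS-v2 halves the expected optimality gap, $\E[F(\x_s)-F_\star]\le \tfrac12\E[F(\x_{s-1})-F_\star]$, up to an additive term of order $\epsilon_s$. With $S=\mathcal{O}(\log(\epsilon_1/\epsilon))$ stages this gives the claim.

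Within a stage we use a one-step descent inequality. By $L_F$-smoothness (the Remark after Assumption~\ref{asm:1}),
\begin{align*}
F(\x_{t+1})\le F(\x_t)+\eta\langle \nabla F(\x_t),\z_t-\x_t\rangle+\tfrac{L_F\eta^2}{2}\|\z_t-\x_t\|^2 .
\end{align*}
We write $\nabla F(\x_t)=\v_t+(\nabla F(\x_t)-\v_t)$ and add and subtract $\tfrac{\lambda}{4}\|\z_t-\x_t\|^2$. This gives $\langle \v_t,\z_t-\x_t\rangle+\tfrac{\lambda}{4}\|\z_t-\x_t\|^2=g(\z_t,\x_t,\v_t)-g(\x_t,\x_t,\v_t)$.

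\begin{compactenum}
\item \emph{Inner Frank-Wolfe error.} The standard Jaggi bound for the IFW subroutine, with $\beta=\lambda/2$ and $N$ inner steps, gives $g(\z_t)-\min_\w g(\w)\le \mathcal{O}(\lambda D^2/N)$. This is $\mathcal{O}(\epsilon)$ once $N=\Omega(\lambda\epsilon^{-1})$.
\item \emph{Comparison with $\x_\star$ via strong convexity.} Compare $\min_\w g(\w)$ against $\w=\x_\star$:
\begin{align*}
\langle \v_t,\x_\star-\x_t\rangle+\tfrac{\lambda}{4}\|\x_\star-\x_t\|^2 \le -(F(\x_t)-F_\star)-\tfrac{\lambda}{4}\|\x_\star-\x_t\|^2+\langle \v_t-\nabla F(\x_t),\x_\star-\x_t\rangle .
\end{align*}
\item \emph{Absorbing the estimation error.} Young's inequality absorbs the cross term into $\tfrac{\lambda}{4}\|\x_\star-\x_t\|^2$, leaving $\tfrac{1}{\lambda}\|\v_t-\nabla F(\x_t)\|^2$. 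The same treatment applies to $\langle\nabla F(\x_t)-\v_t,\z_t-\x_t\rangle$, which is absorbed into the $\tfrac{\lambda}{4}\|\z_t-\x_t\|^2$ slack.
\end{compactenum}
Choosing $\eta\le\mathcal{O}(\lambda/L_F)$ kills the quadratic smoothness term, and we obtain
\begin{align*}
\E[F(\x_{t+1})-F_\star]\le(1-\eta)\E[F(\x_t)-F_\star]+\tfrac{c\,\eta}{\lambda}\E\|\v_t-\nabla F(\x_t)\|^2+\eta\,\mathcal{O}(\lambda D^2/N).
\end{align*}

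The remaining ingredient is the finite-sum estimator error bound, which we reuse from the analysis behind Theorem~\ref{thm:fsnc2}. With $I=m/B_1$ and the SARAH/STORM-type recursions~(\ref{fs:u}) and~(\ref{fs:v}), the errors $\E\|\u_t^i-f_i(\u_t^{i-1})\|^2$ and $\E\|\v_t-\nabla F(\x_t)\|^2$ satisfy a coupled recursion of the form
\begin{align*}
\delta_{t+1}\le(1-\alpha)\delta_t+\mathcal{O}\Big(\tfrac{\eta^2 D^2}{B_1}\Big)+\mathcal{O}\Big(\tfrac{\alpha^2 I\eta^2D^2}{B_1}\Big).
\end{align*}
The second term comes from $\h_t^i$ and $\z_t$, whose variance is controlled by the drift since the last snapshot. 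That drift is at most $I\eta^2D^2$ by Assumption~\ref{asm:stoc_smooth3}, since $\|\x_{t+1}-\x_t\|\le\eta D$. The level-wise errors are propagated through $L_f$ and $L_J$ exactly as in the non-convex finite-sum proof. Snapshot iterations reset the error to zero, and $\delta_1=0$ since the first iteration computes exact quantities.

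We then combine the two recursions with a Lyapunov function $\Phi_t=F(\x_t)-F_\star+\tfrac{C}{\lambda}\delta_t$, where $C$ is chosen so that the $\delta$ contribution is dominated, which requires $\alpha\gtrsim\eta$ up to constants. Unrolling over $T_s$ steps gives
\begin{align*}
\E\Phi_{T_s}\le(1-\eta)^{T_s}\Phi_0+\mathcal{O}\Big(\tfrac{\eta D^2}{\lambda\alpha B_1}\Big)+\mathcal{O}(\lambda D^2/N).
\end{align*}
Since the first snapshot resets $\delta$, the stage-initial error is also controlled.

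For case (i): with $\eta_s=\Theta(\lambda/\sqrt m)$, $\alpha_s=\Theta(1/m)$, $I=m$, and $T_s=\Omega(\sqrt m/\lambda)$, the contraction factor is $(1-\eta)^{T_s}\le 1/4$, and the bias terms are tuned to $\mathcal{O}(\epsilon_s)$. For case (ii): with $B_1=\sqrt m/\lambda$, $\eta=\Theta(1)$, and $T_s=\Omega(1)$, the same arithmetic applies.

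The main obstacle is the variance term in case (i). Naively, $\tfrac{\eta D^2}{\lambda\alpha B_1}=\tfrac{\lambda m}{\lambda\sqrt m}=\sqrt m$, which is not small. This means the crude bound $\|\x_{t+1}-\x_t\|\le\eta D$ is too lossy. To fix this, we will instead bound $\|\z_t-\x_t\|^2$ by $\mathcal{O}(\|\x_t-\x_\star\|^2+\tfrac{1}{\lambda^2}\|\v_t-\nabla F\|^2+\epsilon/\lambda)$. This uses strong convexity of $g$ together with optimality of $\x_\star$, and then $\|\x_t-\x_\star\|^2\le\tfrac{2}{\lambda}(F(\x_t)-F_\star)$. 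The variance term becomes proportional to the current gap and can be absorbed into the contraction. We expect to need this self-bounding step, and the corresponding tuning of the constants hidden in $\Theta(\cdot)$, to close the argument.
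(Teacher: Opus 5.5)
Your proposal has a genuine gap: both the step that closes the argument and the per-iterate Lyapunov framework fail to give the stated parameters.

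\textbf{The dropped negative term.} Your one-step inequality is essentially the paper's. The argument breaks where you discard the leftover quadratic term. After Young's inequality and $\eta\lesssim\lambda/L_F$ you can (and should) keep $-\Theta(\eta\lambda)\|\z_t-\x_t\|^2$ on the right; the paper keeps $-\tfrac{\eta\lambda}{24}\|\z_t-\x_t\|^2$. In the finite-sum setting this term is exactly what pays for the estimator error. Because $\h_t^i$ and the snapshot gradient estimator have no additive variance floor, the stage-averaged error obeys $\tfrac1T\sum_t\E\|\v_t-\nabla F(\x_t)\|^2\le\tfrac{2L_1\eta^2}{\alpha B_1}\cdot\tfrac1T\sum_t\|\z_t-\x_t\|^2$. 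The error contribution $\tfrac{4}{\lambda}(\cdot)$ therefore cancels against the negative term once $\tfrac{8L_1\eta^2}{\lambda\alpha B_1}\le\tfrac{\lambda}{24}$, i.e.\ $\eta\le\lambda B_1/\sqrt{192L_1m}$ with $\alpha=B_1/m$. That single condition is the source of $\eta_s=\Theta(\lambda/\sqrt m)$ (resp.\ $\Theta(1)$ for $B_1=\sqrt m/\lambda$). With $T_s=2/\eta_s$ and a uniformly random stage output it gives $\E[F(\x_s)-F_\star]\le\tfrac12\E[F(\x_{s-1})-F_\star]+\lambda D^2/N$.

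\textbf{Why your replacement loses a power of $\lambda$.} Self-bounding $\|\z_t-\x_t\|^2$ through $\|\x_t-\x_\star\|^2$ cannot deliver these parameters. Since $\z_t\approx\Pi_{\X}(\x_t-\tfrac{2}{\lambda}\v_t)$, the constant hidden in your $\mathcal{O}(\|\x_t-\x_\star\|^2)$ is $\Theta(L_F^2/\lambda^2)$; for an interior optimum $\|\z_t-\x_t\|\approx\tfrac2\lambda\|\nabla F(\x_t)\|$. You therefore get $\|\z_t-\x_t\|^2\lesssim\tfrac{L_F^2}{\lambda^3}(F(\x_t)-F_\star)$. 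Absorbing $\tfrac{1}{\lambda}\cdot\tfrac{L_1\eta^2}{\alpha B_1}\cdot\tfrac{L_F^2}{\lambda^3}$ into the contraction forces $\eta\lesssim\lambda^2\sqrt{\alpha B_1}$. In case (i) that means $\eta_s=\mathcal{O}(\lambda^2/\sqrt m)$ and $T_s=\Omega(\sqrt m/\lambda^2)$. Even the sharpest bound of this type, $\|\z_t-\x_t\|^2\lesssim\tfrac{L_F}{\lambda^2}(F(\x_t)-F_\star)+\dots$ (tight for quadratics), only yields $\eta\lesssim\lambda^{3/2}/\sqrt m$. This is a loss of a power of $\lambda$, not a constant hidden in $\Theta(\cdot)$.

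\textbf{The per-iterate Lyapunov function.} Your $\Phi_t=F(\x_t)-F_\star+\tfrac{C}{\lambda}\delta_t$ contracting at rate $(1-\eta)$ requires $\alpha\gtrsim\eta$, as you note. The stated parameters violate this whenever $\lambda\sqrt m\gg1$:
\begin{itemize}
\item in case (i), $\alpha=\Theta(1/m)$ versus $\eta=\Theta(\lambda/\sqrt m)$;
\item in case (ii), $\alpha=\Theta(1/(\lambda\sqrt m))$ versus $\eta=\Theta(1)$.
\end{itemize}
Otherwise $\Phi$ only contracts at rate $\alpha$, and you would need $T_s\gtrsim1/\alpha$, e.g.\ $T_s\gtrsim m$ rather than $\sqrt m/\lambda$. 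The paper never relates $\alpha$ and $\eta$. It sums the error recursion over the stage, $\sum_t\delta_t\le\tfrac1\alpha\sum_t(\text{drift terms})$, with the stage started from exact full-batch values. It then sums the descent inequality to bound the averaged gap by $\tfrac{F(\x_1)-F_\star}{\eta T}+\dots$ instead of unrolling $(1-\eta)^{T_s}$.

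A minor point: recursion~(\ref{fs:u}) does not reset the error to zero at snapshots; only $\h_\tau^i$ becomes exact. This is harmless for the summed bound, but it is not what you assert.
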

\textbf{Remark:}  
(i) With constant batch size, the SFO complexity is $\mathcal{O}(\sqrt{m}/\lambda\log(1/\epsilon))$ and the LMO complexity is $\mathcal{O}(\sqrt{m}/\epsilon\log(1/\epsilon))$. (ii) With a large batch size, the SFO complexity remains of the same order, while the LMO complexity improves to $\mathcal{O}(\lambda/\epsilon\log(1/\epsilon))$, consistent with single-level strongly convex projection-free results~\citep{doi:10.1137/140992382}.

\section{Experiments}\label{sec:4}
In this section, we evaluate the proposed methods on three benchmark problems. We compare against existing stochastic multi-level algorithms, including A-TSCGD~\citep{Yang2019MultilevelSG}, NLASG~\citep{balasubramanian2020stochastic}, Nested-SPIDER~\citep{Zhang2021MultiLevelCS}, SCSC~\citep{chen2021solving}, and SMVR~\citep{jiang2022optimal}.
For our methods, we tune the momentum parameter $\alpha$ over $\{0.01,0.03,0.05,0.1,0.3\}$ and choose the inner-loop length $N$ for PMVR-v2 from $\{10,50,100\}$.
For the remaining methods, we try the hyperparameters suggested in their original papers when available, and also perform a grid search to select the best parameters.
The learning rates are tuned over $\{0.001,0.005, 0.01,0.05,0.1\}$. All experiments are conducted on a personal laptop.

\subsection{Matrix Optimization with Low-Rank Constraints}
Following prior work on multi-level optimization~\citep{NEURIPS2022_7e16384b}, we conduct experiments on matrix optimization with low-rank constraints. Specifically, we consider the matrix-valued single-index model~\citep{pmlr-v70-yang17a} with a low-rank constraint:
\begin{gather*}
    y=\left|\langle A, B^{\star}\rangle_{F}\right|^{2} + \epsilon, \quad
    \text{rank}(B^{\star}) \leq s,
\end{gather*}
where $A,B \in \R^{m\times n}$, $\epsilon \sim \mathcal{N}\left(0, \sigma^{2}\right),\langle\cdot, \cdot\rangle_{F}$ denotes the Frobenius inner product, and $s$ is a positive integer smaller than both $m$ and $n$. 
To recover a low-rank matrix $B^\star$ given $A$ and $y$, we minimize the mean squared loss function under a nuclear norm constraint:
\begin{align*}
    \min_{B} F(B)&=\E\left[\left(y-\left|\langle A, B\rangle_{F}\right|^{2}\right)^{2}\right] \quad \text{s.t. } \ \|B\|_{\star} \leq s.
\end{align*}
In this problem, projection onto the nuclear-norm ball typically requires a full singular value decomposition~(SVD), whereas each Frank-Wolfe linear minimization step only requires computing the singular vector pair corresponding to the largest singular value, which is substantially cheaper~\citep{pmlr-v28-jaggi13}.

In line with the setup in~\citet{NEURIPS2022_7e16384b}, we set $B^{\star}= v v^{\top} /\left\|v v^{\top}\right\|_{\star}$, where $v$ is randomly sampled from the normal distribution via \texttt{numpy.random.normal()}. We also generate matrix $A$ by $A=I+E$, with $E_{i, j} \stackrel{i . i . d .}{\sim} \mathcal{N}\left(0, 0.3\right)$. 
Figure~\ref{fig:2} reports the Frank-Wolfe gap and the gradient mapping versus running time. All curves are averaged over 50 runs. 
As can be seen, our methods demonstrate a more rapid decrease in both criteria compared to other approaches, especially for the gradient mapping criterion, demonstrating the superiority of our proposed methods.
\begin{figure*}[t]
	\begin{center}
\includegraphics[width=0.37\textwidth]{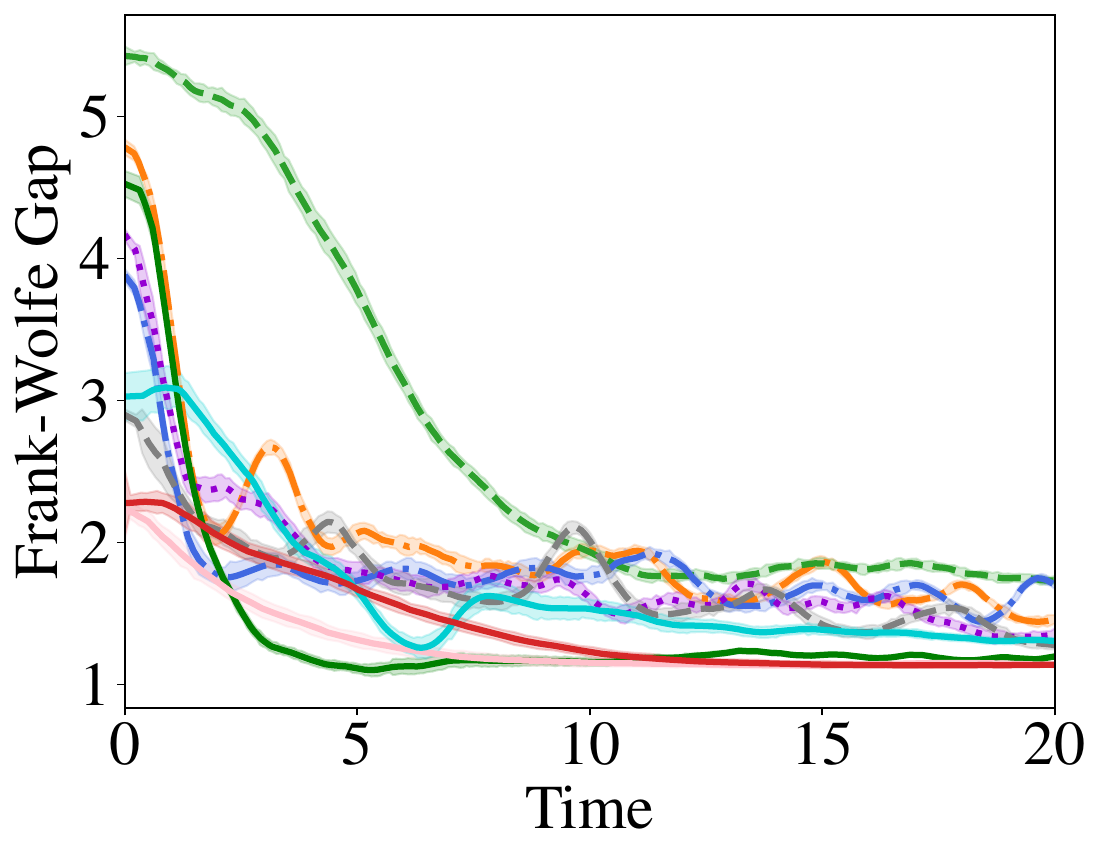}
			\hspace{0.5in}
   \includegraphics[width=0.37\textwidth]{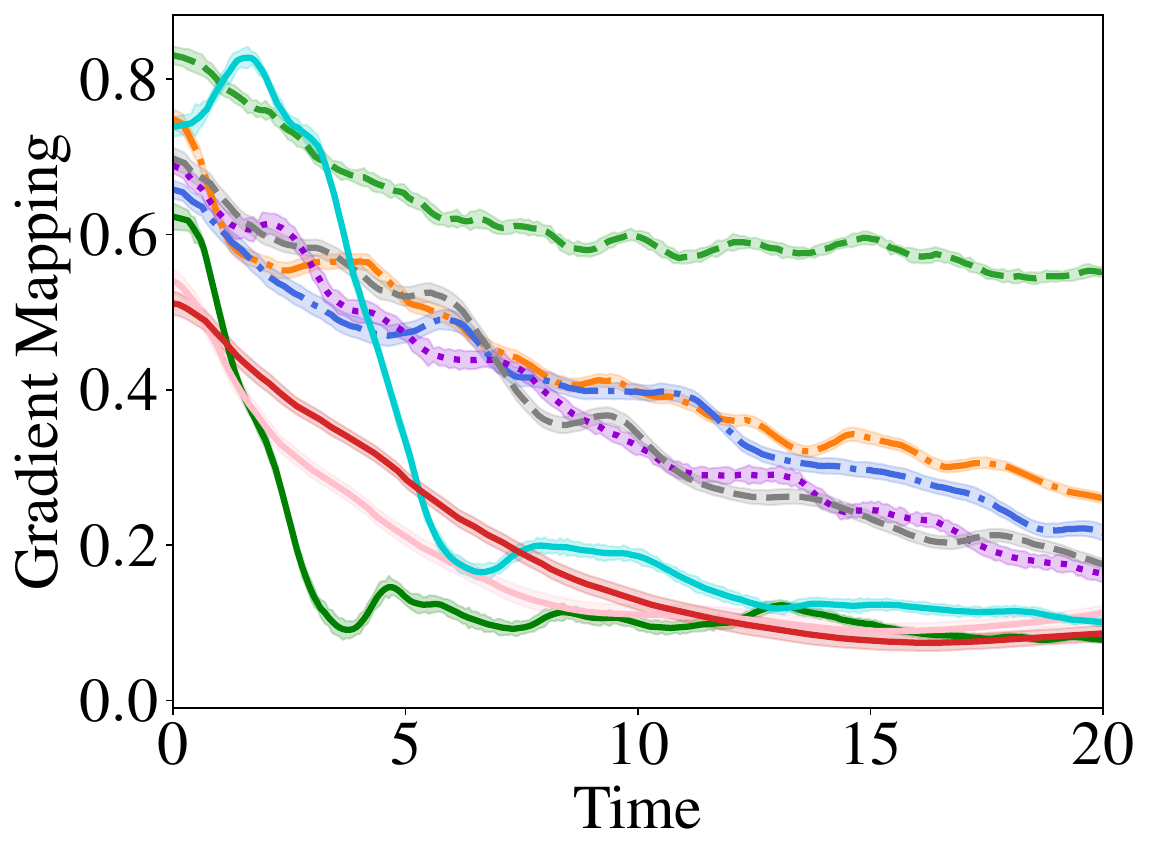}
   \vspace{-0.1in}
		\subfigure{
\includegraphics[width=0.99\textwidth]{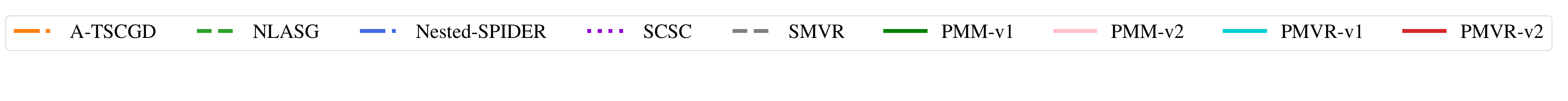}
		}
        \vspace{-0.35in}
		\caption{Results for matrix optimization with low-rank constraints.}
        \vspace{-0.2in}
		\label{fig:2}
	\end{center}
\end{figure*}
\subsection{Mean-variance Risk-averse Optimization}
We next consider mean-variance risk-averse portfolio optimization~\citep{Shapiro2009LecturesOS}, a commonly used benchmark for multi-level methods~\citep{Yang2019MultilevelSG,balasubramanian2020stochastic,Zhang2021MultiLevelCS,chen2021solving}. 
\begin{figure*}[t]
	\begin{center}
		\subfigure{
			\includegraphics[width=0.3\textwidth]{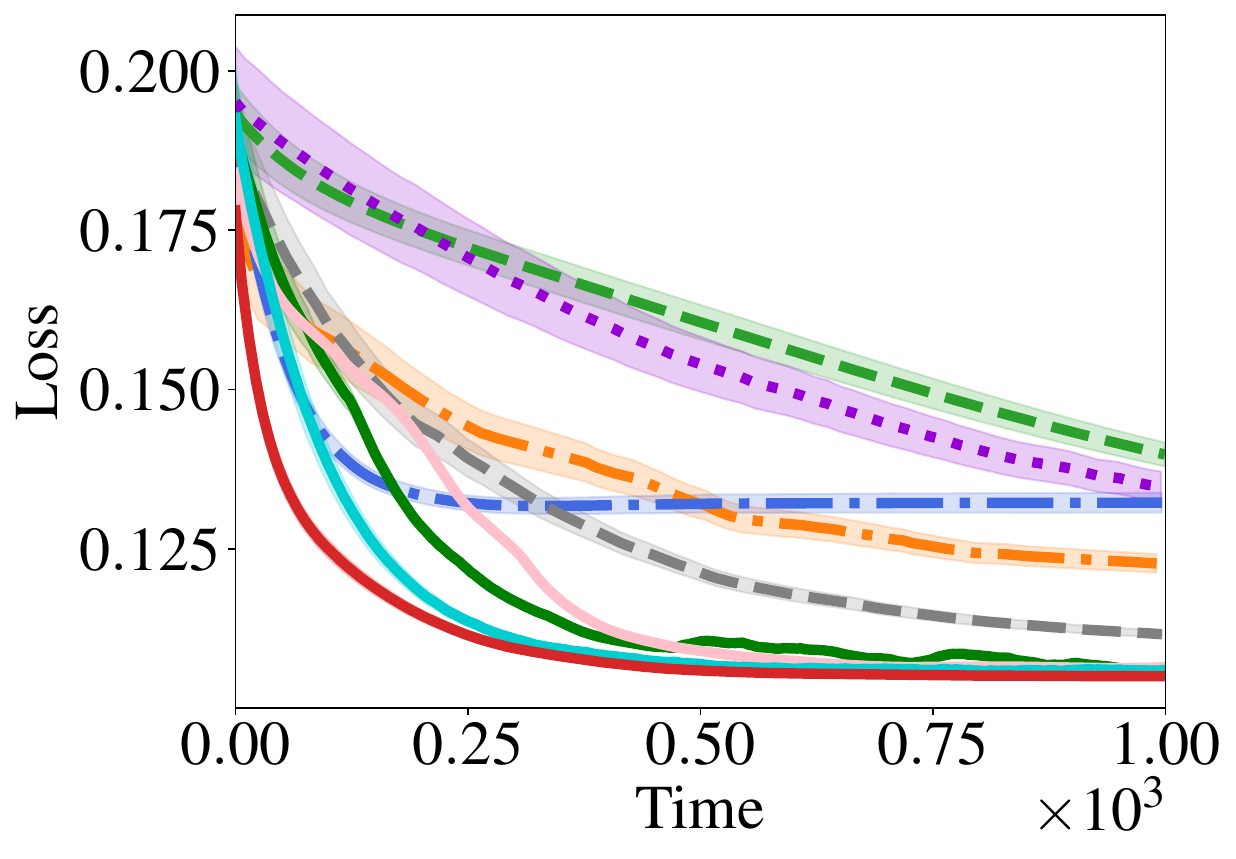}}
   \setcounter{subfigure}{0}
   \subfigure[ Industry-10]{
			\includegraphics[width=0.3\textwidth]{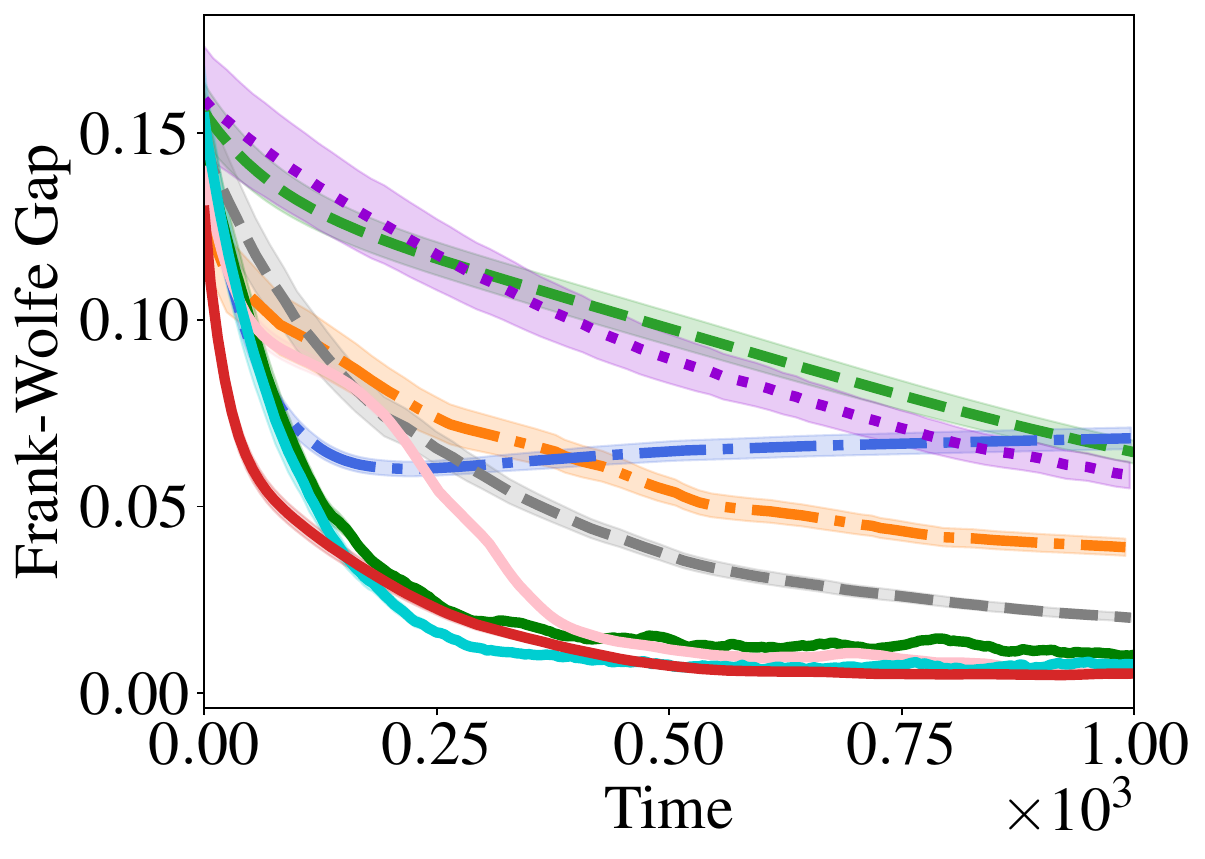}}
   \subfigure{
			\includegraphics[width=0.3\textwidth]{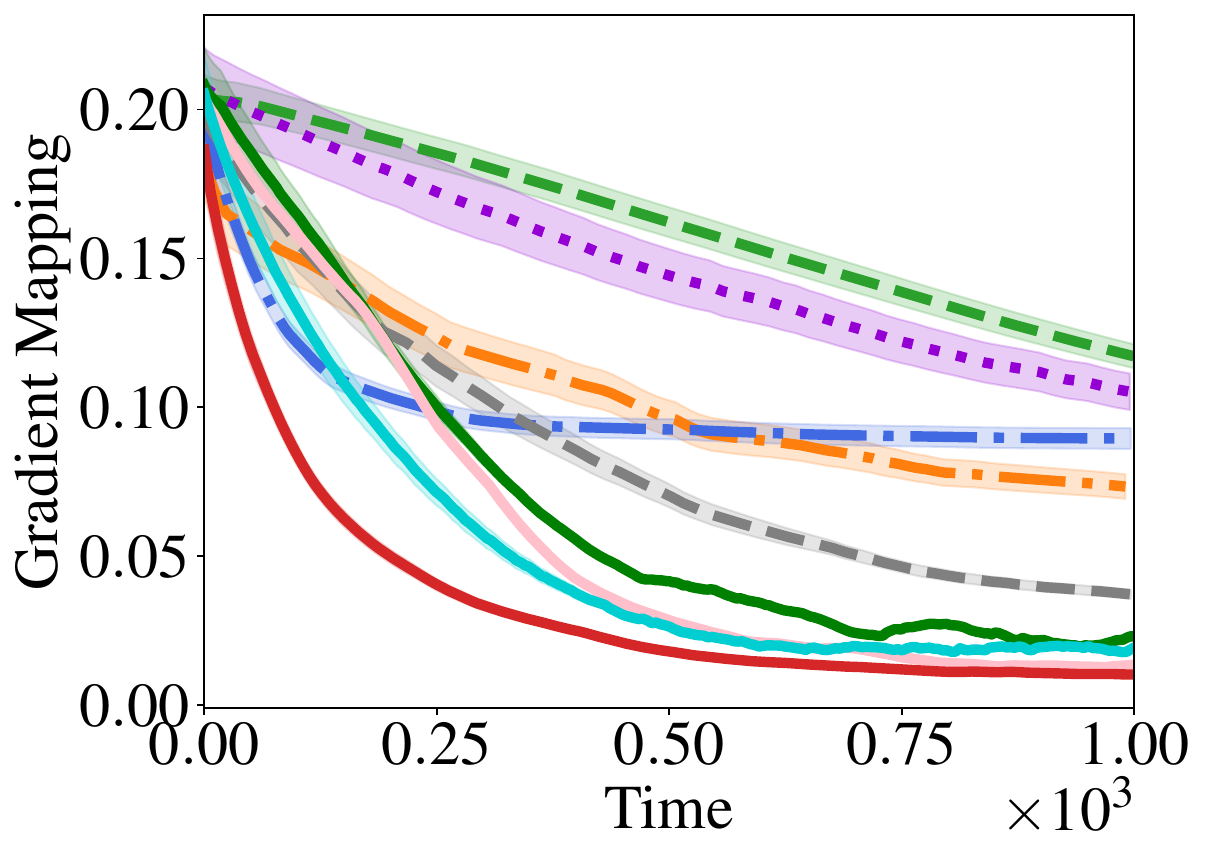}
		}
		%\vskip -0.05in
  \setcounter{subfigure}{0}
		\subfigure{
			\includegraphics[width=0.3\textwidth]{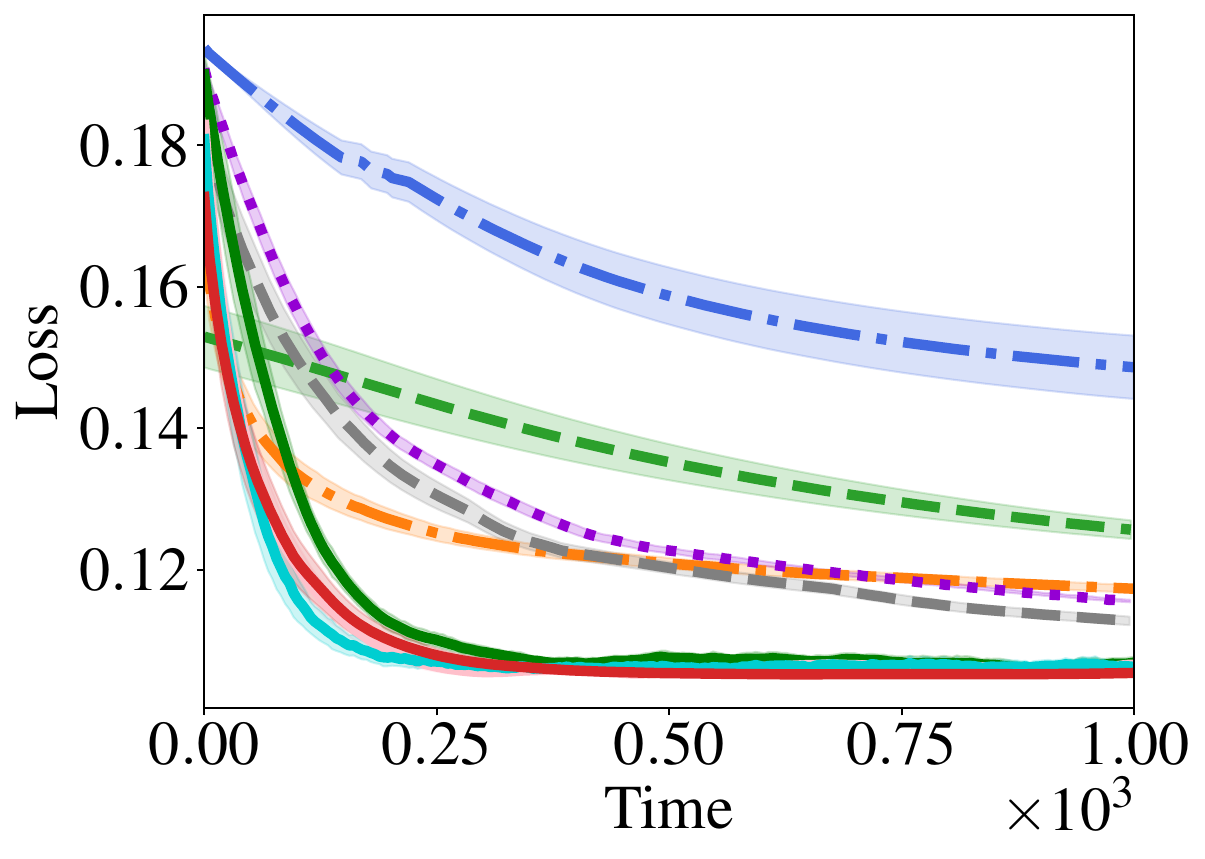}}
   \subfigure[ Industry-12]{
			\includegraphics[width=0.3\textwidth]{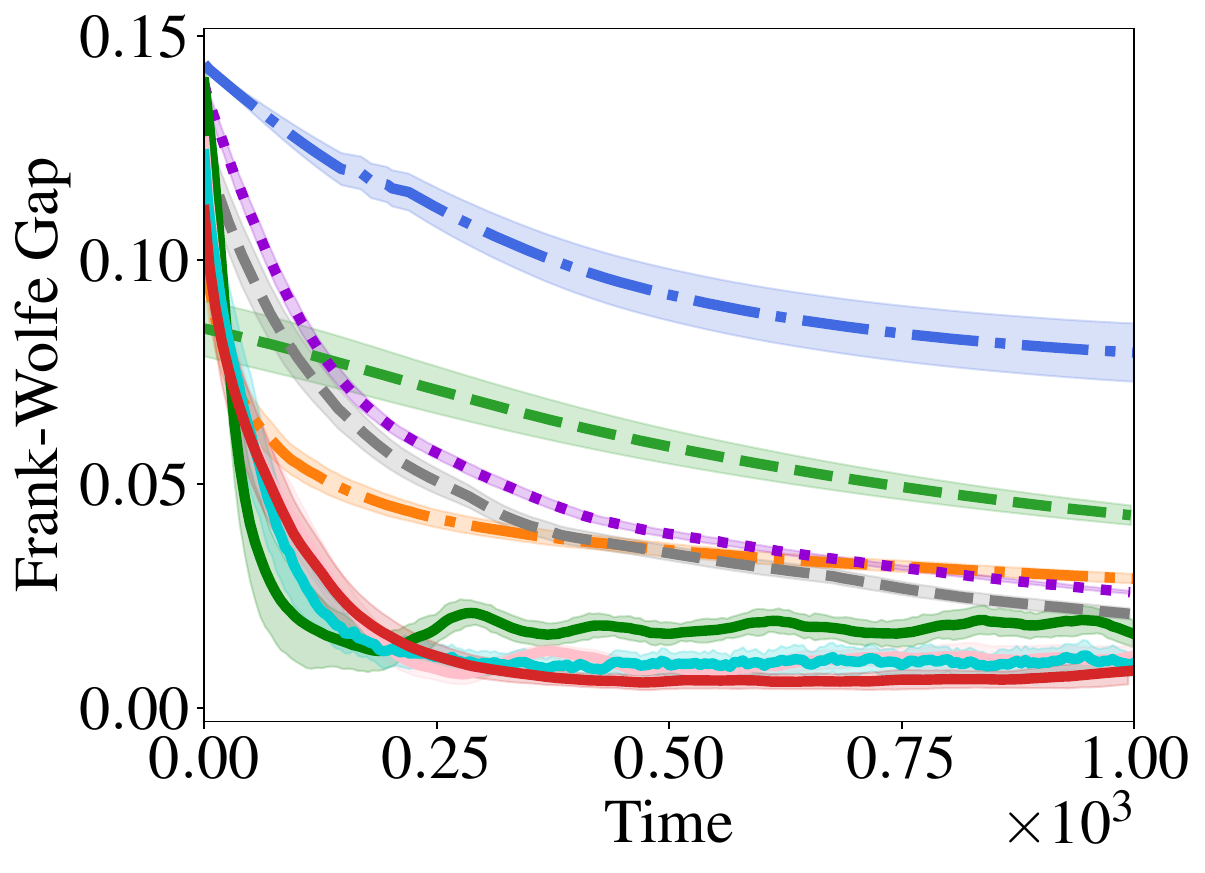}}
   \subfigure{
			\includegraphics[width=0.3\textwidth]{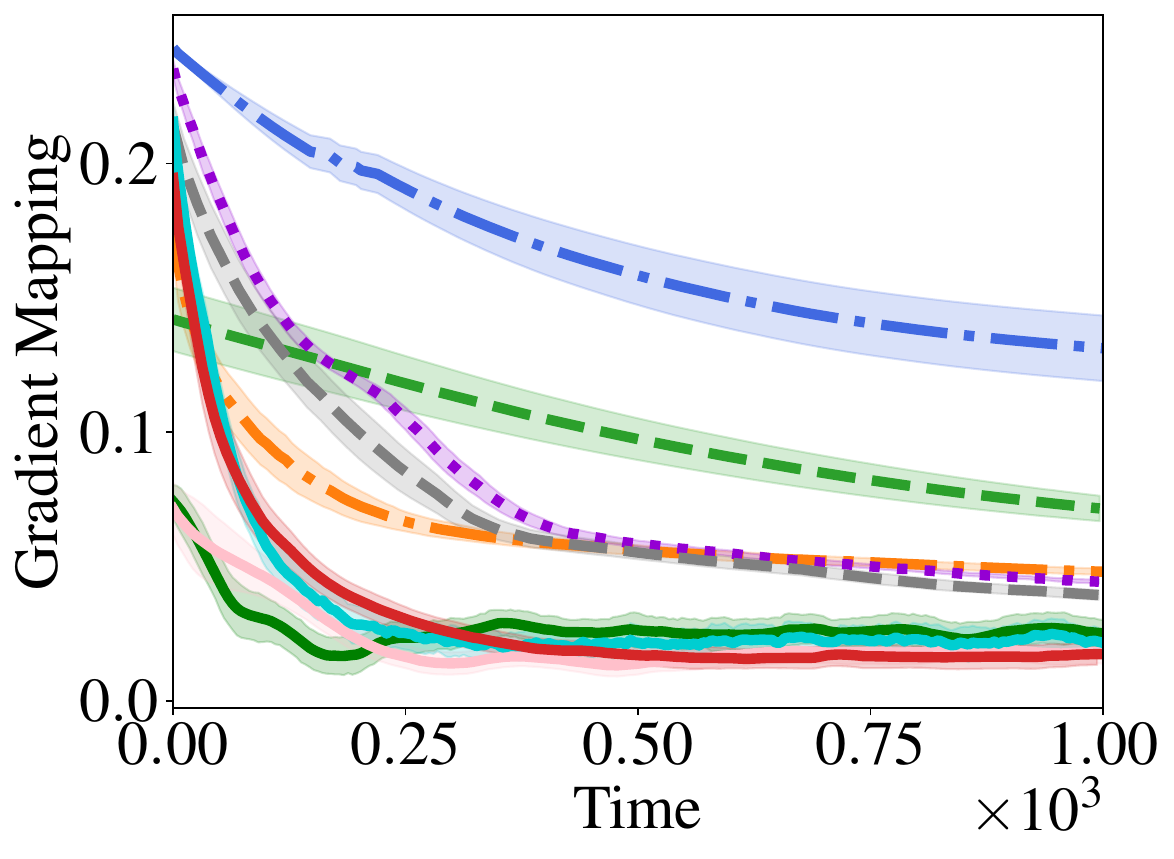}
		}

   \setcounter{subfigure}{2}
     \includegraphics[width=0.3\textwidth]{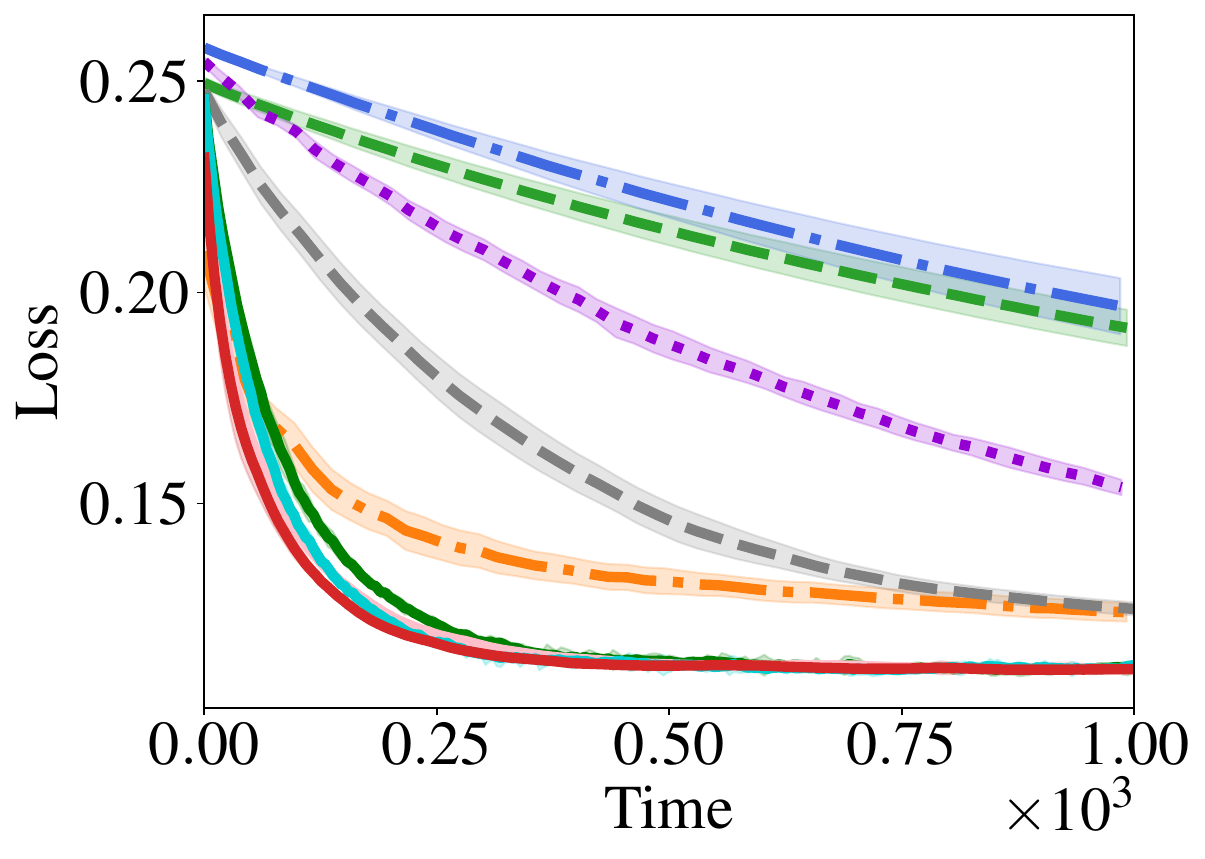}
   \subfigure[ Industry-17]{
			\includegraphics[width=0.3\textwidth]{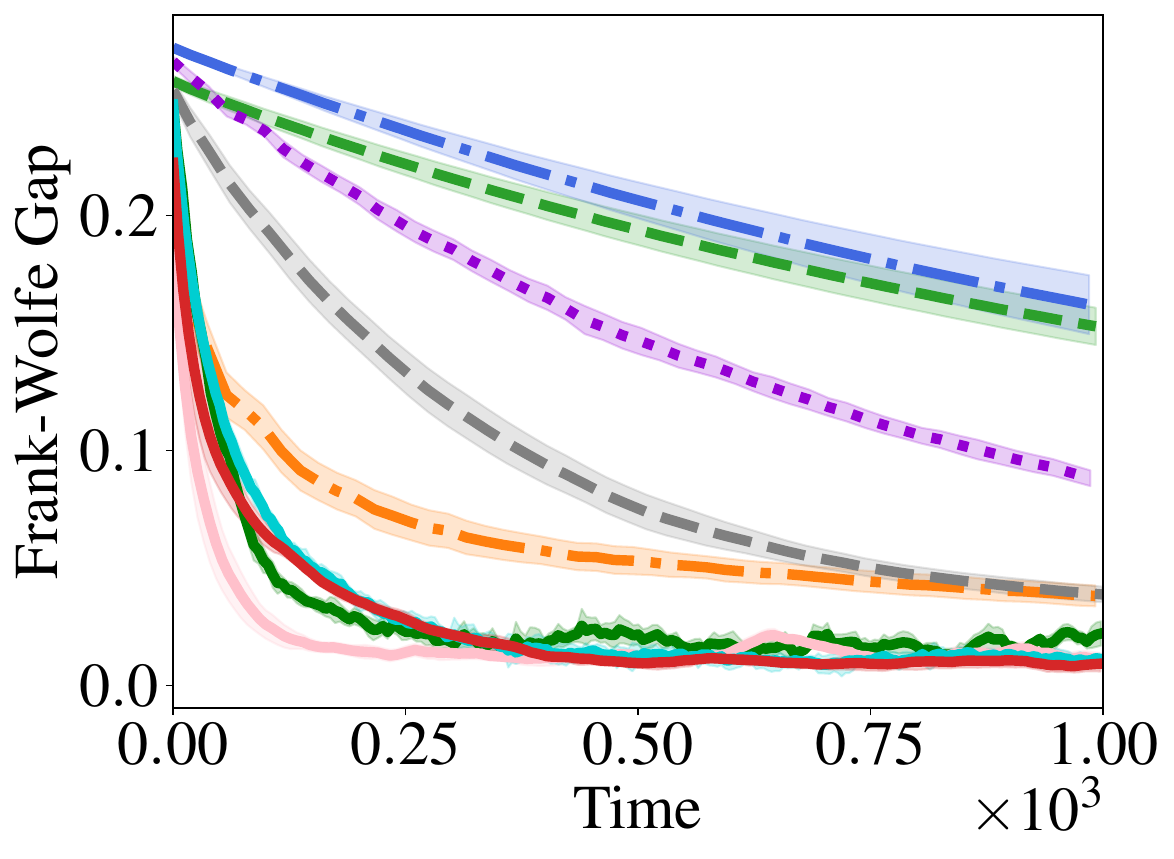}}
   \subfigure{
			\includegraphics[width=0.3\textwidth]{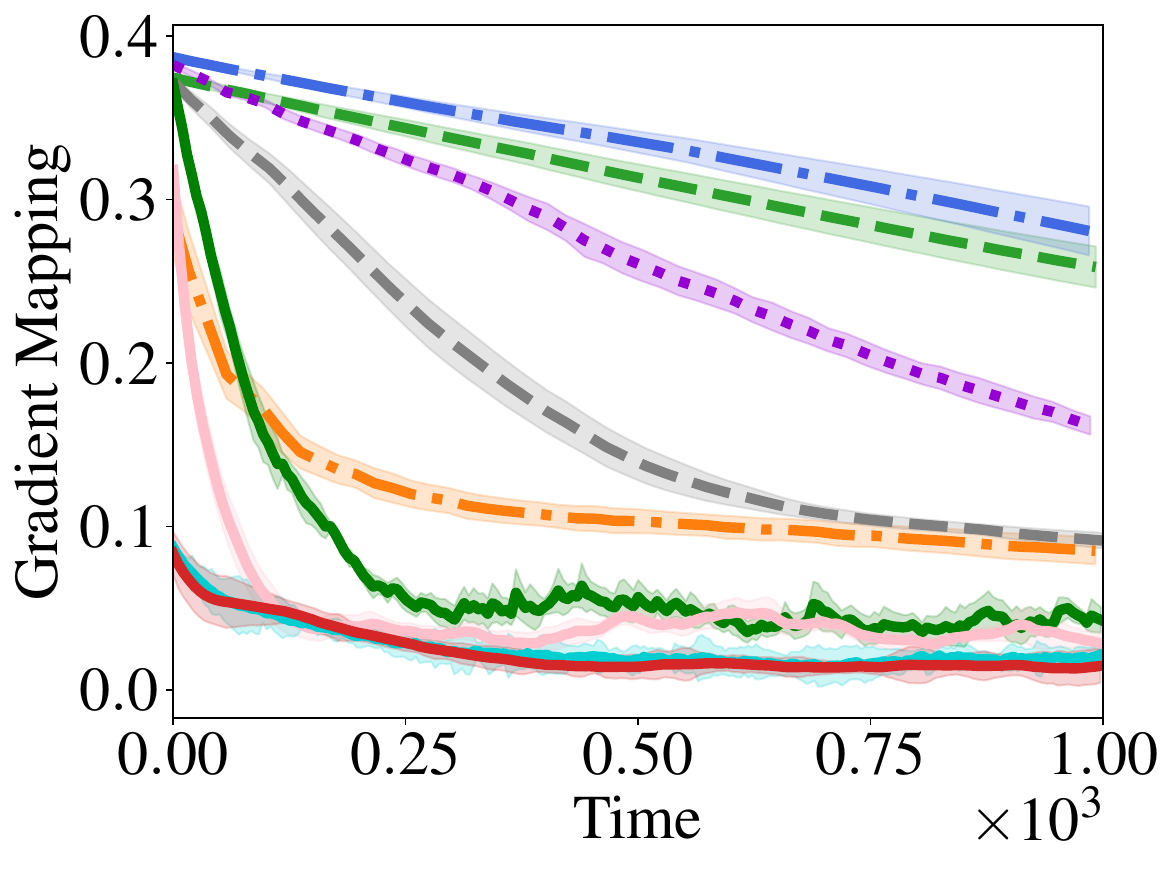}
		}
         \vspace{-0.1in}
		\subfigure{
			\includegraphics[width=0.99\textwidth]{./legend.pdf}
		}
  \vspace{-0.35in}
		\caption{Results for mean-variance risk-averse portfolio optimization.}
        \vspace{-0.3in}
		\label{fig:1}
	\end{center}
\end{figure*}
Suppose we have $d$ assets to invest over periods $l=\{1, \ldots, L\}$, and $\r_{l} \in \mathbb{R}^{d}$ represents the payoff of $d$ assets at time step $l$. The goal is to maximize investment returns and minimize risk simultaneously. A suitable approach for this purpose is the mean-variance risk-averse optimization model, where risk is defined as the variance. The problem can be written as
\begin{gather*}
\min _{\x \in \mathcal{X}} F(\x) =  -\frac{1}{L} \sum_{l=1}^{L}\left\langle \r_{l}, \x\right\rangle+ \frac{\lambda}{L} \sum_{l=1}^{L}\left(\left\langle \r_{l}, \x\right\rangle-\left\langle \bar{\r}, \x\right\rangle\right)^{2},
\end{gather*}
where $\bar{\r} = \frac{1}{L}\sum_{l=1}^{L} \r_{l}$, and the decision variable $\x$ denotes the investment quantities in $d$ assets. 
The domain $\X$ is a simplex, ensuring that $\Norm{\x}_{1}=1$ for any $\x \in \X$. This problem can be modeled as a stochastic two-level constrained compositional optimization problem,  with each layer expressed as 
\begin{gather*}
     f_{1}(\x)=\left(-\frac{1}{L} \sum_{l=1}^{L}\langle \r_{l}, \x\rangle, \x\right), \quad
     f_{2}(\y_1, \y_2)=\y_1 + \frac{\lambda}{L} \sum_{l=1}^{L}\left(\left\langle \r_{l}, \y_2\right\rangle+\y_1\right)^{2}, 
\end{gather*}
where $f_1(\cdot)$ is the inner function and $f_2(\cdot)$ is the outer function such that $F(\x)=f_2(f_1(\x))$.
\begin{figure*}[!ht]
	\begin{center}
		\subfigure{
			\includegraphics[width=0.3\textwidth]{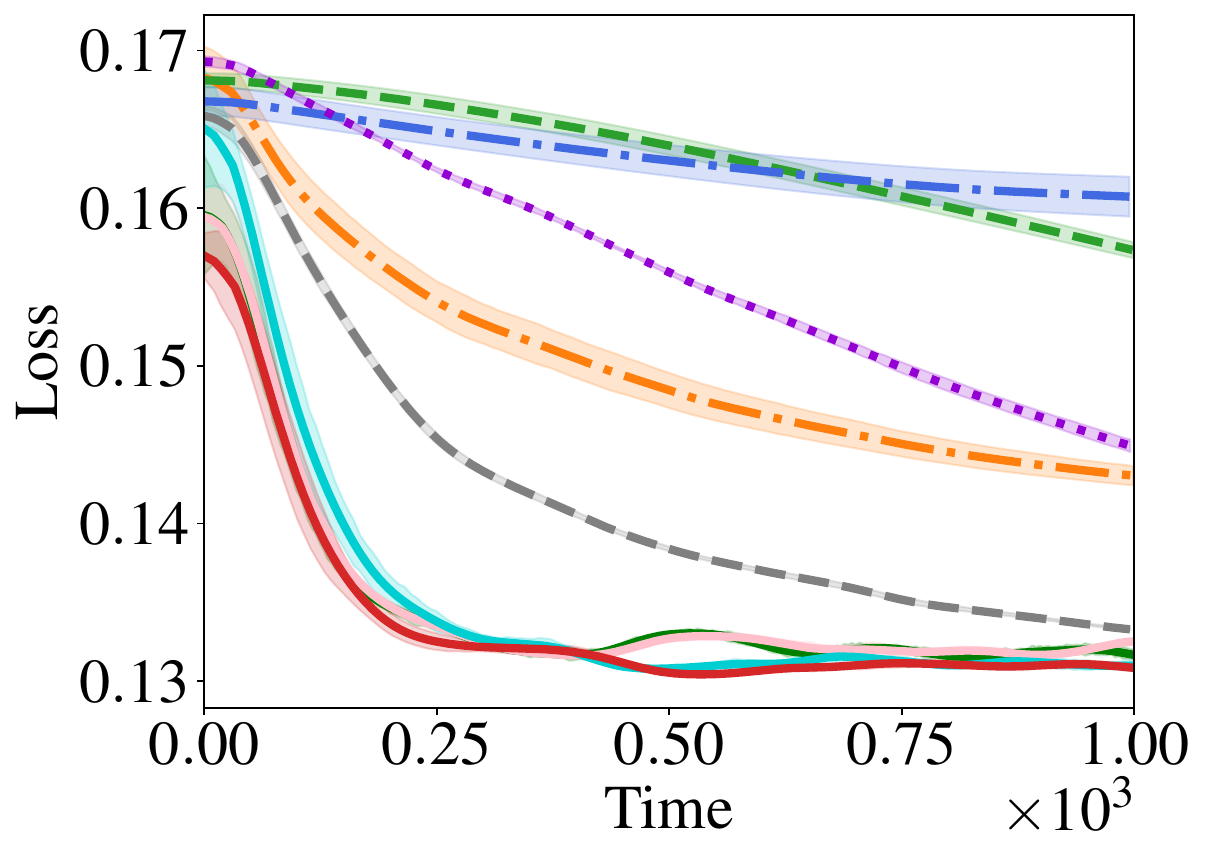}}
   \setcounter{subfigure}{0}
   \subfigure[ Industry-10]{
			\includegraphics[width=0.3\textwidth]{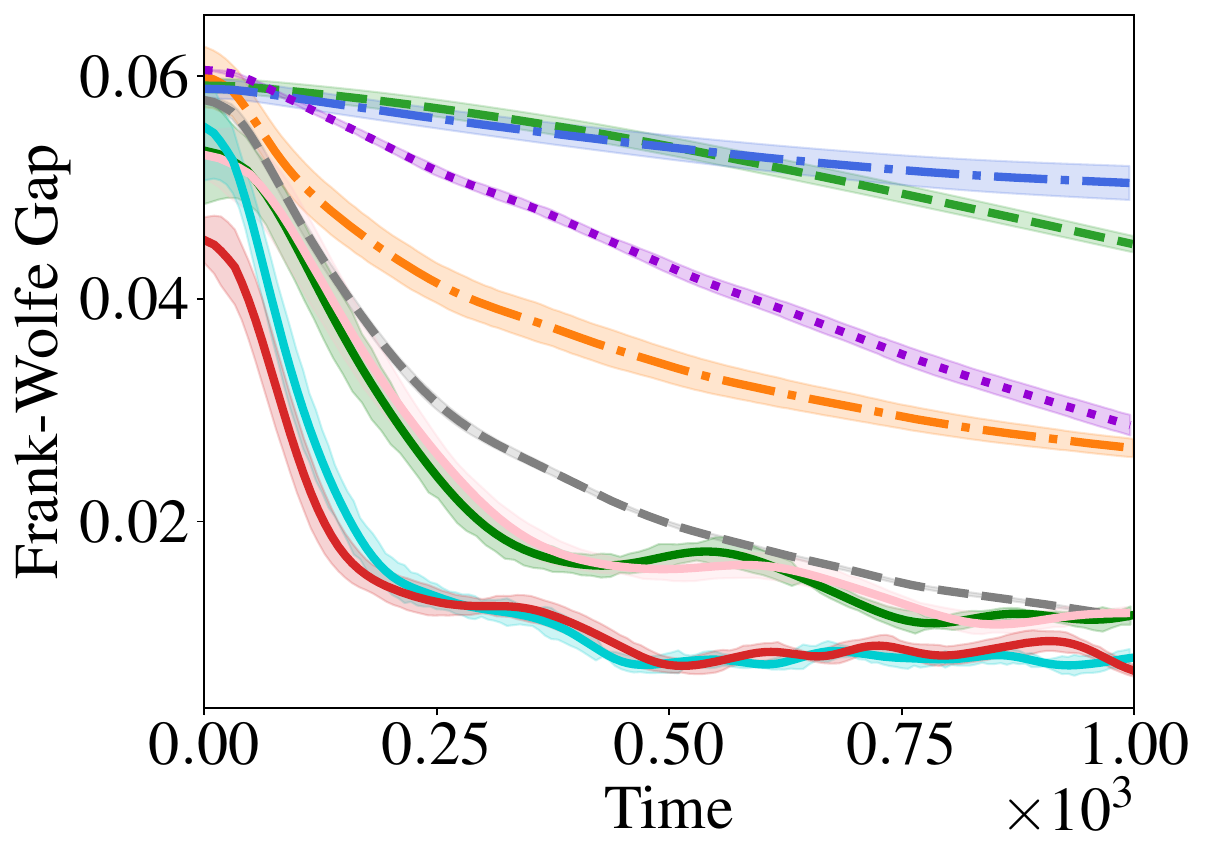}}
   \subfigure{
			\includegraphics[width=0.3\textwidth]{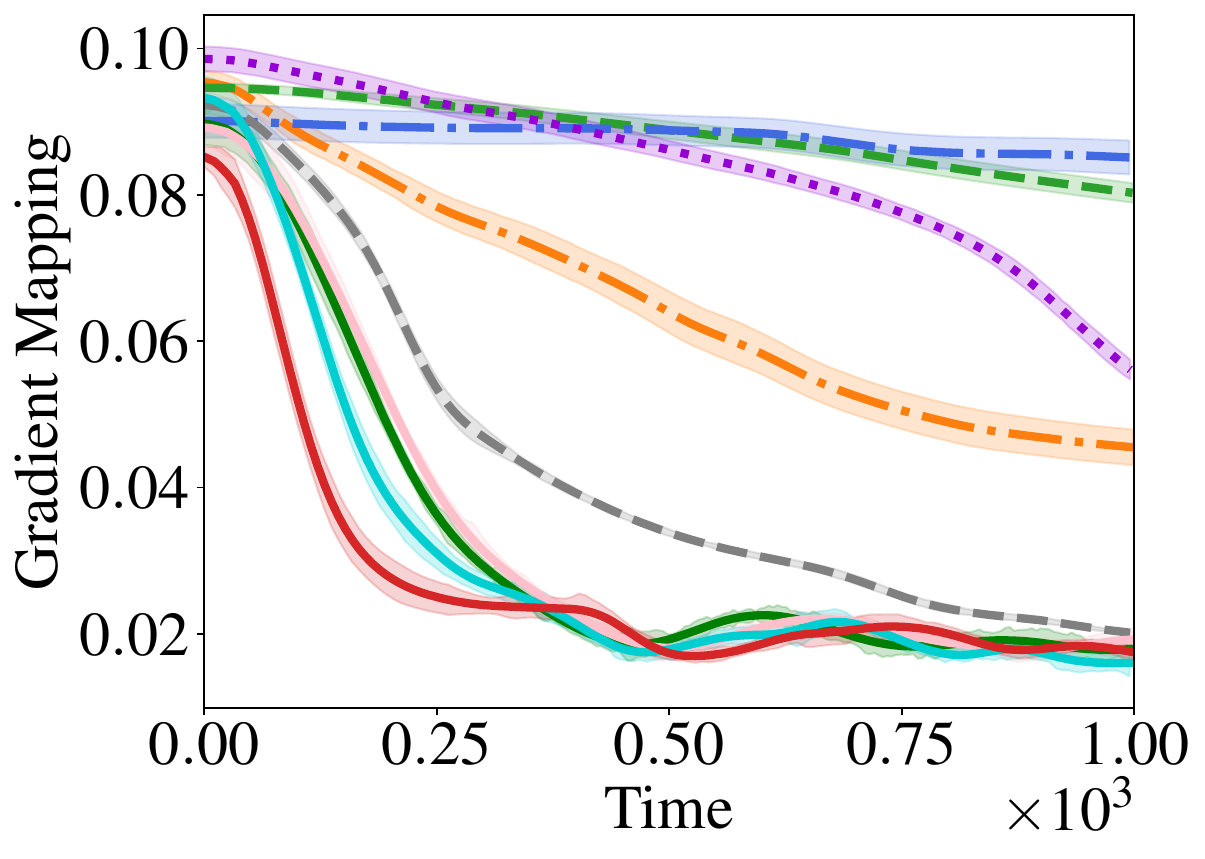}
		}
		%\vskip -0.05in
  \setcounter{subfigure}{0}
		\subfigure{
			\includegraphics[width=0.3\textwidth]{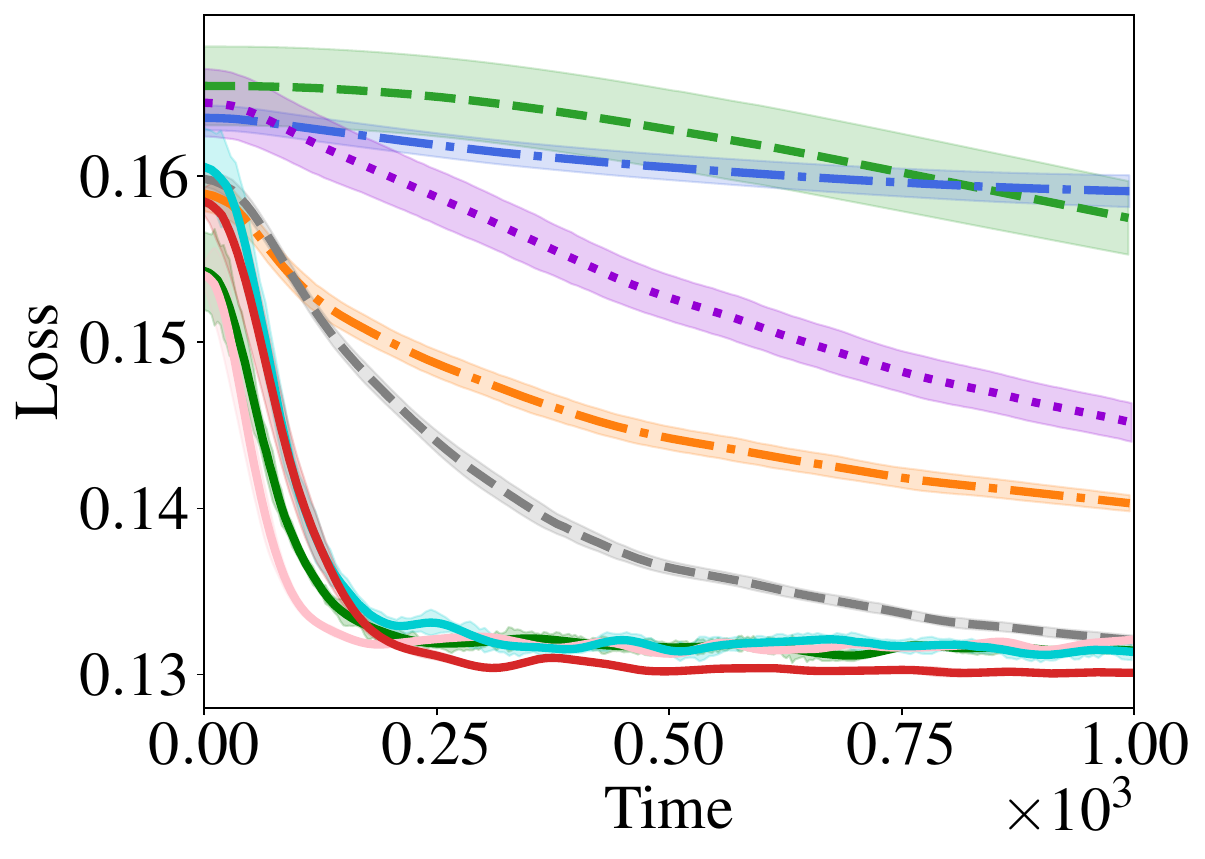}}
   \subfigure[ Industry-12]{
			\includegraphics[width=0.3\textwidth]{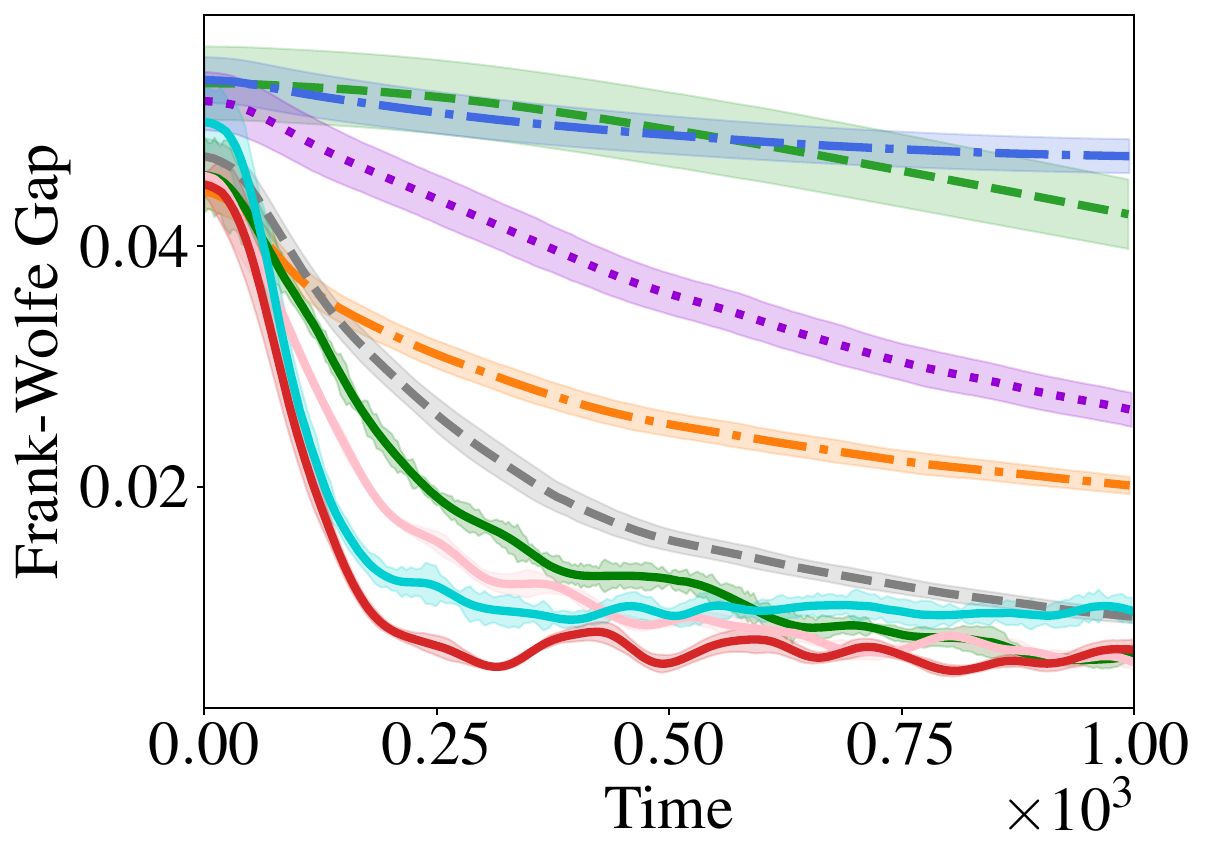}}
   \subfigure{
			\includegraphics[width=0.3\textwidth]{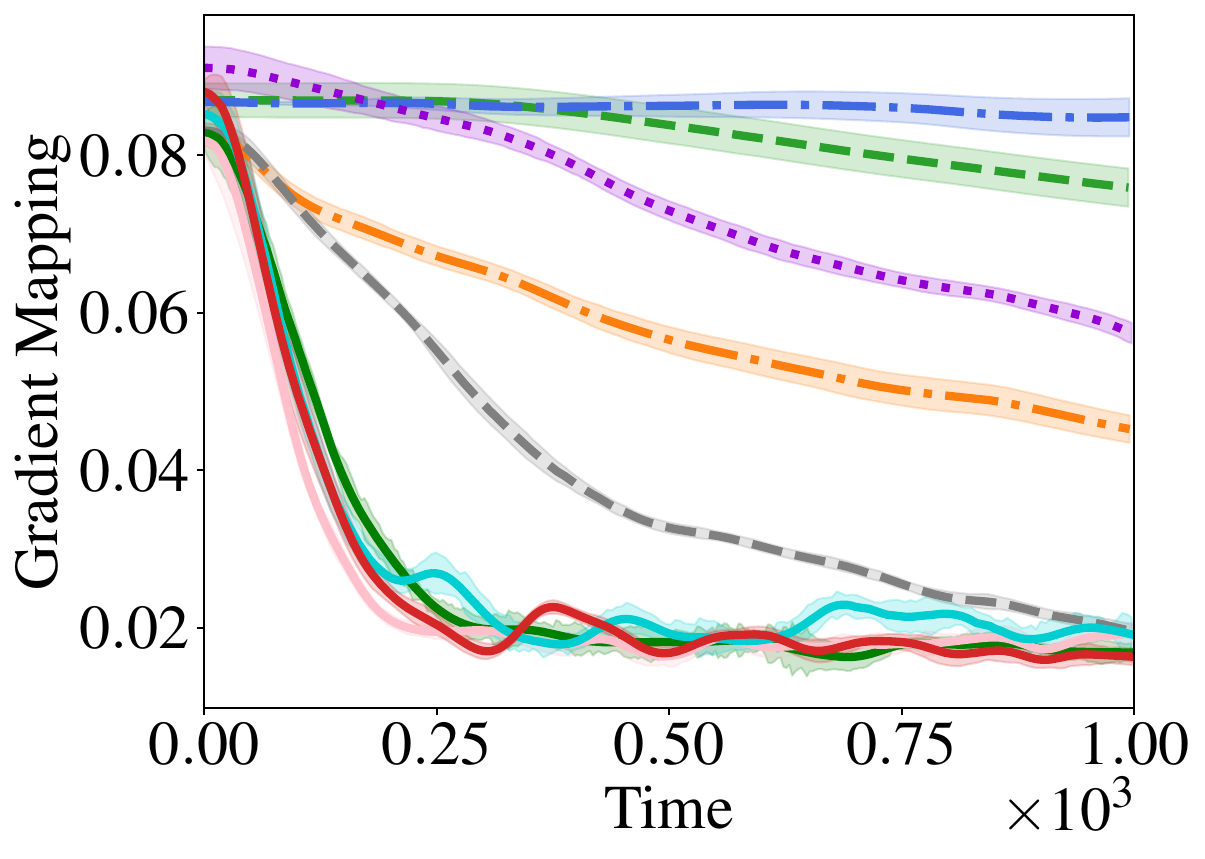}
		}
    \setcounter{subfigure}{1}
		\subfigure{
			\includegraphics[width=0.3\textwidth]{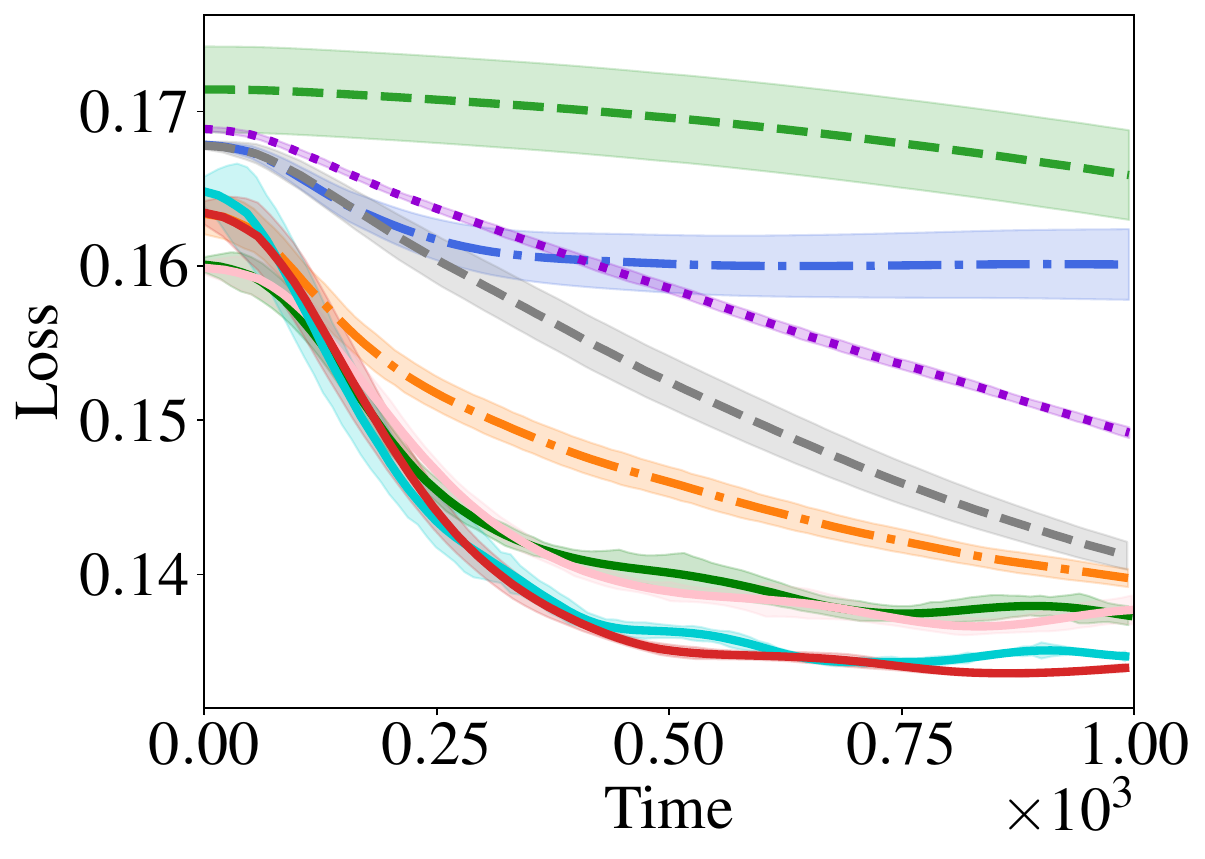}}
   \subfigure[ Industry-17]{
			\includegraphics[width=0.3\textwidth]{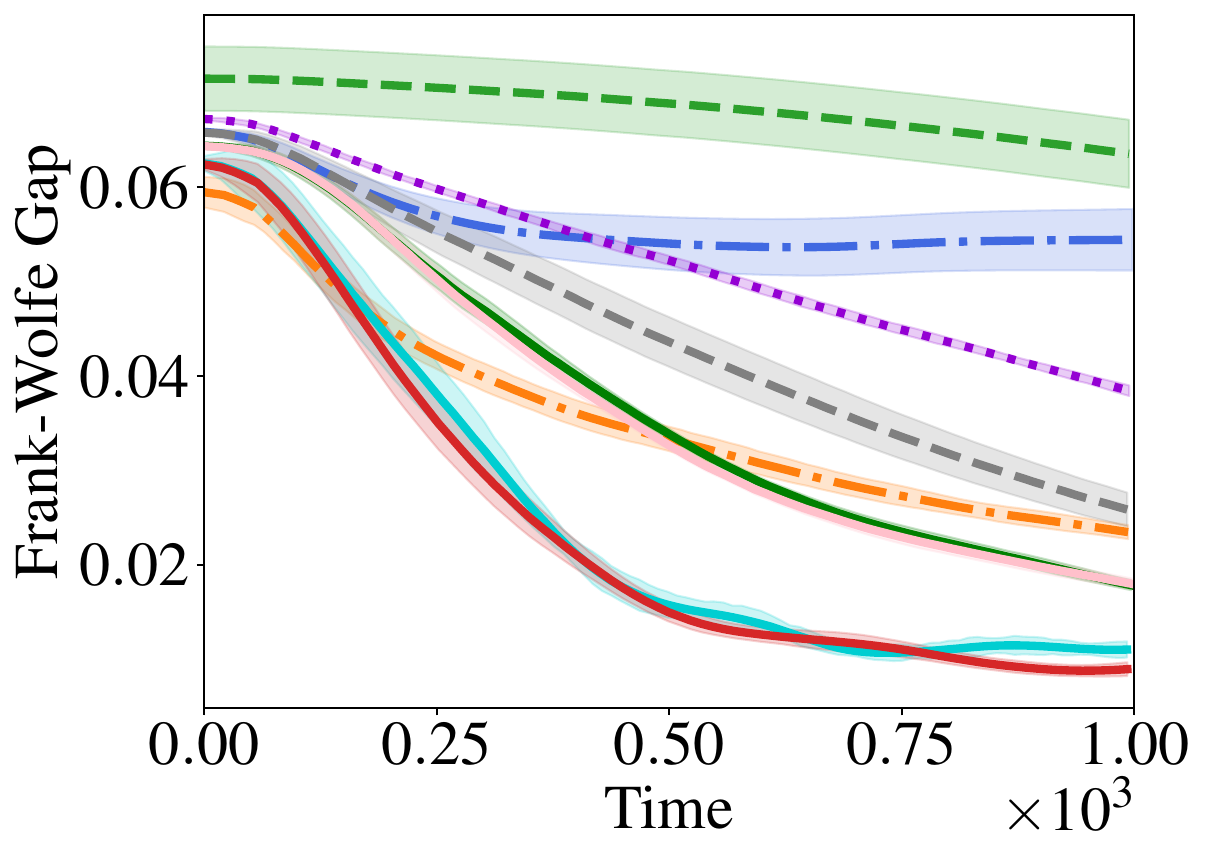}}
   \subfigure{
			\includegraphics[width=0.3\textwidth]{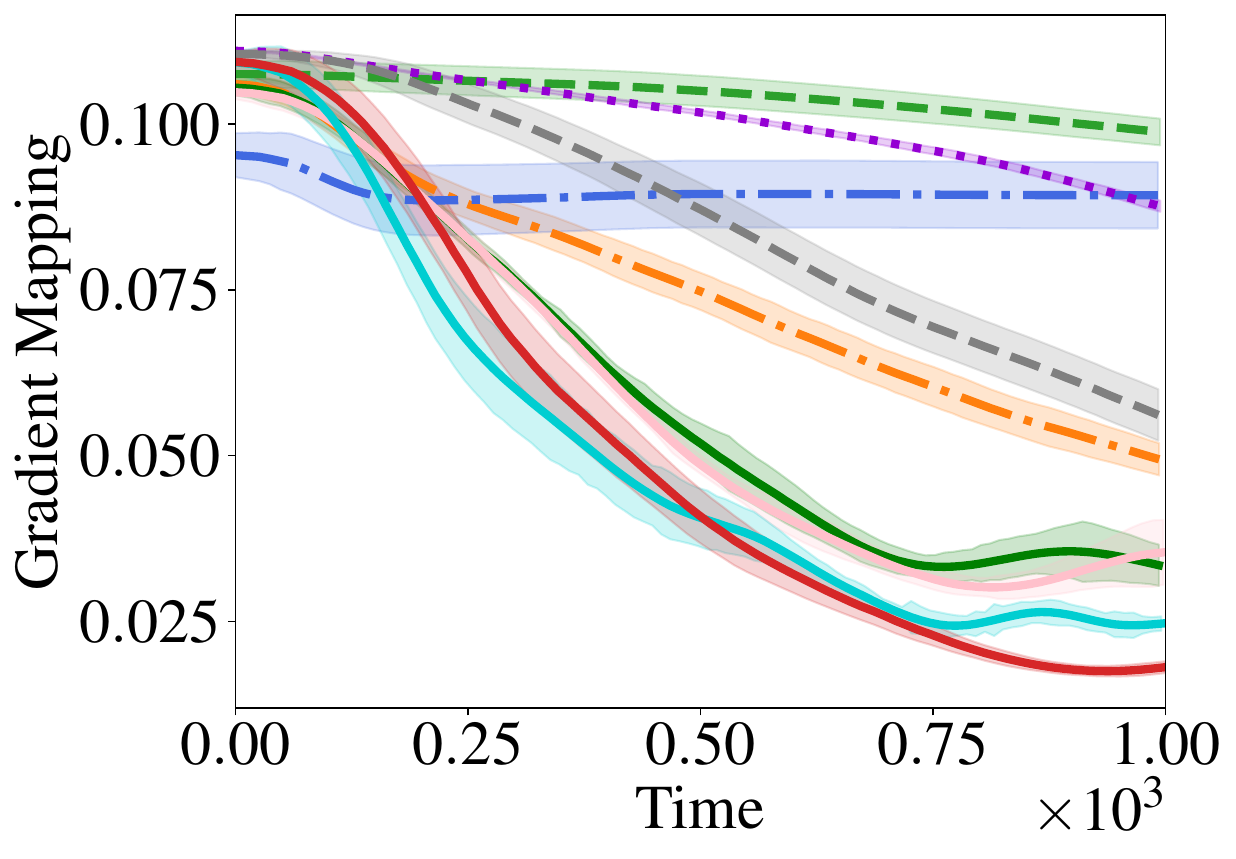}
		}
         \vspace{-0.1in}
		\subfigure{
			\includegraphics[width=0.99\textwidth]{./legend.pdf}
		}
         \vspace{-0.3in}
		\caption{Results for mean-deviation risk-averse portfolio optimization.}
		\label{fig:3}
	\end{center}
\end{figure*}

For experimental validation, we use real-world datasets Industry-10, Industry-12, and Industry-17 from the Kenneth R. French Data Library\footnote{https://mba.tuck.dartmouth.edu/pages/faculty/ken.french/}. These datasets contain payoffs for 10, 12, and 17 industrial assets over 25,105 consecutive periods. 
For projection-based methods, we implement simplex projection using a well-known efficient projection method~\cite{Duchi2008EfficientPO}. 
Figure~\ref{fig:1} reports the objective value $F(\x)$, the Frank-Wolfe gap, and the gradient mapping, averaged over 50 runs.
It is observed that our proposed methods~(PMVR-v1,v2, and PMM-v1,v2) tend to converge more rapidly compared to other algorithms in  all tasks. The loss value, Frank-Wolfe gap, and gradient mapping of our PMVR-v2 decrease most quickly in both datasets, validating the effectiveness of the proposed method. 
\begin{figure*}[!t]
	\begin{center}
		\subfigure{
			\includegraphics[width=0.3\textwidth]{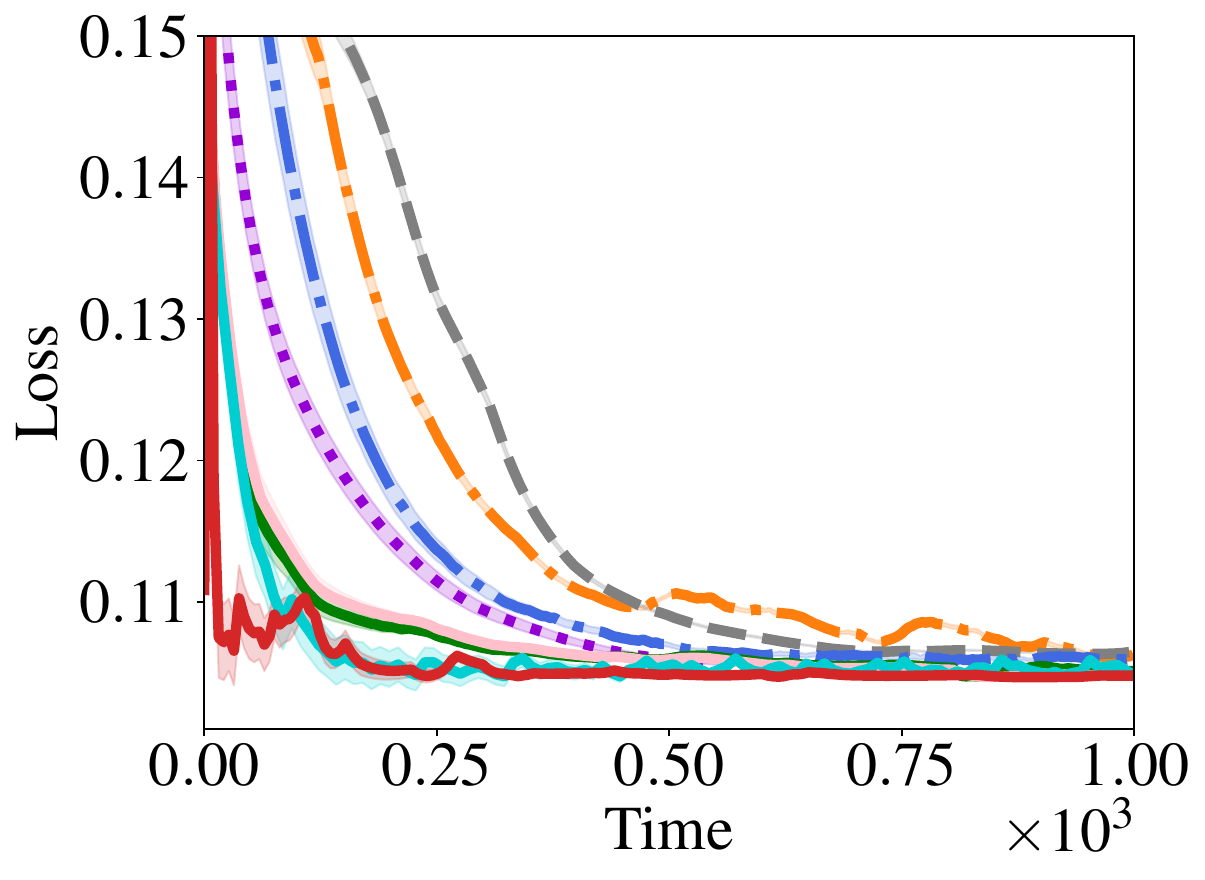}}
   \setcounter{subfigure}{0}
   \subfigure[ Industry-10]{
			\includegraphics[width=0.3\textwidth]{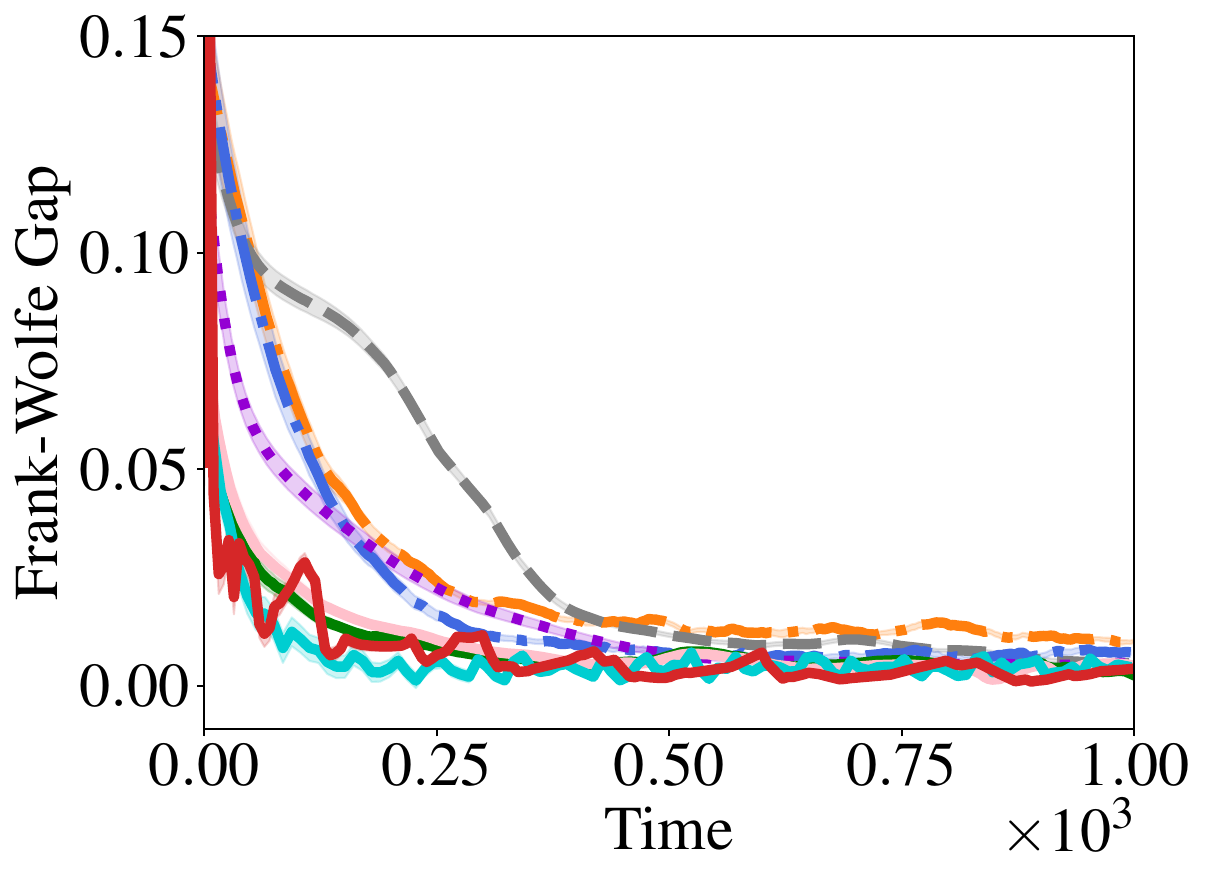}}
   \subfigure{
			\includegraphics[width=0.3\textwidth]{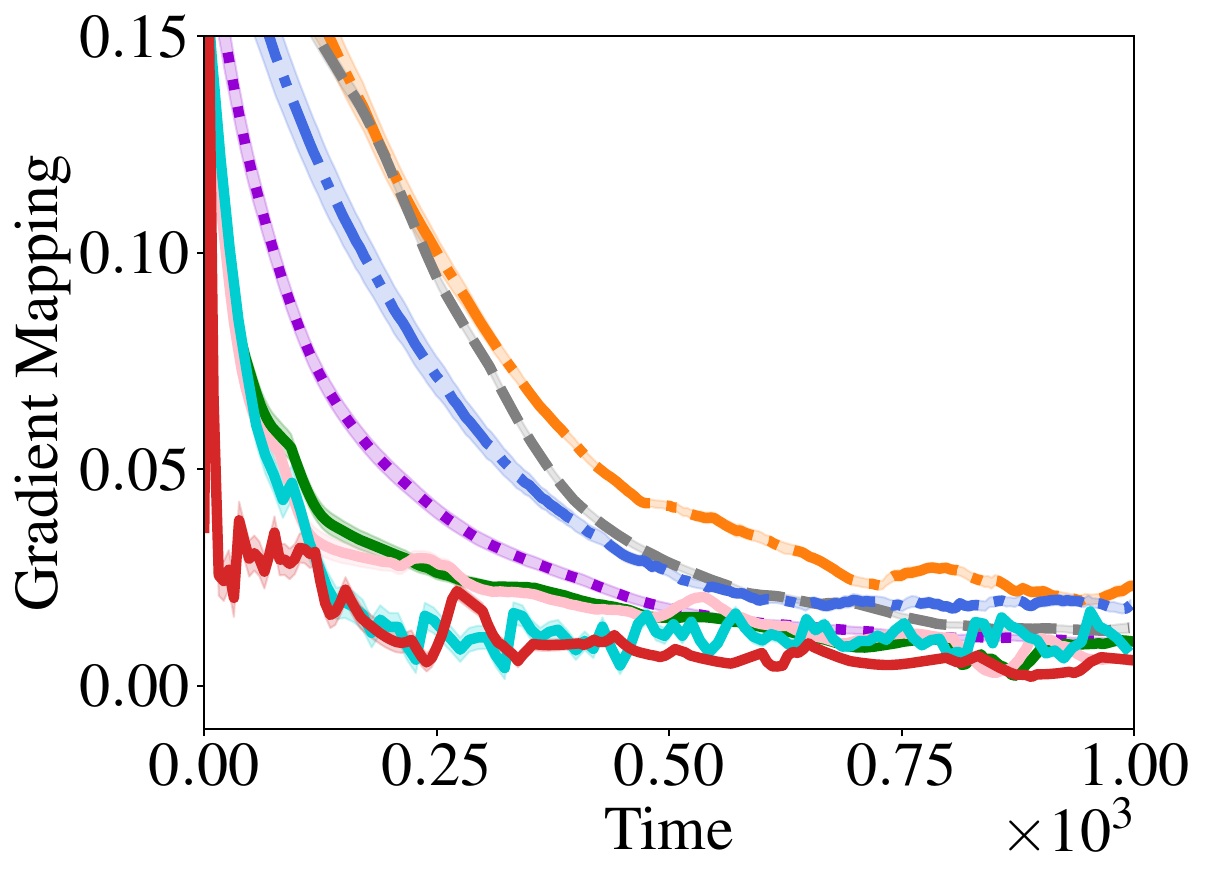}
		}
         \setcounter{subfigure}{0}
		\subfigure{
			\includegraphics[width=0.3\textwidth]{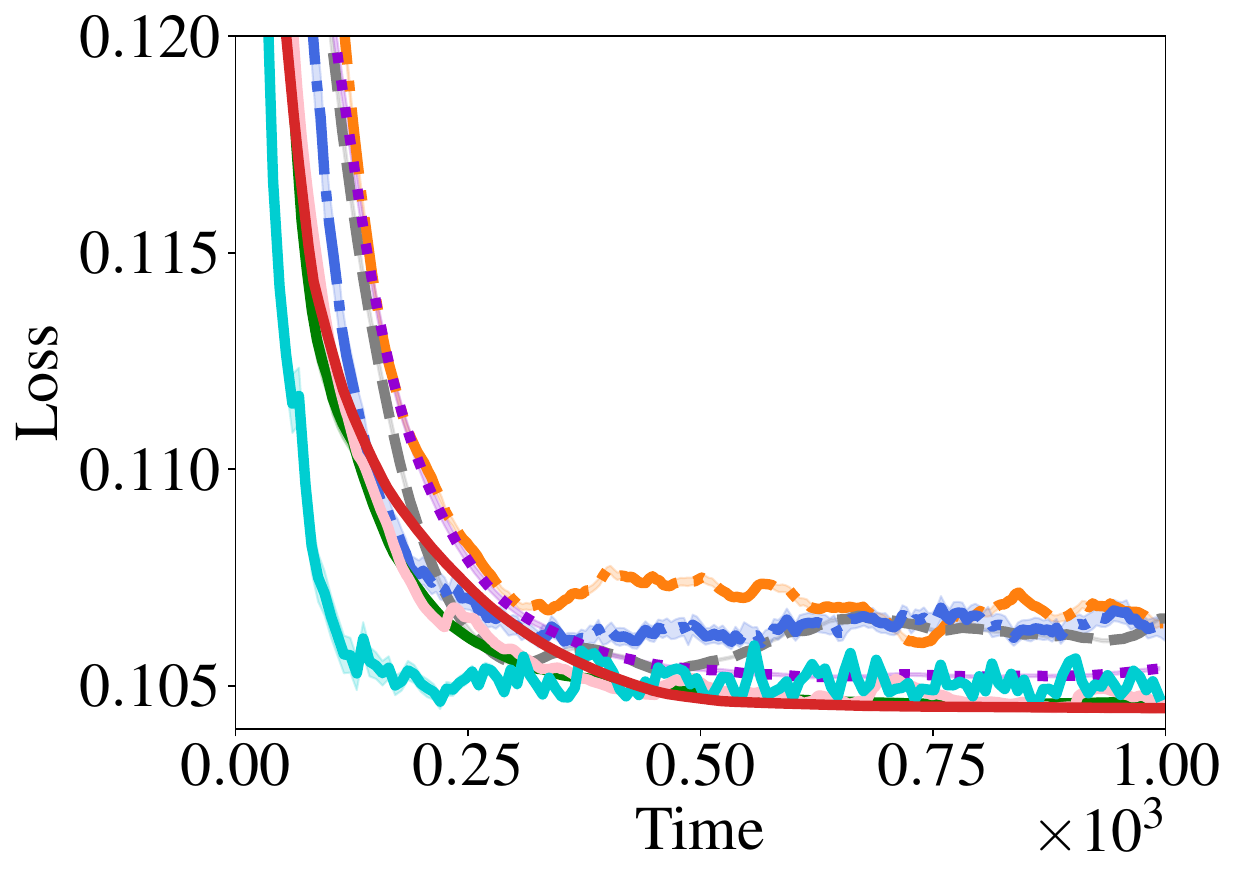}}
   \subfigure[ Industry-12]{
			\includegraphics[width=0.3\textwidth]{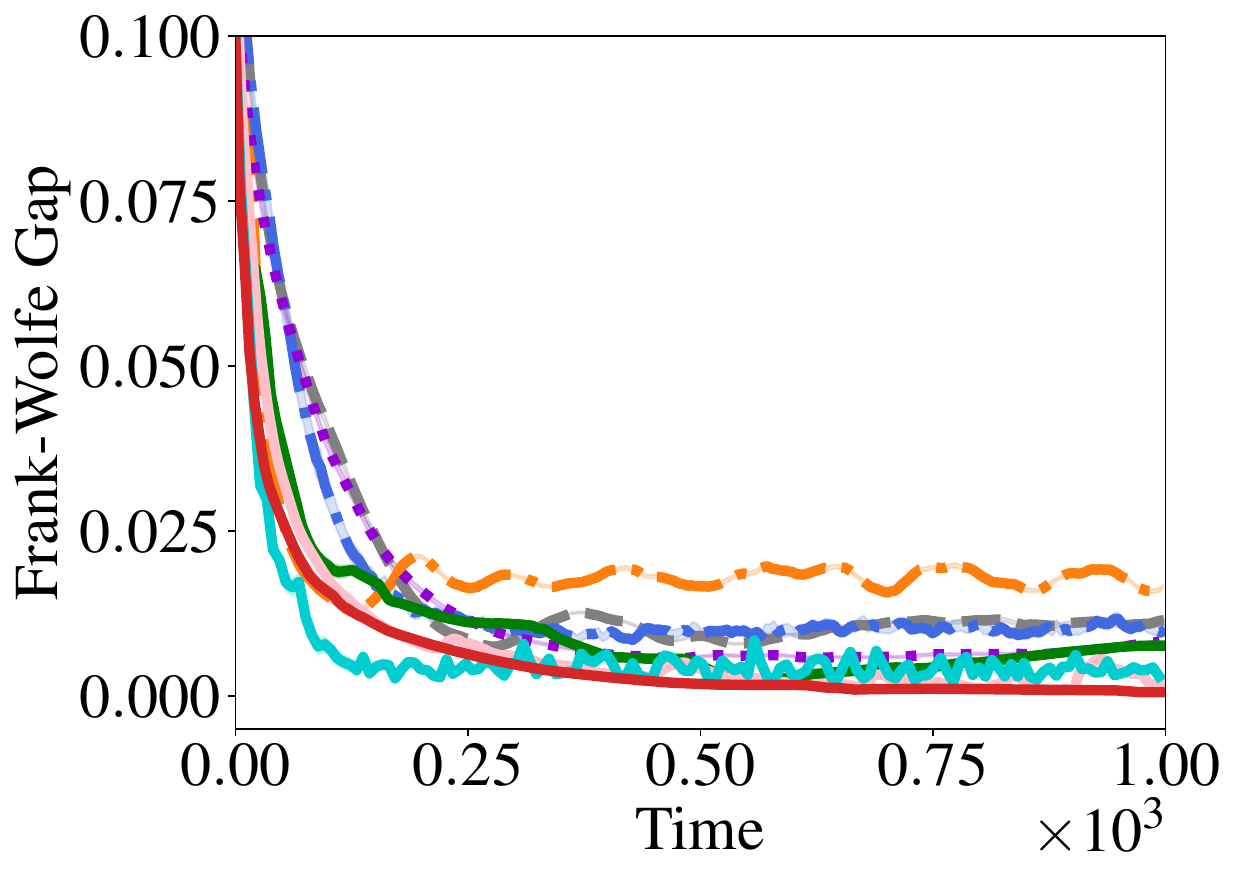}}
   \subfigure{
			\includegraphics[width=0.3\textwidth]{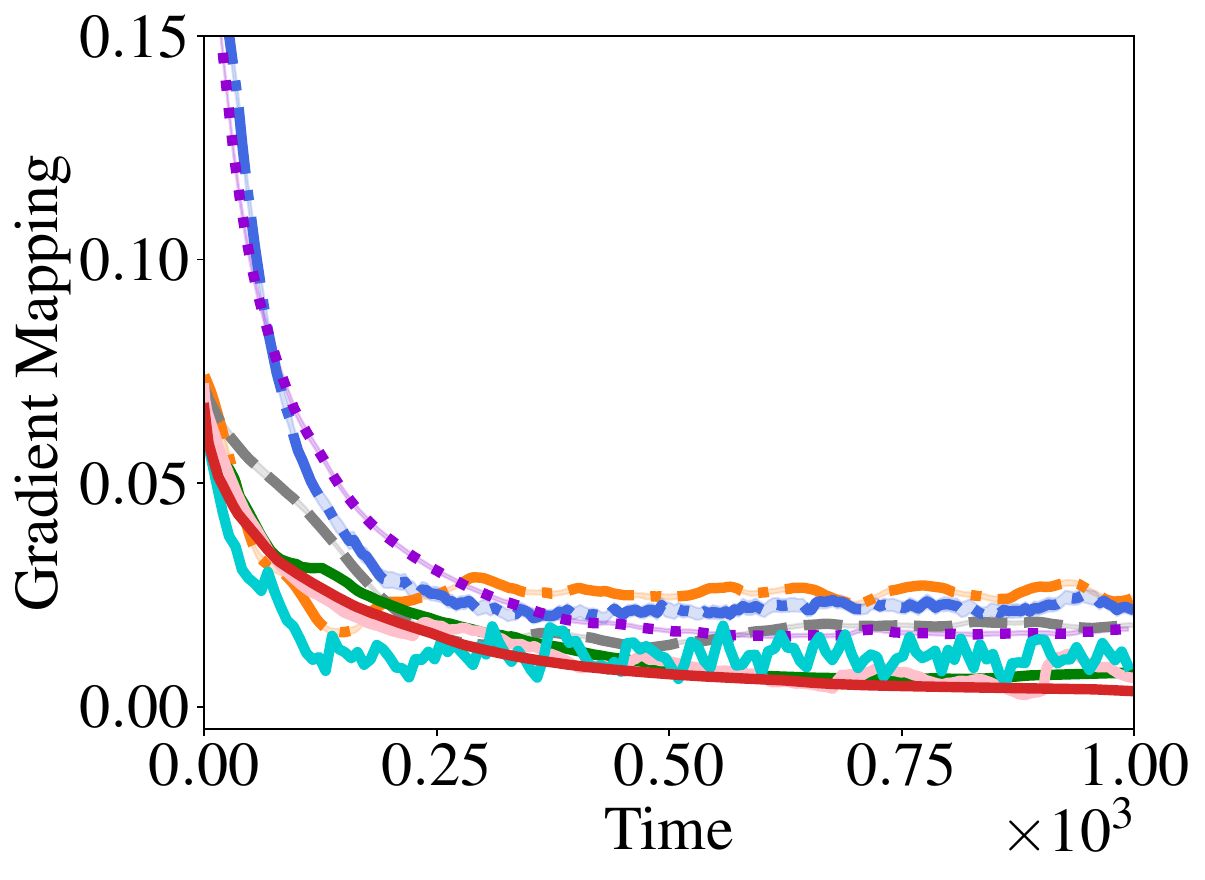}
		}
                 \setcounter{subfigure}{1}
		\subfigure{
			\includegraphics[width=0.3\textwidth]{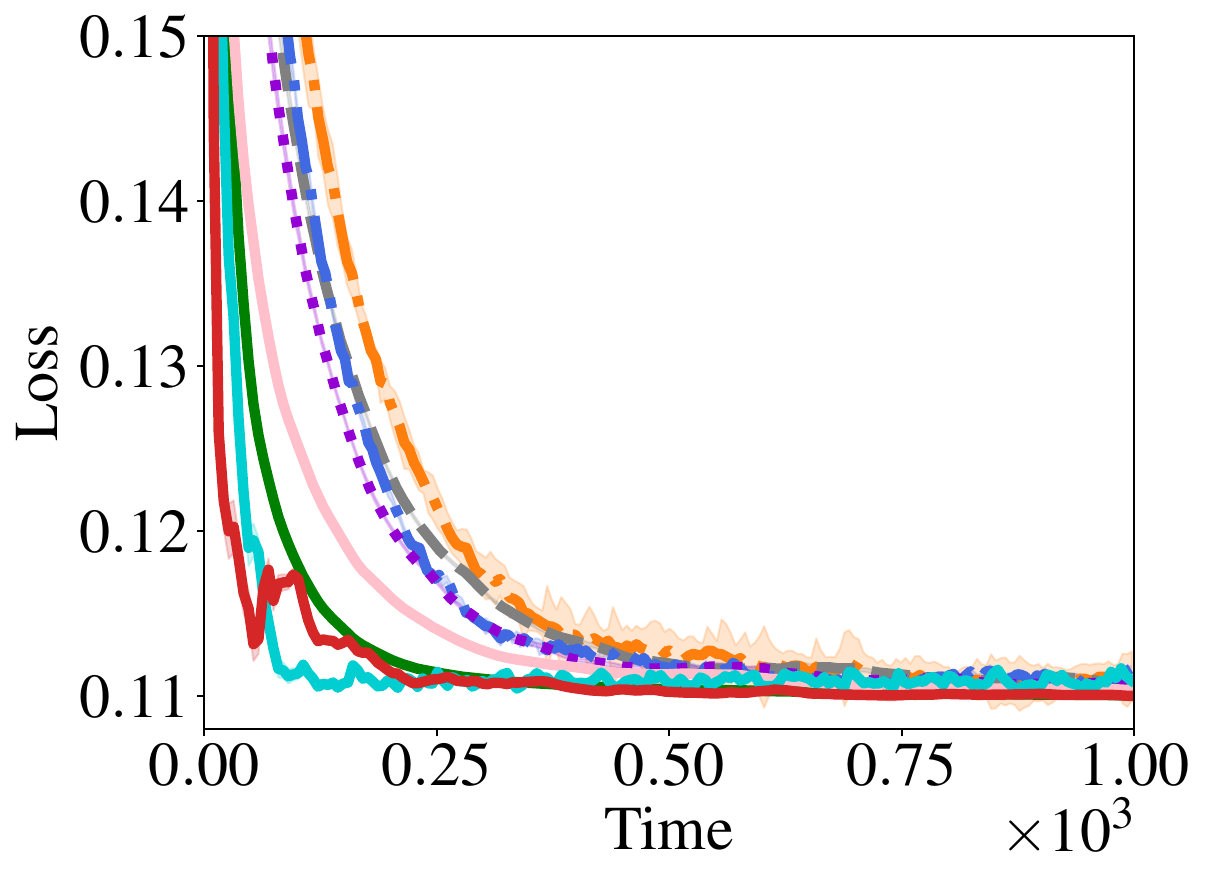}}
   \subfigure[ Industry-17]{
			\includegraphics[width=0.3\textwidth]{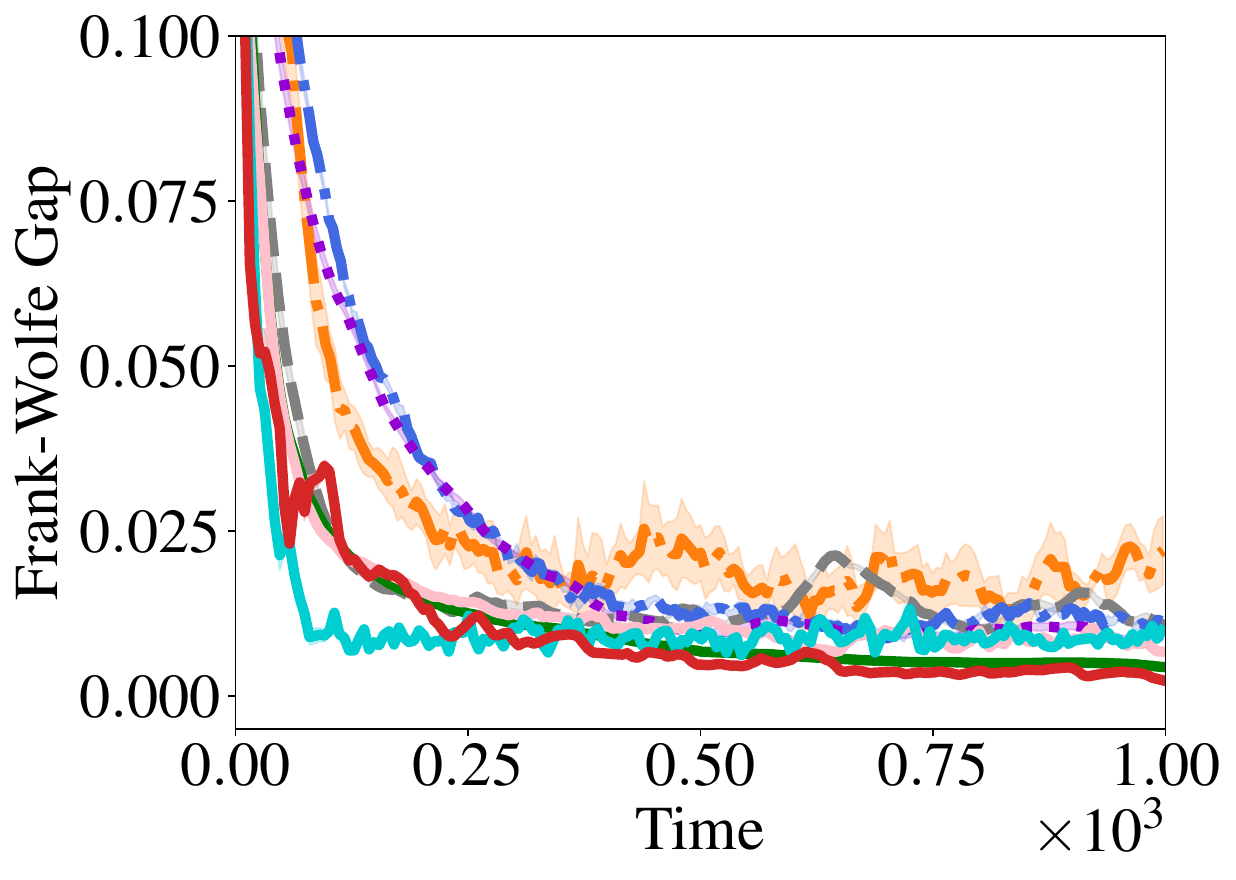}}
   \subfigure{
			\includegraphics[width=0.3\textwidth]{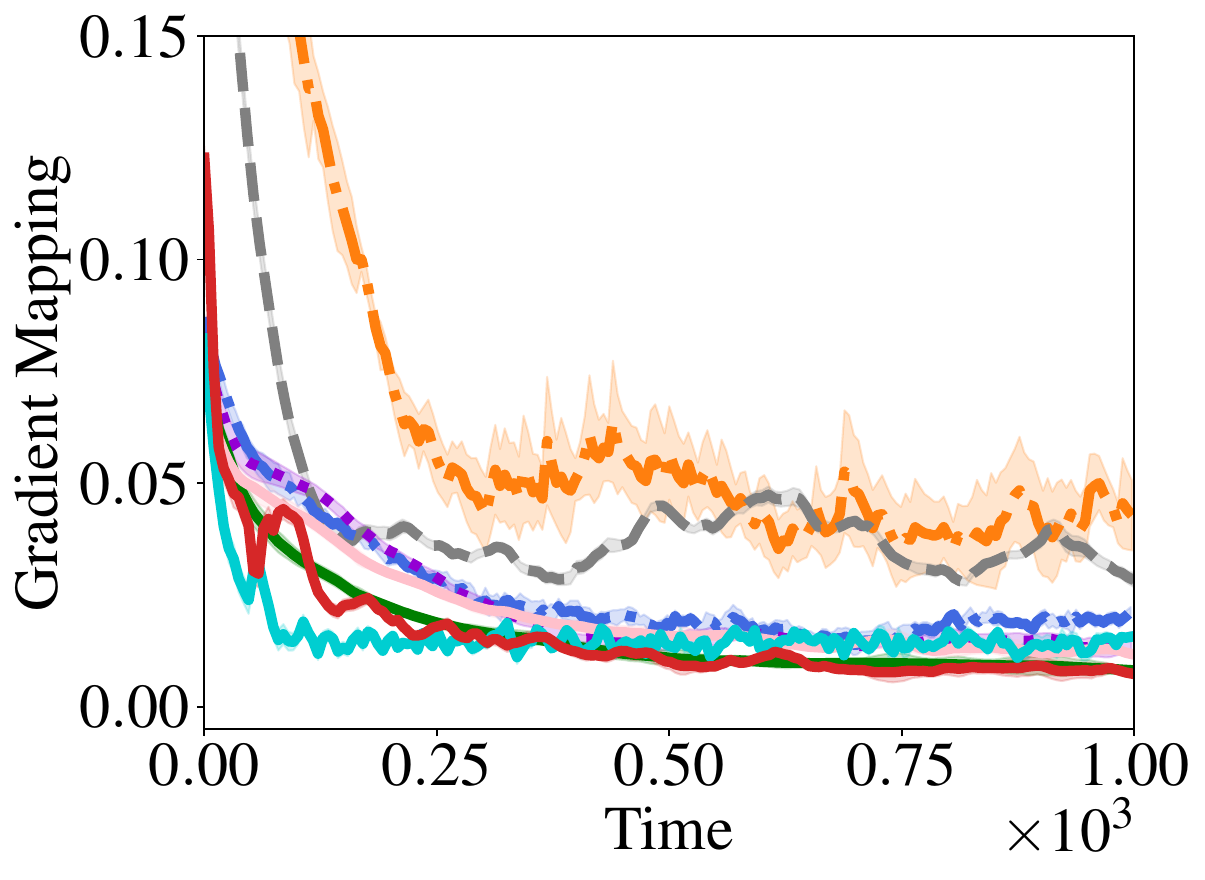}
		}
		\subfigure{
			\includegraphics[width=0.99\textwidth]{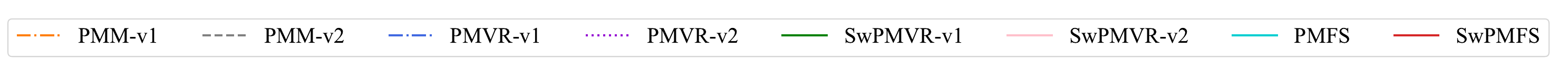}
		}
        
         \vspace{-0.15in}
		\caption{Results for mean-variance risk-averse portfolio optimization.}
        \vspace{-0.15in}
		\label{fig:c1}
	\end{center}
\end{figure*}
\subsection{Mean-deviation Risk-averse Optimization}
Finally, we consider the problem of mean-deviation risk-averse portfolio optimization, in which the risk is measured by the standard deviation. The mathematical formulation of this problem can be presented as:
\begin{align*} \max_{\mathbf{x} \in \mathcal{X}} \frac{1}{L} \sum_{l=1}^{L}\left\langle \mathbf r_l, \mathbf x \right\rangle-\lambda \sqrt{\frac{1}{L} \sum_{t=1}^{L}\left(\left\langle \mathbf r_{l}, 
\mathbf x \right\rangle-\left\langle \mathbf{\bar r}, \mathbf x \right\rangle\right)^{2}}, \end{align*}
where $\mathbf{\bar r}= \frac{1}{L}\sum_{l=1}^{L} \mathbf r_{l}$, $\mathcal{X}$ is the probability simplex, and the decision variable $ \mathbf x$ denotes the investment quantity vector in the $d$ assets. 
As shown by~\citet{jiang2022optimal}, this is a three-level compositional optimization problem.

In the experiments, we also evaluate different methods on the real-world datasets Industry-10, Industry-12, and Industry-17. 
Figure~\ref{fig:3} reports the objective value, the Frank-Wolfe gap, and the gradient mapping, averaged over 10 runs.
As can be seen, our methods~(especially PMVR-v2) demonstrate faster convergence compared to other algorithms in terms of loss value, Frank-Wolfe gap, and gradient mapping across all tasks.

\begin{figure*}[!t]
	\begin{center}
		\subfigure{
			\includegraphics[width=0.3\textwidth]{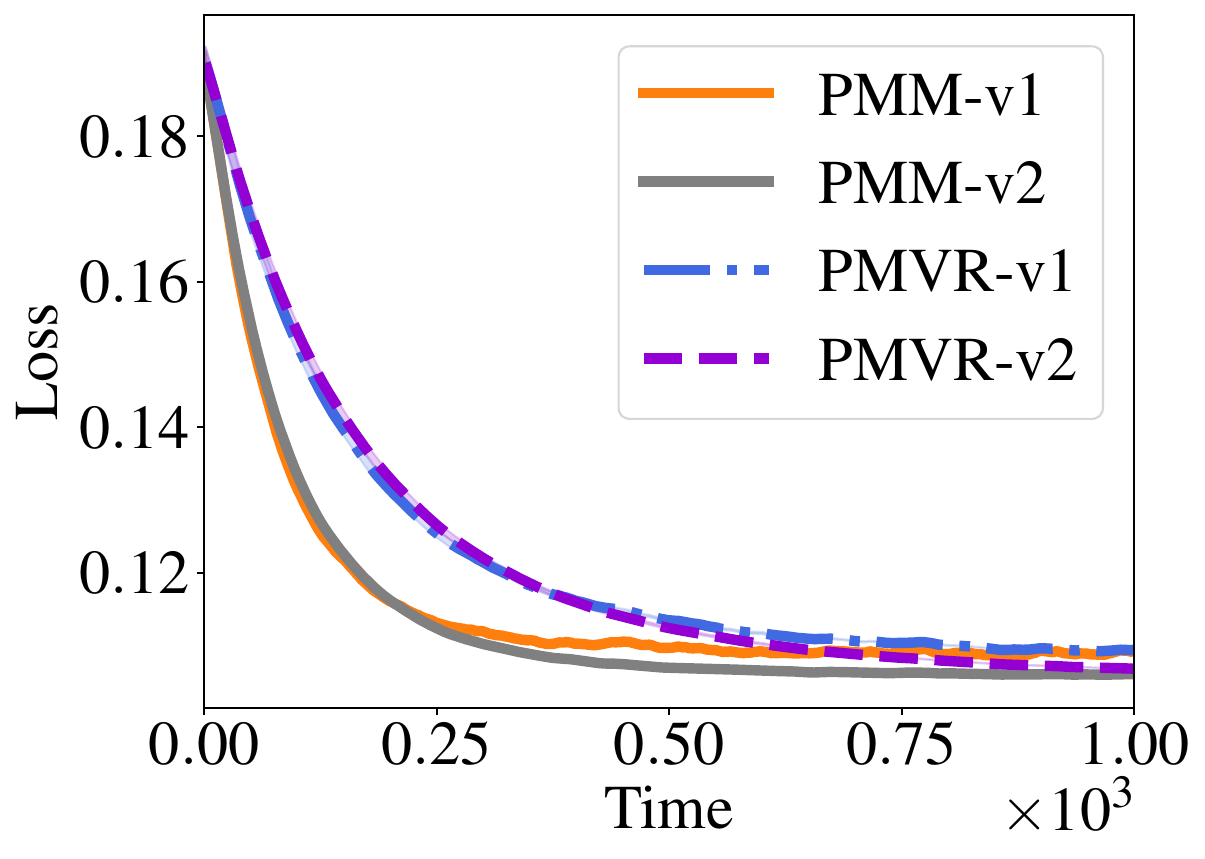}}
   \setcounter{subfigure}{0}
   \subfigure[ Industry-10]{
			\includegraphics[width=0.3\textwidth]{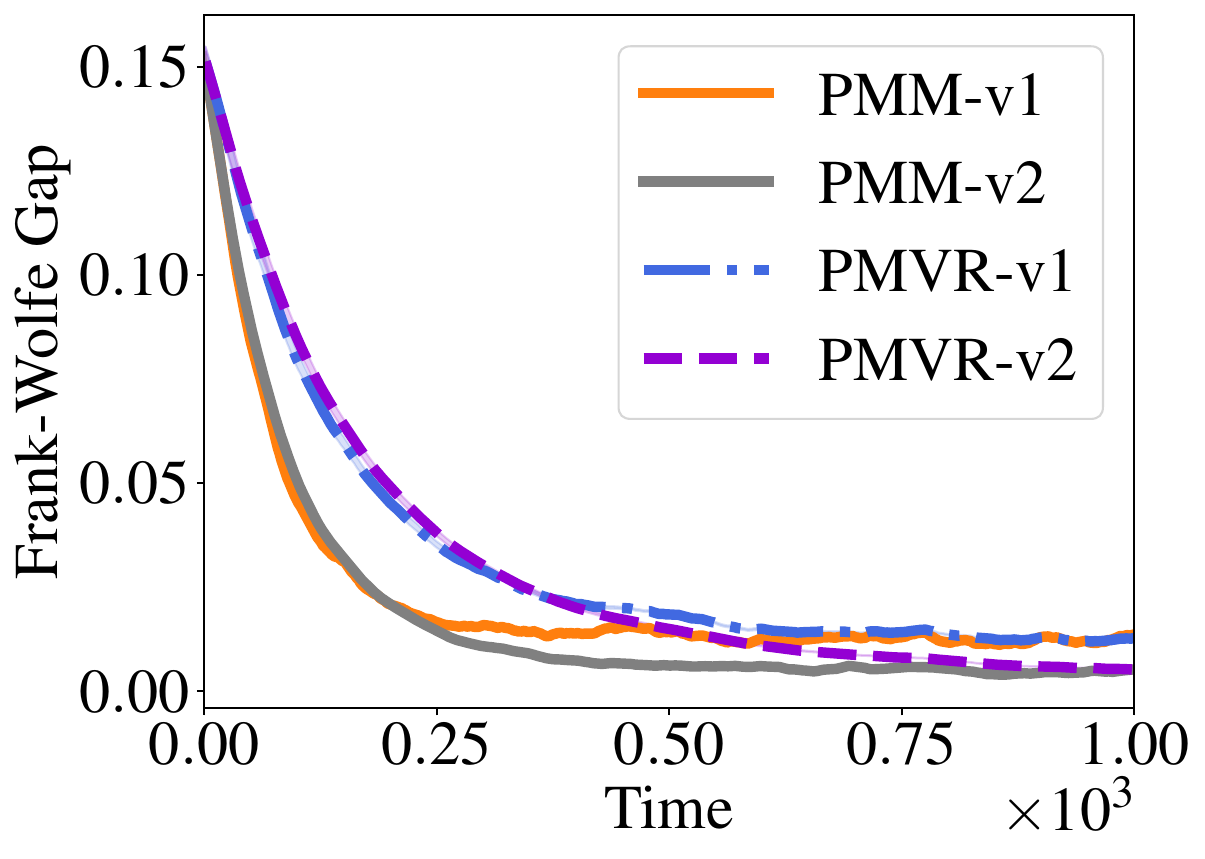}}
   \subfigure{
			\includegraphics[width=0.3\textwidth]{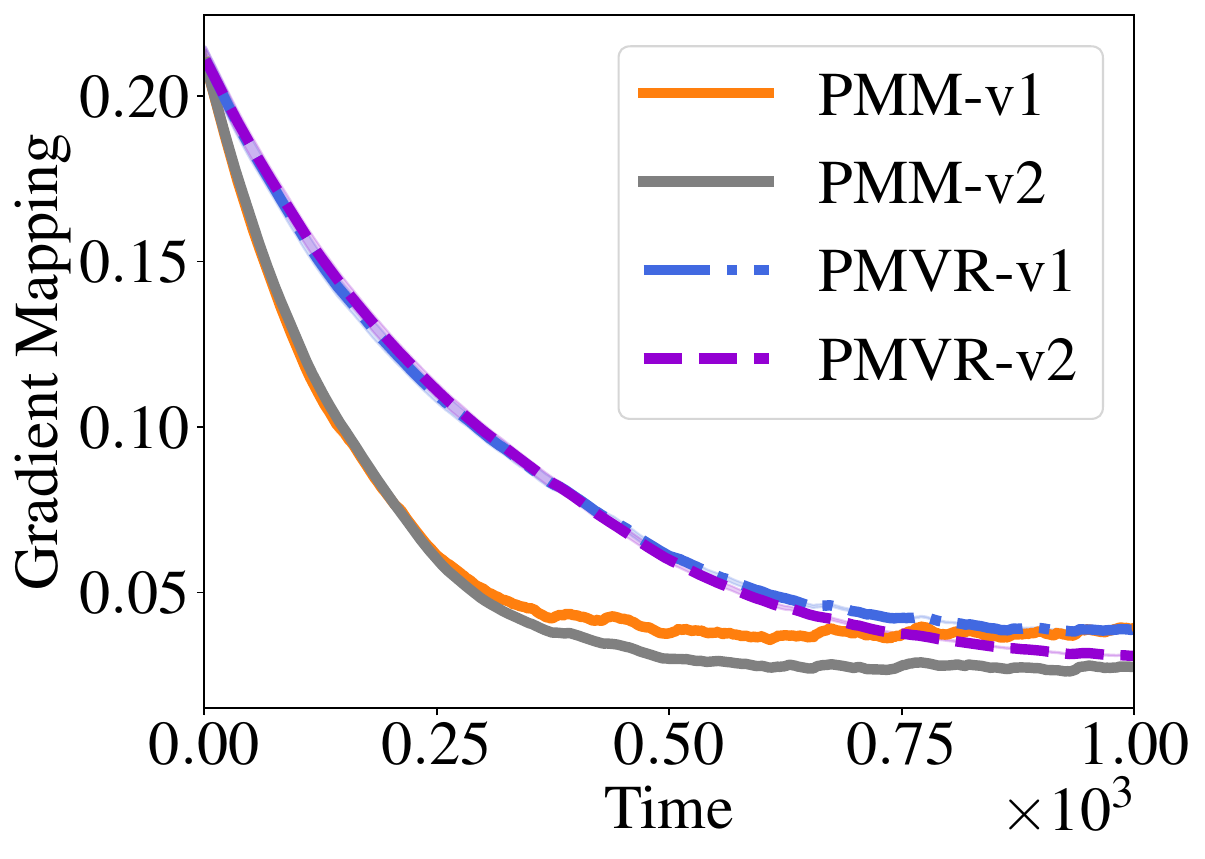}
		}
         \setcounter{subfigure}{0}
		\subfigure{
			\includegraphics[width=0.3\textwidth]{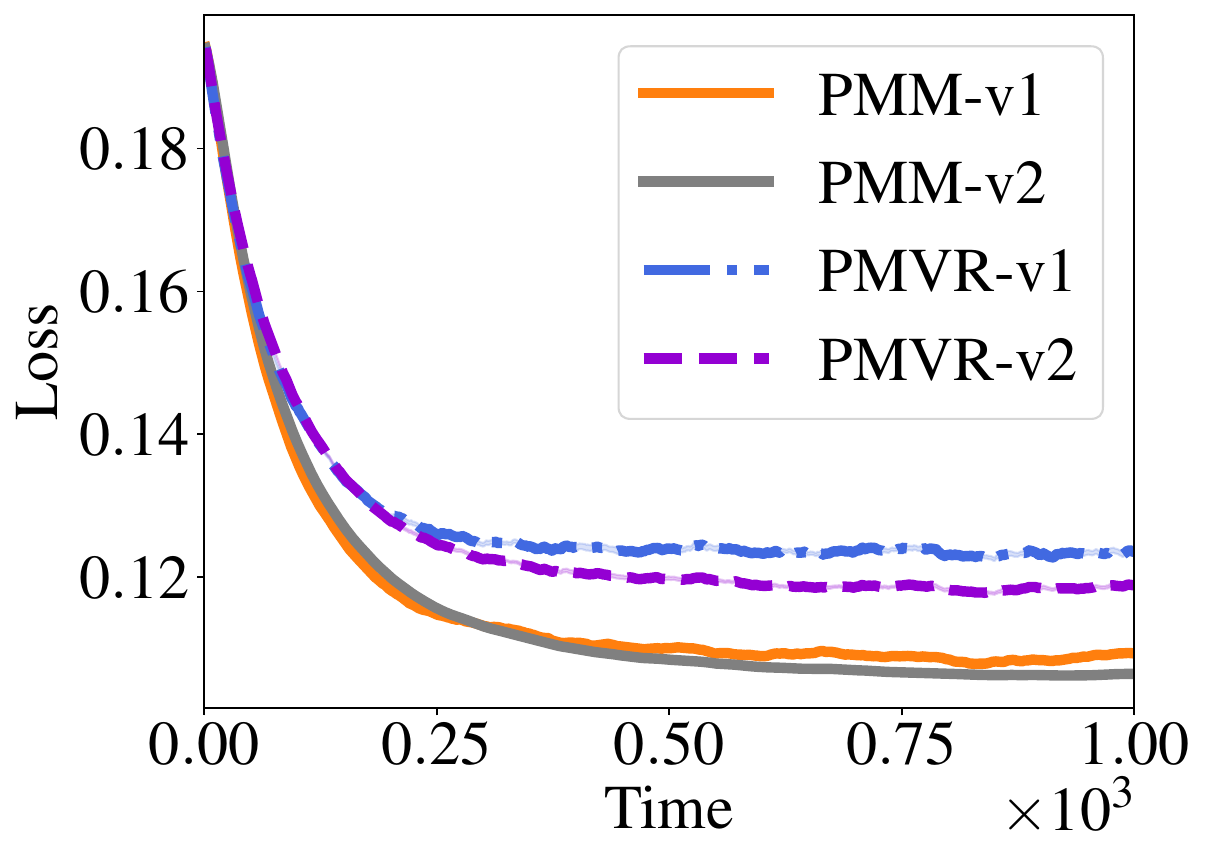}}
   \subfigure[ Industry-12]{
			\includegraphics[width=0.3\textwidth]{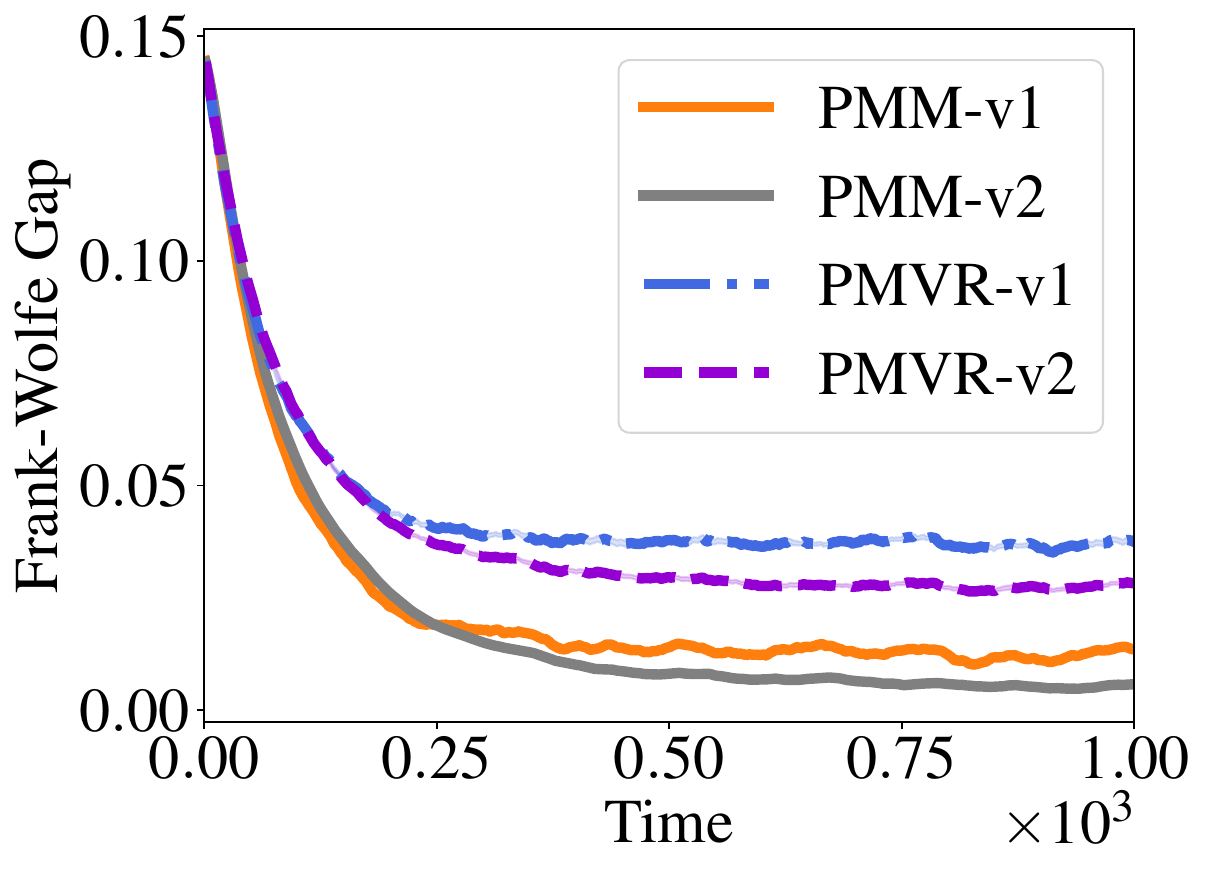}}
   \subfigure{
			\includegraphics[width=0.3\textwidth]{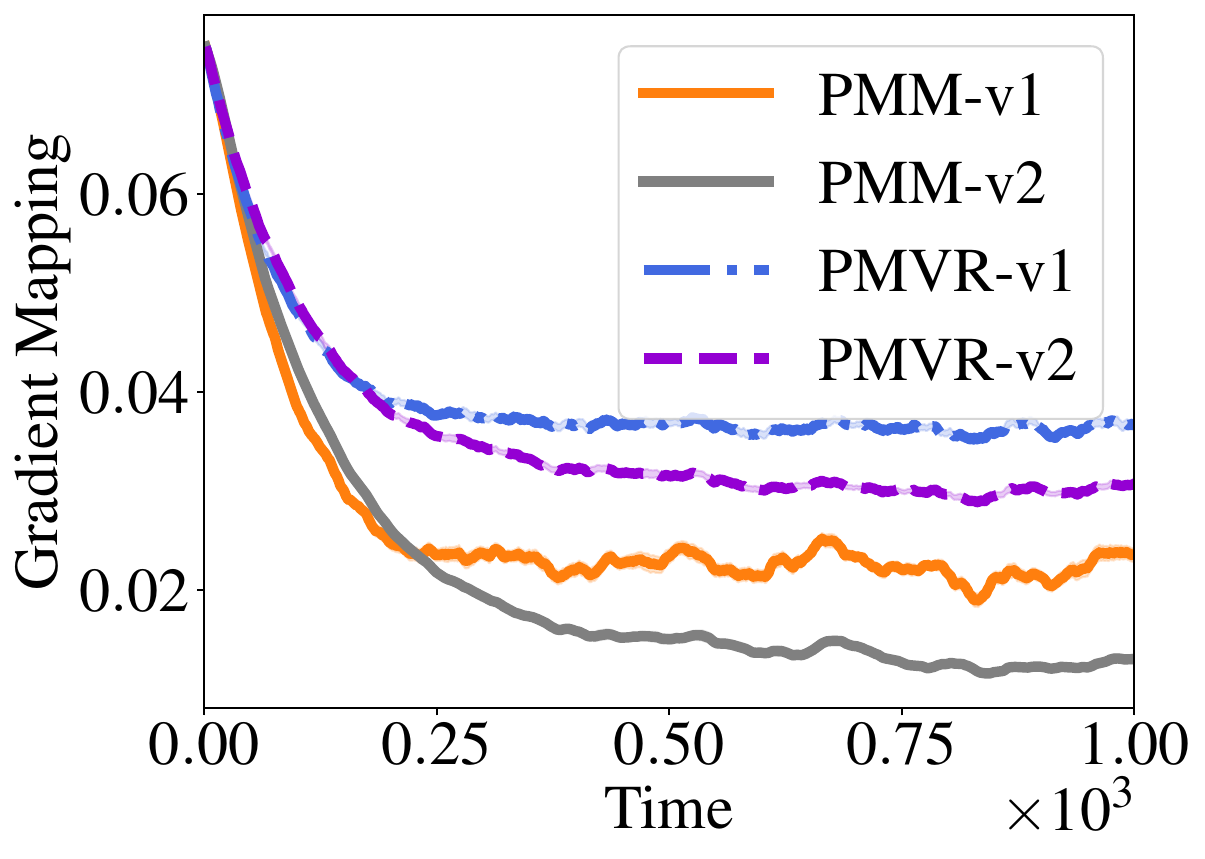}
		}
                 \setcounter{subfigure}{1}
		\subfigure{
			\includegraphics[width=0.3\textwidth]{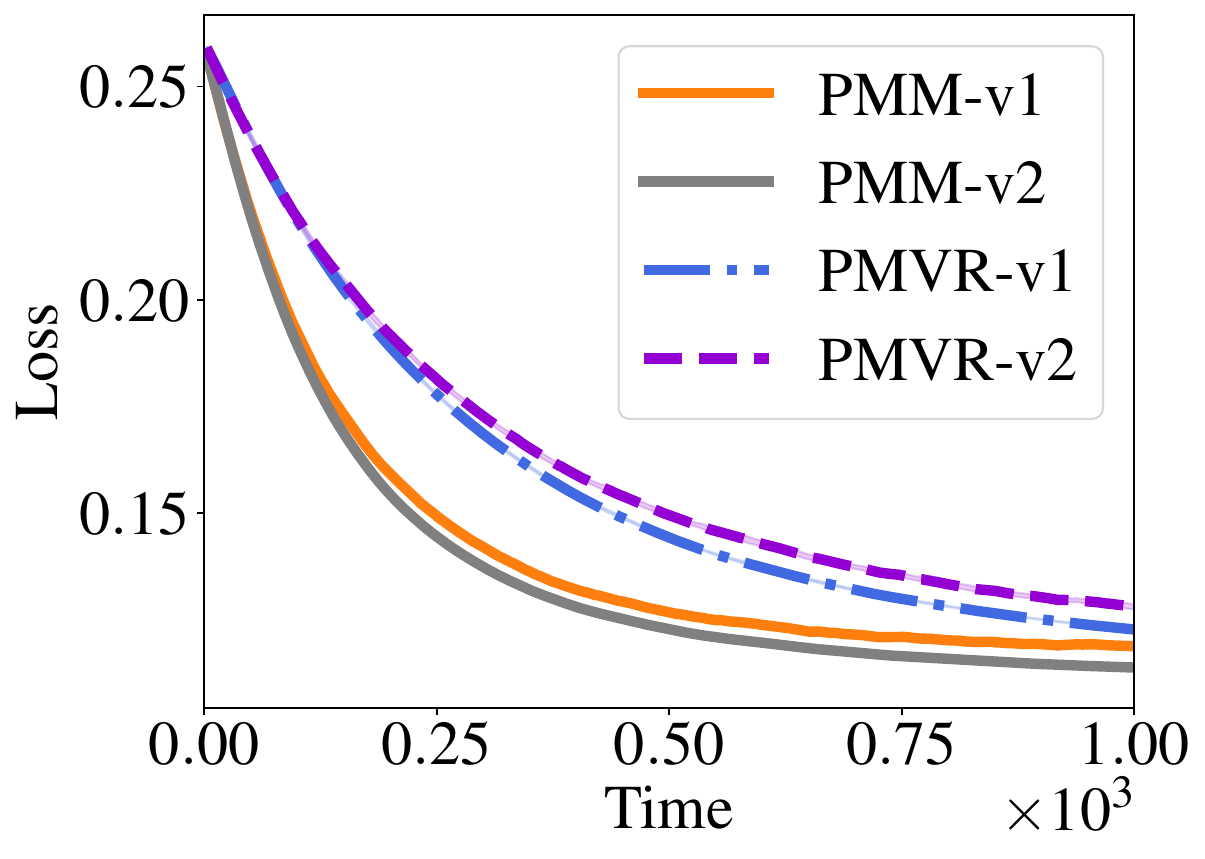}}
   \subfigure[ Industry-17]{
			\includegraphics[width=0.3\textwidth]{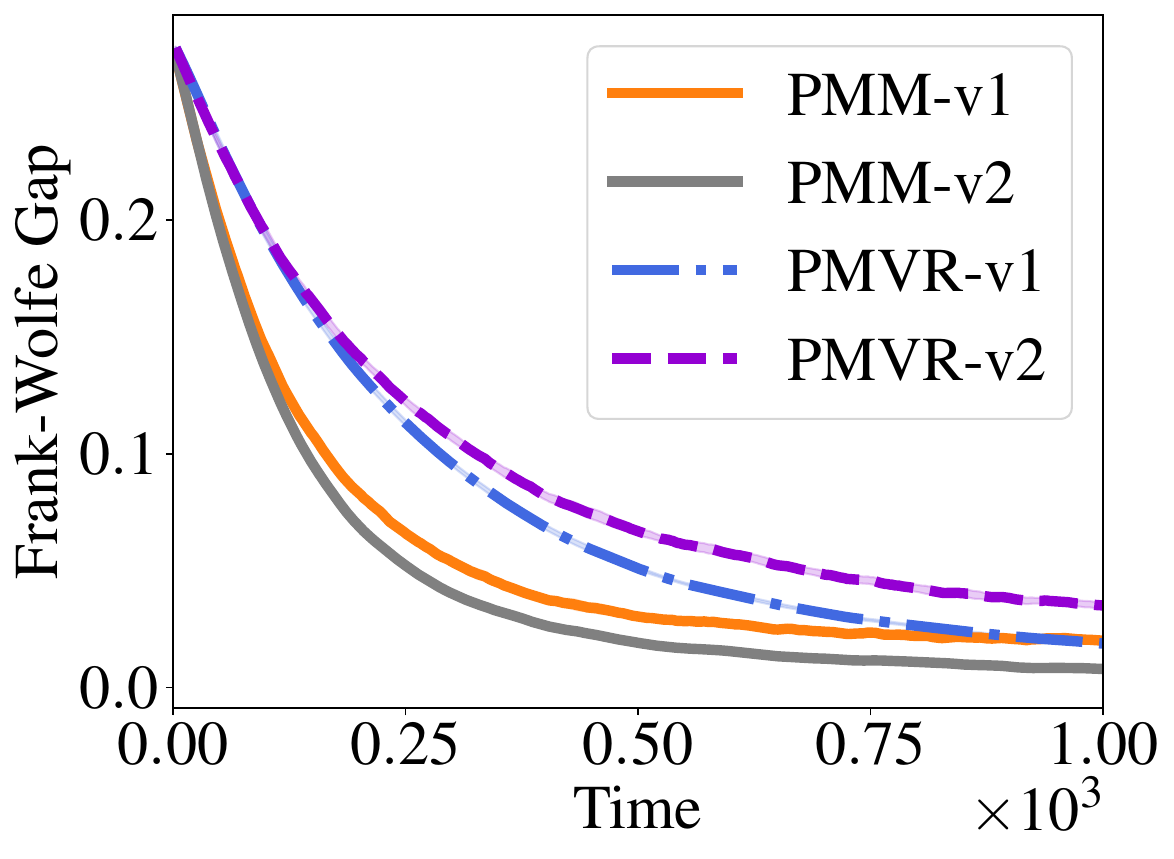}}
   \subfigure{
			\includegraphics[width=0.3\textwidth]{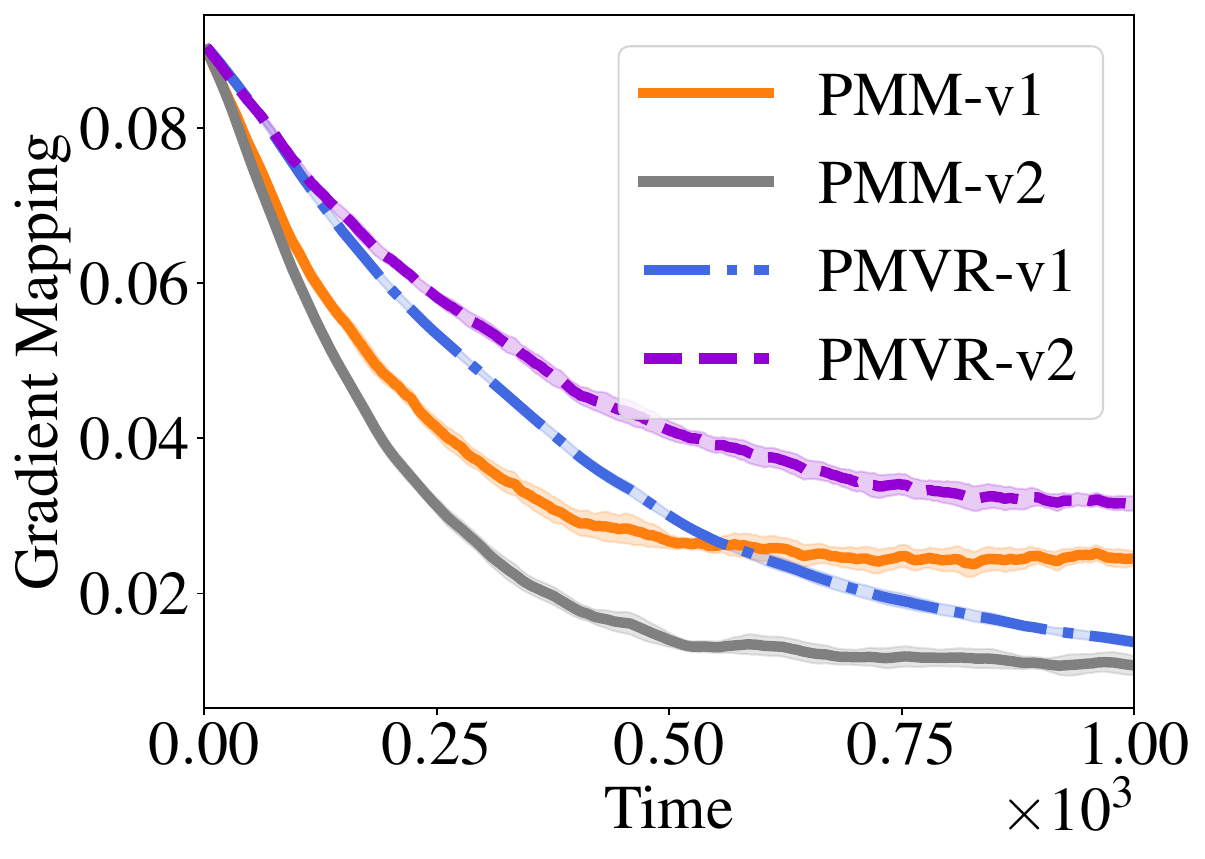}
		}
        \vspace{-0.1in}
		\caption{Results for mean-variance risk-averse portfolio optimization.}
        \vspace{-0.2in}
		\label{fig:c2}
	\end{center}
\end{figure*}
\subsection{Variants Comparison}
In this subsection, we compare the different algorithmic variants proposed in this paper: PMM-v1/v2, PMVR-v1/v2, the finite-sum variant PMFS, and their stage-wise counterparts for convex, strongly convex, and finite-sum objectives~(SwPMVR-v1, SwPMVR-v2, and SwPMFS). Specifically, we evaluate these methods on the mean-variance risk-averse portfolio optimization problem introduced in Section 7.2, whose objective function is convex. The experimental results are presented in Figure~\ref{fig:c1}. As illustrated, the finite-sum variants~(PMFS and SwPMFS) achieve the best overall performance in terms of the objective value, Frank-Wolfe gap, and gradient mapping. Moreover, the stage-wise variants~(SwPMVR-v1/v2) consistently outperform their non-stage-wise counterparts~(PMVR-v1/v2 and PMM-v1/v2), demonstrating the empirical benefit of gradually decreasing the hyperparameters.

Next, we compare PMM and PMVR. The PMM algorithm is designed under the general smoothness condition and utilizes a simpler estimator, whereas PMVR is explicitly tailored to the average smoothness assumption. In practice, we observe that PMVR performs better when the batch size is large. For example, in Figures~\ref{fig:1} and~\ref{fig:3}, where the batch size is set to 128, PMVR outperforms PMM in most instances. To further investigate the effect of batch size, we repeated the experiments from Section 7.2 with a smaller batch size of 32~(Figure~\ref{fig:c2}). Under this setting, PMM outperforms PMVR. This behavior can be attributed to PMVR's reliance on the average smoothness condition. Unlike standard smoothness, which applies only to the global objective function, average smoothness depends on the properties of individual samples. When the batch size is small, individual samples have a greater influence. Consequently, sample-level outliers can inflate the average smoothness constant or even violate the average smoothness assumption. Conversely, larger batches effectively smooth out this variability, rendering PMVR more stable and effective.

Based on these observations, we provide the following practical guidelines for selecting among the proposed variants. When the objective function is convex or strongly convex, the stage-wise algorithms are preferable. When the problem has a finite-sum structure and full gradients are available, PMFS and its stage-wise variant are the most effective among the proposed methods. For nonconvex objectives, when full gradients are unavailable or too expensive to compute, PMVR and PMM are recommended. Between these two methods, PMM is more suitable for smaller batch sizes, whereas PMVR is preferable when larger batch sizes can be used. Finally, we give a concise summary below.
\begin{compactenum}
\item \textbf{Stage-wise PMFS}: for convex objectives with a finite-sum structure.
\item \textbf{Stage-wise PMVR}: for convex objectives.
\item \textbf{PMFS}: for non-convex objectives with a finite-sum structure.
\item \textbf{PMVR}: for non-convex objectives when large batch sizes can be used.
\item \textbf{PMM}: for non-convex objectives when smaller batch sizes are required.
\end{compactenum}

\section{Conclusion}
In this paper, we investigate projection-free algorithms for stochastic constrained multi-level compositional optimization. 
Our methods improve prior projection-free multi-level results under the gradient mapping criterion and provide the first guarantees under the Frank-Wolfe gap in this setting.
We also introduce a stage-wise parameter-free variant which avoids knowing problem-dependent parameters, and a momentum-based approach that removes the stronger average-smoothness assumptions.
In addition, we derive guarantees for convex and strongly convex objectives and extend the approach to finite-sum settings.
Empirical results on several benchmarks corroborate the practical benefits of the proposed methods.

\newpage
\newpage
\appendix
\section{Proof of Theorem~\ref{thm:main}}
Note that when estimating the gradient, we use the term $\prod_{i=1}^K \nabla f_i(\u_t^{i-1})$ to approximate $\nabla F(\x_t)$. We first bound this estimation error as follows.
\begin{lemma}\label{lem:2} For $K\ge 2$, we can obtain the following guarantee:
\begin{align*}
\sum_{t=1}^T \E\left[\Norm{\prod_{i=1}^K\nabla f_i(\u_t^{i-1}) - \nabla F(\x_t)}^2\right] \leq  KL_F^2 \sum_{t=1}^T \sum_{i=1}^{K-1} \E\left[\Norm{\u_t^i - f_i(\u_t^{i-1})}^2\right].
\end{align*}
\end{lemma}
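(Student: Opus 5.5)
This is a pointwise telescoping argument, followed by Cauchy--Schwarz and summation over $t$. Write $\u_t^0 = \x_t$ and $\nabla F(\x_t) = \prod_{i=1}^K \nabla f_i(f_{i-1}\circ\cdots\circ f_1(\x_t))$ by the chain rule. Also write $\bar{\u}_t^{i} \coloneqq f_i\circ\cdots\circ f_1(\x_t)$, so that $\bar{\u}_t^0=\x_t$.

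\textbf{Step 1: telescoping decomposition.} Define hybrid products
\[
P_j \coloneqq \prod_{i=1}^{j}\nabla f_i(\bar{\u}_t^{i-1}) \prod_{i=j+1}^{K}\nabla f_i(\u_t^{i-1}),
\]
so that $P_K=\nabla F(\x_t)$ and $P_0=\prod_i \nabla f_i(\u_t^{i-1})$. Then $\prod_i\nabla f_i(\u_t^{i-1})-\nabla F(\x_t)=\sum_{j=1}^{K-1}(P_{j-1}-P_j)$; the $j=K$ term does not appear because $\u_t^{K-1}$ vs.\ $\bar\u_t^{K-1}$ only enters through $\nabla f_K$. More carefully, I would instead telescope over which argument $\u_t^{j}$ is replaced, for $j=1,\dots,K-1$, since $\u_t^0=\bar\u_t^0$. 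Each difference then involves a single Jacobian $\nabla f_{j+1}$ evaluated at two points, with all other factors bounded in operator norm by $L_f$ (Lipschitz continuity of $f_i$ gives $\|\nabla f_i\|\le L_f$). This yields $\|\text{diff}_j\|\le L_f^{K-1}L_J\|\u_t^j-\bar\u_t^j\|$. Cauchy--Schwarz over the $K-1$ terms then gives
\[
\Norm{\prod_i\nabla f_i(\u_t^{i-1})-\nabla F(\x_t)}^2\le (K-1)L_f^{2K-2}L_J^2\sum_{j=1}^{K-1}\|\u_t^j-\bar\u_t^j\|^2 .
\]

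\textbf{Step 2: converting to one-step errors $\u_t^i-f_i(\u_t^{i-1})$.} This is the main obstacle, since the target bound involves local errors rather than the accumulated deviation from the true composition. I would write
\[
\u_t^j-\bar\u_t^j=(\u_t^j-f_j(\u_t^{j-1}))+(f_j(\u_t^{j-1})-f_j(\bar\u_t^{j-1})).
\]
The second term is bounded by $L_f\|\u_t^{j-1}-\bar\u_t^{j-1}\|$. Unrolling, $\|\u_t^j-\bar\u_t^j\|\le\sum_{l=1}^{j}L_f^{j-l}\|\u_t^l-f_l(\u_t^{l-1})\|$.

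\textbf{Step 3: regrouping into $KL_F^2$.} Alternatively, and better for the constant, I would do the telescoping directly over levels with weights: in Step 1, bound difference $j$ using the Lipschitz continuity of the smooth composite map $\x\mapsto\nabla(f_K\circ\cdots\circ f_{j+1})$ at the point $\u_t^j$ versus $f_j(\u_t^{j-1})$. The tail composition $G_j=f_K\circ\cdots\circ f_{j+1}$ has Lipschitz gradient with constant at most $L_F$ by the Remark after Assumption~\ref{asm:1}, since each partial composition's constant is bounded by the full one up to the $L_f\ge1$ convention. Concretely, set $Q_j=\nabla(G_j)(\u_t^j)\,\nabla(f_j\circ\cdots\circ f_1)$-type hybrids, so that consecutive hybrids differ only by replacing $\u_t^j$ with $f_j(\u_t^{j-1})$ inside $G_j$'s Jacobian, multiplied by the bounded front Jacobians. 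Each difference is then at most $L_F\|\u_t^j-f_j(\u_t^{j-1})\|$, after absorbing the front factors $L_f^{j}$ into $L_F$. Cauchy--Schwarz over the at most $K$ terms gives $KL_F^2\sum_{i=1}^{K-1}\|\u_t^i-f_i(\u_t^{i-1})\|^2$ pointwise. Taking expectations and summing over $t=1,\dots,T$ finishes the proof.

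I expect the delicate part to be arranging the hybrid sequence so that each difference involves only a local error, and verifying that the constants collapse to $L_F$ as defined. If they do not, I would fall back on Steps 1--2 with an explicit constant and compare it to $L_F$.
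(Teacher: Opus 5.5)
Your Step 3 is the right argument and gives a self-contained proof. The paper does not prove the key bound itself. It imports Lemma 4 of Jiang et al.\ (2022), namely $\E\Norm{\nabla F(\x_t)-\prod_{i=1}^K\nabla f_i(\u_t^{i-1})}^2\le K\sum_{i=1}^{K-1}C_i^2\,\E\Norm{f_i(\u_t^{i-1})-\u_t^i}^2$ with $C_i=L_f^{K-1}L_J(1+L_f+\cdots+L_f^{K-i-1})$, notes $C_i\le L_F$, and sums over $t$. Your hybrid telescope reproduces exactly these constants. With $G_j=f_K\circ\cdots\circ f_{j+1}$, take
\[
Q_j\coloneqq\Bigl(\prod_{i=1}^{j}\nabla f_i(\u_t^{i-1})\Bigr)\nabla G_j(\u_t^j),\qquad j=0,\dots,K-1.
\]
Then $Q_0=\nabla F(\x_t)$ and $Q_{K-1}=\prod_{i=1}^K\nabla f_i(\u_t^{i-1})$. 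Since $\nabla G_{j-1}(\y)=\nabla f_j(\y)\nabla G_j(f_j(\y))$, consecutive hybrids satisfy $Q_j-Q_{j-1}=\bigl(\prod_{i=1}^{j}\nabla f_i(\u_t^{i-1})\bigr)\bigl[\nabla G_j(\u_t^j)-\nabla G_j(f_j(\u_t^{j-1}))\bigr]$. The front block must be evaluated at the estimates $\u_t^{i-1}$. If it were the true Jacobian of $f_j\circ\cdots\circ f_1$ at $\x_t$, as your phrase ``$\nabla(f_j\circ\cdots\circ f_1)$-type'' suggests, the telescope would not end at the target and consecutive hybrids would differ in two places. Your version buys self-containedness; the paper's citation buys brevity.

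The step that needs fixing is the constant. You bound $\mathrm{Lip}(\nabla G_j)\le L_F$ ``up to $L_f\ge 1$'' and then ``absorb'' $L_f^j$ into $L_F$, but these two moves do not combine. After the first you are left with $L_f^jL_F$, which cannot be absorbed when $L_f>1$, and the first bound itself can fail when $L_f<1$. Bound the product directly instead. Applying the Remark's formula to the $(K-j)$-level composition $G_j$ gives $\mathrm{Lip}(\nabla G_j)\le L_JL_f^{K-j-1}\sum_{k=0}^{K-j-1}L_f^k$, hence
\[
L_f^j\,\mathrm{Lip}(\nabla G_j)\le L_JL_f^{K-1}\sum_{k=0}^{K-j-1}L_f^k=C_j\le L_JL_f^{K-1}\sum_{k=0}^{K-1}L_f^k=L_F
\]
for every $L_f>0$. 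Cauchy--Schwarz over the $K-1$ differences then yields $(K-1)L_F^2\sum_{i=1}^{K-1}\Norm{\u_t^i-f_i(\u_t^{i-1})}^2$, slightly better than claimed.

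Your Steps 1--2 fallback also works and avoids computing the smoothness of the tail compositions, but the order of operations matters. Use the triangle inequality (not Cauchy--Schwarz) in Step 1, substitute the unrolled bound from Step 2, and swap the sums to get $\sum_{l=1}^{K-1}C_l\Norm{\u_t^l-f_l(\u_t^{l-1})}$; only then square once. If you square in both steps with unweighted Cauchy--Schwarz, the coefficient on the last error is $(K-1)^2L_f^{2K-2}L_J^2$, which exceeds $KL_F^2$ for small $L_f$ and $K\ge 3$.
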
 
\begin{proof}
According to the Lemma 4 of the literature~\citep{jiang2022optimal}, we know that
\begin{align*}
\E\left[\Norm{\nabla F(\x_t)- \prod_{i=1}^K\nabla f_i(\u_t^{i-1}) }^2\right] \leq K\sum_{i=1}^{K-1} C_i^2\E\left[\Norm{f_i(\u_t^{i-1})-\u_t^i}^2\right],
\end{align*}
where $C_i \coloneqq L_f^{K-1} L_J(1+L_f+\dotsc+L_f^{K-i-1}) \leq L_F$. Summing up, we complete the proof.
\end{proof}
Next, we can bound the gradient estimation error as follows.
%%%%%%%%%%%%%%%%%%%%%%%%%%   LEMMA  2    %%%%%%%%%%%%%%%%%%%%%%%%%%%%%%%%%%%%%%%%%%%%%%%%%%%%%
\begin{lemma}\label{lem:3} The gradient estimator $\v_t$ enjoys the following guarantee:
\begin{align*}
&\frac{1}{T}\sum_{t=1}^T \E\left[\Norm{\v_t -\prod_{i=1}^K \nabla f_i(\u_t^{i-1})}^2\right] \leq  \frac{1}{\alpha T}\E\left[\Norm{ \v_{1} - \prod_{i=1}^K \nabla f_i(\u_{1}^{i-1})}^2\right] \\
&\qquad+ 2 K^2 \sigma_J^2 L_f^{2K-2}\frac{\alpha}{B_1}+2K \LL_J^2L_f^{2K-2} \frac{1}{B_1\alpha T}\sum_{t=1}^T\sum_{i=1}^{K} \E\left[\Norm{\u_{t+1}^{i-1} - \u_{t}^{i-1}}^2\right].
\end{align*}
\end{lemma}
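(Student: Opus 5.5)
Write $G_t := \prod_{i=1}^K \nabla f_i(\u_t^{i-1})$ and $G_t(\xi) := \prod_{i=1}^K \nabla f_i(\u_t^{i-1};\xi^{i})$ for a single multi-level sample $\xi=(\xi^1,\ldots,\xi^K)$. The plan is the standard STORM recursion. From equation~(\ref{VR:v}) we get
\begin{align*}
\v_t - G_t = (1-\alpha)(\v_{t-1}-G_{t-1}) + \frac{1}{B_1}\sum_{j=1}^{B_1} \Big[\alpha\big(G_t(\xi_t^{j}) - G_t\big) + (1-\alpha)\big(G_t(\xi_t^{j})-G_{t-1}(\xi_t^{j}) - G_t + G_{t-1}\big)\Big].
\end{align*}
Conditioned on the past, the bracketed terms have zero mean. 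Here we use that the $\xi_t^{i,j}$ are fresh samples independent of $\u_t^{i-1}$ and $\u_{t-1}^{i-1}$, and that the levels are mutually independent, so the expectation of the product is the product of expectations. Squaring and taking expectations therefore kills the cross term. Using $(a+b)^2\le 2a^2+2b^2$ and independence across the $B_1$ samples, we aim for
\begin{align*}
\E\|\v_t-G_t\|^2 \le (1-\alpha)^2\E\|\v_{t-1}-G_{t-1}\|^2 + \frac{2\alpha^2}{B_1}\E\|G_t(\xi)-G_t\|^2 + \frac{2}{B_1}\E\|G_t(\xi)-G_{t-1}(\xi)\|^2 .
\end{align*}

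Next we bound the two per-sample quantities. For the variance, we telescope the product difference one factor at a time:
\begin{align*}
G_t(\xi)-G_t=\sum_{i=1}^K \Big(\prod_{l<i}\nabla f_l(\cdot;\xi)\Big)\big(\nabla f_i(\cdot;\xi)-\nabla f_i(\cdot)\big)\Big(\prod_{l>i}\nabla f_l(\cdot)\Big).
\end{align*}
By Cauchy--Schwarz this is at most $K\sum_i$ of the squared summands. Each summand is bounded by $L_f^{2K-2}\sigma_J^2$, since every Jacobian norm is at most $L_f$ (Lipschitzness, stochastic Jacobians included) and the middle factor is controlled by Assumption~\ref{asm:stochastic2}. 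This gives $K^2 L_f^{2K-2}\sigma_J^2$. The same telescoping applied to $G_t(\xi)-G_{t-1}(\xi)$, with the middle factor $\nabla f_i(\u_t^{i-1};\xi)-\nabla f_i(\u_{t-1}^{i-1};\xi)$ and Assumption~\ref{asm:stoc_smooth3}, gives $K L_f^{2K-2}\LL_J^2\sum_i\|\u_t^{i-1}-\u_{t-1}^{i-1}\|^2$.

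The main obstacle is the step that pulls the independent outer-level factors out in expectation while the norm bounds hold only deterministically. I would handle it by conditioning level by level. The factors $\nabla f_l(\cdot;\xi)$ for $l\ne i$ are bounded by $L_f$ pathwise under the Lipschitz assumption, so they can be pulled out before the expectation over $\xi^i$ is taken.

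Finally, we unroll the recursion. Using $(1-\alpha)^2\le 1-\alpha$ and summing over $t=2,\ldots,T$ gives
\begin{align*}
\alpha\sum_{t=1}^T \E\|\v_t-G_t\|^2 \le \E\|\v_1-G_1\|^2 + \frac{2\alpha^2 T}{B_1}K^2L_f^{2K-2}\sigma_J^2 + \frac{2K L_f^{2K-2}\LL_J^2}{B_1}\sum_{t}\sum_{i}\E\|\u_{t+1}^{i-1}-\u_t^{i-1}\|^2 .
\end{align*}
In the last term we reindex $t\to t+1$ and extend the sum up to $T$ by adding nonnegative terms. Dividing by $\alpha T$ yields the claimed bound.
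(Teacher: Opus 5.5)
Your proposal follows the paper's proof essentially step for step: the same STORM recursion with the cross term removed by conditional unbiasedness, $(a+b)^2\le 2a^2+2b^2$ with the $1/B_1$ mini-batch factor, one-factor-at-a-time telescoping of the Jacobian products to get $K^2\sigma_J^2L_f^{2K-2}$ and $K\LL_J^2L_f^{2K-2}\sum_i\Norm{\u_t^{i-1}-\u_{t-1}^{i-1}}^2$, and unrolling with $(1-\alpha)^2\le 1-\alpha$. The only cosmetic difference is that the paper attaches the $\alpha$-weighted noise term to the previous iterates, $\alpha\bigl(\frac{1}{B_1}\sum_j\prod_i\nabla f_i(\u_{t-1}^{i-1};\xi_t^{i,j})-\prod_i\nabla f_i(\u_{t-1}^{i-1})\bigr)$, rather than to $\u_t$.

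One caution applies equally to the paper's argument. Your stated reason that the expectation of the product factorizes (and hence that the cross-sample terms behind the $1/B_1$ factor vanish) is not literally valid for quantities evaluated at $\u_t$: $\u_t^{i}$ is built from the same samples $\xi_t^{i,\cdot}$ that enter the $i$-th Jacobian factor, so consecutive factors are correlated. The paper tacitly makes the same assumption for its term $\frac{1}{B_1}\sum_j\prod_i\nabla f_i(\u_t^{i-1};\xi_t^{i,j})-\prod_i\nabla f_i(\u_t^{i-1})$.
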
	
\begin{proof}
According to the definition of $\v_t$, we know that:
\begin{align*}
&\E\left[\Norm{\v_t -\prod_{i=1}^K \nabla f_i(\u_t^{i-1})}^2\right]\\
=&\E\left[\left \| \frac{1}{B_1}\sum_{j=1}^{B_1}\prod_{i=1}^K \nabla f_i(\u_t^{i-1};\xi_t^{i,j})  + (1-\alpha)\left(\v_{t-1} - \frac{1}{B_1} \sum_{j=1}^{B_1} \prod_{i=1}^K \nabla f_i(\u_{t-1}^{i-1};\xi_t^{i,j})\right) - \prod_{i=1}^K \nabla f_i(\u_t^{i-1})\right \|^2 \right]\\
=& \E\left[\left \| (1-\alpha) \left( \v_{t-1} - \prod_{i=1}^K \nabla f_i(\u_{t-1}^{i-1}) \right) + \alpha \left( \frac{1}{B_1}\sum_{j=1}^{B_1}  \prod_{i=1}^K \nabla f_i(\u_{t-1}^{i-1};\xi_t^{i,j}) - \prod_{i=1}^K \nabla f_i(\u_{t-1}^{i-1})\right) \right.\right.\\
& \left.\left. + \left( \prod_{i=1}^K \nabla f_i(\u_{t-1}^{i-1}) -\prod_{i=1}^K \nabla f_i(\u_{t}^{i-1})   -  \frac{1}{B_1}\sum_{j=1}^{B_1}\prod_{i=1}^K \nabla f_i(\u_{t-1}^{i-1};\xi_t^{i,j}) + \frac{1}{B_1}\sum_{j=1}^{B_1}\prod_{i=1}^K \nabla f_i(\u_{t}^{i-1};\xi_t^{i,j})\right) \right\|^2\right] \\
\leq & (1-\alpha)^2 \E\left[\Norm{ \v_{t-1} - \prod_{i=1}^K \nabla f_i(\u_{t-1}^{i-1})}^2\right] + 2\alpha^2\E\left[\Norm{\frac{1}{B_1}\sum_{j=1}^{B_1}  \prod_{i=1}^K \nabla f_i(\u_{t-1}^{i-1};\xi_t^{i,j}) - \prod_{i=1}^K \nabla f_i(\u_{t-1}^{i-1})}^2\right]\\
& +2\E\left[\Norm{\frac{1}{B_1}\sum_{j=1}^{B_1}  \prod_{i=1}^K \nabla f_i(\u_{t-1}^{i-1};\xi_t^{i,j}) -  \frac{1}{B_1}\sum_{j=1}^{B_1}  \prod_{i=1}^K \nabla f_i(\u_{t}^{i-1};\xi_t^{i,j}) -  \prod_{i=1}^K \nabla f_i(\u_{t-1}^{i-1}) + \prod_{i=1}^K \nabla f_i(\u_{t}^{i-1})}^2\right],
\end{align*}
where the last equality is due to $(a+b)^2 \leq 2a^2 +2b^2$ and the fact that 
\begin{align*}
    \E \left[ \alpha\left( \frac{1}{B_1}\sum_{j=1}^{B_1} \prod_{i=1}^K \nabla f_i(\u_{t-1}^{i-1};\xi_t^{i,j}) - \prod_{i=1}^K \nabla f_i(\u_{t-1}^{i-1})\right) +\prod_{i=1}^K \nabla f_i(\u_{t-1}^{i-1}) -\prod_{i=1}^K \nabla f_i(\u_{t}^{i-1})\right. \\
\left. -  \frac{1}{B_1}\sum_{j=1}^{B_1} \prod_{i=1}^K \nabla f_i(\u_{t-1}^{i-1};\xi_t^{i,j}) +  \frac{1}{B_1}\sum_{j=1}^{B_1} \prod_{i=1}^K \nabla f_i(\u_{t}^{i-1};\xi_t^{i,j}) \right] =0.
\end{align*}
Then, we would bound the two terms, respectively. First, we have that:
\begin{align*}
    &\E\left[\Norm{ \frac{1}{B_1}\sum_{j=1}^{B_1}  \prod_{i=1}^K \nabla f_i(\u_{t-1}^{i-1};\xi_t^{i,j}) - \prod_{i=1}^K \nabla f_i(\u_{t-1}^{i-1})}^2\right]\\
    = & \frac{1}{B_1^2}\sum_{j=1}^{B_1}\E\left[\Norm{\prod_{i=1}^K \nabla f_i(\u_{t-1}^{i-1};\xi_t^{i,j}) - \prod_{i=1}^K \nabla f_i(\u_{t-1}^{i-1})}^2\right]\\
    =&\frac{1}{B_1^2}\sum_{j=1}^{B_1}\E\left[\left\|\prod_{i=1}^K \nabla f_i(\u_{t-1}^{i-1};\xi_t^{i,j}) - \nabla f_1(\u_{t-1}^0) \prod_{i=2}^K \nabla  f_i(\u_{t-1}^{i-1};\xi_t^{i,j}) \right. \right. \\
    & \quad \left.\left. + \nabla f_1(\u_{t-1}^0) \prod_{i=2}^K \nabla  f_i(\u_{t-1}^{i-1};\xi_t^{i,j}) - \nabla f_1(\u_{t-1}^0) \nabla f_2(\u_{t-1}^1) \prod_{i=3}^K \nabla  f_i(\u_{t-1}^{i-1};\xi_t^{i,j}) \right. \right.\\ 
    &  \qquad \cdots \\
    & \quad \left.\left. +  \left(\prod_{i=1}^{K-1} \nabla f_i(\u_{t-1}^{i-1})\right) \nabla f_K(\u_{t-1}^{K-1};\xi_t^{i,j}) - \prod_{i=1}^K \nabla f_i(\u_{t-1}^{i-1}) \right\|^2\right] \\
    \leq & \frac{K}{B_1^2}\sum_{j=1}^{B_1} \E\left[\Norm{\prod_{i=1}^K \nabla f_i(\u_{t-1}^{i-1};\xi_t^{i,j}) - \nabla f_1(\u_{t-1}^0) \prod_{i=2}^K \nabla  f_i(\u_{t-1}^{i-1};\xi_t^{i,j})}^2 \right] \\
    & + \frac{K}{B_1^2}\sum_{j=1}^{B_1}\E\left[\Norm{\nabla f_1(\u_{t-1}^0) \prod_{i=2}^K \nabla  f_i(\u_{t-1}^{i-1};\xi_t^{i,j}) -\nabla f_1(\u_{t-1}^0) \nabla f_2(\u_{t-1}^1) \prod_{i=3}^K \nabla  f_i(\u_{t-1}^{i-1};\xi_t^{i,j})}^2 \right] \\
    & + ... \\
    & + \frac{K}{B_1^2}\sum_{j=1}^{B_1}\E\left[\Norm{\left(\prod_{i=1}^{K-1} \nabla f_i(\u_{t-1}^{i-1})\right) \nabla f_K(\u_{t-1}^{K-1};\xi_t^{i,j})  - \prod_{i=1}^K \nabla f_i(\u_{t-1}^{i-1})}^2 \right]  \\
   \leq &  \frac{K}{B_1^2}\sum_{j=1}^{B_1} \sigma_J^2 L_f^{2K-2} \\
   = & \frac{K^2 }{B_1} \sigma_J^2 L_f^{2K-2}
\end{align*}
When dealing with the second term, we have
\begin{equation}\label{diff}
    \begin{split}
        &\E\left[\Norm{\frac{1}{B_1}\sum_{j=1}^{B_1}  \prod_{i=1}^K \nabla f_i(\u_{t-1}^{i-1};\xi_t^{i,j}) -  \frac{1}{B_1}\sum_{j=1}^{B_1}  \prod_{i=1}^K \nabla f_i(\u_{t}^{i-1};\xi_t^{i,j}) -  \prod_{i=1}^K \nabla f_i(\u_{t-1}^{i-1}) + \prod_{i=1}^K \nabla f_i(\u_{t}^{i-1})}^2\right]\\
    %= & \frac{1}{B_1^2} \E\left[\Norm{\sum_{j=1}^{B_1}\left(  \prod_{i=1}^K \nabla f_i(\u_{t-1}^{i-1};\xi_t^{i,j}) -  \prod_{i=1}^K \nabla f_i(\u_{t}^{i-1};\xi_t^{i,j}) -  \prod_{i=1}^K \nabla f_i(\u_{t-1}^{i-1}) + \prod_{i=1}^K \nabla f_i(\u_{t}^{i-1}) \right)}^2\right]\\
    = & \frac{1}{B_1^2}\sum_{j=1}^{B_1}\E\left[\Norm{  \prod_{i=1}^K \nabla f_i(\u_{t-1}^{i-1};\xi_t^{i,j}) -  \prod_{i=1}^K \nabla f_i(\u_{t}^{i-1};\xi_t^{i,j}) -  \prod_{i=1}^K \nabla f_i(\u_{t-1}^{i-1}) + \prod_{i=1}^K \nabla f_i(\u_{t}^{i-1}) }^2\right]\\
    \leq & \frac{1}{B_1^2}\sum_{j=1}^{B_1}\E\left[\Norm{  \prod_{i=1}^K \nabla f_i(\u_{t-1}^{i-1};\xi_t^{i,j}) -  \prod_{i=1}^K \nabla f_i(\u_{t}^{i-1};\xi_t^{i,j}) }^2\right]\\
    =&\frac{1}{B_1^2}\sum_{j=1}^{B_1} \E\left[ \left\|\prod_{i=1}^K \nabla f_i(\u_{t-1}^{i-1};\xi_t^{i,j}) - \nabla f_1(\u_{t}^{0};\xi_t^{1,j}) \prod_{i=2}^K \nabla f_i(\u_{t-1}^{i-1};\xi_t^{i,j})   \right. \right. \\
    & \left. \left.\quad\quad + \nabla f_1(\u_{t}^{0};\xi_t^{1,j}) \prod_{i=2}^K \nabla f_i(\u_{t-1}^{i-1};\xi_t^{i,j})  - \nabla f_1(\u_{t}^{0};\xi_t^{1,j}) \nabla f_2(\u_{t}^1;\xi_t^{2,j}) \prod_{i=3}^K \nabla f_i(\u_{t-1}^{i-1};\xi_t^{i,j})\right. \right.  \\
    &\left. \left.\quad\quad \cdots \right. \right. \\
    & \left. \left.\quad\quad+ \left(\prod_{i=1}^{K-1} \nabla f_i(\u_{t}^{i-1};\xi_t^{i,j})\right)\nabla f_K(\u_{t-1}^{K-1};\xi_t^{K,j}) - \prod_{i=1}^K \nabla f_i(\u_{t}^{i-1};\xi_t^{i,j})  \right\|^2 \right]\\
    \leq & \frac{K}{B_1^2}\sum_{j=1}^{B_1} \LL_J^2L_f^{2K-2} \sum_{i=1}^{K} \E\left[\Norm{\u_t^{i-1} - \u_{t-1}^{i-1}}^2\right]\\
    \leq & \frac{K}{B_1} \LL_J^2L_f^{2K-2} \sum_{i=1}^{K} \E\left[\Norm{\u_t^{i-1} - \u_{t-1}^{i-1}}^2\right],
    \end{split}
\end{equation}
where the first two steps are due to the fact that
\begin{align*}
    \E\left[\frac{1}{B_1}\sum_{j=1}^{B_1} \left(  \prod_{i=1}^K \nabla f_i(\u_{t-1}^{i-1};\xi_t^{i,j}) -  \prod_{i=1}^K \nabla f_i(\u_{t}^{i-1};\xi_t^{i,j}) \right)  -  \prod_{i=1}^K \nabla f_i(\u_{t-1}^{i-1}) + \prod_{i=1}^K \nabla f_i(\u_{t}^{i-1})\right]=0
\end{align*}
To this end, we can conclude that:
\begin{align*}
&\E\left[\Norm{\v_t -\prod_{i=1}^K \nabla f_i(\u_t^{i-1})}^2\right] \leq  (1-\alpha) \E\left[\Norm{ \v_{t-1} - \prod_{i=1}^K \nabla f_i(\u_{t-1}^{i-1})}^2\right] \\
&\qquad+ 2 K^2 \sigma_J^2 L_f^{2K-2}\frac{\alpha^2}{B_1}+2K \LL_J^2L_f^{2K-2} \frac{1}{B_1}\sum_{i=1}^{K} \E\left[\Norm{\u_t^{i-1} - \u_{t-1}^{i-1}}^2\right].
\end{align*}
Summing up over $t$ and rearranging, we complete the proof for this lemma.
\end{proof}
Next, we bound the estimation error of the function value estimator in the following lemma.
\begin{lemma}\label{lem:4} The inner function estimator $\u_t$ ensures that:
\begin{align*}
	\frac{1}{T}\sum_{t=1}^T \sum_{i=1}^K \E\left[\Norm{\u_t^i -  f_i(\u_t^{i-1})}^2\right] \leq &\frac{\sum_{i=1}^K \E\left[\Norm{\u_1^i -  f_i(\u_1^{i-1})}^2\right]}{\alpha T} \\
 &+ 2 K \sigma^2 \frac{\alpha }{B_1}  + \frac{2\LL_f^2}{\alpha B_1 T}  \sum_{t=1}^T \sum_{i=1}^K\E\left[\Norm{\u_{t+1}^{i-1} - \u_{t}^{i-1}}^2\right].
\end{align*}
\end{lemma}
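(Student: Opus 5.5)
The plan is to mirror the proof of Lemma~\ref{lem:3}, one level at a time, and then sum over $i$. Fix a level $i$ and write the error as $\u_t^i - f_i(\u_t^{i-1})$. Substituting the update (\ref{VR:u}) and adding and subtracting $(1-\alpha) f_i(\u_{t-1}^{i-1})$ gives the STORM-type decomposition
\begin{align*}
\u_t^i - f_i(\u_t^{i-1}) ={}& (1-\alpha)\left(\u_{t-1}^i - f_i(\u_{t-1}^{i-1})\right) + \alpha\left(\frac{1}{B_1}\sum_{j=1}^{B_1} f_i(\u_{t-1}^{i-1};\xi_t^{i,j}) - f_i(\u_{t-1}^{i-1})\right)\\
&+ \frac{1}{B_1}\sum_{j=1}^{B_1}\left(f_i(\u_t^{i-1};\xi_t^{i,j}) - f_i(\u_{t-1}^{i-1};\xi_t^{i,j})\right) - f_i(\u_t^{i-1}) + f_i(\u_{t-1}^{i-1}).
\end{align*}

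The key point is that $\u_t^{i-1}$ is determined before $\xi_t^{i,\cdot}$ is drawn. It depends only on $\xi_t^{1,\cdot},\dots,\xi_t^{i-1,\cdot}$ and the past, and these are independent of $\xi_t^{i,\cdot}$ by Assumption~\ref{asm:stochastic2}. Conditioning on everything except $\xi_t^{i,\cdot}$, the last two groups therefore have zero conditional mean, and the first term is measurable. Squaring and taking expectations kills the cross term, and $(a+b)^2\le 2a^2+2b^2$ on the remaining two groups yields
\begin{align*}
\E\left[\Norm{\u_t^i - f_i(\u_t^{i-1})}^2\right] \le (1-\alpha)^2 \E\left[\Norm{\u_{t-1}^i - f_i(\u_{t-1}^{i-1})}^2\right] + 2\alpha^2 \frac{\sigma^2}{B_1} + \frac{2\LL_f^2}{B_1}\E\left[\Norm{\u_t^{i-1}-\u_{t-1}^{i-1}}^2\right].
\end{align*}
Two facts produce this bound. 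The mini-batch average of i.i.d. zero-mean terms has variance at most $\sigma^2/B_1$. For the correction term, the variance of a centered average is at most the second moment divided by $B_1$, and Assumption~\ref{asm:stoc_smooth3} controls that second moment by $\LL_f^2\Norm{\u_t^{i-1}-\u_{t-1}^{i-1}}^2$.

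Next, use $(1-\alpha)^2\le 1-\alpha$, sum over $t=2,\dots,T$, and telescope. Writing $e_t$ for the error at level $i$ and time $t$, this gives $\alpha\sum_{t=1}^{T} e_t \le e_1 + 2\alpha^2\sigma^2 T/B_1 + (2\LL_f^2/B_1)\sum_{t}\E\left[\Norm{\u_{t}^{i-1}-\u_{t-1}^{i-1}}^2\right]$. The last sum can be re-indexed as $\sum_{t=1}^{T}\E\left[\Norm{\u_{t+1}^{i-1}-\u_t^{i-1}}^2\right]$ by adding nonnegative terms. Dividing by $\alpha T$ and summing over $i=1,\dots,K$ produces exactly the three terms of the claim: the $K$ copies of the variance term give $2K\sigma^2\alpha/B_1$, and the initial errors and movement terms appear summed over $i$.

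The main obstacle is the conditioning and independence argument that makes the cross terms vanish. The point $\u_t^{i-1}$ is itself random and is built from the current-iteration samples of lower levels, so the conditioning must be on the $\sigma$-algebra generated by the past together with $\xi_t^{1},\dots,\xi_t^{i-1}$. Once that is done carefully, everything else is routine bookkeeping identical to Lemma~\ref{lem:3}.
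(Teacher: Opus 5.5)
Your proposal is correct and follows the paper's proof essentially step for step. It uses the same STORM-type decomposition, the same one-step recursion with $(1-\alpha)^2\le 1-\alpha$, the variance term $2\alpha^2\sigma^2/B_1$ and the movement term $(2\LL_f^2/B_1)\E[\Norm{\u_t^{i-1}-\u_{t-1}^{i-1}}^2]$, and then sums over $i$ and telescopes in $t$; you condition explicitly on the past together with $\xi_t^{1},\dots,\xi_t^{i-1}$ to kill the cross terms, a step the paper leaves implicit but which is exactly what justifies its first inequality.
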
	
\begin{proof}
According to the definition of $\u_t^i$, we have:
\begin{align*}
& \E\left[\Norm{ \u_t^i - f_i(\u_t^{i-1})}^2\right]\\
%=&  \E\left[\Norm{\frac{1}{B_1}\sum_{j=1}^{B_1}f_i(\u_t^{i-1};\xi_t^{i,j}) + (1-\alpha)\left(\u_{t-1}^i - \frac{1}{B_1}\sum_{j=1}^{B_1}f_i(\u_{t-1}^{i-1};\xi_t^{i,j})\right)-  f_i(\u_t^{i-1})}^2\right]\\
=& \E\left[\left\|(1-\alpha)\left(\u_{t-1}^i - f_i(\u_{t-1}^{i-1})\right) +  \alpha\frac{1}{B_1}\sum_{j=1}^{B_1} \left(  f_i(\u_{t-1}^{i-1};\xi_t^{i,j}) -  f_i(\u_{t-1}^{i-1})\right) \right.\right.\\
&\quad\quad\quad \left.\left.   +  f_i(\u_{t-1}^{i-1}) - f_i(\u_t^{i-1})  -\frac{1}{B_1}\sum_{j=1}^{B_1} \left(f_i(\u_{t-1}^{i-1};\xi_t^{i,j}) -  f_i(\u_t^{i-1};\xi_t^{i,j}) \right) \right\|^2\right] \\
\leq & (1-\alpha)^2 \E \Norm{\u_{t-1}^i - f_i(\u_{t-1}^{i-1})}^2 + \frac{2\alpha^2}{B_1} \E\left[\Norm{f_i(\u_{t-1}^{i-1};\xi_t^{i,j}) - f_i(\u_{t-1}^{i-1})}^2\right] \\
& \qquad  +2\E\left[\left\|\frac{1}{B_1}\sum_{j=1}^{B_1}\left(f_i(\u_{t-1}^{i-1};\xi_t^{i,j}) - f_i(\u_t^{i-1};\xi_t^{i,j} )- f_i(\u_{t-1}^{i-1}) + f_i(\u_t^{i-1})\right)\right\|^2\right]\\
\leq&  (1-\alpha) \E \Norm{\u_{t-1}^i - f_i(\u_{t-1}^{i-1})}^2 + \frac{2\alpha^2 \sigma^2}{B_1} + \frac{2}{B_1}\LL_f^2 \left\|\u_{t-1}^{i-1} - \u_t^{i-1}\right\|^2, 
\end{align*}
%where the first inequality is due to $(a+b)^2 \leq 2a^2 +2b^2$ and the fact that
%\begin{align*}
%    \E\left[\frac{\alpha}{B_1}\sum_{j=1}^{B_1} \left(  f_i(\u_{t-1}^{i-1};\xi_t^{i,j}) -  f_i(\u_{t-1}^{i-1})\right)+f_i(\u_{t-1}^{i-1})   - f_i(\u_t^{i-1})\right. \\
% \left.-  \frac{1}{B_1}\sum_{j=1}^{B_1}\left( f_i(\u_{t-1}^{i-1};\xi_t^{i,j}) - f_i(\u_t^{i-1};\xi_t^{i,j})\right)\right] = 0, 
%\end{align*}
where the last inequality is because of
\begin{align*}
&\E\left[\left\|\frac{1}{B_1}\sum_{j=1}^{B_1}\left(f_i(\u_{t-1}^{i-1};\xi_t^{i,j})  - f_i(\u_t^{i-1};\xi_t^{i,j} )- f_i(\u_{t-1}^{i-1}) + f_i(\u_t^{i-1})\right)\right\|^2\right]\\
%\leq &  \frac{1}{B_1^2}\sum_{j=1}^{B_1}\E\left[\left\|f_i(\u_{t-1}^{i-1};\xi_t^{i,j}) - f_i(\u_t^{i-1};\xi_t^{i,j}) - f_i(\u_{t-1}^{i-1}) + f_i(\u_t^{i-1}) \right\|^2\right]\\
\leq & \frac{1}{B_1^2}\sum_{j=1}^{B_1}\E\left[\left\|f_i(\u_{t-1}^{i-1};\xi_t^{i,j})  -f_i(\u_t^{i-1};\xi_t^{i,j})\right\|^2\right]
\leq \frac{1}{B_1}\LL_f^2 \left\|\u_{t-1}^{i-1} - \u_t^{i-1}\right\|^2.
\end{align*}
This leads to the fact that:
\begin{align*}
\sum_{i=1}^K \E\left[\Norm{ \u_t^i - f_i(\u_t^{i-1}) }^2\right] \leq  (1-\alpha) \sum_{i=1}^K \E \Norm{\u_{t-1}^i - f_i(\u_{t-1}^{i-1})}^2 \\
+ \frac{2\alpha^2 \sigma^2 K}{B_1} + \frac{2}{B_1}\LL_f^2 \sum_{i=1}^K \left\|\u_{t-1}^{i-1} - \u_t^{i-1}\right\|^2.  
\end{align*}
By summing up and rearranging, we finish the proof of this lemma.
\end{proof}
Then, we bound the term $\sum_{i=1}^K\E\left[\Norm{\u_{t+1}^{i-1} - \u_{t}^{i-1}}^2\right]$.
\begin{lemma} We can obtain the following guarantee.
\begin{align*}
&\sum_{i=1}^K  \E\left[\Norm{\u_{t+1}^{i-1} - \u_{t}^{i-1}}^2 \right] \\ 
 \leq&  \left(\sum_{i=1}^{K}\left(2\LL_f^2\right)^{i-1}\right) \left(\E \left[\eta^2 \Norm{\z_t - \x_t}^2\right] + \frac{2\alpha^2\sigma^2 K}{B_1}   + 2\alpha^2K \sum_{i=1}^{K} \E\left[\Norm{\u_t^i - f_i(\u_t^{i-1})}^2\right]  \right).
\end{align*}
\end{lemma}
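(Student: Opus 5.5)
The plan is to bound the level-wise increments one at a time, by induction on the level. Each increment $\u_{t+1}^{i}-\u_t^{i}$ will be expressed through the increment one level below, $\u_{t+1}^{i-1}-\u_t^{i-1}$, plus a term of order $\alpha$. Unrolling this recursion down to level $0$ produces the geometric factors $(2\LL_f^2)^{i-1}$, and summing over $i$ then gives the stated bound.

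\textbf{Level 0.} The base case uses $\u_t^0=\x_t$ together with the update $\x_{t+1}=\x_t+\eta(\z_t-\x_t)$. These give
\begin{align*}
\Norm{\u_{t+1}^0-\u_t^0}^2=\eta^2\Norm{\z_t-\x_t}^2 .
\end{align*}

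\textbf{Recursion for $i\ge 1$.} I will use the definition (\ref{VR:u}) at time $t+1$. Write $\bar f_i^{t+1}(\cdot)=\frac{1}{B_1}\sum_{j}f_i(\cdot;\xi_{t+1}^{i,j})$ for the mini-batch average. Then
\begin{align*}
\u_{t+1}^i-\u_t^i = \left(\bar f_i^{t+1}(\u_{t+1}^{i-1})-\bar f_i^{t+1}(\u_t^{i-1})\right) + \alpha\left(\bar f_i^{t+1}(\u_t^{i-1})-\u_t^i\right).
\end{align*}
Next I apply $(a+b)^2\le 2a^2+2b^2$. The first piece is controlled by average Lipschitzness (Assumption~\ref{asm:stoc_smooth3}), with Jensen over the batch, giving at most $\LL_f^2\E\Norm{\u_{t+1}^{i-1}-\u_t^{i-1}}^2$. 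For this step I need $\xi_{t+1}$ to be independent of $\u_{t+1}^{i-1}$ and $\u_t^{i-1}$. Strictly, $\u_{t+1}^{i-1}$ depends on $\xi_{t+1}^{i-1}$ and lower levels but not on $\xi_{t+1}^{i}$, so conditioning is legitimate. For the second piece, I split
\begin{align*}
\bar f_i^{t+1}(\u_t^{i-1})-\u_t^i = \left(\bar f_i^{t+1}(\u_t^{i-1})-f_i(\u_t^{i-1})\right) + \left(f_i(\u_t^{i-1})-\u_t^i\right).
\end{align*}
The first part is mean zero with variance at most $\sigma^2/B_1$, so the cross term vanishes in expectation. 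This yields
\begin{align*}
\E\Norm{\u_{t+1}^i-\u_t^i}^2\le 2\LL_f^2\E\Norm{\u_{t+1}^{i-1}-\u_t^{i-1}}^2 + \frac{2\alpha^2\sigma^2}{B_1} + 2\alpha^2\E\Norm{\u_t^i-f_i(\u_t^{i-1})}^2 .
\end{align*}
The additive term is in fact smaller than what the lemma allows, which leaves slack for the constants.

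\textbf{Unrolling and summing.} Define
\begin{align*}
E \coloneqq \frac{2\alpha^2\sigma^2K}{B_1}+2\alpha^2K\sum_{i'=1}^K\E\Norm{\u_t^{i'}-f_{i'}(\u_t^{i'-1})}^2 .
\end{align*}
The per-level additive term is bounded crudely by $E$ (dividing by $K$ is simply not used). I will show by induction that
\begin{align*}
\E\Norm{\u_{t+1}^{i-1}-\u_t^{i-1}}^2\le \left(\sum_{l=1}^{i}(2\LL_f^2)^{l-1}\right)\left(\eta^2\E\Norm{\z_t-\x_t}^2+E\right).
\end{align*}
The inductive step multiplies the level-$(i-1)$ bound by $2\LL_f^2$ and adds $E$. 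Summing over $i=1,\dots,K$ then gives a prefactor $\sum_i\sum_{l\le i}(2\LL_f^2)^{l-1}$, which is at most $K\sum_{i=1}^K(2\LL_f^2)^{i-1}$.

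\textbf{Main obstacle.} The bookkeeping exceeds the statement's constant by a factor of $K$. To remove it, I will put the entire inhomogeneous term into the level-$i$ bound only at the deepest level. Concretely, I will bound $\E\Norm{\u_{t+1}^{K-1}-\u_t^{K-1}}^2$, the dominant term, by $\left(\sum_{i=1}^{K}(2\LL_f^2)^{i-1}\right)$ times the bracket, absorbing the lower levels using the slack from the $K$ factor in $E$. If that absorption fails, I will accept a harmless extra factor of $K$, since it is inconsequential for the downstream theorems.
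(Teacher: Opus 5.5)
Your one-step recursion
\[
\E\Norm{\u_{t+1}^i-\u_t^i}^2\le 2\LL_f^2\,\E\Norm{\u_{t+1}^{i-1}-\u_t^{i-1}}^2+\frac{2\alpha^2\sigma^2}{B_1}+2\alpha^2\,\E\Norm{\u_t^i-f_i(\u_t^{i-1})}^2
\]
is correct, as are the base case and the independence argument, and it is exactly the recursion the paper uses. The gap is in how you unroll it.

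Your induction hypothesis puts $\eta^2\E\Norm{\z_t-\x_t}^2$ under the partial sum $\sum_{l=1}^{i}(2\LL_f^2)^{l-1}$. This happens because at every step you pay for the additive term with a whole extra copy of $\eta^2\E\Norm{\z_t-\x_t}^2+E$. The true coefficient of $\eta^2\E\Norm{\z_t-\x_t}^2$ at level $i-1$ is only $(2\LL_f^2)^{i-1}$. The over-count lands on the one entry of the target bracket that carries no factor $K$, so no slack in $E$ can absorb it. Even with $E=0$, your chain ends with the coefficient $\sum_{i=1}^K\sum_{l=1}^{i}(2\LL_f^2)^{l-1}$ in front of $\eta^2\E\Norm{\z_t-\x_t}^2$. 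For $K\ge 2$ this is strictly larger than the required $\sum_{i=1}^K(2\LL_f^2)^{i-1}$. So the ``absorb the lower levels at the deepest level'' step cannot work as described. Your fallback proves only a weaker lemma, with prefactor $K\sum_{i=1}^K(2\LL_f^2)^{i-1}$ instead of $\sum_{i=1}^K(2\LL_f^2)^{i-1}$.

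The missing idea is to carry the homogeneous and inhomogeneous parts separately. You should also keep each level's own additive term rather than replacing it by $E$. Define $e_j\coloneqq\frac{2\alpha^2\sigma^2}{B_1}+2\alpha^2\E\Norm{\u_t^j-f_j(\u_t^{j-1})}^2$. Unrolling your recursion gives
\[
\E\Norm{\u_{t+1}^{i-1}-\u_t^{i-1}}^2\le (2\LL_f^2)^{i-1}\eta^2\E\Norm{\z_t-\x_t}^2+\sum_{j=1}^{i-1}(2\LL_f^2)^{i-1-j}e_j .
\]
Now sum over $i=1,\dots,K$:
\begin{itemize}
\item The first part contributes exactly $\bigl(\sum_{i=1}^K(2\LL_f^2)^{i-1}\bigr)\eta^2\E\Norm{\z_t-\x_t}^2$.
\item In the second part, exchange the sums. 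Each $e_j$ is then multiplied by $\sum_{i=j+1}^{K}(2\LL_f^2)^{i-1-j}\le\sum_{i=1}^K(2\LL_f^2)^{i-1}$. Since $\sum_{j=1}^{K-1}e_j\le E$, this part is at most $\bigl(\sum_{i=1}^K(2\LL_f^2)^{i-1}\bigr)E$.
\end{itemize}
Together these give the stated bound, with room to spare on the $E$ part. The paper does the same bookkeeping in the other order. It bounds the inhomogeneous part at each level by $\bigl(\sum_{i=1}^K(2\LL_f^2)^{i-1}\bigr)E/K$ and then sums over the $K$ levels.
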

\begin{proof}
(1) For the first level, i.e., $i=1$, we have: 
    \begin{align*}
	    \E\left[\Norm{\u_{t+1}^{i-1} - \u_{t}^{i-1}}^2\right] = \E\left[\Norm{\x_{t+1} - \x_{t}}^2\right] = \E \left[\eta^2 \Norm{\z_t - \x_t}^2\right].
	\end{align*}
(2) For other levels, i.e., $2\leq i\leq K$, we have:
	\begin{align*}
		& \E\left[\Norm{\u_{t+1}^{i-1} - \u_{t}^{i-1}}^2\right]\\
		=&\E\left[\left\|\alpha \left(f_{i-1}(\u_{t}^{i-2}) - \u_{t}^{i-1}\right) + \frac{1}{B_1}\sum_{j=1}^{B_1}(f_{i-1}(\u_{t+1}^{i-2};\xi_{t+1}^{i-1,j}) - f_{i-1}(\u_{t}^{i-2};\xi_{t+1}^{i-1,j}))\right.\right.\\
		 &\quad \left.\left.+ \alpha \left( \frac{1}{B_1}\sum_{j=1}^{B_1}f_{i-1} (\u_{t}^{i-2};\xi_{t+1}^{i-1,j})-f_{i-1}(\u_{t}^{i-2})\right) \right\|^2 \right]\\
\leq& 2\E\left[\Norm{\alpha\left(f_{i-1}(\u_{t}^{i-2}) - \u_{t}^{i-1}\right) + \alpha \left(\frac{1}{B_1}\sum_{j=1}^{B_1}f_{i-1} (\u_{t}^{i-2};\xi_{t+1}^{i-1,j})-f_{i-1}(\u_{t}^{i-2})\right)}^2\right] \\
& \qquad + 2\LL_f^2\E\left[\Norm{\u_{t+1}^{i-2} - \u_{t}^{i-2}}^2 \right]\\
	\leq&  2\alpha^2 \E\left[\Norm{f_{i-1}(\u_{t}^{i-2}) - \u_{t}^{i-1}}^2\right] + \frac{2\alpha^2 \sigma^2}{B_1} + 2\LL_f^2\E\left[\Norm{\u_{t+1}^{i-2} - \u_{t}^{i-2}}^2 \right].
	\end{align*} 
Denote $\Upsilon_t^i =\E\left[\Norm{\u_t^i - f_i(\u_t^{i-1})}^2\right]$ and  $Q^{i}_t = \E\left[\Norm{\u_{t+1}^{i-1} - \u_{t}^{i-1}}^2\right]$, we have $Q^{i}_t \leq 2\LL_f^2 Q^{i-1}_t + 2\alpha^2 \Upsilon_t^{i-1} + \frac{2\alpha^2\sigma^2}{B_1}$ for $i\geq 2$. Then we can get:
\begin{align*}
Q^{1}_t &\leq \E \left[\eta^2 \Norm{\z_t - \x_t}^2\right] \\
Q^{2}_t &\leq \left(2\LL_f^2\right) \E \left[\eta^2 \Norm{\z_t - \x_t}^2\right] +  \frac{2\alpha^2\sigma^2}{B_1} +2\alpha^2 \Upsilon_t^{1} \\
Q^{3}_t &\leq \left(2\LL_f^2\right)^2 \E \left[\eta^2 \Norm{\z_t - \x_t}^2\right]  +  \frac{2\alpha^2\sigma^2 \left(1+2\LL_f^2\right)}{B_1} + 2\alpha^2\left( 2\LL_f^2  \Upsilon_t^{1}+ \Upsilon_t^{2}\right) \\
& \cdots\\
Q^{i}_t &\leq \left(2\LL_f^2\right)^{i-1} \E \left[\eta^2 \Norm{\z_t - \x_t}^2\right] + \frac{2\alpha ^2\sigma^2}{B_1} \sum_{j=1}^{i-1}\left(2\LL_f^2\right)^{j-1}  + 2\alpha^2 \sum_{j=1}^{i-1} \left(2\LL_f^2\right)^{i-1-j} \Upsilon_t^{j}\\
&\leq \left(2\LL_f^2\right)^{i-1} \E \left[\eta^2 \Norm{\z_t - \x_t}^2\right] + \frac{2\alpha^2\sigma^2}{B_1} \sum_{j=1}^{K}\left(2\LL_f^2\right)^{j-1}  + 2\alpha^2 \sum_{j=1}^{K} \sum_{l=1}^{K} \left(2L_f^2\right)^{K-l} \Upsilon_t^{j}.
\end{align*}
When summing up, we have:
\begin{align*}
    &\sum_{i=1}^K \E\left[\Norm{\u_{t+1}^{i-1} - \u_{t}^{i-1}}^2 \right] 
    = \sum_{i=1}^{K} Q^{i}_t \\
    \leq &\sum_{i=1}^{K}\left(2\LL_f^2\right)^{i-1} \E \left[\eta^2 \Norm{\z_t - \x_t}^2\right] + \frac{2\alpha^2\sigma^2 K}{B_1} \sum_{i=1}^{K}\left(2\LL_f^2\right)^{i-1} + 2\alpha^2 K \sum_{j=1}^{K} \sum_{l=1}^{K} \left(2L_f^2\right)^{K-l} \Upsilon_t^{j}  \\
    \leq &\left(\sum_{i=1}^{K}\left(2\LL_f^2\right)^{i-1}\right) \left(\E \left[\eta^2 \Norm{\z_t - \x_t}^2\right] + \frac{2\alpha^2\sigma^2 K}{B_1}   + 2\alpha^2K \sum_{i=1}^{K} \Upsilon_t^{j}  \right)\\
    \leq &\left(\sum_{i=1}^{K}\left(2\LL_f^2\right)^{i-1}\right) \left(\E \left[\eta^2 \Norm{\z_t - \x_t}^2\right] + \frac{2\alpha^2\sigma^2 K}{B_1}   + 2\alpha^2K \sum_{i=1}^{K} \E\left[\Norm{\u_t^i - f_i(\u_t^{i-1})}^2\right]  \right).
\end{align*}
%Thus, we complete the proof of this lemma.
\end{proof}
Next, we bound the error of the gradient estimator.
%%%%%%%%%%%%%%%%%%%%%%%%%%%%%%   LEMMA  7    %%%%%%%%%%%%%%%%%%%%%%%%%%%%%%%%%%%%%%%%%%%%%%%%%
\begin{lemma} \label{lem:66}
Denote that the constants  $L_1 = \left(2K \LL_J^2 L_f^{2K-2} + 4KL_F^2\LL_f^2\right)\left(\sum_{i=1}^{K}\left(2\LL_f^2\right)^{i-1}\right)$, $L_2 = 2(2K^2 \sigma_J^2L_f^{2K-2} + 4  K^2 L_F^2 \sigma^2+2 L_1 \sigma^2 K) + 2L_1 D^2 +2L_f^{2K}$, we can ensure that:
\begin{align*}
	\frac{1}{T}\sum_{t=1}^T \E\left[\Norm{\nabla F(\x_t) - \v_t }^2\right] \leq \frac{L_2}{\alpha B_0 T} + \frac{\alpha L_2}{B_1} + \frac{L_2\eta^2}{\alpha B_1}.
\end{align*}
\end{lemma}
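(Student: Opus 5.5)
The plan is to split the total gradient error into a Jacobian-product tracking error and an inner-function tracking error, bound each with the lemmas already proved, and then close the loop introduced by the movement terms. Start from
\begin{align*}
\Norm{\nabla F(\x_t) - \v_t}^2 \leq 2\Norm{\v_t - \prod_{i=1}^K \nabla f_i(\u_t^{i-1})}^2 + 2\Norm{\prod_{i=1}^K\nabla f_i(\u_t^{i-1}) - \nabla F(\x_t)}^2 .
\end{align*}
Average over $t$. Lemma~\ref{lem:2} reduces the second term to $2KL_F^2$ times the averaged inner tracking error $\frac{1}{T}\sum_t\sum_i \E\Norm{\u_t^i - f_i(\u_t^{i-1})}^2$, which Lemma~\ref{lem:4} then controls. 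Lemma~\ref{lem:3} controls the first term directly.

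After this substitution there are three kinds of quantities.
\begin{compactenum}[(a)]
\item \textbf{Initialization errors}, divided by $\alpha T$. With $B_0$ initial samples and the same decomposition used for the $\sigma_J^2$ term in Lemma~\ref{lem:3}, they are at most $K^2\sigma_J^2L_f^{2K-2}/B_0$ and $K\sigma^2/B_0$. This gives the $\frac{L_2}{\alpha B_0 T}$ term.
\item \textbf{Variance terms} $2K^2\sigma_J^2L_f^{2K-2}\alpha/B_1$ and $2K\sigma^2\alpha/B_1$. These are absorbed into $\frac{\alpha L_2}{B_1}$.
\item \textbf{Movement terms}, of the form $(2K\LL_J^2L_f^{2K-2} + 4KL_F^2\LL_f^2)\frac{1}{\alpha B_1 T}\sum_t\sum_i Q_t^i$. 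Here $Q_t^i = \E\Norm{\u_{t+1}^{i-1} - \u_t^{i-1}}^2$, and the prefactor is exactly the one appearing in $L_1$.
\end{compactenum}

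Handle the movement terms with the preceding lemma. It bounds $\sum_i Q_t^i$ by $\left(\sum_{i}(2\LL_f^2)^{i-1}\right)\left(\eta^2 D^2 + 2\alpha^2\sigma^2K/B_1 + 2\alpha^2K\sum_i\Upsilon_t^i\right)$, using $\Norm{\z_t - \x_t}\leq D$. Combined with the prefactor in (c), this gives
\begin{align*}
\frac{L_1}{\alpha B_1}\left(\eta^2D^2 + \frac{2\alpha^2\sigma^2K}{B_1} + \frac{2\alpha^2 K}{T}\sum_t\sum_i\Upsilon_t^i\right).
\end{align*}
The first piece gives $\frac{L_2\eta^2}{\alpha B_1}$, and the second is $O(\alpha/B_1^2)\le O(\alpha/B_1)$, so it goes into $\frac{\alpha L_2}{B_1}$.

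\textbf{Main obstacle.} The third piece feeds the averaged tracking error $\frac1T\sum\Upsilon$ back into its own bound. Lemma~\ref{lem:4} bounds $\frac1T\sum\Upsilon$ by something containing $\frac{2\LL_f^2}{\alpha B_1 T}\sum Q$, and $\sum Q$ in turn contains $\frac{1}{T}\sum\Upsilon$ with coefficient $2\LL_f^2\cdot\left(\sum_i(2\LL_f^2)^{i-1}\right)\cdot 2\alpha K/B_1$. Whether this loop can be closed depends on the parameter regime:
\begin{compactenum}[(1)]
\item If this coefficient is at most $1/2$ (e.g., $\alpha$ small or $B_1$ large, which holds for the parameter choices in Theorem~\ref{thm:main}), move the $\Upsilon$ term to the left-hand side. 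That costs a factor of 2, which is the source of the leading 2 in $L_2$.
\item Otherwise, fall back on the crude bound $\Upsilon_t^i \le$ const (the $2L_f^{2K}$ term in $L_2$ suggests this). With bounded Lipschitz values $\Norm{\u_t^i - f_i}^2$ is bounded, so $\frac{2\alpha^2K L_1}{\alpha B_1}\cdot$const$=O(\alpha/B_1)$, which again lands in $\frac{\alpha L_2}{B_1}$.
\end{compactenum}
I would try the absorption argument first, verify that the constants collect into $L_1, L_2$ as stated, and use the boundedness fallback if the absorption constraint is not implied by the parameter choices. Finally, collect all terms, take $L_2$ as the max coefficient, and conclude.
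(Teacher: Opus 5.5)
Your primary route is the paper's own. You split with Young's inequality, apply Lemma~\ref{lem:2} to the Jacobian-product bias, add Lemma~\ref{lem:3} to $2KL_F^2$ times Lemma~\ref{lem:4}, bound the movement terms with $\Norm{\z_t-\x_t}\le D$, and absorb the fed-back tracking error. The paper closes the loop by imposing $\alpha \le \frac{B_1 L_F^2}{2L_1}$ inside its proof. This gives $\frac{L_1}{\alpha B_1}\cdot 2\alpha^2 K \le K L_F^2$, and that term is subtracted from the $2KL_F^2$-weighted tracking error on the left. So the lemma is established only under this smallness condition, even though its statement omits it; the condition does hold for the parameter choices in Theorem~\ref{thm:main}.

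Your fallback (2) does not work and should be dropped. Nothing in Assumptions~\ref{asm:1}--\ref{asm:stoc_smooth3} bounds $\E\Norm{\u_t^i - f_i(\u_t^{i-1})}^2$ by an absolute constant. Lipschitz continuity of $f_i$ controls differences $f_i(\x)-f_i(\y)$. It says nothing about the error of a recursively built STORM estimator whose stochastic inputs are controlled only in second moment. Any bound on this tracking error has to come out of the very recursion you are trying to close.

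The $2L_f^{2K}$ in $L_2$ is also not such a bound. It is the initialization constant from $\E\Norm{\v_1 - \prod_{i=1}^K\nabla f_i(\u_1^{i-1})}^2 \le L_f^{2K}/B_0$, and it enters only through the $\frac{L_2}{\alpha B_0 T}$ term.

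You have two honest fixes:
\begin{itemize}
\item State the condition on $\alpha/B_1$ as a hypothesis, as the paper effectively does.
\item Obtain an unconditional version by noting that the loop is an artifact of summing over all levels at once. The tracking error at level $i$ depends only on the movement of $\u^{i-1}$. That movement depends only on the level-$(i-1)$ tracking error and the movement of $\u^{i-2}$, with $\u^0=\x$. An induction over $i$, in the spirit of Lemma~\ref{L2}, therefore closes with no constraint on $\alpha\in(0,1]$. The price is constants that differ from $L_1$ and $L_2$.
\end{itemize}
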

\begin{proof}
Since $\E\left[\Norm{\v_1 -\prod_{i=1}^K \nabla f_i(\u_1^{i-1})}^2\right] \leq \frac{ L_f^{2K}}{B_0}$, $\sum_{i=1}^K \E\left[\Norm{\u_1^i -  f_i(\u_1^{i-1})}^2\right] \leq \frac{K \sigma^2}{B_0}$, by setting $\alpha \leq \frac{B_1 L_F^2}{2L_1}$, we can deduce that:
\begin{align*}
 & \frac{1}{T}\sum_{t=1}^T\E\left[\Norm{\v_t - \prod_{i=1}^K \nabla f_i(\u_t^{i-1}) }^2\right] + \frac{2KL_F^2}{T}\sum_{t=1}^T \sum_{i=1}^{K-1} \E\left[\Norm{\u_t^i - f_i(\u_t^{i-1})}^2\right]\\
 \leq & \frac{ L_f^{2K} + 2K^2L_F^2 \sigma^2}{\alpha B_0 T} + (2K^2 \sigma_J^2L_f^{2K-2} + 4K^2L_F^2 \sigma^2)\frac{\alpha}{B_1}\\
 &+\frac{2K\LL_J^2L_f^{2K-2}+4KL_F^2\LL_f^2}{\alpha B_1 T}\sum_{t=1}^T \sum_{i=1}^{K} \E\left[\Norm{\u_{t+1}^{i-1} - \u_{t}^{i-1}}^2\right]\\
\leq & \frac{ L_f^{2K} + 2K^2L_F^2 \sigma^2}{\alpha B_0 T} + (2K^2 \sigma_J^2L_f^{2K-2} + 4 K^2L_F^2 \sigma^2)\frac{\alpha}{B_1} \\
 & +\frac{L_1}{\alpha B_1} \left(\eta^2 D^2 + \frac{2\alpha^2\sigma^2 K}{B_1}   + 2\alpha^2K \frac{1}{T}\sum_{t=1}^T \sum_{i=1}^{K} \E\left[\Norm{\u_t^i - f_i(\u_t^{i-1})}^2\right]  \right) \\
 \leq & \frac{ L_f^{2K} + 2K^2 L_F^2 \sigma^2}{\alpha B_0 T} + (2K^2 \sigma_J^2L_f^{2K-2} + 4  K^2 L_F^2 \sigma^2+2 L_1 \sigma^2 K)\frac{\alpha}{B_1}  + \frac{L_1\eta^2D^2}{\alpha B_1} \\
 & + \frac{KL_F^2}{T}\sum_{t=1}^T \sum_{i=1}^{K}\E\left[\Norm{\u_t^i - f_i(\u_t^{i-1})}^2\right]. 
\end{align*}
Thus, we have 
\begin{align*}
 & \frac{1}{T}\sum_{t=1}^T\E\left[\Norm{\v_t - \prod_{i=1}^K \nabla f_i(\u_t^{i-1}) }^2\right] + \frac{KL_F^2}{T}\sum_{t=1}^T \sum_{i=1}^{K-1} \E\left[\Norm{\u_t^i - f_i(\u_t^{i-1})}^2\right]\\
 \leq & \frac{ L_f^{2K} + 2K^2 L_F^2 \sigma^2}{\alpha B_0 T} + (2K^2 \sigma_J^2L_f^{2K-2} + 4  K^2 L_F^2 \sigma^2+2 L_1 \sigma^2 K)\frac{\alpha}{B_1}  + \frac{L_1\eta^2D^2}{\alpha B_1}. 
\end{align*}
Then, we can obtain:
\begin{align*}
&\frac{1}{T}\sum_{t=1}^T \E\left[\Norm{\v_t - \nabla F(\x_t)}^2\right]\\
\leq &\frac{2}{T}\sum_{t=1}^T \E\left[\Norm{\v_t - \prod_{i=1}^K\nabla f_i(\u_t^{i-1})}^2\right] + \frac{2}{T}\sum_{t=1}^T \E\left[\Norm{\prod_{i=1}^K\nabla f_i(\u_t^{i-1}) - \nabla F(\x_t)}^2\right] \\ \leq & \frac{2}{T}\sum_{t=1}^T\E\left[\Norm{\v_t - \prod_{i=1}^K\nabla f_i(\u_t^{i-1})}^2\right]  + \frac{2KL_F^2}{T}\sum_{t=1}^T \sum_{i=1}^{K-1} \E\left[\Norm{\u_t^i - f_i(\u_t^{i-1})}^2\right]\\
\leq & \frac{2L_f^{2K} + 4K^2 L_F^2 \sigma^2}{\alpha B_0 T} + 2(2K^2 \sigma_J^2L_f^{2K-2} + 4  K^2 L_F^2 \sigma^2+2 L_1 \sigma^2 K)\frac{\alpha}{B_1}  + \frac{2L_1\eta^2D^2}{\alpha B_1}\\
\leq &  \frac{L_2}{\alpha B_0 T} + \frac{\alpha L_2}{B_1} + \frac{L_2\eta^2}{\alpha B_1}
\end{align*}
\end{proof}	
\paragraph{The rest proof of the Theorem:} Denote the Frank-Wolfe gap as $\F(\x):=\max _{\hat{\x} \in \X}\langle \hat{\x}-\x,-\nabla F(\x)\rangle$ and $\z_t^{\star} = \arg \max _{\hat{\x} \in \X}\langle \hat{\x}-\x,-\nabla F(\x)\rangle$.
\begin{align*} 
F\left(\x_{t+1}\right) 
& \leq F\left(\x_t\right)+\left\langle\nabla F\left(\x_t\right), \x_{t+1}-\x_t\right\rangle+\frac{L_F}{2}\left\|\x_{t+1}-\x_{t}\right\|^{2} \\ 
& \leq F\left(\x_{t}\right)+\eta\left\langle\nabla F\left(\x_{t}\right), \z_{t}-\x_{t}\right\rangle+\eta^{2} \frac{L}{2} D^{2} \\ 
& =F\left(\x_{t}\right)+\eta\left\langle \v_{t}, \z_{t}-\x_{t}\right\rangle+\eta\left\langle\nabla F\left(\x_{t}\right)-\v_t, \z_t-\x_t\right\rangle+\eta^{2} \frac{L_F}{2} D^{2} \\ 
& \leq F\left(\x_{t}\right)+\eta\left\langle \v_t, \z^{\star}_{t}-\x_{t}\right\rangle+\eta\left\langle\nabla F\left(\x_{t}\right)-\v_{t}, \z_{t}-\x_{t}\right\rangle+\eta^{2} \frac{L_F}{2} D^{2} \\ 
& =F\left(\x_t\right)+\eta\left\langle\nabla F\left(\x_t\right), \z^{\star}_{t}-\x_{t}\right\rangle+\eta\left\langle\nabla F\left(\x_{t}\right)-\v_{t}, \z_t-\z^{\star}_{t}\right\rangle+\eta^{2} \frac{L_F}{2} D^{2} \\ 
& \leq F\left(\x_{t}\right)-\eta \F\left(\x_{t}\right)+\eta D\left\|\nabla F\left(\x_t\right)-\v_t\right\|+\eta^{2} \frac{L_F}{2} D^{2}
\end{align*}
That is to say:
\begin{align*} 
&\E\left[\F(\x_\tau)\right]=\E\left[ \frac{1}{T}\sum_{t=1}^{T}  \F\left(\x_{t}\right)\right] \\ \leq &\frac{\E\left[F\left(\x_{1}\right)-F(\x_{T+1})\right]}{\eta T}+D\cdot\E\left[\frac{1}{T} \sum_{t=1}^T \left\|\nabla F\left(\x_t\right)-\v_t\right\|\right]+\eta \frac{L_F}{2} D^{2}\\
\leq &\frac{\Delta_F}{\eta T}+D\sqrt{\E\left[\frac{1}{T} \sum_{t=1}^T \left\|\nabla F\left(\x_t\right)-\v_t\right\|^2\right]}+\eta \frac{L_F}{2} D^{2}\\
\leq &\frac{\Delta_F}{\eta T}+D\sqrt{\frac{L_2}{\alpha B_0 T} + \frac{\alpha L_2}{B_1} + \frac{L_2\eta^2}{\alpha B_1}}+\eta \frac{L_F}{2} D^{2}
\end{align*} 
By setting $T = \mathcal{O} \left(\epsilon^{-2}\right)$, $\eta =\alpha =\Theta \left(\epsilon\right)$, $B_0 = B_1 = \Omega \left(\epsilon^{-1}\right)$, we can ensure $\E\left[\F(\x_\tau)\right] \leq \epsilon$. By setting $T = \mathcal{O}\left( \epsilon^{-3}\right), \eta = \alpha=\Theta\left( \epsilon^2\right),  B_0 = \Omega \left(\epsilon^{-1}\right), B_1 =  \Omega \left(1 \right)$, we can also ensure $\E\left[\F(\x_\tau)\right] \leq \epsilon$.

\section{Proof of Theorem~\ref{thm:main+}}
Since $2^0+2^1+\cdots+2^{S-1} < 2^S$, running the algorithm for $T$ iterations guarantees at least $S = \lfloor \log (T) \rfloor $ complete stages. In the theoretical analysis, we can simply use the output of the last complete stage $S=\lfloor \log (T) \rfloor$, which has been at least run for $2^{S-1} \geq T/4$ iterations. Note that in the previous analysis, we have already proved that
\begin{align*} 
\E\left[ \frac{1}{T}\sum_{t=1}^{T}  \F\left(\x_{t}^s\right)\right] 
\leq &\frac{\E\left[F\left(\x_{1}^s\right)-F(\x_{T+1}^s)\right]}{\eta_s T_s}+D\sqrt{\frac{L_2}{\alpha_s B_0^s T_s} + \frac{\alpha_s L_2}{B_1^s} + \frac{L_2\eta_s^2}{\alpha_s B_1^s}}+\eta_s \frac{L_F}{2} D^{2}\\
\leq &\frac{L_f^K D}{\eta_s T_s}+D\sqrt{\frac{L_2}{\alpha_s B_0^s T_s} + \frac{\alpha_s L_2}{B_1^s} + \frac{L_2\eta_s^2}{\alpha_s B_1^s}}+\eta_s \frac{L_F}{2} D^{2},
\end{align*}
where the last inequality is due to the fact that function $F$ is $L_f^K$ Lipschitz continuous.

First, when setting $B_0^s = B_1^s = \sqrt{T_s}$ and $\eta_s = \alpha_s = 1/\sqrt{T_s}$, the hyper-parameters for the last complete stage $S$ is $T_S = T/4$, $B_0^S = B_1^S = \sqrt{T}/2$ and $\eta_S = \alpha_S = 2/\sqrt{T}$. Thus, the output of the last complete stage is
\begin{align*} 
\E\left[ \frac{1}{T}\sum_{t=1}^{T}  \F\left(\x_{t}^s\right)\right] \leq &\frac{2L_f^K D}{\sqrt{ T}}+D\sqrt{\frac{4L_2}{T} + \frac{4 L_2}{T} + \frac{4L_2}{T}}+\frac{L_F}{\sqrt{T}} D^{2} = \mathcal{O}\left(\frac{1}{\sqrt{T}}\right),
\end{align*} 

Second, when setting $B_0^s = T_s^{1/3}$, $B_1^s = 1$ and $\eta_s = \alpha_s = T_s^{-2/3}$, the hyper-parameters for the last complete stage $S$ is $T_s = T/4$, $B_0^S = (T/4)^{1/3}$, $B_1^S = 1$ and $\eta_S = \alpha_S = (T/4)^{-2/3}$. Thus, the output of the last complete stage is
\begin{align*} 
\E\left[ \frac{1}{T}\sum_{t=1}^{T}  \F\left(\x_{t}^s\right)\right] \leq  \mathcal{O}\left(\frac{1}{T^{1/3}}\right),
\end{align*}which completes the proof of this Theorem. 

\newpage
\section{Proof of Theorem~\ref{thm:2}}
In the previous analysis of Lemma~\ref{lem:66}, we simply reduce ${\eta^2}\Norm{\z_t - \x_t}^{2} \leq \eta^2 D^2$. To obtain the rate for gradient mapping, we have to keep this term. That is to say, we rewrite the Lemma~\ref{lem:66} as follows
\begin{align*}
 & \frac{1}{T}\sum_{t=1}^T\E\left[\Norm{\v_t - \prod_{i=1}^K \nabla f_i(\u_t^{i-1}) }^2\right] + \frac{2KL_F^2}{T}\sum_{t=1}^T \sum_{i=1}^{K-1} \E\left[\Norm{\u_t^i - f_i(\u_t^{i-1})}^2\right]\\
\leq & \frac{ L_f^{2K} + 2K^2L_F^2 \sigma^2}{\alpha B_0 T} + (2K^2 \sigma_J^2L_f^{2K-2} + 4 K^2L_F^2 \sigma^2)\frac{\alpha}{B_1} \\
 & +\frac{L_1}{\alpha B_1} \left(\eta^2 \frac{1}{T}\sum_{t=1}^T\Norm{\z_t - \x_t}^{2} + \frac{2\alpha^2\sigma^2 K}{B_1}   + 2\alpha^2K \frac{1}{T}\sum_{t=1}^T \sum_{i=1}^{K} \E\left[\Norm{\u_t^i - f_i(\u_t^{i-1})}^2\right]  \right) \\
 \leq & \frac{L_f^{2K} + 2K^2 L_F^2 \sigma^2}{\alpha B_0 T} + (2K^2 \sigma_J^2L_f^{2K-2} + 4  K^2 L_F^2 \sigma^2+2 L_1 \sigma^2 K)\frac{\alpha}{B_1}   \\
 & + \frac{L_1\eta^2}{\alpha B_1} \frac{1}{T}\sum_{t=1}^T\Norm{\z_t - \x_t}^{2}+ \frac{KL_F^2}{T}\sum_{t=1}^T \sum_{i=1}^{K}\E\left[\Norm{\u_t^i - f_i(\u_t^{i-1})}^2\right]. 
\end{align*}
Thus, by setting $L_3 = 2(2K^2 \sigma_J^2L_f^{2K-2} + 4  K^2 L_F^2 \sigma^2+2 L_1 \sigma^2 K) + 2L_1+2L_f^{2K}  $, we have
\begin{align*}
&\frac{1}{T}\sum_{t=1}^T \E\left[\Norm{\v_t - \nabla F(\x_t)}^2\right] \\ \leq & \frac{2}{T}\sum_{t=1}^T\E\left[\Norm{\v_t - \prod_{i=1}^K\nabla f_i(\u_t^{i-1})}^2\right]  + \frac{2KL_F^2}{T}\sum_{t=1}^T \sum_{i=1}^{K-1} \E\left[\Norm{\u_t^i - f_i(\u_t^{i-1})}^2\right]\\
\leq &  \frac{L_3}{\alpha B_0 T} + \frac{\alpha L_3}{B_1} + \frac{L_3\eta^2 }{\alpha B_1}\frac{1}{T}\sum_{t=1}^T\Norm{\z_t - \x_t}^{2}
\end{align*}
According to Proposition 2 of~\citet{NEURIPS2022_7e16384b}, we know that the gradient mapping satisfies
\begin{align*}
\left\|\mathcal{G}(\x_t, \beta)\right\|^{2} \leq -4 \beta g(\x_t, \v_t)+2\E\left[\Norm{\nabla F(\x_t) - \v_t }^2\right],  
\end{align*}  
where $g(\x_t, \v_t)=\min _{\y \in \X}\left\{\langle \v_t, \y-\x \rangle+\frac{\beta}{2}\|\y-\x_t\|^{2}\right\}$.

Due to the convergence of Frank-Wolfe algorithm~\cite{pmlr-v28-jaggi13}, we know that 
\begin{align*}
    \langle \v_t, \z_t-\x \rangle+\frac{\beta}{2}\|\z_t-\x_t\|^{2} \leq g(\x_t, \v_t) + \frac{2 \beta D^2}{N+2},
\end{align*}
which is widely used in the analysis of Frank-Wolfe algorithm~\citep{NEURIPS2022_7e16384b, Zhang2019OneSS,AAAI:2021:Wan:B,AAAI:2021:Wan:C}.

Denote $\y^{\star}= \min _{\y \in \X}\left\{\langle \v_t, \y-\x \rangle+\frac{\beta}{2}\|\y-\x_t\|^{2}\right\}$. Set  $\eta \leq \frac{\beta}{2 L_F}$, and then we have, 
\begin{align*} 
F\left(\x_{t+1}\right) 
\leq & F\left(\x_t\right)+\left\langle\nabla F\left(\x_t\right), \x_{t+1}-\x_t\right\rangle+\frac{L_F}{2}\left\|\x_{t+1}-\x_{t}\right\|^{2} \\ 
\leq & F\left(\x_{t}\right)+\eta\left\langle\nabla F\left(\x_{t}\right), \z_{t}-\x_{t}\right\rangle+\eta^{2} \frac{L_F}{2} \Norm{\z_t - \x_t}^{2} \\ 
=&F\left(\x_{t}\right)+\eta\left\langle \v_{t}, \z_{t}-\x_{t}\right\rangle+\eta\left\langle\nabla F\left(\x_{t}\right)-\v_t, \z_t-\x_t\right\rangle+\eta^{2} \frac{L_F}{2} \Norm{\z_t - \x_t}^{2} \\ 
=&F\left(\x_{t}\right)+\eta\left\langle \v_{t}, \z_{t}-\x_{t}\right\rangle+\frac{\eta\beta}{2}\|\z_t-\x_t\|^{2}+\eta\left\langle\nabla F\left(\x_{t}\right)-\v_t, \z_t-\x_t\right\rangle \\
 &\quad - \frac{\eta\beta}{2}\|\z_t-\x_t\|^{2} +\eta^{2} \frac{L_F}{2} \Norm{\z_t - \x_t}^{2}\\ 
\leq & F\left(\x_{t}\right)+\eta\left\langle \v_{t}, \y^{\star}-\x_{t}\right\rangle+\frac{\eta\beta}{2}\|\y^{\star}-\x_t\|^{2}+\frac{2 \eta \beta D^2}{N+2}\\
&\quad +\eta\left\langle\nabla F\left(\x_{t}\right)-\v_t, \z_t-\x_t\right\rangle - \frac{\eta\beta}{2}\|\z_t-\x_t\|^{2} +\eta^{2} \frac{L_F}{2} \Norm{\z_t - \x_t}^{2} \\
\leq & F(\x_t) + \eta g(\x_t, \v_t) + \frac{2\beta D^2  \eta}{N+2} +\eta\left\langle\nabla F\left(\x_{t}\right)-\v_t, \z_t-\x_t\right\rangle - \frac{\eta\beta}{4}\|\z_t-\x_t\|^{2} \\
\leq & F(\x_t) + \eta g(\x_t, \v_t) + \frac{2\beta D^2  \eta}{N+2} +\frac{2\eta}{\beta} \E\left[\Norm{\nabla F(\x_t) - \v_t }^2\right] - \frac{\eta\beta}{8}\|\z_t-\x_t\|^{2}.
\end{align*}    
As a result,
\begin{align*} 
-g(\x_t, \v_t)
& \leq \frac{F(\x_t) - F\left(\x_{t+1}\right)}{\eta}  + \frac{2\beta D^2 }{N+2} +\frac{2}{\beta} \E\left[\Norm{\nabla F(\x_t) - \v_t }^2\right] - \frac{\beta}{8}\|\z_t-\x_t\|^{2}.
\end{align*}
So, we have:
\begin{align*}
  \left\|\mathcal{G}(\x_t, \beta)\right\|^{2} \leq &-4 \beta g(\x_t, \v_t)+2\E\left[\Norm{\nabla F(\x_t) - \v_t }^2\right]  \\
  \leq &\frac{4\beta (F(\x_t) - F\left(\x_{t+1}\right))}{\eta}  + \frac{8\beta^2 D^2 }{N+2} + 10 \E\left[\Norm{\nabla F(\x_t) - \v_t }^2\right] - \frac{\beta^2}{2}\|\z_t-\x_t\|^{2}.
\end{align*}  
Finally, by setting $\eta \leq \sqrt{\frac{\beta^2 \alpha B_1}{20 L_3}}$, we have
\begin{align*}
  &\quad \frac{1}{T}\sum_{t=1}^{T}\left\|\mathcal{G}(\x_t, \beta)\right\|^{2}   \\
  & \leq \frac{4\beta \Delta_F}{\eta T}  + \frac{8\beta^2 D^2 }{N} + \frac{10}{T}\sum_{t=1}^{T}\E\left[\Norm{\nabla F(\x_t) - \v_t }^2\right] - \frac{\beta^2}{2} \frac{1}{T}\sum_{t=1}^{T}\|\z_t-\x_t\|^{2} \\
  & \leq \frac{4\beta \Delta_F}{\eta T}  + \frac{8\beta^2 D^2 }{N} + \frac{10L_3}{ \alpha B_0 T}+ \frac{10 L_3\alpha}{B_1} + \left(  \frac{10 \eta^2 L_3} {\alpha B_1} -\frac{\beta^2}{2}\right)\frac{1}{T}  \sum_{t=1}^{T}\Norm{\z_t - \x_t}^{2} \\
  & \leq \frac{4\beta \Delta_F}{\eta T}  + \frac{8\beta^2 D^2 }{N} + \frac{10L_3}{ \alpha B_0 T}+ \frac{10 L_3\alpha}{B_1}
\end{align*}  
By setting $\alpha = \Theta(\sqrt{\epsilon})$, $\eta = \Theta(1)$, $N=T=\mathcal{O}(\epsilon^{-1})$, $B_0 = B_1=\Omega(\epsilon^{-0.5})$, We can ensure that $\E\left[\Norm{\G(\x_\tau,\beta)}^2\right]\leq \epsilon$. This guarantee can also be satisfied by setting $\alpha =\Theta\left( \epsilon\right), \eta =\Theta\left(\epsilon^{0.5}\right), T = \mathcal{O}\left( \epsilon^{-1.5} \right), B_0 = \Omega(\epsilon^{-0.5}), B_1 =  \Omega\left(1 \right), N = \mathcal{O}(\epsilon^{-1})$.

\section{Proof of Theorem~\ref{thm:main-}}
First, we bound the estimation error of the function estimator $\u_t^i$, as well as the difference between iteration steps.
\begin{lemma}\label{L1} By ensuring that $B_1 \leq \alpha^2 B_0 T$, we can obtain the following inequalities:
\begin{align*}
	\frac{1}{T}\sum_{t=1}^T  \E\left[\Norm{\u_t^i -  f_i(\u_t^{i-1})}^2\right] \leq &\frac{2\LL_f^2}{\alpha^2 T}  \sum_{t=1}^T \E\left[\Norm{\u_{t+1}^{i-1} - \u_{t}^{i-1}}^2\right] + \frac{2\alpha   \sigma^2}{B_1};\\
 \frac{1}{T}\sum_{t=1}^T \E\left[\Norm{\u_{t+1}^{i} - \u_{t}^{i}}^2\right] \leq & \frac{\alpha^2\sigma^2}{B_1}  + \frac{2\alpha^2\LL_f^2}{T}\sum_{t=1}^T \E\left[\left\| \u_{t+1}^{i-1} -\u_{t}^{i-1} \right\|^2 \right]\\
 &+  \frac{2\alpha^2}{T}\sum_{t=1}^T \E\left[\left\| f_{i}(\u_{t}^{i-1}) - \u_{t}^{i} \right\|^2 \right].
\end{align*}
\end{lemma}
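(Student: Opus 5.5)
The first inequality comes from a one-step recursion for $e_t^i := \u_t^i - f_i(\u_t^{i-1})$, with the momentum (PMM) estimator~(\ref{ss1}). Write
\begin{align*}
e_t^i = (1-\alpha)e_{t-1}^i + (1-\alpha)\bigl(f_i(\u_{t-1}^{i-1}) - f_i(\u_t^{i-1})\bigr) + \alpha\Bigl(\frac{1}{B_1}\sum_{j=1}^{B_1} f_i(\u_t^{i-1};\xi_t^{i,j}) - f_i(\u_t^{i-1})\Bigr).
\end{align*}
The last term has zero conditional mean given the past and $\u_t^{i-1}$. This holds because $\xi_t^{i,\cdot}$ is independent of $\u_t^{i-1}$, which depends only on lower-level samples. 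So the cross term with it vanishes, and its second moment is at most $\alpha^2\sigma^2/B_1$ by Assumption~\ref{asm:stochastic2}. For the remaining deterministic part we use Young's inequality $\|a+b\|^2 \le (1+\alpha)\|a\|^2 + (1+1/\alpha)\|b\|^2$, together with $L_f$-Lipschitzness of $f_i$ (Assumption~\ref{asm:1}; this is why no average smoothness is needed). We use $\LL_f=L_f$ here. This gives
\begin{align*}
\E\|e_t^i\|^2 \le (1-\alpha)\E\|e_{t-1}^i\|^2 + \frac{2\LL_f^2}{\alpha}\E\|\u_t^{i-1}-\u_{t-1}^{i-1}\|^2 + \frac{\alpha^2\sigma^2}{B_1},
\end{align*}
where we used $(1-\alpha)^2(1+\alpha)\le 1-\alpha$ and $(1-\alpha)^2(1+1/\alpha)\le 2/\alpha$ for $\alpha\le 1$.

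Next we sum over $t=1,\dots,T$, telescope, and divide by $\alpha T$. This yields the initial term $\E\|e_1^i\|^2/(\alpha T)\le \sigma^2/(\alpha B_0 T)$, the drift term $\frac{2\LL_f^2}{\alpha^2 T}\sum_t \E\|\u_{t+1}^{i-1}-\u_t^{i-1}\|^2$ after an index shift, and the noise term $\alpha\sigma^2/B_1$. The hypothesis $B_1\le \alpha^2B_0T$ gives $\sigma^2/(\alpha B_0T)\le \alpha\sigma^2/B_1$, so the initial term merges into the noise term. This produces the stated constant $2\alpha\sigma^2/B_1$.

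The second inequality follows from the explicit increment
\begin{align*}
\u_{t+1}^i-\u_t^i = \alpha\bigl(f_i(\u_t^{i-1})-\u_t^i\bigr) + \alpha\bigl(f_i(\u_{t+1}^{i-1})-f_i(\u_t^{i-1})\bigr) + \alpha\Bigl(\frac{1}{B_1}\sum_j f_i(\u_{t+1}^{i-1};\xi_{t+1}^{i,j})-f_i(\u_{t+1}^{i-1})\Bigr).
\end{align*}
The noise term is again conditionally mean-zero and independent of the first two terms, so it contributes exactly $\alpha^2\sigma^2/B_1$. The other two terms are split by $(a+b)^2\le 2a^2+2b^2$, and Lipschitzness bounds $\|f_i(\u_{t+1}^{i-1})-f_i(\u_t^{i-1})\|\le L_f\|\u_{t+1}^{i-1}-\u_t^{i-1}\|$. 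Averaging over $t$ gives the second inequality directly.

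The main obstacle is the conditioning argument. It must make the mean-zero property valid even though $\u_t^{i-1}$ and $\u_{t+1}^{i-1}$ are random and depend on same-iteration samples from lower levels. The plan is to condition on the sigma-field generated by all samples up to iteration $t-1$ (respectively $t$) together with the level-$1,\dots,i-1$ samples of the current iteration, and then invoke the mutual independence in Assumption~\ref{asm:stochastic2}. A secondary point is to check that the $(1-\alpha)$ factors permit the clean constant $2/\alpha$.
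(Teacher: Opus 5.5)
Your proposal is correct and follows essentially the same route as the paper: the same Young split with weights $(1+\alpha)$ and $(1+1/\alpha)$ using $L_f$-Lipschitzness, the same telescoping in which the initial error $\sigma^2/(\alpha B_0 T)$ is absorbed via $B_1\le\alpha^2B_0T$, and the same three-term expansion of $\u_{t+1}^i-\u_t^i$ for the second bound. The only nit is wording: the noise term is conditionally mean-zero rather than independent of the other terms, and it contributes at most, not exactly, $\alpha^2\sigma^2/B_1$, since Assumption~\ref{asm:stochastic2} only bounds the variance.
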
	
\textbf{Proof}
Considering that $\u_t^i = (1-\alpha)\u_{t-1}^i  + \alpha \frac{1}{B_1}\sum_{j=1}^{B_1}f_i(\u_t^{i-1};\xi_t^{i,j}) $, we have:
\begin{align*}
& \E\left[\Norm{\u_t^i - f_i(\u_t^{i-1})}^2\right]\\
=& \E\left[\left\|(1-\alpha)\left(\u_{t-1}^i - f_i(\u_{t}^{i-1})\right)   +\alpha\left(\frac{1}{B_1}\sum_{j=1}^{B_1}   f_i(\u_{t}^{i-1};\xi_t^{i,j}) - f_i(\u_t^{i-1}) \right)\right\|^2\right] \\
=& (1-\alpha)^2 \E\left[\left\|\u_{t-1}^i - f_i(\u_{t}^{i-1})  \right\|^2\right] + \alpha^2 \E\left[\left\|\frac{1}{B_1}\sum_{j=1}^{B_1}   f_i(\u_{t}^{i-1};\xi_t^{i,j}) - f_i(\u_t^{i-1}) \right\|^2\right]\\
\leq & (1-\alpha)^2(1+\alpha) \E\left[\left\|\u_{t-1}^i - f_i(\u_{t-1}^{i-1})  \right\|^2\right]+(1-\alpha)^2(1+\frac{1}{\alpha}) \E\left[\left\|f_i(\u_{t-1}^{i-1}) - f_i(\u_{t}^{i-1})  \right\|^2\right] + \frac{\alpha^2\sigma^2}{B_1} \\
\leq & (1-\alpha) \E\left[\left\|\u_{t-1}^i - f_i(\u_{t-1}^{i-1})  \right\|^2\right]+\frac{2}{\alpha} \E\left[\left\|f_i(\u_{t-1}^{i-1}) - f_i(\u_{t}^{i-1})  \right\|^2\right] + \frac{\alpha^2\sigma^2}{B_1} \\
\leq & (1-\alpha) \E\left[\left\|\u_{t-1}^i - f_i(\u_{t-1}^{i-1})  \right\|^2\right]+\frac{2\LL_f^2}{\alpha} \E\left[\left\|\u_{t-1}^{i-1} - \u_{t}^{i-1}  \right\|^2\right] + \frac{\alpha^2\sigma^2}{B_1}
\end{align*}
Summing up and rearranging, we can obtain:
\begin{align*}
	\frac{1}{T}\sum_{t=1}^T  \E\left[\Norm{\u_t^i -  f_i(\u_t^{i-1})}^2\right] \leq &\frac{\E\left[\Norm{\u_1^i -  f_i(\u_1^{i-1})}^2\right]}{\alpha T} + \frac{2\LL_f^2}{\alpha^2 T}  \sum_{t=1}^T \E\left[\Norm{\u_{t+1}^{i-1} - \u_{t}^{i-1}}^2\right] + \frac{\alpha   \sigma^2}{B_1} \\
 \leq &\frac{\sigma^2}{\alpha T B_0} + \frac{2\LL_f^2}{\alpha^2 T}  \sum_{t=1}^T \E\left[\Norm{\u_{t+1}^{i-1} - \u_{t}^{i-1}}^2\right] + \frac{\alpha   \sigma^2}{B_1}\\
 \leq &\frac{2\LL_f^2}{\alpha^2 T}  \sum_{t=1}^T \E\left[\Norm{\u_{t+1}^{i-1} - \u_{t}^{i-1}}^2\right] + \frac{2\alpha   \sigma^2}{B_1}
\end{align*}
where the last inequality is by setting $B_1 \leq \alpha^2 B_0 T$.

Next, we bound the difference of function estimators between adjacent steps:
\begin{align*}
		& \E\left[\Norm{\u_{t+1}^{i} - \u_{t}^{i}}^2\right] =\alpha^2\E\left[\left\|\frac{1}{B_1}\sum_{j=1}^{B_1}f_{i}(\u_{t+1}^{i-1};\xi_{t+1}^{i,j}) - \u_{t}^{i} \right\|^2 \right]\\
  =&\alpha^2\E\left[\left\|\frac{1}{B_1}\sum_{j=1}^{B_1}f_{i}(\u_{t+1}^{i-1};\xi_{t+1}^{i,j})-f_{i}(\u_{t+1}^{i-1}) + f_{i}(\u_{t+1}^{i-1}) - \u_{t}^{i} \right\|^2 \right]\\
  =&\alpha^2\E\left[\left\|\frac{1}{B_1}\sum_{j=1}^{B_1}f_{i}(\u_{t+1}^{i-1};\xi_{t+1}^{i,j})-f_{i}(\u_{t+1}^{i-1})  \right\|^2 \right] + \alpha^2\E\left[\left\| f_{i}(\u_{t+1}^{i-1}) - \u_{t}^{i} \right\|^2 \right]\\
  =&\frac{\alpha^2\sigma^2}{B_1}  +\alpha^2\E\left[\left\| f_{i}(\u_{t+1}^{i-1}) -f_{i}(\u_{t}^{i-1}) + f_{i}(\u_{t}^{i-1}) - \u_{t}^{i} \right\|^2 \right]\\
  \leq &\frac{\alpha^2\sigma^2}{B_1}  +2\alpha^2\E\left[\left\| f_{i}(\u_{t+1}^{i-1}) -f_{i}(\u_{t}^{i-1}) \right\|^2 \right] + 2\alpha^2\E\left[\left\| f_{i}(\u_{t}^{i-1}) - \u_{t}^{i} \right\|^2 \right]\\
  \leq &\frac{\alpha^2\sigma^2}{B_1}  +2\alpha^2\LL_f^2 \E\left[\left\| \u_{t+1}^{i-1} -\u_{t}^{i-1} \right\|^2 \right] + 2\alpha^2\E\left[\left\| f_{i}(\u_{t}^{i-1}) - \u_{t}^{i} \right\|^2 \right].
	\end{align*} 
Summing up, we obtain that
\begin{align*}
		\frac{1}{T}\sum_{t=1}^T \E\left[\Norm{\u_{t+1}^{i} - \u_{t}^{i}}^2\right] \leq & \frac{\alpha^2\sigma^2}{B_1}  + \frac{2\alpha^2\LL_f^2}{T}\sum_{t=1}^T \E\left[\left\| \u_{t+1}^{i-1} -\u_{t}^{i-1} \right\|^2 \right] \\
  &+  \frac{2\alpha^2}{T}\sum_{t=1}^T \E\left[\left\| f_{i}(\u_{t}^{i-1}) - \u_{t}^{i} \right\|^2 \right].
	\end{align*} 

\begin{lemma}\label{L2} By setting that $C_{i+1} = 5+6\LL_f^2 C_i$, we can obtain the following inequalities:
\begin{align*}
	\frac{1}{T}\sum_{t=1}^T  \E\left[\Norm{\u_t^i -  f_i(\u_t^{i-1})}^2\right] \leq C_i \left(\frac{\eta^2 D^2}{\alpha^2}  + \frac{ \sigma^2}{B_1} \right)\\
 \frac{1}{T}\sum_{t=1}^T \E\left[\Norm{\u_{t+1}^{i} - \u_{t}^{i}}^2\right]\leq C_i \alpha^2 \left(\frac{\eta^2 D^2}{\alpha^2}  + \frac{ \sigma^2}{B_1} \right)
\end{align*}
\end{lemma}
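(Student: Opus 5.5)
We prove both inequalities of Lemma~\ref{L2} together by induction on the level $i$, using the two recursive bounds of Lemma~\ref{L1} as the only ingredients. Abbreviate $A_i \coloneqq \frac{1}{T}\sum_{t=1}^T \E\left[\Norm{\u_t^i - f_i(\u_t^{i-1})}^2\right]$, $Q_i \coloneqq \frac{1}{T}\sum_{t=1}^T \E\left[\Norm{\u_{t+1}^{i}-\u_t^{i}}^2\right]$, and $R \coloneqq \frac{\eta^2D^2}{\alpha^2} + \frac{\sigma^2}{B_1}$. The target is $A_i \le C_i R$ and $Q_i \le C_i\alpha^2 R$.

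\textbf{Base case.} The base is the level $0$ quantity $\u_t^0=\x_t$. Here $\Norm{\u_{t+1}^0-\u_t^0}^2=\eta^2\Norm{\z_t-\x_t}^2\le \eta^2D^2 \le \alpha^2 R$, so $Q_0\le C_0\alpha^2R$ with $C_0=1$.

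\textbf{Inductive step for $A_i$.} Assume $Q_{i-1}\le C_{i-1}\alpha^2 R$. The first inequality of Lemma~\ref{L1} gives
\begin{align*}
A_i \le \frac{2\LL_f^2}{\alpha^2}Q_{i-1} + \frac{2\alpha\sigma^2}{B_1} \le 2\LL_f^2 C_{i-1}R + 2R,
\end{align*}
using $\alpha\le 1$. The key feature is that the factor $\alpha^{-2}$ from Lemma~\ref{L1} is exactly cancelled by the $\alpha^2$ built into the inductive bound on $Q_{i-1}$; this is why the bound on $Q$ is carried with the $\alpha^2$ factor.

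\textbf{Inductive step for $Q_i$.} The second inequality of Lemma~\ref{L1} gives
\begin{align*}
Q_i \le \frac{\alpha^2\sigma^2}{B_1} + 2\alpha^2\LL_f^2 Q_{i-1} + 2\alpha^2 A_i \le \alpha^2 R\left(1 + 2\LL_f^2 C_{i-1}\alpha^2 + 4 + 4\LL_f^2 C_{i-1}\right).
\end{align*}
With $\alpha\le1$, this is at most $\alpha^2 R\,(5+6\LL_f^2C_{i-1})=\alpha^2 R\, C_i$. The bound on $A_i$, namely $2+2\LL_f^2C_{i-1}$, is also at most $C_i$. This matches the recursion $C_{i}=5+6\LL_f^2C_{i-1}$, so the induction closes and the lemma follows for all $1\le i\le K$.

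The main thing to check is the bookkeeping of the constants and the indexing: the recursion in the statement is written as $C_{i+1}=5+6\LL_f^2C_i$, so the base index must be set consistently. I would take $C_0=1$, or equivalently verify level $1$ directly with $C_1 = 5+6\LL_f^2$. The hypothesis $B_1\le\alpha^2B_0T$ from Lemma~\ref{L1} is inherited and is needed to drop the initialization term. No other obstacle arises, because the argument is linear in $R$.
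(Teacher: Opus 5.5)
Your proposal is correct and follows the paper's own proof. Both argue by induction on the level $i$: the bound on $\frac{1}{T}\sum_t\E\left[\Norm{\u_{t+1}^{i-1}-\u_t^{i-1}}^2\right]$ is fed into the two inequalities of Lemma~\ref{L1}, and the $\alpha^{-2}$ factor is cancelled by the $\alpha^2$ carried in the step-difference bound. The only difference is cosmetic: you anchor at level $0$ with $C_0=1$ via $\Norm{\x_{t+1}-\x_t}^2\le\eta^2D^2$, whereas the paper checks level $1$ directly with $C_1=5+6\LL_f^2$, which is the same computation.
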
	
\textbf{Proof}
We would prove by Induction.

(1) For $i=1$, by noting that $\u_t^0 = \x_t$ and setting $C_1=5+6\LL_f^2$, we have

\begin{align*}
	&\frac{1}{T}\sum_{t=1}^T  \E\left[\Norm{\u_t^1 -  f_1(\u_t^{0})}^2\right]\\ 
 \leq &\frac{2\LL_f^2}{\alpha^2 T}  \sum_{t=1}^T \E\left[\Norm{\u_{t+1}^{0} - \u_{t}^{0}}^2\right] + \frac{2\alpha   \sigma^2}{B_1}
 \leq  \frac{2\LL_f^2}{\alpha^2 T}  \sum_{t=1}^T \E\left[\Norm{\x_{t+1} - \x_{t}}^2\right] + \frac{2\alpha   \sigma^2}{B_1}\\
 \leq & \frac{2\LL_f^2}{\alpha^2} \eta^2 D^2 + \frac{2\alpha   \sigma^2}{B_1} \leq (2\LL_f^2+2) \left(\frac{\eta^2 D^2}{\alpha^2}  + \frac{ \sigma^2}{B_1} \right) \leq C_1 \left(\frac{\eta^2 D^2}{\alpha^2}  + \frac{ \sigma^2}{B_1} \right),
\end{align*}
as well as the following guarantee
\begin{align*}
 &\frac{1}{T}\sum_{t=1}^T \E\left[\Norm{\u_{t+1}^{1} - \u_{t}^{1}}^2\right]\\ \leq & \frac{\alpha^2\sigma^2}{B_1}  + \frac{2\alpha^2\LL_f^2}{T}\sum_{t=1}^T \E\left[\left\| \u_{t+1}^{0} -\u_{t}^{0} \right\|^2 \right] +  \frac{2\alpha^2}{T}\sum_{t=1}^T \E\left[\left\| f_{1}(\u_{t}^{0}) - \u_{t}^{1} \right\|^2 \right]\\
 \leq & \frac{\alpha^2\sigma^2}{B_1}  + 2\alpha^2\LL_f^2 \eta^2 D^2 +  2\alpha^2\left(\frac{2\LL_f^2}{\alpha^2} \eta^2 D^2 + \frac{2\alpha   \sigma^2}{B_1}\right)\\
 \leq & (6\LL_f^2+5)\alpha^2\left(\frac{\eta^2 D^2}{\alpha^2}  + \frac{\sigma^2}{B_1}\right) \leq C_1 \alpha^2 \left(\frac{\eta^2 D^2}{\alpha^2}  + \frac{ \sigma^2}{B_1} \right)
\end{align*}
(2) Suppose that for $i=k$, we have that
$\frac{1}{T}\sum_{t=1}^T  \E\left[\Norm{\u_t^k -  f_k(\u_t^{k-1})}^2\right] \leq C_k \left(\frac{\eta^2 D^2}{\alpha^2}  + \frac{ \sigma^2}{B_1} \right)$,
as well as $ \frac{1}{T}\sum_{t=1}^T \E\left[\Norm{\u_{t+1}^{k} - \u_{t}^{k}}^2\right]\leq C_k \alpha^2 \left(\frac{\eta^2 D^2}{\alpha^2}  + \frac{ \sigma^2}{B_1} \right)$.
Then, for $i=k+1$, we have:
\begin{align*}
  \frac{1}{T}\sum_{t=1}^T  \E\left[\Norm{\u_t^{k+1} -  f_{k+1}(\u_t^{k})}^2\right]\leq & \frac{2\LL_f^2}{\alpha^2 T}  \sum_{t=1}^T \E\left[\Norm{\u_{t+1}^{k} - \u_{t}^{k}}^2\right] + \frac{2\alpha   \sigma^2}{B_1} \\
  \leq &\frac{2\LL_f^2}{\alpha^2}C_k \alpha^2 \left(\frac{\eta^2 D^2}{\alpha^2}  + \frac{ \sigma^2}{B_1} \right) + \frac{2\alpha   \sigma^2}{B_1}\\
  \leq &(2+2\LL_f^2 C_k) \left(\frac{\eta^2 D^2}{\alpha^2}  + \frac{ \sigma^2}{B_1} \right) \leq C_{k+1} \left(\frac{\eta^2 D^2}{\alpha^2}  + \frac{ \sigma^2}{B_1} \right), 
\end{align*}
as well as the fact that
\begin{align*}
 &\frac{1}{T}\sum_{t=1}^T \E\left[\Norm{\u_{t+1}^{k+1} - \u_{t}^{k+1}}^2\right] \\
 \leq & \frac{\alpha^2\sigma^2}{B_1}  + \frac{2\alpha^2\LL_f^2}{T}\sum_{t=1}^T \E\left[\left\| \u_{t+1}^{k} -\u_{t}^{k} \right\|^2 \right] +  \frac{2\alpha^2}{T}\sum_{t=1}^T \E\left[\left\| f_{k+1}(\u_{t}^{k}) - \u_{t}^{k+1} \right\|^2 \right]\\ 
 \leq & \frac{\alpha^2\sigma^2}{B_1}  + 2\alpha^2\LL_f^2 C_k \alpha^2 \left(\frac{\eta^2 D^2}{\alpha^2}  + \frac{ \sigma^2}{B_1} \right) +  2\alpha^2(2+2\LL_f^2 C_k) \left(\frac{\eta^2 D^2}{\alpha^2}  + \frac{ \sigma^2}{B_1} \right)\\ 
 \leq & (5+6\LL_f^2 C_k) \alpha^2\left(\frac{\eta^2 D^2}{\alpha^2}  + \frac{ \sigma^2}{B_1} \right) \leq C_{k+1} \alpha^2\left(\frac{\eta^2 D^2}{\alpha^2}  + \frac{ \sigma^2}{B_1} \right).
\end{align*}
Thus, we have proved this lemma via Induction. Also, by setting that $C_0=1$ and noting that $C_{i+1} = 5+6L_f^2 C_i$ for $i\ge 1$, we know that $C_i = (5+6L_f^2)^i$.

\begin{lemma}
We have the following guarantee:
    \begin{align*}
    &\frac{1}{T}\sum_{t=1}^T \E\left[\Norm{\v_{t}-\nabla F(\x_{t})}^2\right] \\
    \leq &\frac{(\sigma^2+L_f^2)^K}{B_0 \alpha T}+\frac{4L_F^2}{\alpha^2}\eta^2 D^2 +4K L_F^2\sum_{i=1}^{K-1} C_i \left(\frac{\eta^2 D^2}{\alpha^2}  + \frac{ \sigma^2}{B_1} \right)+\alpha\frac{K^2 \sigma^2}{B_1} (\sigma^2 +L_f^2 )^{K-1} 
\end{align*}
\end{lemma}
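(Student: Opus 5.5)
The plan is to split the error through the intermediate quantity $\prod_{i=1}^K \nabla f_i(\u_t^{i-1})$, handling the momentum error and the inner-function error separately. First I would derive a one-step recursion for the momentum estimator $\v_t=(1-\alpha)\v_{t-1}+\alpha\,\frac{1}{B_1}\sum_j\prod_i\nabla f_i(\u_t^{i-1};\xi_t^{i,j})$, measured against the true gradient $\nabla F(\x_t)$. Write
\[
\v_t-\nabla F(\x_t)=(1-\alpha)\bigl(\v_{t-1}-\nabla F(\x_{t-1})\bigr)+(1-\alpha)\bigl(\nabla F(\x_{t-1})-\nabla F(\x_t)\bigr)+\alpha\bigl(\bar g_t-\nabla F(\x_t)\bigr),
\]
where $\bar g_t$ is the mini-batch product of stochastic Jacobians. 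I then split $\bar g_t-\nabla F(\x_t)$ into a zero-mean noise part $\bar g_t-\prod_i\nabla f_i(\u_t^{i-1})$ and a bias part $\prod_i\nabla f_i(\u_t^{i-1})-\nabla F(\x_t)$. The noise part is conditionally independent of the past, so its cross term vanishes.

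For the noise, I would telescope the product level by level exactly as in Lemma~\ref{lem:3}. Using independence across levels and the second-moment bound $\E\|\nabla f_i(\cdot;\xi)\|^2\le \sigma^2+L_f^2$ (the variance term plus $L_f^2$), this gives $\frac{K^2\sigma^2}{B_1}(\sigma^2+L_f^2)^{K-1}$ times $\alpha^2$. For the remaining deterministic terms I apply Young's inequality with parameter $\alpha$, as in Lemma~\ref{L1}. This yields a contraction factor $(1-\alpha)$ on the previous error, plus a term $\frac{2}{\alpha}\bigl(L_F^2\eta^2D^2+\alpha^2\|\prod_i\nabla f_i(\u_t^{i-1})-\nabla F(\x_t)\|^2\bigr)$. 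Here the smoothness of $F$ and $\|\x_t-\x_{t-1}\|\le \eta D$ are used.

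Next I sum over $t$ and divide by $\alpha T$. The initialization contributes $\E\|\v_1-\nabla F(\x_1)\|^2/(\alpha T)$, which I bound by $(\sigma^2+L_f^2)^K/(B_0\alpha T)$ via the product second-moment bound. The drift contributes $\frac{4L_F^2}{\alpha^2}\eta^2D^2$ after adjusting the constants. The bias term is $\frac{4}{T}\sum_t\|\prod_i\nabla f_i(\u_t^{i-1})-\nabla F(\x_t)\|^2$. By Lemma~\ref{lem:2} this is at most $4KL_F^2\sum_{i=1}^{K-1}\frac1T\sum_t\E\|\u_t^i-f_i(\u_t^{i-1})\|^2$, and Lemma~\ref{L2} bounds each inner average by $C_i(\eta^2D^2/\alpha^2+\sigma^2/B_1)$. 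The noise term averages to $\alpha\frac{K^2\sigma^2}{B_1}(\sigma^2+L_f^2)^{K-1}$.

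The main obstacle is the bias term, and I will not recurse on $\prod_i \nabla f_i(\u_t^{i-1})$ directly. Doing so would produce a difference between consecutive Jacobian products at the $\u$ points, and without average smoothness that difference cannot be controlled by $\|\u_t-\u_{t-1}\|$ for the stochastic Jacobians. Recursing against $\nabla F(\x_t)$ avoids this: it uses only deterministic smoothness of $F$ and of the $f_i$ through Lemma~\ref{lem:2}. The bias then appears with an $\alpha^2$ weight, so dividing by $\alpha$ leaves an $\alpha\cdot$bias contribution. This is at most a constant times the bias, since $\alpha\le1$, which matches the $4KL_F^2$ factor in the claimed bound. The only further care needed is that $\alpha$ is constant ($=1/2$) in the intended regime, so Young's inequality with parameter $\alpha$ costs nothing in rate.
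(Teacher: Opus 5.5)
Your proposal is correct and follows the paper's proof essentially step for step. The paper likewise writes $\v_{t+1}-\nabla F(\x_{t+1})$ as $(1-\alpha)(\v_t-\nabla F(\x_t))+(1-\alpha)(\nabla F(\x_t)-\nabla F(\x_{t+1}))$ plus $\alpha$ times a bias term and a zero-mean noise term. It then drops the noise cross term, applies Young's inequality with parameter $\alpha$ to get $\frac{4L_F^2}{\alpha}\eta^2D^2+4\alpha\,\text{bias}$, and closes with Lemma~\ref{lem:2}, Lemma~\ref{L2}, and the $(\sigma^2+L_f^2)^K/B_0$ initialization bound.

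One small correction to your last paragraph: when you sum and divide by $\alpha T$, the factor $\alpha$ in $4\alpha\cdot\text{bias}$ cancels exactly. The coefficient $4KL_F^2$ therefore appears without any appeal to $\alpha\le 1$, as your own computation in the third paragraph already shows.
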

\textbf{Proof} Considering the definition of $\v_{t+1}$, we have
    \begin{align*}
        &\E\left[\left\|\v_{t+1} - \nabla F\left(\x_{t+1}\right) \right\|^2\right] \\
        %\leq & \E\left[\Norm{(1-\alpha)(\v_{t}-\nabla F(\x_{t+1}))  +\alpha\left( \frac{1}{B_1} \sum_{j=1}^{B_1} \left[ \prod_{i=1}^K \nabla f_i(\u_{t+1}^{i-1};\xi_{t+1}^{i,j}) \right] -\nabla F(\x_{t+1})\right)}^2\right]\\
        = & \E\left[\Norm{(1-\alpha)(\v_{t}-\nabla F(\x_{t+1}))  +\alpha\left( \frac{1}{B_1} \sum_{j=1}^{B_1} \left[ \prod_{i=1}^K \nabla f_i(\u_{t+1}^{i-1};\xi_{t+1}^{i,j}) \right] -\prod_{i=1}^K \nabla f_i(\u_{t+1}^{i-1})\right)\right.\right.\\
        &\quad \left.\left.+\alpha\left( \prod_{i=1}^K \nabla f_i(\u_{t+1}^{i-1}) -\nabla F(\x_{t+1})\right)}^2\right]\\
        \leq & \E\left[\Norm{(1-\alpha)(\v_{t}-\nabla F(\x_{t+1}))  +\alpha\left( \prod_{i=1}^K \nabla f_i(\u_{t+1}^{i-1}) -\nabla F(\x_{t+1})\right)}^2 \right]\\
        &+\alpha^2\E\left[\Norm{\frac{1}{B_1} \sum_{j=1}^{B_1} \left[ \prod_{i=1}^K \nabla f_i(\u_{t+1}^{i-1};\xi_{t+1}^{i,j}) \right] -\prod_{i=1}^K \nabla f_i(\u_{t+1}^{i-1})}^2\right]\\
        \leq & \E\left[\Norm{(1-\alpha)(\v_{t}-\nabla F(\x_{t}))+(1-\alpha)(\nabla F(\x_{t}) -\nabla F(\x_{t+1}))  \right.\right.\\
        &\left.\left.+\alpha\left( \prod_{i=1}^K \nabla f_i(\u_{t+1}^{i-1}) -\nabla F(\x_{t+1})\right)}^2\right] +\alpha^2\frac{K^2}{B_1} (\sigma^2 +L_f^2 )^{K-1}\sigma^2\\
        \leq & (1-\alpha)\E\left[\Norm{\v_{t}-\nabla F(\x_{t})}^2\right] +\frac{4}{\alpha}\Norm{\nabla F(\x_{t}) -\nabla F(\x_{t+1})}^2\\
        &+4\alpha\E\left[\Norm{ \prod_{i=1}^K \nabla f_i(\u_{t+1}^{i-1}) -\nabla F(\x_{t+1})}^2\right]+\frac{\alpha^2}{B_1^2} \sum_{j=1}^{B_1} \E\left[\Norm{ \prod_{i=1}^K \nabla f_i(\u_{t+1}^{i-1};\xi_{t+1}^{i,j})  -\prod_{i=1}^K \nabla f_i(\u_{t+1}^{i-1})}^2\right]\\
        \leq & (1-\alpha)\E\left[\Norm{\v_{t}-\nabla F(\x_{t})}^2\right] +\frac{4L_F^2}{\alpha}\eta^2 D^2 +4\alpha K L_F^2 \sum_{i=1}^{K-1} \E\left[\Norm{f_i(\u_{t+1}^{i-1})-\u_{t+1}^i}^2\right]\\
        &+\alpha^2\frac{K^2 \sigma^2}{B_1} (\sigma^2 +L_f^2 )^{K-1}.
    \end{align*}
Summing up and rearranging, we proved the conclusion by noting that $ \E\left[\Norm{\v_{1}-\nabla F(\x_{1})}^2\right] \leq {(\sigma^2+L_f^2)^K}/{B_0}$ and $\frac{1}{T}\sum_{t=1}^T  \E\left[\Norm{\u_{t+1}^i -  f_i(\u_{t+1}^{i-1})}^2\right] \leq C_i \left(\frac{\eta^2 D^2}{\alpha^2}  + \frac{ \sigma^2}{B_1} \right)$.

\textbf{Proof of the Theorem:} Next, we can finish the proof as follows.
Set $B_0 = 1$, $B_1 = T$, $\eta = 1/\sqrt{T}$ and $\alpha$ is a positive constant within $(0,1)$, and we have:
\begin{align*} 
&\E\left[\frac{1}{T}\sum_{t=1}^{T}  \F\left(\x_{t}\right)\right] 
\leq \frac{1}{\eta T}+D\sqrt{\E\left[\frac{1}{T} \sum_{t=1}^T \left\|\nabla F\left(\x_t\right)-\v_t\right\|^2\right]}+\eta \frac{L_F}{2} D^{2}\\
\leq &\mathcal{O}\left(\frac{1}{\eta T}+\sqrt{\frac{1}{B_0 \alpha T}+\frac{1}{\alpha^2}\eta^2  +\frac{\eta^2}{\alpha^2}  + \frac{ 1}{B_1} +\alpha\frac{1}{B_1} }+\eta \right)\\
\leq & \mathcal{O}\left(\frac{1}{\sqrt{T}}\right).
\end{align*} 
%To ensure that $\E\left[\frac{1}{T}\sum_{t=1}^{T}  \F\left(\x_{t}\right)\right] \leq \epsilon$, the iteration number $T=\mathcal{O}(\epsilon^{-2})$ and thus the overall complexity is $B_1 T = T^2= \mathcal{O}(\epsilon^{-4})$

\section{Proof of Theorem~\ref{thm:2-}}
In the previous analysis of Lemma~\ref{L2}, we simply reduce $\frac{1}{T}\sum_{t=1}^T \Norm{\z_t -\x_t}^2 \leq  D^2$. In this case, we keep this term and rewrite Lemma~\ref{L2} as follows.
\begin{lemma}\label{L4} By setting that $C_{i+1} = 5+6\LL_f^2 C_i$, we can obtain the following inequalities:
\begin{align*}
	\frac{1}{T}\sum_{t=1}^T  \E\left[\Norm{\u_t^i -  f_i(\u_t^{i-1})}^2\right] \leq C_i \left(\frac{\eta^2}{\alpha^2} \left(\frac{1}{T}\sum_{t=1}^T \Norm{\z_t -\x_t}^2\right) + \frac{ \sigma^2}{B_1} \right),\\
 \frac{1}{T}\sum_{t=1}^T \E\left[\Norm{\u_{t+1}^{i} - \u_{t}^{i}}^2\right]\leq C_i \alpha^2 \left(\frac{\eta^2}{\alpha^2} \left(\frac{1}{T}\sum_{t=1}^T \Norm{\z_t -\x_t}^2\right) + \frac{ \sigma^2}{B_1} \right).
\end{align*}
\end{lemma}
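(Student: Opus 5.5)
We follow the proof of Lemma~\ref{L2} essentially verbatim; the only change is where $\x_{t+1}-\x_t$ enters. Write $\Phi := \frac{\eta^2}{\alpha^2}\bigl(\frac{1}{T}\sum_{t=1}^T \Norm{\z_t-\x_t}^2\bigr) + \frac{\sigma^2}{B_1}$. The starting point is Lemma~\ref{L1}, which does not use the bound $\Norm{\z_t-\x_t}\le D$ and therefore still holds unchanged. It gives, for every level $i$, the two coupled recursions: one bounds the averaged tracking error $\frac1T\sum_t\E\Norm{\u_t^i-f_i(\u_t^{i-1})}^2$ by $\frac{2\LL_f^2}{\alpha^2}$ times the averaged movement of level $i-1$ plus $\frac{2\alpha\sigma^2}{B_1}$; the other bounds the averaged movement of level $i$ by $\frac{\alpha^2\sigma^2}{B_1}$, plus $2\alpha^2\LL_f^2$ times the movement of level $i-1$, plus $2\alpha^2$ times the tracking error of level $i$.

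We then induct on $i$. For the base case $i=1$ we have $\u_t^0=\x_t$, so the movement of level $0$ is exactly
\[
\frac1T\sum_{t=1}^T \E\Norm{\x_{t+1}-\x_t}^2=\eta^2\frac1T\sum_{t=1}^T\Norm{\z_t-\x_t}^2 .
\]
In Lemma~\ref{L2} this quantity was bounded by $\eta^2D^2$; here we keep it, and it equals $\alpha^2\bigl(\Phi-\frac{\sigma^2}{B_1}\bigr)\le \alpha^2\Phi$. Substituting into the first recursion gives a tracking error of at most $2\LL_f^2\Phi+\frac{2\alpha\sigma^2}{B_1}\le (2\LL_f^2+2)\Phi\le C_1\Phi$, using $\alpha\le 1$. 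Substituting both bounds into the second recursion gives a movement of at most $(1+2\LL_f^2+4\LL_f^2+4)\alpha^2\Phi\le C_1\alpha^2\Phi$.

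For the inductive step, assume the level-$k$ movement is at most $C_k\alpha^2\Phi$. The first recursion then gives a level-$(k+1)$ tracking error of at most $(2\LL_f^2C_k+2)\Phi$. The second recursion gives a level-$(k+1)$ movement of at most $\bigl(1+2\alpha^2\LL_f^2C_k+4+4\LL_f^2C_k\bigr)\alpha^2\Phi\le(5+6\LL_f^2C_k)\alpha^2\Phi=C_{k+1}\alpha^2\Phi$. The tracking-error bound $(2\LL_f^2C_k+2)\Phi$ is also below $C_{k+1}\Phi$, so both claims hold at level $k+1$.

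The one point needing care is that $\Norm{\z_t-\x_t}$ is random, so the quantity $\frac1T\sum_t\Norm{\z_t-\x_t}^2$ in $\Phi$ must be read inside the expectation, as $\E$ of the average. This is how it is later combined with the negative $-\frac{\beta^2}{2}\Norm{\z_t-\x_t}^2$ term in the gradient-mapping argument. With that reading, every step above is linear in expectations, so the induction goes through with exactly the constants $C_i$ of Lemma~\ref{L2}.
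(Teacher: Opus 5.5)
Your proposal is correct and is essentially the paper's argument. The paper obtains this lemma by rerunning the induction of Lemma~\ref{L2} on the two recursions of Lemma~\ref{L1}, keeping $\eta^2 \frac{1}{T}\sum_{t=1}^T \E\Norm{\z_t-\x_t}^2$ in place of $\eta^2 D^2$; your constants match $C_{k+1}=5+6\LL_f^2 C_k$ exactly, and invoking Lemma~\ref{L1} inherits its standing condition $B_1 \le \alpha^2 B_0 T$. Reading the average of $\Norm{\z_t-\x_t}^2$ inside the expectation is the right interpretation of the slightly loose statement, and it is what the later gradient-mapping argument uses.
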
	
Then, following a very similar analysis, we can have that
\begin{align*}
    &\frac{1}{T}\sum_{t=1}^T \E\left[\Norm{\v_{t}-\nabla F(\x_{t})}^2\right] \leq \frac{(\sigma^2+L_f^2)^K}{B_0 \alpha T}+\frac{4L_F^2}{\alpha^2}\eta^2 \left(\frac{1}{T}\sum_{t=1}^T \Norm{\z_t -\x_t}^2\right)\\
    &\quad +4K L_F^2\sum_{i=1}^{K-1} C_i \left(\frac{\eta^2}{\alpha^2}\left(\frac{1}{T}\sum_{t=1}^T \Norm{\z_t -\x_t}^2\right)  + \frac{ \sigma^2}{B_1} \right)+\alpha\frac{K^2 \sigma^2}{B_1} (\sigma^2 +L_f^2 )^{K-1}. 
\end{align*}
Finally, by setting  $N=B_1=T$, $\eta \leq \min \left\{\frac{\beta}{4L_F}, \frac{\alpha \beta}{\sqrt{2\left(4L_F^2+4K L_F^2\sum_{i=1}^{K-1} C_i\right)}} \right\}$, $\alpha=\Theta(1)$ and $B_0=\Omega(1)$, we have
\begin{align*}
  &\quad \E\left[\frac{1}{T}\sum_{t=1}^{T}\left\|\mathcal{G}(\x_t, \beta)\right\|^{2} \right]  \\
  & \leq \frac{4\beta \Delta_F}{\eta T}  + \frac{8\beta^2 D^2 }{N} + \frac{10}{T}\sum_{t=1}^{T}\E\left[\Norm{\nabla F(\x_t) - \v_t }^2\right] -\frac{\beta^2}{2} \frac{1}{T}\sum_{t=1}^{T}\|\z_t-\x_t\|^{2} \\
  & \leq \frac{4\beta \Delta_F}{\eta T}  + \frac{8\beta^2 D^2 }{N} + \frac{(\sigma^2+L_f^2)^K}{B_0 \alpha T}+\frac{ \sigma^2}{B_1}\left(4K L_F^2\sum_{i=1}^{K-1} C_i +K^2  (\sigma^2 +L_f^2 )^{K-1}\right)\\
    &\quad + \left(\frac{1}{T}\sum_{t=1}^T \Norm{\z_t -\x_t}^2\right)\left(\frac{\eta^2}{\alpha^2}\left(4L_F^2+4K L_F^2\sum_{i=1}^{K-1} C_i\right) - \frac{\beta^2}{2} \right)\\
    & \leq \mathcal{O}\left( \frac{1}{\eta T}  + \frac{1}{N} + \frac{1}{B_0 \alpha T}+\frac{1}{B_1} \right)\\
    &\leq \mathcal{O}\left( \frac{1}{T}\right).
\end{align*}  
Thus, we finish the proof for this Theorem.

\section{Proof of Theorem~\ref{thm:3}}
According to the equation~(C.21) of \citet{pmlr-v97-yurtsever19b}, for Algorithm~\ref{alg:1}~(PMVR) with convex objectives, we have that
\begin{align*}
\mathbb{E}\left[F\left(\x_{t+1}\right)\right]-F_{\star} \leq\left(1-\eta\right)\left(\mathbb{E}\left[F\left(\x_{t}\right)\right]-F_{\star}\right)+\eta D \mathbb{E}\left\|\nabla F\left(\x_{t}\right)-\v_{t}\right\|+\eta^{2}\frac{L_F D^2}{2}  
\end{align*}
Then, we have:
\begin{align*}
 &\frac{1}{T}\sum_{t=1}^T \mathbb{E}\left[F\left(\x_{t}\right)\right]-F_{\star} \\
 \leq &\frac{\left(\mathbb{E}\left[F\left(\x_{1}\right)\right]-F_{\star}\right)}{\eta T}+\frac{ D}{T} \sum_{t=1}^T \mathbb{E}\left\|\nabla F\left(\x_{t}\right)-\v_{t}\right\|+\eta \frac{L_F D^2}{2} \\
  \leq & \frac{\left(\mathbb{E}\left[F\left(\x_{1}\right)\right]-F_{\star}\right)}{\eta T}+D\sqrt{\frac{ 1}{T} \sum_{t=1}^T \mathbb{E}\left\|\nabla F\left(\x_{t}\right)-\v_{t}\right\|^2}+\eta \frac{L_F D^2}{2} \\
  \leq & \frac{\left(\mathbb{E}\left[F\left(\x_{1}\right)\right]-F_{\star}\right)}{\eta T}+D\sqrt{\frac{2\E\left[\Norm{\v_1 -\prod_{i=1}^K \nabla f_i(\u_1^{i-1})}^2\right] + 2K L_F^2\sum_{i=1}^K \E\left[\Norm{\u_1^i -  f_i(\u_1^{i-1})}^2\right] }{\alpha T}}\\
  & +D\sqrt{\frac{\alpha L_2}{B_1} + \frac{\eta^2  L_2}{\alpha B_1}} +\eta \frac{L_F D^2}{2}, 
\end{align*}
Next, we denote $\Gamma_s = \frac{1}{T}\sum_{t=1}^{T} \Norm{\v_t -\prod_{i=1}^K \nabla f_i(\u_t^{i-1})}^2 + \frac{KL_F^2}{T}\sum_{t=1}^{T}\sum_{i=1}^K \Norm{\u_t^i -  f_i(\u_t^{i-1})}^2 $ for stage $s$ and we denote $\x^s$ as the output of Algorithm\ref{alg:3}. Then, we have:
\begin{align*}
\mathbb{E}\left[F\left(\x^{s}\right)-F_{\star}\right] \leq \frac{\mathbb{E}\left[F\left(\x^{s-1}\right)-F_{\star}\right]}{\eta_s T_s}+D\sqrt{\frac{2 \Gamma_{s-1} }{\alpha_s  T_s}} +D\sqrt{\frac{\alpha_s L_2}{B_1^s} + \frac{\eta_s^2  L_2}{\alpha_s B_1^s}} +\eta_s \frac{L_F D^2}{2}. 
\end{align*}
Also, according to the previous analysis, we have that:
\begin{align*}
    \Gamma_{s} \leq&  \frac{\Gamma_{s-1}}{\alpha_{s} T_{s}} + \frac{\alpha_{s} L_2}{B_1^{s}} + \frac{L_2 \eta_{s}^2}{\alpha_{s} B_1^{s}}
\end{align*}
Set $\epsilon_s = \left(\frac{1}{2}\right)^{s-1} $, $\eta_{s} = \alpha_{s} \leq \frac{\epsilon_s}{2L_F D^2},B_1^s \geq (64 L_2  D^2+3L_2)\frac{\eta_s}{\epsilon_s^2} $, $T_s \geq \frac{128D^2+12+4\Delta_F+3\Gamma_0+32D^2\Gamma_0}{\eta_s}$. We can guarantee that $\mathbb{E}\left[F\left(\x^{s}\right)-F_{\star}\right] \leq \epsilon_s$ and $\E\left[\Gamma_{s} \right] \leq \epsilon_s^2$.
We will use  Induction to prove.
\begin{proof}
When $s=1$, we have:
\begin{align*}
\mathbb{E}\left[F\left(\x^{1}\right)-F_{\star}\right] & \leq \frac{\Delta_F}{\eta_1 T_1}+D\sqrt{\frac{2 \Gamma_{0} }{\alpha_1  T_1}} +D\sqrt{\frac{\alpha_1 L_2}{B_1^1} + \frac{\eta_1^2  L_2}{\alpha_1 B_1^1}} +\eta_1 \frac{L_F D^2}{2} \leq \epsilon_1 =1 
\end{align*}
where $\Gamma_0 = L_f^{2K} + 2K^2 L_F^2 \sigma^2$. Also, we have that:
\begin{align*}
     \Gamma_1 &\leq \frac{\Gamma_0}{\alpha_1 T_1} + \frac{\alpha_1 L_2}{B_1^1} + \frac{L_2 \eta_1^2}{\alpha_1 B_1^1}\leq  \epsilon_1^2 = 1 
 \end{align*}
Then, assume  $\mathbb{E}\left[F\left(\x^{s}\right)-F_{\star}\right] \leq \epsilon_s$ and $\mathbb{E}\left[\Gamma_s\right] \leq \epsilon_s^2$, we prove that it holds for stage $s+1$.
\begin{align*}
& \mathbb{E}\left[F\left(\x^{s+1}\right)-F_{\star}\right]  \\
&\leq \frac{\epsilon_s}{\eta_{s+1} T_{s+1}}+D\sqrt{\frac{2 \epsilon_s^2 }{\alpha_{s+1}  T_{s+1}}} +D\sqrt{\frac{\alpha_{s+1} L_2}{B_1^{s+1}} + \frac{\eta_{s+1}^2  L_2}{\alpha_{s+1} B_1^{s+1}}} +\eta_{s+1} \frac{L_F D^2}{2} \\
&\leq \frac{\epsilon_s}{8}+\frac{\epsilon_s}{8}+\frac{\epsilon_s}{8}+\frac{\epsilon_s}{8} =\epsilon_{s}/2 = \epsilon_{s+1}.
\end{align*}
We also know that
\begin{align*}
    \Gamma_{s+1} \leq&  \frac{\epsilon_s^2}{\alpha_{s+1} T_{s+1}} + \frac{\alpha_{s+1} L_2}{B_1^{s+1}} + \frac{L_2 \eta_{s+1}^2}{\alpha_{s+1} B_1^{s+1}}
    \leq  \frac{\epsilon_s^2}{12}+\frac{\epsilon_s^2}{12}+\frac{\epsilon_s^2}{12} =  \epsilon_{s}^2/4
    =  \epsilon_{s+1}^2.
\end{align*}
So we prove that $\mathbb{E}\left[F\left(\x^{s}\right)-F_{\star}\right] \leq \left(\frac{1}{2}\right)^{s-1}$ with $\eta_{s} = \alpha_{s} \leq \frac{\epsilon_s}{2L_F D^2},B_1^s \geq (64 L_2  D^2+3L_2)\frac{\eta_s}{\epsilon_s^2} $, $T_s \geq \frac{128D^2+12+4\Delta_F+3\Gamma_0+32D^2\Gamma_0}{\eta_s}$.
 
This condition can be satisfied by setting that $\eta_s =\alpha_s = \Theta (\epsilon_s)$, $B_1^s = \Omega (\epsilon_s^{-1})$ and $T_s = \Omega(\epsilon_s^{-1})$ [Large batch version]. This can also be achieved by setting that $\eta_s = \alpha_s = \Theta (\epsilon_s^2)$, $B_1^s = \Omega (1)$ and $T_s = \Omega(\epsilon_s^{-2})$  [Constant Batch]. To ensure $\mathbb{E}\left[F\left(\x^{s}\right)-F_{\star}\right] \leq \epsilon$, set $S=\log_2 (\frac{2}{\epsilon})$, and the SFO rate is $\sum_{s=1}^S T^s B_1^s = \mathcal{O}\left(\sum_{s=1}^S 2^{(2s)}\right) = \mathcal{O}\left(\frac{1}{\epsilon^2}\right)$. 
\end{proof}

\section{Proof of Theorem~\ref{thm:4}}
We assume $F(\x)$ is $\lambda$-strongly convex function. Note that we have proved:
\begin{align*} 
F\left(\x_{t+1}\right)
\leq &F\left(\x_{t}\right)+\eta\left\langle \v_{t}, \z_{t}-\x_{t}\right\rangle+\eta\left\langle\nabla F\left(\x_{t}\right)-\v_t, \z_t-\x_t\right\rangle +\eta^{2} \frac{L_F}{2} \Norm{\z_t - \x_t}^{2}. 
\end{align*}    
Set $\eta \leq \frac{\lambda}{4L_F}$. Since we know that
\begin{align*}
    \eta\left\langle \v_{t}, \z_{t}-\x_{t}\right\rangle + \frac{\eta \lambda}{4} \Norm{\z_t - \x_t}^{2}  \leq \eta\left\langle \v_{t}, \x^{\star}-\x_{t}\right\rangle + \frac{\eta \lambda}{4} \Norm{\x^{\star} - \x_t}^{2}  + \frac{\eta \lambda D^2}{N},
\end{align*}
and  $\left\langle \nabla F(\x_t), \x^{\star} - \x_t \right\rangle \leq F_{\star} - F(\x_t) - \frac{\lambda}{2} \Norm{\x_t - \x^{\star}}^2$ for $\lambda$-strongly convex function, we have
\begin{align*} 
&F\left(\x_{t+1}\right)-F(\x_t) \\
\leq &  \eta \left\langle \nabla F(\x_t) - \v_t, \z_t - \x^{\star} \right\rangle + \eta \left\langle \nabla F(\x_t) , \x^{\star} - \x_t \right\rangle +  \frac{\eta\lambda}{4} \Norm{\x^{\star}-\x_t}^2 + \frac{\eta \lambda D^2}{N} - \frac{\eta \lambda}{8} \Norm{\z_t - \x_t}^2\\
\leq &  \eta (F_{\star} - F(\x_t)) - \frac{\eta \lambda}{4}\Norm{\x^{\star} - \x_t}^2 + \frac{\eta \lambda}{16} \Norm{\z_t - \x^{\star}}^2 + \frac{4\eta}{\lambda} \Norm{\nabla F(\x_t) - \v_t}^2+ \frac{\eta \lambda D^2}{N} - \frac{\eta \lambda}{8} \Norm{\z_t - \x_t}^2\\
\leq &  \eta (F_{\star} - F(\x_t))  + \frac{4\eta}{\lambda} \Norm{\nabla F(\x_t) - \v_t}^2+ \frac{\eta \lambda D^2}{N} -\frac{\eta \lambda}{24} \Norm{\z_t - \x_t}^2.
\end{align*}    
So we have:
$$F\left(\x_{t+1}\right)  - F_{\star} \leq (1-\eta) (F\left(\x_{t}\right) -F_{\star} ) +\frac{4\eta}{\lambda} \Norm{\nabla F(\x_t) - \v_t}^2 + \frac{\eta \lambda D^2}{N}.$$
Summing up and rearranging, we obtain: 
\begin{align*}
    \frac{1}{T}\sum_{i=1}^T (F\left(\x_{t}\right) -F_{\star} ) & \leq \frac{F\left(\x_1\right) - F_{\star}}{\eta T} + \frac{4}{\lambda T}\sum_{t=1}^T \left\|\nabla F\left(\x_t\right)-\v_t\right\|^2+\frac{ \lambda D^2}{N}
    % &\leq \frac{F\left(\x_1\right) - F_{\star}}{\eta T} + \frac{4}{\lambda}(\frac{L_3}{\alpha B_0 T}+ \frac{\alpha L_3}{B_1}+  \frac{\eta^2 L_3D^2} {\alpha B_1}) +\frac{ \lambda D^2}{N}
\end{align*}
Next, we denote $\x^s$ as the output for stage $s$. Then, we have:
\begin{align*}
\mathbb{E}\left[F\left(\x^{s}\right)\right]-F_{\star} \leq &\frac{F\left(\x_{s-1}\right) - F_{\star}}{\eta_s T_s} + \frac{8\Gamma_{s-1}}{\lambda \alpha_s T_s } + \frac{4 \alpha_s L_3}{\lambda B_1^s} + \frac{4 \eta^2_s  L_3 D^2}{\lambda \alpha_s B_1^s} +  \frac{\lambda D^2}{N},\\
    \Gamma_{s} \leq&  \frac{\Gamma_{s-1}}{\alpha_{s} T_{s}} + \frac{\alpha_{s} L_3}{B_1^{s}} + \frac{L_3 \eta_{s}^2 D^2}{\alpha_{s} B_1^{s}}.
\end{align*}
Set that $\epsilon_s  = (\frac{1}{2})^{s-1},    B_1^s \geq  \frac{ 80 \alpha_s L_3 (1+D^2)}{ \lambda \epsilon_s }, N \geq \frac{10\lambda D^2}{\epsilon_s}, T_s \geq \frac{(160+5\Delta_F)}{\eta_s}$, $B_0 =\max \{ \frac{\Gamma_0}{4\lambda},1\}$. We can guarantee that $\mathbb{E}\left[F\left(\x^{s}\right)-F_{\star}\right] \leq \epsilon_s$ and $\E\left[\Gamma_{s} \right] \leq \lambda \epsilon_s$.
We will use Induction to prove.

\begin{proof}
When $s=1$, we have: 
\begin{align*}
\Gamma_{1}  \leq & \frac{\Gamma_{0}}{\alpha_{1} T_{1} B_0} + \frac{\alpha_{1} L_3}{B_1^{1}} + \frac{L_3 \eta_{1}^2 D^2}{\alpha_{1} B_1^{1}}
\leq   \lambda \epsilon_1 = \lambda ,
\end{align*}
where $B_0$ is the batch size used in the first iteration of the first stage. Also, we have that:
\begin{align*}
 \mathbb{E}\left[F\left(\x^{1}\right)-F_{\star}\right] & \leq \frac{\Delta_F}{\eta_1 T_1}+\frac{8\Gamma_0}{\lambda \alpha_1 T_1 B_0} + \frac{4 \alpha_1 L_3}{\lambda B_1^1} + \frac{4 \eta^2_1  L_3 D^2}{\lambda \alpha_1 B_1^1 } +  \frac{\lambda D^2}{N}\leq \epsilon_1 =1 
\end{align*}
Assume $\mathbb{E}\left[\Gamma_s\right] \leq \lambda \epsilon_{s}$ and $\mathbb{E}\left[F\left(\x^{s}\right)-F_{\star}\right] \leq \epsilon_s$, we would prove that it holds for stage $s+1$.
\begin{align*}
    \Gamma_{s+1} \leq&  \frac{\Gamma_{s}}{\alpha_{s+1} T_{s+1}} + \frac{\alpha_{s+1} L_3}{B_1^{s+1}} + \frac{L_3 \eta_{s+1}^2 D^2}{\alpha_{s+1} B_1^{s+1}}\\
    \leq & \frac{\lambda \epsilon_s}{\alpha_{s+1} T_{s+1}} + \frac{\alpha_{s+1} L_3}{B_1^{s+1}} + \frac{L_3 \eta_{s+1}^2 D^2}{\alpha_{s+1} B_1^{s+1}}
    \leq  \frac{\lambda \epsilon_s}{2} = \lambda \epsilon_{s+1}
\end{align*}
Besides, we know that
\begin{align*}
\mathbb{E}\left[F\left(\x^{s+1}\right)-F_{\star} \right]
& \leq \frac{\epsilon_s}{\eta_{s+1} T_{s+1}} + \frac{8\Gamma_{s}}{\lambda \alpha_{s+1} T_{s+1}} +  \frac{4 \alpha_{s+1} L_3}{\lambda  B_1^{s+1}} + \frac{4 \eta^2_{s+1}  L_3 D^2}{\lambda \alpha_{s+1} B_1^{s+1}} +  \frac{\lambda D^2}{N}\\
& \leq  \frac{\epsilon_{s}}{2} = \epsilon_{s+1}
\end{align*}
So we have proved that $\mathbb{E}\left[F\left(\x^{s}\right)-F_{\star}\right] \leq \left(\frac{1}{2}\right)^{s-1}$ with $\epsilon_s  = (\frac{1}{2})^{s-1},    B_1^s \geq  \frac{ 80 \alpha_s L_3 (1+D^2)}{ \lambda \epsilon_s }, N \geq \frac{10\lambda D^2}{\epsilon_s}, T_s \geq \frac{(160+5\Delta_F)}{\eta_s}$, $B_0 =\max \{ \frac{\Gamma_0}{4\lambda},1\}$.
\end{proof}
This condition can be satisfied by setting that $\eta_s= \alpha_s = \Theta (\lambda)$, $T_s = \Omega(\lambda^{-1})$, $B_1^s = \Omega (\epsilon_s^{-1}), N=\Omega(\frac{\lambda}{\epsilon_s})$  (Large Batch) or by setting that $\eta_s = \alpha_s= \Theta (\lambda \epsilon_s)$, $T_s = \Omega(\lambda^{-1}\epsilon_s^{-1})$, $B_1^s = \Omega (1), N=\Omega(\frac{\lambda}{\epsilon_s})$  (Constant Batch). 

To ensure $\mathbb{E}\left[F\left(\x^{s}\right)-F_{\star}\right] \leq \epsilon$, set $S=\log_2 (\frac{2}{\epsilon})$, and the SFO rate is $\sum_{s=1}^S T_s B_1^s =  \mathcal{O}(1)\cdot \sum_{s=1}^S\frac{2^{s-1}}{\lambda}  =\mathcal{O}(\frac{1}{\lambda \epsilon})$. 

\newpage

\section{Proof of Theorem~\ref{thm:fsnc}}
First, we can bound the term $\E\left[\Norm{\v_t - \prod_{i=1}^K \nabla f_i(\u_t^{i-1})}^2\right]$ as follows.
%%%%%%%%%%%%%%%%%%%%%%%%%%   LEMMA  2    %%%%%%%%%%%%%%%%%%%%%%%%%%%%%%%%%%%%%%%%%%%%%%%%%%%%%
\begin{lemma}%\label{lem:3} The gradient estimator $\v_t$ enjoys the following guarantee:
\begin{align*}
&\frac{1}{T}\sum_{t=1}^T \E\left[\Norm{\v_t -\prod_{i=1}^K \nabla f_i(\u_t^{i-1})}^2\right] \leq  \frac{4K}{\alpha B_1 T} L_J^2 L_f^{2K-2}\sum_{t=1}^T\sum_{i=1}^K \Norm{\u_t^{i-1} - \u_{t+1}^{i-1}}^2.
\end{align*}
\end{lemma}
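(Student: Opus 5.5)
We follow the one-step recursion behind Lemma~\ref{lem:3}, rewritten for the PMFS estimator~(\ref{fs:v}). The point is that in the finite-sum setting there is no independent variance term: the noise of $\z_t$ is itself a difference at two points, $\u_t^{i-1}$ and the snapshot $\u_\tau^{i-1}$. Write $P_t=\prod_{i=1}^K\nabla f_i(\u_t^{i-1})$ and $P_t^j=\prod_{i=1}^K\nabla f_i(\u_t^{i-1};\xi_t^{i,j})$.

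\textbf{Step 1: the one-step decomposition.} We expand
\begin{align*}
\v_t-P_t = (1-\alpha)(\v_{t-1}-P_{t-1}) + \alpha(\z_t-P_t) + (1-\alpha)\Bigl(\tfrac{1}{B_1}\textstyle\sum_j (P_t^j-P_{t-1}^j) - (P_t-P_{t-1})\Bigr).
\end{align*}
Conditioned on the past, the last two terms have zero mean, so the cross term with the first vanishes. Applying $(a+b)^2\le 2a^2+2b^2$ to the remaining two, we get
\begin{align*}
\E\Norm{\v_t-P_t}^2\le(1-\alpha)\E\Norm{\v_{t-1}-P_{t-1}}^2+2\alpha^2\E\Norm{\z_t-P_t}^2+2\E\Bigl\|\tfrac{1}{B_1}\textstyle\sum_j(P_t^j-P_{t-1}^j)-(P_t-P_{t-1})\Bigr\|^2 .
\end{align*}

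\textbf{Step 2: bounding the two variance terms.} The last term is handled exactly as in~(\ref{diff}): independence over $j$, then dropping the mean, then telescoping the product with $K$ swaps and average smoothness. This gives $\frac{K}{B_1}L_J^2L_f^{2K-2}\sum_i\Norm{\u_t^{i-1}-\u_{t-1}^{i-1}}^2$.

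The term $\E\Norm{\z_t-P_t}^2$ is treated the same way, with $\u_\tau$ in place of $\u_{t-1}$. This yields $\frac{K}{B_1}L_J^2L_f^{2K-2}\sum_i\Norm{\u_t^{i-1}-\u_\tau^{i-1}}^2$. Since $t-\tau< I$, Cauchy--Schwarz gives $\Norm{\u_t-\u_\tau}^2\le I\sum_{r=\tau}^{t-1}\Norm{\u_{r+1}-\u_r}^2$. Summing over $t$ within an epoch produces a factor of at most $I^2$. The choice $\alpha=B_1/m$, $I=m/B_1$ makes $\alpha^2I^2=1$, so this term is of the same order as the Step-2 difference term.

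\textbf{Step 3: summation and the epoch reset.} We sum over $t$ and rearrange the geometric recursion (divide by $\alpha$). At snapshot iterations $\v_\tau=P_\tau$ exactly, so there is no initial-error term. Combining the constants, $2+2=4$, gives the stated $\frac{4K}{\alpha B_1T}L_J^2L_f^{2K-2}\sum_t\sum_i\Norm{\u_t^{i-1}-\u_{t+1}^{i-1}}^2$. The index shift $t\to t+1$ is harmless after summing.

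The main obstacle is Step 2's snapshot term. The accumulation over the epoch must be controlled by $\alpha^2 I^2\le1$, and the constant must stay at most $4$. If the bookkeeping overshoots, we would use $(1-\alpha)^2\le1-\alpha$ more carefully, or absorb a factor into the constant.
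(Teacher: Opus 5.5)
Your proposal matches the paper's proof essentially step for step: the same three-term decomposition, the same two bounds via~(\ref{diff}) with $\u_\tau$ in place of $\u_{t-1}$ for the $\z_t$ term, the $I^2$ accumulation of $\Norm{\u_t-\u_\tau}^2$ cancelled by $\alpha=1/I$, and the constant $2+2=4$ after dividing by $\alpha$. One small correction: under~(\ref{fs:v}) the momentum recursion is not reset, so at later snapshots it is $\z_\tau$, not $\v_\tau$, that equals $P_\tau$; you only need $\v_1=P_1$ at the $t=1$ snapshot to drop the initial-error term, and the paper relies on the same assumption implicitly.
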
	
\begin{proof}
According to the definition of $\v_t$, we know that:
\begin{align*}
&\E\left[\Norm{\v_t -\prod_{i=1}^K \nabla f_i(\u_t^{i-1})}^2\right]\\
=& \E\left[\left \| (1-\alpha) \left( \v_{t-1} - \prod_{i=1}^K \nabla f_i(\u_{t-1}^{i-1}) \right) + \alpha \left( \z_t - \prod_{i=1}^K \nabla f_i(\u_{t}^{i-1})\right) \right.\right.\\
& \left.\left. +(1-\alpha) \left( \prod_{i=1}^K \nabla f_i(\u_{t-1}^{i-1}) -\prod_{i=1}^K \nabla f_i(\u_{t}^{i-1})   -  \frac{1}{B_1}\sum_{j=1}^{B_1}\prod_{i=1}^K \nabla f_i(\u_{t-1}^{i-1};\xi_t^{i,j}) + \frac{1}{B_1}\sum_{j=1}^{B_1}\prod_{i=1}^K \nabla f_i(\u_{t}^{i-1};\xi_t^{i,j})\right) \right\|^2\right] \\
= & (1-\alpha)^2 \E\left[\Norm{ \v_{t-1} - \prod_{i=1}^K \nabla f_i(\u_{t-1}^{i-1})}^2\right] +  \E \left[ \left\| \alpha \left(\z_t - \prod_{i=1}^K \nabla f_i(\u_{t}^{i-1}) \right) \right.\right. \\
& \left.\left. +(1-\alpha)\left(\frac{1}{B_1}\sum_{j=1}^{B_1}  \prod_{i=1}^K \nabla f_i(\u_{t-1}^{i-1};\xi_t^{i,j}) -  \frac{1}{B_1}\sum_{j=1}^{B_1}  \prod_{i=1}^K \nabla f_i(\u_{t}^{i-1};\xi_t^{i,j}) -  \prod_{i=1}^K \nabla f_i(\u_{t-1}^{i-1}) + \prod_{i=1}^K \nabla f_i(\u_{t}^{i-1})\right) \right\|^2 \right] \\
\leq & (1-\alpha) \E\left[\Norm{ \v_{t-1} - \prod_{i=1}^K \nabla f_i(\u_{t-1}^{i-1})}^2\right] + 2\alpha^2\E\left[\Norm{\z_t-\prod_{i=1}^K \nabla f_i(\u_{t}^{i-1})}^2\right]\\
& +2(1-\alpha)^2\E\left[\Norm{\frac{1}{B_1}\sum_{j=1}^{B_1}  \prod_{i=1}^K \nabla f_i(\u_{t-1}^{i-1};\xi_t^{i,j}) -  \frac{1}{B_1}\sum_{j=1}^{B_1}  \prod_{i=1}^K \nabla f_i(\u_{t}^{i-1};\xi_t^{i,j}) -  \prod_{i=1}^K \nabla f_i(\u_{t-1}^{i-1}) + \prod_{i=1}^K \nabla f_i(\u_{t}^{i-1})}^2\right]\\
\leq & (1-\alpha) \E\left[\Norm{ \v_{t-1} - \prod_{i=1}^K \nabla f_i(\u_{t-1}^{i-1})}^2\right] + 2\alpha^2\E\left[\Norm{\z_t-\prod_{i=1}^K \nabla f_i(\u_{t}^{i-1})}^2\right]\\
& +\frac{2K}{B_1} \LL_J^2L_f^{2K-2} \sum_{i=1}^{K} \E\left[\Norm{\u_t^{i-1} - \u_{t-1}^{i-1}}^2\right]\\
\leq & (1-\alpha) \E\left[\Norm{ \v_{t-1} - \prod_{i=1}^K \nabla f_i(\u_{t-1}^{i-1})}^2\right] + 2\alpha^2\frac{K}{B_1} L_J^2 L_f^{2K-2}\sum_{i=1}^K \Norm{\u_\tau^{i-1} - \u_t^{i-1}}^2\\
& +\frac{2K}{B_1} \LL_J^2L_f^{2K-2} \sum_{i=1}^{K} \E\left[\Norm{\u_t^{i-1} - \u_{t-1}^{i-1}}^2\right],
\end{align*}
where the last inequality is due to equation~(\ref{diff}) and the fact that 
\begin{align*}
    &\E\left[\Norm{\z_t - \prod_{i=1}^K \nabla f_i(\u_{t}^{i-1})}^2\right]\\
    \leq &\E\left[\Norm{\frac{1}{B_1} \sum_{j=1}^{B_1} \left[\prod_{i=1}^K \nabla f_i(\u_t^{i-1};\xi_t^{i,j})-\prod_{i=1}^K \nabla f_i(\u_{\tau}^{i-1};\xi_t^{i,j})\right] +\prod_{i=1}^K\nabla f_i(\u_\tau^{i-1}) - \prod_{i=1}^K \nabla f_i(\u_{t}^{i-1})}^2\right]\\
    = & \frac{1}{B_1^2}\sum_{j=1}^{B_1}\E\left[\Norm{\prod_{i=1}^K \nabla f_i(\u_{t}^{i-1};\xi_t^{i,j}) - \prod_{i=1}^K \nabla f_i(\u_{\tau}^{i-1};\xi_t^{i,j})}^2\right]\\
    =&\frac{1}{B_1^2}\sum_{j=1}^{B_1}\E\left[\left\|\prod_{i=1}^K \nabla f_i(\u_{t}^{i-1};\xi_t^{i,j}) - \nabla f_1(\u_{\tau}^0;\xi_t^{1,j}) \prod_{i=2}^K \nabla  f_i(\u_{t}^{i-1};\xi_t^{i,j}) \right. \right. \\
    & \quad \left.\left. + \nabla f_1(\u_{\tau}^0;\xi_t^{1,j}) \prod_{i=2}^K \nabla  f_i(\u_{t}^{i-1};\xi_t^{i,j}) - \nabla f_1(\u_{\tau}^0;\xi_t^{1,j}) \nabla f_2(\u_{\tau}^1;\xi_t^{2,j}) \prod_{i=3}^K \nabla  f_i(\u_{t}^{i-1};\xi_t^{i,j}) \right. \right.\\ 
    &  \qquad \cdots \\
    & \quad \left.\left. +  \left(\prod_{i=1}^{K-1} \nabla f_i(\u_{\tau}^{i-1};\xi_t^{i,j})\right) \nabla f_K(\u_{t}^{K-1};\xi_t^{i,j}) - \prod_{i=1}^K \nabla f_i(\u_{\tau}^{i-1};\xi_t^{i,j}) \right\|^2\right] \\
   \leq &  \frac{K}{B_1^2}\sum_{j=1}^{B_1} L_J^2 L_f^{2K-2}\sum_{i=1}^K \Norm{\u_\tau^{i-1} - \u_t^{i-1}}^2 \\
   = & \frac{K}{B_1} L_J^2 L_f^{2K-2}\sum_{i=1}^K \Norm{\u_\tau^{i-1} - \u_t^{i-1}}^2.
\end{align*}
Summing up over $t$ and rearranging, set $\alpha =1/I$, we have:
\begin{align*}
&\frac{1}{T}\sum_{t=1}^T \E\left[\Norm{\v_t -\prod_{i=1}^K \nabla f_i(\u_t^{i-1})}^2\right] \\
\leq & 2\alpha\frac{K}{B_1 T} L_J^2 L_f^{2K-2}\sum_{i=1}^K\sum_{t=1}^T \Norm{\u_\tau^{i-1} - \u_{t+1}^{i-1}}^2 +\frac{2K}{\alpha B_1 T} \LL_J^2L_f^{2K-2} \sum_{i=1}^{K}\sum_{t=1}^T \E\left[\Norm{\u_{t+1}^{i-1} - \u_{t}^{i-1}}^2\right]\\
\leq & 2\alpha\frac{K}{B_1 T} L_J^2 L_f^{2K-2}\sum_{i=1}^K\sum_{t=1}^T \Norm{\sum_{j=\tau}^{t}\u_{j}^{i-1} - \u_{j+1}^{i-1}}^2 +\frac{2K}{\alpha B_1 T} \LL_J^2L_f^{2K-2} \sum_{i=1}^{K}\sum_{t=1}^T \E\left[\Norm{\u_{t+1}^{i-1} - \u_{t}^{i-1}}^2\right]\\
\leq & 2\alpha\frac{KI^2}{B_1 T} L_J^2 L_f^{2K-2}\sum_{i=1}^K\sum_{t=1}^T \Norm{\u_t^{i-1} - \u_{t+1}^{i-1}}^2 +\frac{2K}{\alpha B_1 T} \LL_J^2L_f^{2K-2} \sum_{i=1}^{K}\sum_{t=1}^T \E\left[\Norm{\u_{t+1}^{i-1} - \u_{t}^{i-1}}^2\right]\\
\leq & \frac{4K}{\alpha B_1 T} L_J^2 L_f^{2K-2}\sum_{i=1}^K\sum_{t=1}^T \Norm{\u_t^{i-1} - \u_{t+1}^{i-1}}^2.
\end{align*}
\end{proof}
Next, we bound the estimation error of the function value estimator in the following lemma.
\begin{lemma}%\label{lem:4} The inner function estimator $\u_t$ ensures that:
\begin{align*}
	\frac{1}{T}\sum_{t=1}^T \sum_{i=1}^K \E\left[\Norm{\u_t^i -  f_i(\u_t^{i-1})}^2\right] \leq  \frac{4\LL_f^2}{\alpha B_1 T}  \sum_{t=1}^T \sum_{i=1}^K\E\left[\Norm{\u_{t+1}^{i-1} - \u_{t}^{i-1}}^2\right].
\end{align*}
\end{lemma}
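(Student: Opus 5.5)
The plan mirrors the preceding lemma for $\v_t$, now applied to the finite-sum estimator~(\ref{fs:u}). I will fix a level $i$ and write
\begin{align*}
\u_t^i - f_i(\u_t^{i-1}) = (1-\alpha)\left(\u_{t-1}^i - f_i(\u_{t-1}^{i-1})\right) + \alpha\left(\h_t^i - f_i(\u_t^{i-1})\right) + (1-\alpha)\,\mathbf{e}_t^i,
\end{align*}
where $\mathbf{e}_t^i = \frac{1}{B_1}\sum_{j=1}^{B_1}\left(f_i(\u_t^{i-1};\xi_t^{i,j}) - f_i(\u_{t-1}^{i-1};\xi_t^{i,j})\right) - f_i(\u_t^{i-1}) + f_i(\u_{t-1}^{i-1})$.

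Conditioned on the past, the last two terms have zero mean, because $\h_t^i$ is unbiased for $f_i(\u_t^{i-1})$. The cross term with the first summand therefore vanishes. Applying $(a+b)^2\le 2a^2+2b^2$ to the remaining two terms and using $(1-\alpha)^2\le 1-\alpha$ gives
\begin{align*}
\E\left[\Norm{\u_t^i - f_i(\u_t^{i-1})}^2\right] \leq (1-\alpha)\E\left[\Norm{\u_{t-1}^i - f_i(\u_{t-1}^{i-1})}^2\right] + 2\alpha^2\E\left[\Norm{\h_t^i - f_i(\u_t^{i-1})}^2\right] + 2\E\left[\Norm{\mathbf{e}_t^i}^2\right].
\end{align*}

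Both noise terms are controlled by independence across the mini-batch, variance $\le$ second moment, and Assumption~\ref{asm:stoc_smooth3}. This yields $\E\Norm{\mathbf{e}_t^i}^2 \le \frac{\LL_f^2}{B_1}\E\Norm{\u_t^{i-1}-\u_{t-1}^{i-1}}^2$ and $\E\Norm{\h_t^i - f_i(\u_t^{i-1})}^2 \le \frac{\LL_f^2}{B_1}\E\Norm{\u_t^{i-1}-\u_\tau^{i-1}}^2$. No $\sigma^2$ term appears anywhere, since the snapshot uses the exact $f_i(\u_\tau^{i-1})$. At a snapshot iteration the error is zero, which also removes the initialization term.

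The main obstacle is the snapshot drift $\Norm{\u_t^{i-1}-\u_\tau^{i-1}}^2$. Since $t-\tau\le I$, I will telescope and apply Cauchy--Schwarz:
\begin{align*}
\Norm{\u_t^{i-1}-\u_\tau^{i-1}}^2 \le I\sum_{l=\tau}^{t-1}\Norm{\u_{l+1}^{i-1}-\u_l^{i-1}}^2 .
\end{align*}
Summing over $t$, each consecutive difference is counted at most $I$ times, so the total drift is at most $I^2\sum_t\Norm{\u_{t+1}^{i-1}-\u_t^{i-1}}^2$. With $\alpha = B_1/m = 1/I$ this gives $\alpha^2 I^2 = 1$, so the drift contribution has the same order as the $\mathbf{e}_t^i$ term. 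This is exactly the step used in the preceding lemma for $\v_t$.

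Finally, I will unroll the recursion: summing over $t$ and dividing by $\alpha T$ (the geometric factor $(1-\alpha)$ gives $\sum_t a_t \le \frac{1}{\alpha}\sum_t b_t$). The combined constant is $\frac{2\LL_f^2}{\alpha B_1 T} + \frac{2\LL_f^2}{\alpha B_1 T}$, where the first comes from $\mathbf{e}_t^i$ and the second is the drift term, after the shift of the index $t$ to $t+1$ as in Lemma~\ref{lem:4}. Summing over $i=1,\dots,K$ yields the stated constant $\frac{4\LL_f^2}{\alpha B_1 T}$.
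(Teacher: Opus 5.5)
Your proposal is correct and follows the paper's proof essentially step for step: the same decomposition with $(a+b)^2\le 2a^2+2b^2$ on the two zero-mean pieces, the same $\frac{\LL_f^2}{B_1}$ bounds from Assumption~\ref{asm:stoc_smooth3}, the same $I^2$ snapshot-drift bound, and $\alpha=1/I$ to merge both contributions into $\frac{4\LL_f^2}{\alpha B_1 T}$. One minor correction: at snapshot iterations $t>1$ only the $\h_t^i$-noise vanishes, not the full error $\u_t^i - f_i(\u_t^{i-1})$, because $\u_t^i$ is still produced by (\ref{fs:u}); dropping the initialization term only requires exact initialization at $t=1$, which the paper also assumes implicitly.
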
	
\begin{proof}
Following the very similar analysis, we have:
\begin{align*}
& \E\left[\Norm{ \u_t^i - f_i(\u_t^{i-1})}^2\right]\\
\leq & (1-\alpha)^2 \E \Norm{\u_{t-1}^i - f_i(\u_{t-1}^{i-1})}^2 + {2\alpha^2} \E\left[\Norm{\h_t^i - f_i(\u_{t}^{i-1})}^2\right] \\
& \qquad  +2(1-\alpha)^2\E\left[\left\|\frac{1}{B_1}\sum_{j=1}^{B_1}\left(f_i(\u_{t-1}^{i-1};\xi_t^{i,j}) - f_i(\u_t^{i-1};\xi_t^{i,j} )- f_i(\u_{t-1}^{i-1}) + f_i(\u_t^{i-1})\right)\right\|^2\right]\\
\leq&  (1-\alpha) \E \Norm{\u_{t-1}^i - f_i(\u_{t-1}^{i-1})}^2 + \frac{2\alpha^2 L_f^2}{B_1}\left\|\u_{\tau}^{i-1} - \u_t^{i-1}\right\|^2 + \frac{2\LL_f^2}{B_1} \left\|\u_{t-1}^{i-1} - \u_t^{i-1}\right\|^2. 
\end{align*}
This leads to the fact that:
\begin{align*}
	&\frac{1}{T}\sum_{t=1}^T \sum_{i=1}^K \E\left[\Norm{\u_t^i -  f_i(\u_t^{i-1})}^2\right]  \\
 \leq& \frac{2\alpha L_f^2}{B_1 T}\sum_{t=1}^T \sum_{i=1}^K\left\|\u_{\tau}^{i-1} - \u_{t+1}^{i-1}\right\|^2 + \frac{2\LL_f^2}{\alpha B_1 T}\sum_{t=1}^T \sum_{i=1}^K\left\|\u_{t+1}^{i-1} - \u_t^{i-1}\right\|^2\\
 \leq& \frac{2\alpha L_f^2 I^2}{B_1 T}\sum_{t=1}^T \sum_{i=1}^K\left\|\u_{t+1}^{i-1} - \u_t^{i-1}\right\|^2 + \frac{2\LL_f^2}{\alpha B_1 T}\sum_{t=1}^T \sum_{i=1}^K\left\|\u_{t+1}^{i-1} - \u_t^{i-1}\right\|^2\\
 \leq &\frac{4\LL_f^2}{\alpha B_1 T}\sum_{t=1}^T \sum_{i=1}^K\left\|\u_{t+1}^{i-1} - \u_t^{i-1}\right\|^2.
\end{align*}
By summing up and rearranging, we finish the proof of this lemma.
\end{proof}
Then, we bound the term $\sum_{i=1}^K\E\left[\Norm{\u_{t+1}^{i-1} - \u_{t}^{i-1}}^2\right]$.
\begin{lemma} We can obtain the following guarantee.
\begin{align*}
&\sum_{t=1}^T\sum_{i=1}^K  \E\left[\Norm{\u_{t+1}^{i-1} - \u_{t}^{i-1}}^2 \right] \\ 
 \leq&  \left(\sum_{i=1}^{K}\left(4\LL_f^2\right)^{i-1}\right) \left(\sum_{t=1}^T\E \left[\eta^2 \Norm{\z_t - \x_t}^2\right] +  2\alpha^2K \sum_{t=1}^T\sum_{i=1}^{K} \E\left[\Norm{\u_t^i - f_i(\u_t^{i-1})}^2\right]  \right).
\end{align*}
\end{lemma}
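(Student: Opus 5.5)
We follow the same level-by-level recursion used for the PMVR version of this lemma, adapting it to the finite-sum estimator \eqref{fs:u}. Write $Q_t^i=\E\left[\Norm{\u_{t+1}^{i-1}-\u_t^{i-1}}^2\right]$ and $\Upsilon_t^i=\E\left[\Norm{\u_t^i-f_i(\u_t^{i-1})}^2\right]$. For the first level, $\u_t^0=\x_t$, so $Q_t^1=\eta^2\E\Norm{\z_t-\x_t}^2$ directly from the update $\x_{t+1}=\x_t+\eta(\z_t-\x_t)$.

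For $2\le i\le K$, expand $\u_{t+1}^{i-1}-\u_t^{i-1}$ from \eqref{fs:u}:
\begin{align*}
\u_{t+1}^{i-1}-\u_t^{i-1}
={}&\alpha\left(f_{i-1}(\u_t^{i-2})-\u_t^{i-1}\right)+\alpha\left(\h_{t+1}^{i-1}-f_{i-1}(\u_{t+1}^{i-2})\right)
+\alpha\left(f_{i-1}(\u_{t+1}^{i-2})-f_{i-1}(\u_t^{i-2})\right)\\
&+(1-\alpha)\frac{1}{B_1}\sum_{j=1}^{B_1}\left(f_{i-1}(\u_{t+1}^{i-2};\xi_{t+1}^{i-1,j})-f_{i-1}(\u_t^{i-2};\xi_{t+1}^{i-1,j})\right).
\end{align*}
Applying $\Norm{a+b}^2\le 2\Norm{a}^2+2\Norm{b}^2$, grouping the last two terms, and using average Lipschitzness (Assumption~\ref{asm:stoc_smooth3}), we expect
$$Q_t^i\le 2\alpha^2\Upsilon_t^{i-1}+c\,\alpha^2\E\Norm{\h_{t+1}^{i-1}-f_{i-1}(\u_{t+1}^{i-2})}^2+c'\LL_f^2 Q_t^{i-1}.$$
The key difference from the PMVR case is that there is no $\sigma^2/B_1$ term. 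The middle term is an unbiased SVRG-type error bounded by $\frac{\LL_f^2}{B_1}\Norm{\u_{t+1}^{i-2}-\u_\tau^{i-2}}^2$, which is not a one-step quantity. After summing over $t$, we telescope it, as in the two preceding lemmas, into $I^2\sum_t Q_t^{i-1}$. With $\alpha=B_1/m=1/I$ this yields the factor $\alpha^2 I^2/B_1\le 1$, which merges into the $Q^{i-1}$ coefficient. Altogether this gives the recursion
$$\sum_t Q_t^i\le 4\LL_f^2\sum_t Q_t^{i-1}+2\alpha^2\sum_t\Upsilon_t^{i-1},$$
which explains the constant $4\LL_f^2$ in place of $2\LL_f^2$.

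Unrolling this recursion as before gives
$$\sum_t Q_t^i\le(4\LL_f^2)^{i-1}\sum_t Q_t^1+2\alpha^2\sum_{j<i}(4\LL_f^2)^{i-1-j}\sum_t\Upsilon_t^j.$$
Each coefficient is bounded by $\sum_{l=1}^K(4\LL_f^2)^{l-1}$, and summing over $i\le K$ produces the extra factor $K$ on the $\Upsilon$ term, which yields the claimed bound.

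The main obstacle is the snapshot term. A per-step bound $\Norm{\u_{t+1}-\u_\tau}^2\le I\sum_{j=\tau}^{t}\Norm{\u_{j+1}-\u_j}^2$ summed over an epoch gives an $I^2$ factor. Absorbing it requires $\alpha^2I^2/B_1\lesssim 1$ and a careful choice of constants, splitting with factor $2$ and then $2$ again, which is exactly where the constant $4$ arises. We also have to check that epoch boundaries (where $\tau$ resets) do not break the telescoping, and that the $\alpha$-weighted and $(1-\alpha)$-weighted Lipschitz differences combine without an extra factor.
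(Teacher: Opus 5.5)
Your proposal is correct and follows the paper's proof. Level~1 gives $\eta^2\E\Norm{\z_t-\x_t}^2$; each higher level satisfies $\sum_t Q_t^i\le 2\alpha^2\sum_t\Upsilon_t^{i-1}+4\LL_f^2\sum_t Q_t^{i-1}$ once the snapshot error $\frac{\LL_f^2}{B_1}\Norm{\u_{t+1}^{i-2}-\u_\tau^{i-2}}^2$ is telescoped over an epoch using $\alpha I=1$; and the recursion is unrolled exactly as you describe.

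Your centering is actually cleaner than the paper's. You center $\h_{t+1}^{i-1}$ at $f_{i-1}(\u_{t+1}^{i-2})$ and absorb the drift $\alpha\left(f_{i-1}(\u_{t+1}^{i-2})-f_{i-1}(\u_t^{i-2})\right)$ into the $(1-\alpha)$-difference term by convexity. The paper instead pairs $\h_{t+1}^{i-1}$ with $f_{i-1}(\u_t^{i-2})$ and treats that difference as a zero-mean error, dropping the drift. Your grouping gives exactly $4\LL_f^2=2\LL_f^2+2\LL_f^2$, i.e.\ $c=c'=2$, provided you use that the snapshot error is conditionally mean-zero given $\u_{t+1}^{i-2}$ and the past, so that it is orthogonal to $\alpha(f_{i-1}(\u_t^{i-2})-\u_t^{i-1})$. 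Do not split $\Norm{a+b}^2\le 2\Norm{a}^2+2\Norm{b}^2$ a second time, which would inflate the constant.
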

\begin{proof}
(1) For the first level, i.e., $i=1$, we have: 
    \begin{align*}
	    \sum_{t=1}^T \E\left[\Norm{\u_{t+1}^{i-1} - \u_{t}^{i-1}}^2\right] = \sum_{t=1}^T\E\left[\Norm{\x_{t+1} - \x_{t}}^2\right] = \sum_{t=1}^T\E \left[\eta^2 \Norm{\z_t - \x_t}^2\right].
	\end{align*}
(2) For other levels, i.e., $2\leq i\leq K$, we have:
	\begin{align*}
		& \sum_{t=1}^T\E\left[\Norm{\u_{t+1}^{i-1} - \u_{t}^{i-1}}^2\right]\\
		=&\sum_{t=1}^T\E\left[\left\|\alpha \left(\h_{t+1}^{i-1} - \u_{t}^{i-1}\right) + \frac{(1-\alpha)}{B_1}\sum_{j=1}^{B_1}(f_{i-1}(\u_{t+1}^{i-2};\xi_{t+1}^{i-1,j}) - f_{i-1}(\u_{t}^{i-2};\xi_{t+1}^{i-1,j})) \right\|^2 \right]\\
\leq& 2\alpha^2\sum_{t=1}^T\E\left[\Norm{f_{i-1}(\u_{t}^{i-2}) - \u_{t}^{i-1} + \h_{t+1}^{i-1}-f_{i-1}(\u_{t}^{i-2})}^2\right]+ 2\LL_f^2\sum_{t=1}^T\E\left[\Norm{\u_{t+1}^{i-2} - \u_{t}^{i-2}}^2 \right]\\
	\leq&  2\alpha^2\sum_{t=1}^T \E\left[\Norm{f_{i-1}(\u_{t}^{i-2}) - \u_{t}^{i-1}}^2\right] + \frac{2\alpha^2 L_f^2}{B_1}\sum_{t=1}^T\E\left[\Norm{\u_{t+1}^{i-2} - \u_{\tau}^{i-2}}^2 \right] \\
 &\quad + 2\LL_f^2\sum_{t=1}^T\E\left[\Norm{\u_{t+1}^{i-2} - \u_{t}^{i-2}}^2 \right]\\
	\leq&  2\alpha^2\sum_{t=1}^T \E\left[\Norm{f_{i-1}(\u_{t}^{i-2}) - \u_{t}^{i-1}}^2\right]  + 4\LL_f^2\sum_{t=1}^T\E\left[\Norm{\u_{t+1}^{i-2} - \u_{t}^{i-2}}^2 \right].
	\end{align*} 
Denote that $\Upsilon^i =\sum_{t=1}^T\E\left[\Norm{\u_t^i - f_i(\u_t^{i-1})}^2\right]$ and  $Q^{i} = \sum_{t=1}^T\E\left[\Norm{\u_{t+1}^{i-1} - \u_{t}^{i-1}}^2\right]$, we have $Q^{i} \leq 4\LL_f^2 Q^{i-1} + 2\alpha^2 \Upsilon^{i-1} $ for $i\geq 2$. Then we can get:
\begin{align*}
Q^{1} &\leq \E \left[\eta^2 \Norm{\z_t - \x_t}^2\right] \\
Q^{2} &\leq \left(4\LL_f^2\right) \E \left[\eta^2 \Norm{\z_t - \x_t}^2\right] +2\alpha^2 \Upsilon_t^{1} \\
Q^{3} &\leq \left(4\LL_f^2\right)^2 \E \left[\eta^2 \Norm{\z_t - \x_t}^2\right]  + 2\alpha^2\left( 4\LL_f^2  \Upsilon_t^{1}+ \Upsilon_t^{2}\right) \\
& \cdots\\
Q^{i} &\leq \left(4\LL_f^2\right)^{i-1} \E \left[\eta^2 \Norm{\z_t - \x_t}^2\right] + 2\alpha^2 \sum_{j=1}^{i-1} \left(4\LL_f^2\right)^{i-1-j} \Upsilon_t^{j}\\
&\leq \left(4\LL_f^2\right)^{i-1} \E \left[\eta^2 \Norm{\z_t - \x_t}^2\right] + 2\alpha^2 \sum_{j=1}^{K} \sum_{l=1}^{K} \left(4L_f^2\right)^{K-l} \Upsilon_t^{j}.
\end{align*}
When summing up, we have:
\begin{align*}
    &\sum_{t=1}^T\sum_{i=1}^K \E\left[\Norm{\u_{t+1}^{i-1} - \u_{t}^{i-1}}^2 \right] 
    = \sum_{t=1}^T\sum_{i=1}^{K} Q^{i}_t \\
    \leq &\sum_{t=1}^T\sum_{i=1}^{K}\left(4\LL_f^2\right)^{i-1} \E \left[\eta^2 \Norm{\z_t - \x_t}^2\right] + 2\alpha^2 K \sum_{t=1}^T\sum_{j=1}^{K} \sum_{l=1}^{K} \left(4L_f^2\right)^{K-l} \Upsilon_t^{j}  \\
    %\leq &\left(\sum_{i=1}^{K}\left(2\LL_f^2\right)^{i-1}\right) \left(\E \left[\eta^2 \Norm{\z_t - \x_t}^2\right] + \frac{2\alpha^2\sigma^2 K}{B_1}   + 2\alpha^2K \sum_{i=1}^{K} \Upsilon_t^{j}  \right)\\
    \leq &\left(\sum_{i=1}^{K}\left(4\LL_f^2\right)^{i-1}\right) \left(\sum_{t=1}^T\E \left[\eta^2 \Norm{\z_t - \x_t}^2\right] + 2\alpha^2K \sum_{t=1}^T\sum_{i=1}^{K} \E\left[\Norm{\u_t^i - f_i(\u_t^{i-1})}^2\right]  \right)
\end{align*}
Thus, we complete the proof of this lemma.
\end{proof}
Next, we bound the error of gradient estimator as follows.
%%%%%%%%%%%%%%%%%%%%%%%%%%%%%%   LEMMA  7    %%%%%%%%%%%%%%%%%%%%%%%%%%%%%%%%%%%%%%%%%%%%%%%%%
Denote that the constants as $L_1 = \left(4K \LL_J^2 L_f^{2K-2} + 8KL_f^2\LL_f^2\right)\left(\sum_{i=1}^{K}\left(2\LL_f^2\right)^{i-1}\right)$, and setting $\alpha \leq \frac{B_1 L_F^2}{2L_1}$, we can deduce:
\begin{align*}
 & \frac{1}{T}\sum_{t=1}^T\E\left[\Norm{\v_t - \prod_{i=1}^K \nabla f_i(\u_t^{i-1}) }^2\right] + \frac{2KL_F^2}{T}\sum_{t=1}^T \sum_{i=1}^{K-1} \E\left[\Norm{\u_t^i - f_i(\u_t^{i-1})}^2\right]\\
 \leq &\frac{4K\LL_J^2L_f^{2K-2}+8KL_F^2\LL_f^2}{\alpha B_1 T}\sum_{t=1}^T \sum_{i=1}^{K} \E\left[\Norm{\u_{t+1}^{i-1} - \u_{t}^{i-1}}^2\right]\\
\leq & \frac{L_1}{\alpha B_1} \left(\eta^2 D^2 + 2\alpha^2K \frac{1}{T}\sum_{t=1}^T \sum_{i=1}^{K} \E\left[\Norm{\u_t^i - f_i(\u_t^{i-1})}^2\right]  \right) \\
 \leq &  \frac{L_1\eta^2D^2}{\alpha B_1} + \frac{KL_F^2}{T}\sum_{t=1}^T \sum_{i=1}^{K}\E\left[\Norm{\u_t^i - f_i(\u_t^{i-1})}^2\right]. 
\end{align*}
Then, we can obtain:
\begin{align*}
&\frac{1}{T}\sum_{t=1}^T \E\left[\Norm{\v_t - \nabla F(\x_t)}^2\right] \\ \leq & \frac{2}{T}\sum_{t=1}^T\E\left[\Norm{\v_t - \prod_{i=1}^K\nabla f_i(\u_t^{i-1})}^2\right]  + \frac{2KL_F^2}{T}\sum_{t=1}^T \sum_{i=1}^{K-1} \E\left[\Norm{\u_t^i - f_i(\u_t^{i-1})}^2\right]\\
\leq &\frac{2L_1\eta^2D^2}{\alpha B_1}
\end{align*}
By setting $I=\frac{m}{B_1}$ and $\alpha = 1/I$, We have already shown that
\begin{align*} 
&\E\left[\F(\x_\tau)\right]=\E\left[ \frac{1}{T}\sum_{t=1}^{T}  \F\left(\x_{t}\right)\right] \\ \leq &\frac{\E\left[F\left(\x_{1}\right)-F(\x_{T+1})\right]}{\eta T}+D\cdot\E\left[\frac{1}{T} \sum_{t=1}^T \left\|\nabla F\left(\x_t\right)-\v_t\right\|\right]+\eta \frac{L_F}{2} D^{2}\\
\leq &\frac{\Delta_F}{\eta T}+D\sqrt{\E\left[\frac{1}{T} \sum_{t=1}^T \left\|\nabla F\left(\x_t\right)-\v_t\right\|^2\right]}+\eta \frac{L_F}{2} D^{2}\\
\leq &\frac{\Delta_F}{\eta T}+D\sqrt{\frac{2L_1\eta^2D^2}{\alpha B_1}}+\eta \frac{L_F}{2} D^{2}\\
\leq &\frac{\Delta_F}{\eta T}+D\sqrt{\frac{2L_1\eta^2 m D^2}{B_1^2}}+\eta \frac{L_F}{2} D^{2}.
\end{align*} 
By setting $\eta =1/\sqrt{T}$, $B_1 = \sqrt{m}$, we can ensure $\E\left[\F(\x_\tau)\right] \leq \mathcal{O}\left(1/\sqrt{T}\right)$. 

\noindent By setting $\eta = \frac{1}{m^{1/4}T^{1/2}}, B_1 =  1$, we can ensure $\E\left[\F(\x_\tau)\right] \leq \mathcal{O}\left(\frac{m^{1/4}}{{T}^{1/2}}\right)$.
\newpage

\section{Proof of Theorem~\ref{thm:fsnc2}}
In the previous analysis, we simply reduced ${\eta^2}\Norm{\z_t - \x_t}^{2} \leq \eta^2 D^2$. To obtain the rate for gradient mapping, we keep this term. 
By setting $\alpha \leq \frac{B_1 L_F^2}{2L_1}$, we can deduce that:
\begin{align*}
 & \frac{1}{T}\sum_{t=1}^T\E\left[\Norm{\v_t - \prod_{i=1}^K \nabla f_i(\u_t^{i-1}) }^2\right] + \frac{2KL_F^2}{T}\sum_{t=1}^T \sum_{i=1}^{K-1} \E\left[\Norm{\u_t^i - f_i(\u_t^{i-1})}^2\right]\\
 %\leq &\frac{4K\LL_J^2L_f^{2K-2}+8KL_F^2\LL_f^2}{\alpha B_1 T}\sum_{t=1}^T \sum_{i=1}^{K} \E\left[\Norm{\u_{t+1}^{i-1} - \u_{t}^{i-1}}^2\right]\\
\leq & \frac{L_1}{\alpha B_1} \left(\eta^2 \frac{1}{T}\sum_{t=1}^T\Norm{\z_t - \x_t}^{2} + 2\alpha^2K \frac{1}{T}\sum_{t=1}^T \sum_{i=1}^{K} \E\left[\Norm{\u_t^i - f_i(\u_t^{i-1})}^2\right]  \right) \\
 \leq &  \frac{L_1\eta^2}{\alpha B_1 T}\sum_{t=1}^T\Norm{\z_t - \x_t}^{2} + \frac{KL_F^2}{T}\sum_{t=1}^T \sum_{i=1}^{K}\E\left[\Norm{\u_t^i - f_i(\u_t^{i-1})}^2\right]. 
\end{align*}
Thus, we have
\begin{align*}
&\frac{1}{T}\sum_{t=1}^T \E\left[\Norm{\v_t - \nabla F(\x_t)}^2\right] \\ \leq & \frac{2}{T}\sum_{t=1}^T\E\left[\Norm{\v_t - \prod_{i=1}^K\nabla f_i(\u_t^{i-1})}^2\right]  + \frac{2KL_F^2}{T}\sum_{t=1}^T \sum_{i=1}^{K-1} \E\left[\Norm{\u_t^i - f_i(\u_t^{i-1})}^2\right]\\
\leq &  \frac{2L_1\eta^2 }{\alpha B_1}\frac{1}{T}\sum_{t=1}^T\Norm{\z_t - \x_t}^{2}.
\end{align*}
By setting $\alpha = B_1/m$, $\eta \leq \sqrt{\frac{\beta^2 \alpha B_1}{40 L_1}}=\frac{\beta B_1}{\sqrt{40 m L_1}}$, we can prove that  
\begin{align*}
  &\quad \frac{1}{T}\sum_{t=1}^{T}\left\|\mathcal{G}(\x_t, \beta)\right\|^{2}   \\
  & \leq \frac{4\beta \Delta_F}{\eta T}  + \frac{8\beta^2 D^2 }{N} + \frac{10}{T}\sum_{t=1}^{T}\E\left[\Norm{\nabla F(\x_t) - \v_t }^2\right] - \frac{\beta^2}{2} \frac{1}{T}\sum_{t=1}^{T}\|\z_t-\x_t\|^{2} \\
  & \leq \frac{4\beta \Delta_F}{\eta T}  + \frac{8\beta^2 D^2 }{N} + \left(  \frac{20 \eta^2 L_1} {\alpha B_1} -\frac{\beta^2}{2}\right)\frac{1}{T}  \sum_{t=1}^{T}\Norm{\z_t - \x_t}^{2} 
   \leq \frac{4\beta \Delta_F}{\eta T}  + \frac{8\beta^2 D^2 }{N}
\end{align*}  
Set $N=\Omega(\epsilon^{-1})$. We ensure $\E\left[\Norm{\G(\x_t,\beta)}^2\right]\leq \epsilon$ by choosing $\alpha = \Theta(1/\sqrt{m})$, $\eta = \Theta(1)$, $T=\Omega(\epsilon^{-1})$, $ B_1=\Omega(\sqrt{m})$, or setting $\alpha =\Theta\left( 1/m\right), \eta =\Theta\left(1/\sqrt{m}\right), T = \Omega\left( \sqrt{m}\epsilon^{-1} \right),  B_1 =  \Omega\left(1 \right)$.

\section{Proof of Theorem~\ref{thm:3+}}
Note that in the previous analysis, we have proved that
\begin{align*}
 \frac{1}{T}\sum_{t=1}^T \mathbb{E}\left[F\left(\x_{t}\right)\right]-F_{\star} 
 %\leq &\frac{\left(\mathbb{E}\left[F\left(\x_{1}\right)\right]-F_{\star}\right)}{\eta T}+\frac{ D}{T} \sum_{t=1}^T \mathbb{E}\left\|\nabla F\left(\x_{t}\right)-\v_{t}\right\|+\eta \frac{L_F D^2}{2} \\
  \leq & \frac{\left(\mathbb{E}\left[F\left(\x_{1}\right)\right]-F_{\star}\right)}{\eta T}+D\sqrt{\frac{ 1}{T} \sum_{t=1}^T \mathbb{E}\left\|\nabla F\left(\x_{t}\right)-\v_{t}\right\|^2}+\eta \frac{L_F D^2}{2} \\
  \leq & \frac{\left(\mathbb{E}\left[F\left(\x_{1}\right)\right]-F_{\star}\right)}{\eta T}+D\sqrt{\frac{2L_1 D^2 \eta^2  }{\alpha B_1}} +\eta \frac{L_F D^2}{2}. 
\end{align*}
Next, we denote $\x^s$ as the output of Algorithm 3 for the stage $s$. Then, we have:
\begin{align*}
\mathbb{E}\left[F\left(\x^{s}\right)-F_{\star}\right] \leq \frac{\mathbb{E}\left[F\left(\x^{s-1}\right)-F_{\star}\right]}{\eta_s T_s}+D^2\sqrt{ \frac{2L_1 \eta_s^2 }{\alpha_s B_1^s}} +\eta_s \frac{L_F D^2}{2}. 
\end{align*}
Set $\epsilon_s = \left(\frac{1}{2}\right)^{s-1} $, $\eta_{s} \leq \frac{\epsilon_s}{3L_F D^2},B_1^s \geq D^2\sqrt{18L_1 n}\frac{\eta_s}{\epsilon_s} $, $T_s \geq \frac{6+3\Delta_F}{\eta_s}$. We can guarantee that $\mathbb{E}\left[F\left(\x^{s}\right)-F_{\star}\right] \leq \epsilon_s$.
We will use Induction to give the proof:
\begin{proof}
When $s=1$, we have:
\begin{align*}
\mathbb{E}\left[F\left(\x^{1}\right)-F_{\star}\right] & \leq \frac{\Delta_F}{\eta_1 T_1}+D^2\sqrt{ \frac{2L_1 \eta_1^2  }{\alpha_1 B_1^1}} +\eta_1 \frac{L_F D^2}{2} \leq \epsilon_1 =1 
\end{align*}
Assume that $\mathbb{E}\left[F\left(\x^{s}\right)-F_{\star}\right] \leq \epsilon_s$, we would prove that it holds for stage $s+1$ as well.
\begin{align*}
\mathbb{E}\left[F\left(\x^{s+1}\right)-F_{\star}\right] 
&\leq \frac{\epsilon_s}{\eta_{s+1} T_{s+1}}+D^2\sqrt{ \frac{ 2L_1\eta_{s+1}^2 }{\alpha_{s+1} B_1^{s+1}}} +\eta_{s+1} \frac{L_F D^2}{2} \leq \epsilon_{s}/2 = \epsilon_{s+1}
\end{align*}
So we prove  $\mathbb{E}\left[F\left(\x^{s}\right)-F_{\star}\right] \leq \left(\frac{1}{2}\right)^{s-1}$ with $\eta_{s} \leq \frac{\epsilon_s}{3L_F D^2},B_1^s \geq D^2\sqrt{18L_1 m}\frac{\eta_s}{\epsilon_s} $, $T_s \geq \frac{6+3\Delta_F}{\eta_s}$.
 
This condition can be satisfied by setting that $\eta_s =\Theta(\epsilon_s)$, $B_1^s = \Omega (\sqrt{m})$ and $T_s = \Omega(\epsilon_s^{-1})$ or $\eta_s = \Theta (\epsilon_s /\sqrt{m})$, $B_1^s = \Omega (1)$ and $T_s = \Omega(\sqrt{m}\epsilon_s^{-1})$.  
To ensure $\mathbb{E}\left[F\left(\x^{s}\right)-F_{\star}\right] \leq \epsilon$, set $S=\log_2 (\frac{2}{\epsilon})$, and the SFO rate is $\sum_{s=1}^S T^s B_1^s = \mathcal{O}\left(\sqrt{m}\sum_{s=1}^S 2^{s}\right) = \mathcal{O}\left(\frac{\sqrt{m}}{\epsilon}\right)$. 
\end{proof}

\section{Proof of Theorem~\ref{thm:4+}}
When $F(\x)$ is $\lambda$-strongly convex and set $\eta \leq \frac{\lambda}{4L_F}$, we have:
$$F\left(\x_{t+1}\right)  - F_{\star} \leq (1-\eta) (F\left(\x_{t}\right) -F_{\star} ) +\frac{4\eta}{\lambda} \Norm{\nabla F(\x_t) - \v_t}^2 + \frac{\eta \lambda D^2}{N}- \frac{\eta \lambda}{24} \Norm{\z_t - \x_t}^2.$$
Summing up and setting $\eta_s =\eta \leq \frac{\lambda B_1^s}{\sqrt{192L_1 m}}$, $T=\frac{2}{\eta}=\frac{2\sqrt{192L_1 m}}{\lambda B_1^s}$, we obtain
\begin{align*}
    &\frac{1}{T}\sum_{i=1}^T (F\left(\x_{t}\right) -F_{\star} ) \\ 
    \leq& \frac{F\left(\x_1\right) - F_{\star}}{\eta T} + \frac{4}{\lambda T}\sum_{t=1}^T \left\|\nabla F\left(\x_t\right)-\v_t\right\|^2+\frac{ \lambda D^2}{N} - \frac{\lambda}{24T}\sum_{i=1}^T \Norm{\z_t - \x_t}^2\\
    \leq& \frac{F\left(\x_1\right) - F_{\star}}{\eta T} +\frac{ \lambda D^2}{N} +\left(\frac{8L_1\eta^2}{\lambda \alpha B_1}- \frac{\lambda}{24}\right)\frac{1}{T}\sum_{i=1}^T \Norm{\z_t - \x_t}^2\\
    \leq& \frac{F\left(\x_1\right) - F_{\star}}{2} +\frac{ \lambda D^2}{N} 
\end{align*}
Next, we denote $\x^s$ as the output for stage $s$. Then, we have:
\begin{align*}
\mathbb{E}\left[F\left(\x^{s}\right)\right]-F_{\star} \leq \frac{F\left(\x_{s-1}\right) - F_{\star}}{2} +  \frac{\lambda D^2}{N} \leq \frac{F\left(\x_{1}\right) - F_{\star}}{2^{s-1}} +  \frac{2\lambda D^2}{N}
\end{align*}
To ensure $\mathbb{E}\left[F\left(\x^{S}\right)-F_{\star}\right] \leq \epsilon$, set that $S=\log_2 (\frac{2\Delta_F}{\epsilon})$ and $N=\Omega(\lambda/\epsilon)$. 
For constant batch size $B_1^s = \Omega(1)$, we choose $\eta_s=\Theta(\lambda/\sqrt{m})$, $\alpha_s=\Theta(1/m)$, and $T=\Omega(\sqrt{m}/\lambda)$. For large batch size $B_1^s = \Omega(\sqrt{m}/\lambda)$, we choose $\eta_s = \Theta(1)$, $\alpha_s=\Theta(1/(\lambda\sqrt{m}))$ and $T=\Omega(1)$.

Thus, the SFO complexity is $\sum_{s=1}^S T_s B_1^s =  \mathcal{O}(1)\cdot \sum_{s=1}^S\frac{\sqrt{m}}{\lambda}  =\mathcal{O}(\frac{\sqrt{m}}{\lambda }S) =\mathcal{O}(\frac{\sqrt{m}}{\lambda }\log(\frac{1}{\epsilon}))$ and the LMO rate is $\sum_{s=1}^S T_s  =\mathcal{O}(\frac{\sqrt{m}}{\lambda B_1^s }\log(\frac{1}{\epsilon}))$.

\vskip 1in
\bibliography{ref}

\end{document}